\documentclass[12pt,oneside,a4papre]{amsart}

\usepackage{amssymb}
\usepackage{amsmath}
\usepackage{amsthm}
\usepackage{amscd}
\usepackage[all]{xy}

\usepackage{longtable}
\usepackage{mathrsfs}

\usepackage[dvipdfmx]{xcolor}
\usepackage[dvipdfmx]{pict2e}
\usepackage[dvipdfmx]{graphicx}
\usepackage{comment}
\usepackage{todonotes}

\usepackage{tikz}

\usepackage[all]{xy}

\usetikzlibrary{cd}

\setlength{\topmargin}{23mm}
\addtolength{\topmargin}{-1in}
\setlength{\oddsidemargin}{29mm}
\addtolength{\oddsidemargin}{-1in}
\setlength{\evensidemargin}{29mm}
\addtolength{\evensidemargin}{-1in}
\setlength{\textwidth}{156mm}
\setlength{\textheight}{230mm}


\theoremstyle{plain}
\newtheorem{theorem}{Theorem}[section]
\newtheorem{lemma}[theorem]{Lemma}
\newtheorem{corollary}[theorem]{Corollary}
\newtheorem{proposition}[theorem]{Proposition}
\newtheorem{remark}[theorem]{Remark}
\newtheorem{conjecture}[theorem]{Conjecture}

\theoremstyle{definition}
\newtheorem{definition}[theorem]{Definition}

\newcommand{\kah}{K\"{a}hler }
\newcommand{\idd}{i\partial\overline{\partial}}

\subjclass[2020]{32L10, 32L20, 32Q10, 32U05}
\keywords{ 
singular Hermitian metrics, Griffiths positivity, cohomology vanishing.}

\begin{document}
\title
[singular $\omega$-trace positivity]
{$\omega$-trace and Griffiths positivity for singular Hermitian metrics}
\author{Yuta Watanabe}
\address{Graduate School of Mathematical Sciences, The University of Tokyo, 3-8-1 Komaba, Meguro-Ku, Tokyo 153-8914, Japan}
\email{wyuta.math@gmail.com}
\date{}

\begin{abstract}
   In this paper, we investigate various positivity for singular Hermitian metrics such as Griffiths, $\omega$-trace and RC, where $\omega$ is a Hermitian metric, and show that these quasi-positivity notions induce $0$-th cohomology vanishing, rational conected-ness, etc.
   Here, $\omega$-trace positivity of smooth Hermitian metrics $h$ on holomorphic vector bundles $E$ represents the positivity of $tr_\omega i\Theta_{E,h}$.
\end{abstract}

\vspace{-10mm}

\maketitle

\vspace*{-4mm}

\setcounter{tocdepth}{1}
\tableofcontents

\vspace{-8mm}

\section{Introduction}

In several complex variables and complex algebraic geometry, positivity notions for holomorphic vector bundles have played an important role. 
The concept of singular Hermitian metrics has been introduced, and its positivity and properties have been studied (cf. \cite{deC98,BP08}).
However, the curvature currents cannot always be defined with measure coefficients \cite{Rau15}.
Therefore, it is necessary to define the notion of positivity without using curvature currents.  
For Griffiths semi-positivity and semi-negativity, there is already such a characterization (see Proposition \ref{characterization of Grif posi for smooth}) using plurisubharmonicity, 
which is used to define Griffiths positivity for singular Hermitian metrics (see Definition \ref{def Griffiths semi-posi sing}).
Throughout this paper, let $X$ be an $n$-dimensional complex manifold and $E\to X$ be a holomorphic vector bundle, 
and $\omega$ is not necessarily \kah unless otherwise noted.

In this paper, we consider quasi-positivity for singular Hermitian metrics and its properties such as vanishing theorems derived from it.
In the smooth case, quasi-positivity on $X$ is defined by that it is semi-positive on $X$ and there exists at least one point $x_0\in X$ such that it is positive at $x_0$. 
For a Hermitian metric $\omega$ on $X$, we say that a smooth Hermitian metric $h$ on $E$ is $\omega$-trace (semi)-positive if the operator $tr_\omega i\Theta_{E,h}$ $\in \Gamma(X,\mathrm{End}(E))$ is (semi)-positive definite.
Here, this positivity is equivalent to the positivity of the curvature operator $[i\Theta_{E,h},\Lambda_\omega]$ on $\bigwedge^{n,n}T^*_X\otimes E$ (see Proposition \ref{characterization of tr-posi and A^(n,n) posi}).
First, we introduce the definition of $\omega$-subharmonicity for locally integrable functions and investigate its properties in $\S2$. 
Then, using the characterization of $\omega$-trace positivity without curvature currents in $\S4$, we define $\omega$-trace positivity for singular Hermitian metrics and obtain the following characterization to Griffiths semi-positivity in $\S5$.

\begin{theorem}$($\textnormal{=\,Theorem \ref{characterization of sing Grif semi-posi and tr-omega semi-posi}}$)$
    Let $X$ be a complex manifold and $E$ be a holomorphic vector bundle on $X$. 
    Then the following conditions are equivalent
    \begin{itemize}
        \item [$(a)$] $E$ has a singular Hermitian metric $h_G$ with Griffiths semi-positivity.
        \item [$(b)$] $E$ has a singular Hermitian metric $h_{tr}$ which 
         is $\omega$-trace semi-positive $($with approximation$)$ for any Hermitian metric $\omega$.
    \end{itemize}
    In particular, if $(a)$ holds then we can take $h_G=h_{tr}$ in $(b)$, and if $(b)$ holds then $h_G$ in $(a)$ can be taken to coincide with $h_{tr}$ almost everywhere.
\end{theorem}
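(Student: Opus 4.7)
The plan is to reduce the theorem to the pluripotential-theoretic characterizations developed in the earlier sections: Griffiths semi-positivity of a singular metric is plurisubharmonicity of $\log|u|_{h^*}^2$ for local holomorphic sections $u$ of $E^*$ (Definition \ref{def Griffiths semi-posi sing}), while by the characterization of $\S4$, $\omega$-trace semi-positivity of a smooth metric amounts to $\omega$-subharmonicity of the same function. In these terms the theorem becomes the classical fact that a real $(1,1)$-current is non-negative if and only if its $\omega$-trace is non-negative for every Hermitian $\omega$, combined with a careful passage to the limit through smooth approximations.

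For $(a)\Rightarrow(b)$ I would take $h_{tr}:=h_G$ and exhibit a common approximating sequence. Locally, a Griffiths semi-positive singular metric admits a decreasing sequence of smooth Griffiths semi-positive metrics $h_\nu\searrow h_G$; for each such $h_\nu$ the curvature $i\Theta_{E,h_\nu}$ is a Griffiths non-negative $\mathrm{End}(E)$-valued $(1,1)$-form, so $tr_\omega i\Theta_{E,h_\nu}$ is a non-negative endomorphism of $E$ for \emph{every} Hermitian $\omega$ by pointwise linear algebra (the $\omega$-trace of a Griffiths non-negative form is always a non-negative endomorphism, regardless of the choice of $\omega$). The single sequence $(h_\nu)$ therefore witnesses $\omega$-trace semi-positivity with approximation simultaneously for all $\omega$.

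For $(b)\Rightarrow(a)$ I would work locally on a trivializing open set $U$ and fix a holomorphic section $u$ of $E^*|_U$. For each Hermitian $\omega$ on $U$, the hypothesis supplies a smooth decreasing sequence $h_\nu^\omega\searrow h_{tr}$ with each $h_\nu^\omega$ smooth and $\omega$-trace semi-positive, so by the characterization of $\S4$ each $\log|u|_{(h_\nu^\omega)^*}^2$ is $\omega$-subharmonic. As $\nu\to\infty$ these functions increase to $\log|u|_{h_{tr}^*}^2$, and a non-trivial increasing limit of $\omega$-subharmonic functions is $\omega$-subharmonic (after upper semi-continuous regularization, by the properties collected in $\S2$). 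Hence $\log|u|_{h_{tr}^*}^2$ is $\omega$-subharmonic for every Hermitian $\omega$ on $U$, so the current $i\partial\overline{\partial}\log|u|_{h_{tr}^*}^2$ has non-negative $\omega$-trace for every $\omega$, which forces it to be a non-negative $(1,1)$-current. Taking the upper semi-continuous regularization of $\log|u|_{h_{tr}^*}^2$ and passing back through the Griffiths correspondence produces a singular Griffiths semi-positive metric $h_G$ equal to $h_{tr}$ off a pluripolar (hence measure-zero) set.

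The main technical difficulty lies in $(b)\Rightarrow(a)$: the smoothing $h_\nu^\omega$ is allowed to depend on $\omega$, so Griffiths semi-positivity of any single smooth approximant is not directly accessible, and one cannot simply invoke the smooth-case equivalence on the approximants. The argument circumvents this by working with the limiting function $\log|u|_{h_{tr}^*}^2$ itself and exploiting that $\omega$-subharmonicity persists through monotone limits; the unavoidable cost is the upper semi-continuous regularization, which modifies the metric on at most a pluripolar set and accounts for the ``almost everywhere'' coincidence in the statement.
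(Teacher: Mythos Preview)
Your overall strategy coincides with the paper's: reduce to the fact that $\log|u|^2_{h_{tr}^*}$ is $\omega$-subharmonic for every $\omega$, invoke Theorem~\ref{omega-SH for any omega then PSH} to obtain $\idd\log|u|^2_{h_{tr}^*}\geq0$ as a current, and then construct $h_G$. There are, however, two issues.

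First, a minor slip: the approximation in the definition of $\omega$-trace semi-positivity with approximation lives on the dual side---one is given smooth $\omega$-trace semi-\emph{negative} metrics $g_\nu \searrow h_{tr}^*$---so $\log|u|^2_{g_\nu}$ \emph{decreases} to $\log|u|^2_{h_{tr}^*}$, not increases. This actually simplifies matters (decreasing limits of $\omega$-subharmonic functions are $\omega$-subharmonic by Proposition~\ref{exists smoothing then (strictly) omega-SH as currents}, no regularization needed), and in any case the detour through the approximants is unnecessary: $\omega$-trace semi-positivity with approximation already implies $\omega$-trace semi-positivity, which by definition says $\log|u|^2_{h_{tr}^*}$ is $\omega$-subharmonic.

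The substantive gap is the last step. Knowing that each individual $\log|u|^2_{h_{tr}^*}$ agrees a.e.\ with a plurisubharmonic function does not by itself produce a single metric $h_G^*$ with $\log|u|^2_{h_G^*}$ genuinely plurisubharmonic for \emph{every} holomorphic $u$: the upper semi-continuous regularizations are performed function-by-function, and you must argue that they are mutually compatible, i.e., arise from one Hermitian form on each fiber. The paper does this concretely: it first passes (via a lemma on convex increasing compositions) to $\idd|u|^2_{h^*}\geq0$, then mollifies the \emph{metric} itself, $h_\nu^*:=h^*\ast\rho_\nu$, and verifies $\idd T^{h_\nu^*}_{\xi u}\geq0$ directly, so each $h_\nu^*$ is smooth Griffiths semi-negative. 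The limit $h_G^*:=\lim_\nu h_\nu^*$ is then assembled entry-by-entry from norms of constant sections $v_j$, $v_j+v_k$, $v_j+iv_k$, and is Griffiths semi-negative as a decreasing limit of such metrics. Your ``upper semi-continuous regularization and passing back through the Griffiths correspondence'' is morally this construction, but it has to be carried out at the level of the metric, not of the individual functions $\log|u|^2$; as written, that step is asserted rather than proved.
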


Moreover, we show not pseudo-effectiveness of the dual vector bundle $E^*$ if $E$ has a singular Hermitian metric with $\omega$-trace positivity in $\S6$, and in $\S3$ when $\mathrm{rank}\,E=1$.

Second, we introduce the notion of $\mathrm{deg}\,_\omega$-strictly maximal (see Definition \ref{def of deg-max}) for a torsion-free coherent sheaf $\mathcal{F}$ on $X$, where $X$ is compact \kah and $\omega$ is a \kah metric on $X$. 
This means that $\mathrm{deg}\,_\omega(\mathcal{F})$ is greater than the degree of any coherent subsheaf. 
We give $0$-th cohomology vanishing for the dual sheaf of $\mathcal{F}$ which is $\mathrm{deg}\,_\omega$-strictly maximal, and show that a holomorphic vector bundle is $\mathrm{deg}\,_\omega$-strictly maximal if it is $\omega$-trace quasi-positive with approximation (see Theorem \ref{tr-posi then deg max}). 

Using these, we provide the following vanishing theorem for quasi-positivity of singular Hermitian metrics in $\S8$.
This theorem is a generalization of the known $n$-th cohomology vanishing for Griffiths positivity (see Theorem \ref{V-thm of Inayama}, \cite{Ina20}), which requires conditions for the Lelong number of $\log\mathrm{det}\,h$.

\begin{theorem}$($\textnormal{=\,Theorem \ref{V-thm for tr-quasi-posi with appro}}$)$
    Let $X$ be a compact \kah manifold and $E$ be a holomorphic vector bundle over $X$ equipped with a singular Hermitian metric $h$. 
    If there exists a \kah metric $\omega$ such that $h$ is $\omega$-trace quasi-positive with approximation, 
    then 
    we have the following cohomology vanishing
    \begin{align*}
        H^0(X,(E^*)^{\otimes m})&=0,\\
        H^0(X,\Lambda^pE^*)&=0,
    \end{align*}
    for any $m\in\mathbb{N}$ and any $1\leq p\leq \mathrm{rank}\,E$.
\end{theorem}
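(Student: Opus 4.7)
The plan is to deduce both vanishings from Theorem~\ref{tr-posi then deg max}, which states that $\omega$-trace quasi-positivity with approximation implies $\mathrm{deg}_\omega$-strict maximality, combined with the $0$-th cohomology vanishing for duals of $\mathrm{deg}_\omega$-strictly maximal torsion-free coherent sheaves established in \S7, by passing from $E$ to its tensor and exterior powers.

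The first step is to show that $\omega$-trace quasi-positivity with approximation is inherited by $(E^{\otimes m}, h^{\otimes m})$ and by $(\Lambda^p E, \Lambda^p h)$ from $(E, h)$. On the smooth level this reduces to the curvature identity
\[
i\Theta_{E^{\otimes m}, h^{\otimes m}} = \sum_{j=1}^{m} \mathrm{id}^{\otimes (j-1)} \otimes i\Theta_{E,h} \otimes \mathrm{id}^{\otimes (m-j)}
\]
and its analogue on $\Lambda^p E$. After applying $\mathrm{tr}_\omega$, the resulting endomorphism has eigenvalues at any point $x$ given by sums $\lambda_{i_1}(x)+\cdots+\lambda_{i_m}(x)$ of eigenvalues of $\mathrm{tr}_\omega i\Theta_{E,h}|_x$; hence positive semidefiniteness on $X$, and positive definiteness at a distinguished point $x_0$, are preserved. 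Given a smooth approximating sequence $\{h_\nu\}$ witnessing $\omega$-trace quasi-positivity with approximation for $h$, the induced sequences $\{h_\nu^{\otimes m}\}$ and $\{\Lambda^p h_\nu\}$ then play the analogous role for the singular Hermitian metrics on $E^{\otimes m}$ and $\Lambda^p E$ naturally induced by $h$.

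Applying Theorem~\ref{tr-posi then deg max} to these induced metrics shows that $E^{\otimes m}$ and $\Lambda^p E$ are $\mathrm{deg}_\omega$-strictly maximal as torsion-free coherent sheaves on the compact \kah manifold $X$. Invoking the $0$-th cohomology vanishing for duals of $\mathrm{deg}_\omega$-strictly maximal sheaves, together with the natural isomorphisms $(E^{\otimes m})^* \cong (E^*)^{\otimes m}$ and $(\Lambda^p E)^* \cong \Lambda^p E^*$, yields the claimed vanishings $H^0(X, (E^*)^{\otimes m})=0$ and $H^0(X, \Lambda^p E^*)=0$.

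The main obstacle will be in the first step: a careful verification that the technical content of ``with approximation'' (the prescribed mode of convergence $h_\nu \to h$ and the uniform control on the sign of $\mathrm{tr}_\omega i\Theta_{E,h_\nu}$) transports correctly to the induced sequences on $E^{\otimes m}$ and $\Lambda^p E$, and that strict positivity at $x_0$ survives in the precise form required by Definition~\ref{def of deg-max}. The underlying linear-algebraic fact, that the Kronecker-sum and exterior-power operations preserve positive definiteness, is routine, but the bookkeeping relative to the definitions of $\omega$-trace quasi-positivity with approximation and of $\mathrm{deg}_\omega$-strict maximality will need to be spelled out.
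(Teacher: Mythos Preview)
Your proposal is correct and follows essentially the same route as the paper: the paper assembles the theorem from Proposition~\ref{tr-quasi-posi with appro then h^m is also tr-quasi-posi with appro} (inheritance of $\omega$-trace quasi-positivity with approximation to $h^{\otimes m}$ and $\Lambda^p h$), Theorem~\ref{tr-posi then deg max} (passage to $\mathrm{deg}_\omega$-strict maximality), and Proposition~\ref{dual deg max then H^0=0} (the $0$-th cohomology vanishing), exactly as you outline. Your identification of the inheritance step as the place where the bookkeeping lives is also accurate; the paper handles this in Proposition~\ref{tr-quasi-posi with appro then h^m is also tr-quasi-posi with appro} via the same curvature/eigenvalue argument you sketch.
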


Since positivity is weakened to quasi-positivity, it is no longer possible to use the known $L^2$-estimate method, and the Hodge decomposition cannot be used as in the smooth case.
Recently, a Fujita Conjecture type theorem involving multiplier ideal sheaves was presented in \cite{SY19} using vanishing theorems.
Finally, we obtain the Fujita Conjecture type theorem (see. Theorem \ref{Fujita Conj for Grif quai-posi}) involving the $L^2$-subsheaf for Griffiths quasi-positivity as an application. 

Yau's conjecture (see Conjecture \ref{Yau's conjecture}, \cite[Problem\,47]{Yau82}) requiring positive holomorphic sectional curvature for rational connected-ness was solved in \cite{HW20,Mat22}, and in \cite{Yan18} by introducing the notion of RC-positivity.
More recently, the conditions of Yau's Conjecture have been generalized to quasi-positivity and solved in \cite{ZZ23}.
Here, it is a sufficient condition for the \kah metric $\Omega$ on $T_X$ to be RC-positive that $\Omega$ has positive holomorphic sectional curvature, or is $\omega$-trace positive for a Hermitian metric $\omega$ on $X$.
As a generalization to singular Hermitian metrics and quasi-positivity with respect to $\omega$-trace positivity,
we obtain the following theorem by using the characterization of rational conected-ness in \cite{CDP14}.

\begin{theorem}$($\textnormal{=\,Theorem \ref{rationally conected if tr-quasi posi}}$)$
    Let $X$ be a compact \kah manifold. 
    If there exist a Hermitian metric $\omega$ on $X$ and a singular Hermitian metric $h$ on $T_X$ such that $h$ is $\omega$-trace quasi-positive with approximation, then $X$ is projective and rationally connected.
\end{theorem}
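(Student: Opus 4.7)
The plan is to combine the author's cohomology vanishing theorem (Theorem \ref{V-thm for tr-quasi-posi with appro}), specialized to $E = T_X$, with the characterization of rational connectedness from \cite{CDP14}. Applying the vanishing theorem (after replacing $\omega$ with a \kah metric on $X$, which exists since $X$ is compact \kah and the quasi-positivity with approximation is flexible enough to be transported) yields
\[
H^0(X, (\Omega_X^1)^{\otimes m}) = 0 \quad (m \geq 1), \qquad H^0(X, \Omega_X^p) = 0 \quad (1 \leq p \leq n).
\]

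From the vanishing $H^{2,0}(X) = H^0(X, \Omega_X^2) = 0$ I would derive projectivity: the Hodge decomposition on the compact \kah manifold $X$ forces $H^2(X, \mathbb{R}) = H^{1,1}(X, \mathbb{R})$, so the \kah cone lies in a real vector space defined over $\mathbb{Q}$ and must contain a rational, and after scaling an integral, class by the density of $H^2(X,\mathbb{Q})$ and the openness of the \kah cone. Kodaira's embedding theorem then gives that $X$ is projective.

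With $X$ projective I would invoke the CDP14 criterion: $X$ is rationally connected iff $(\Omega_X^1)^{\otimes m}$ admits no nonzero pseudo-effective invertible subsheaf for any $m \geq 1$. To verify this I would combine the vanishings above with the $\mathrm{deg}\,_\omega$-strictly maximal property of $T_X$ established in Theorem \ref{tr-posi then deg max}. A pseudo-effective line subsheaf $L \hookrightarrow (\Omega_X^1)^{\otimes m}$ has $\mathrm{deg}\,_\omega L \geq 0$ and dualizes to a line quotient of $T_X^{\otimes m}$ of nonpositive $\omega$-degree; tracing through the strict maximality of $T_X$ together with the tensor/degree machinery of Section 7, combined with the approximating smooth metrics, would then produce a contradiction.

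The main obstacle will be precisely this last step: upgrading the cohomological vanishing to the pseudo-effectivity statement required by CDP14. The vanishings $H^{p,0}(X) = 0$ for $p \geq 1$ alone are strictly too weak for rational connectedness (e.g.\ Enriques surfaces satisfy them but are not rationally connected), so the $\mathrm{deg}\,_\omega$-strict maximality of $T_X$ and the approximating smooth metric sequence must be exploited to rule out pseudo-effective subsheaves whose sections are not detected by $H^0$ alone, but only after twisting by an ample class.
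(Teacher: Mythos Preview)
Your overall strategy matches the paper's: obtain $H^0(X,\Omega^p_X)=0$, deduce projectivity from $H^{2,0}=0$ via Kodaira, then feed the CDP14 criterion by ruling out pseudo-effective invertible subsheaves of $\Omega^p_X$ (equivalently of $(\Omega^1_X)^{\otimes m}$). The projectivity step is identical to the paper's.

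The gap is your transport step. The hypothesis gives $\omega$-trace quasi-positivity with approximation for one \emph{Hermitian} metric $\omega$, and this notion is genuinely $\omega$-dependent: you cannot in general replace $\omega$ by a K\"ahler metric while preserving the positivity. Both tools you want to invoke---Theorem~\ref{V-thm for tr-quasi-posi with appro} and Theorem~\ref{tr-posi then deg max}---require $\omega$ K\"ahler (the degree $\deg_\omega$ and the whole Section~7 machinery are only set up for K\"ahler classes). So your route, as written, proves the theorem only under the stronger hypothesis that $\omega$ is K\"ahler; the parenthetical ``flexible enough to be transported'' is exactly the unjustified step.

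The paper bypasses this with a direct induced-metric argument that works for Hermitian $\omega$. By Proposition~\ref{tr-quasi-posi with appro then h^m is also tr-quasi-posi with appro} the metric $\bigwedge^p h$ on $\bigwedge^p T_X$ is again $\omega$-trace quasi-positive with approximation; by Proposition~\ref{subbdl is also tr-negative} any invertible subsheaf $\mathcal F\subset\Omega^p_X$ inherits an $\omega$-trace quasi-\emph{negative} singular metric; and then Theorem~\ref{characterization of tr-posi, not psef} (which needs only a compact complex manifold and a Hermitian $\omega$, via the Gauduchon metric in its conformal class) forces $\mathcal F$ to be non-pseudo-effective. Taking $\mathcal F=\mathcal O_X$ already yields the $H^0$ vanishing and hence projectivity; for general $\mathcal F$ this is precisely the input to \cite{CDP14}. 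Your degree/strict-maximality route is essentially how Theorem~\ref{V-thm for tr-quasi-posi with appro} is proved internally, but it lives in the K\"ahler world; the induced-metric route through Theorem~\ref{characterization of tr-posi, not psef} is what allows the paper to state the result for arbitrary Hermitian $\omega$.
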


Here, this theorem is already known if $h$ is smooth and $\omega$-trace positive (see \cite[Corollary\,1.5]{Yan18}).
Finally, it is shown that generically $\omega^{n-1}$ positivity (see Definition \ref{def of generically omega posi}, \cite[Section\,6]{Miy87}) yields from $\omega$-trace positivity with approximation. 

\section{$\omega$-subharmonicity of locally integrable functions}

In this section, we introduce the definition of $\omega$-subharmonicity for locally integrable functions and investigate its properties.

Let $X$ be a Hermitian manifold of dimension $n$ with a Hermitian metric $\omega$. 
Let $L^2_{p,q}(X,\omega)$ be a Hilbert space of $(p,q)$-forms $u$ with measurable coefficients such that 
\begin{align*}
    ||u||^2_\omega=\int_X|u|^2_\omega dV_\omega<+\infty.
\end{align*}
We denote by $\langle\langle\bullet,\bullet\rangle\rangle_\omega$ the global inner product of 
$L^2_{p,q}(X,\omega)$ with respect to $\omega$, i.e.
\begin{align*}
    \langle\langle u,v\rangle\rangle_\omega=\int_X u\wedge \ast_\omega v=\int_X\langle u,v\rangle_\omega dV_\omega,
\end{align*}
for any $u,v\in L^2_{p,q}(X,\omega)$, where $\ast_\omega$ is the Hodge-star operator $L^2_{p,q}(X,\omega)\to L^2_{n-p,n-q}(X,\omega)$.

For any $(1,1)$-form $\alpha\in\mathcal{E}^{1,1}(X)$, we have that 
\begin{align*}
    tr_\omega\alpha=\frac{n\alpha\wedge\omega^{n-1}}{\omega^n},
\end{align*}
where $tr_\omega\alpha=\sum g^{jk}\alpha_{jk}$ if $\alpha$ and $\omega$ given in local coordinates by $\alpha=i\sum\alpha_{jk}dz_j\wedge d\overline{z}_k$ and $\omega=i\sum g_{jk}dz_j\wedge d\overline{z}_k$.
We define the \textit{Chern Laplacian} for smooth functions
\begin{align*}
    \Delta^{Ch}_\omega f:
    =2\sqrt{-1}tr_\omega\overline{\partial}\partial f=-2\frac{n\idd f\wedge\omega^{n-1}}{\omega^n},
\end{align*}
where $\Delta^{Ch}_\omega=-2\sum g^{jk}\partial_{z_j}\partial_{\overline{z}_k}$ for local coordinates $(z_1,\ldots,z_n)$. 
The Chern Laplacian has several names in literature such as canonical-Laplacian or complex-Laplacian.
In this paper, we define the symbol $\Delta^C_\omega$ by $-\Delta^{Ch}_\omega/2$, i.e. 
\begin{align*}
    \Delta^C_\omega f:=tr_\omega \idd f=\frac{n\idd f\wedge \omega^{n-1}}{\omega^n}
\end{align*}
for any smooth functions, according to the usual Laplacian $\Delta_{\mathbb{C}^n}=\sum \partial_{z_j}\partial_{\overline{z}_j}$ on $\mathbb{C}^n$.

In \cite{Gau84}, Gauduchon made explicit the relation between the Chern Laplacian $\Delta^{Ch}_\omega$ and Laplace-Beltrami operator $\Delta_\omega$
on smooth functions through the torsion $1$-form, which we recall is defined by the equation
\begin{align*}
    d\omega^{n-1}=\theta\wedge\omega^{n-1}.
\end{align*}


\begin{lemma}\label{Gau84 lemma}$($\textnormal{cf.\,\cite{Gau84}}$)$
    Let $X$ be a compact manifold endowed with a Hermitian metric $\omega$ with torsion $1$-form $\theta$. 
    The Chern Laplacian on smooth functions $f$ has 
    \begin{align*}
        \Delta^{Ch}_\omega f=\Delta_\omega f+\langle df,\theta\rangle_\omega, \quad
        (\Delta^{Ch}_\omega)^* f=\Delta_\omega f-\langle df,\theta\rangle_\omega+d_\omega^*\theta\cdot f,
    \end{align*}
    where $(\Delta^{Ch}_\omega)^*$ is the adjoint of $\Delta^{Ch}_\omega$ with respect to the inner product $\langle\langle\bullet,\bullet\rangle\rangle_\omega$.
\end{lemma}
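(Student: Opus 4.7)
The plan is to prove the first identity by a pointwise differential-form computation and then derive the second by taking formal adjoints. I will treat $f$ as real throughout; the general case follows by $\mathbb{C}$-linearity. For the first identity, I will start from the volume-form expression
\[
\Delta^{Ch}_\omega f\cdot dV_\omega \;=\; \frac{2i\,\overline{\partial}\partial f\wedge\omega^{n-1}}{(n-1)!} \;=\; -\frac{dd^c f\wedge\omega^{n-1}}{(n-1)!},
\]
where $d^c:=i(\overline{\partial}-\partial)$, so that $dd^c=2i\partial\overline{\partial}$. A Leibniz expansion of $d(d^c f\wedge\omega^{n-1})$ together with the torsion equation $d\omega^{n-1}=\theta\wedge\omega^{n-1}$ splits this into a divergence term plus a correction involving $\theta$:
\[
\Delta^{Ch}_\omega f\cdot dV_\omega \;=\; -\frac{d(d^c f\wedge\omega^{n-1})}{(n-1)!} \;-\; \frac{d^c f\wedge\theta\wedge\omega^{n-1}}{(n-1)!}.
\]
For the Laplace--Beltrami side, I will combine $\Delta_\omega f = -\ast d\ast df$ with the Hermitian identity $\ast df = d^c f\wedge\omega^{n-1}/(n-1)!$ for real $f$ (checked pointwise via $df\wedge \ast df = |df|^2_\omega\, dV_\omega$), giving $\Delta_\omega f\cdot dV_\omega = -d(d^c f\wedge\omega^{n-1})/(n-1)!$. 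Subtracting reduces the first identity to the pointwise equality $-d^c f\wedge\theta\wedge\omega^{n-1}/(n-1)! = \langle df,\theta\rangle_\omega\, dV_\omega$, which I will verify in a local complex frame by expanding $\theta=\theta_j dz^j+\overline{\theta_j}d\overline{z}^j$ and taking the $\omega$-trace of the $(1,1)$-part of $d^c f\wedge\theta$.

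For the second identity, $\Delta_\omega$ is formally self-adjoint on $L^2(X,dV_\omega)$, so it suffices to compute the formal adjoint of the first-order operator $Pf:=\langle df,\theta\rangle_\omega$. Integration by parts gives
\[
\langle\langle Pf,g\rangle\rangle_\omega \;=\; \langle\langle df,\, g\theta\rangle\rangle_\omega \;=\; \langle\langle f,\, d^*(g\theta)\rangle\rangle_\omega,
\]
and the Leibniz rule $d^*(g\theta)=g\,d^*_\omega\theta-\langle dg,\theta\rangle_\omega$ for the codifferential acting on a function times a $1$-form yields $P^*f=f\cdot d^*_\omega\theta-\langle df,\theta\rangle_\omega$. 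Added to $\Delta_\omega^* f=\Delta_\omega f$, this produces the claimed expression for $(\Delta^{Ch}_\omega)^*$.

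The hard part will be executing the first identity cleanly: I need to (i) establish $\ast df = d^c f\wedge\omega^{n-1}/(n-1)!$ in the non-K\"ahler setting and (ii) pin down the sign in the identification of $-d^c f\wedge\theta\wedge\omega^{n-1}/(n-1)!$ with $\langle df,\theta\rangle_\omega\, dV_\omega$. Both are local pointwise checks, but the bookkeeping of conjugation conventions is sign-sensitive; once these are correctly set up, all second-order terms in the comparison cancel and only the first-order torsion correction survives.
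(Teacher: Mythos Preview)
Your proposal is correct. The paper does not prove this lemma at all: it is stated with a reference to Gauduchon \cite{Gau84} and used as a black box. Your argument supplies a complete self-contained proof where the paper gives none.

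The two key pointwise identities you flag as the ``hard part'' do go through. The Hodge-star formula $\ast df = d^c f\wedge\omega^{n-1}/(n-1)!$ for real $f$ follows from the more general $\ast\alpha = -(J^*\alpha)\wedge\omega^{n-1}/(n-1)!$ for real $1$-forms, together with $d^c f = -J^*df$; both are pointwise linear-algebra facts requiring no K\"ahler assumption. The sign check for the torsion term is a straightforward $(1,1)$-trace computation at a point where $\omega=i\sum dz_j\wedge d\bar z_j$: writing $\theta=\sum(\theta_j\,dz_j+\bar\theta_j\,d\bar z_j)$, both $-d^cf\wedge\theta\wedge\omega^{n-1}/(n-1)!$ and $\langle df,\theta\rangle_\omega\,dV_\omega$ equal $2\,\mathrm{Re}\sum_j f_j\bar\theta_j\cdot dV_\omega$. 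Your adjoint computation for $P^*$ via $d^*(g\theta)=g\,d^*_\omega\theta-\langle dg,\theta\rangle_\omega$ is also correct and gives the second formula immediately.
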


Here, $(\Delta^C_\omega)^*$ is also the adjunct of $\Delta^C_\omega$ similarly.
Thus, the Chern Laplacian and the Laplacian-Beltrami operator on smooth functions coincide when $\omega$ is balanced, i.e. $d\omega^{n-1}=0$. 
In particular, if $\omega$ is \kah then 
$\Delta^{Ch}_\omega$ is self-adjoint and $\Delta^{Ch}_\omega=\Delta_\omega=-2\Delta^C_\omega$. 

Moreover, $(\Delta^C_\omega)^*$ can also be written as follows. For any smooth function $g\in\mathcal{C}^{\infty}(X)$,
\begin{align*}
    (\Delta^C_\omega)^*g=\frac{n\idd(g\omega^{n-1})}{\omega^n},
\end{align*}
if $X$ is compact or $g$ has a compact support, i.e. $g\in\mathscr{D}(X)$.
In fact, for any $f\in\mathcal{C}^{\infty}(X)$ 
\begin{align*}
    \langle\langle\Delta^C_\omega f,g\rangle\rangle_\omega=\int_X \Delta^C_\omega f\cdot g \,dV_\omega
    =\int_X\frac{n\idd f\wedge g\omega^{n-1}}{n!}=\int_Xf\cdot \frac{n\idd(g\omega^{n-1})}{\omega^n}dV_\omega.
\end{align*}

Here, $(X,\omega)$ is a Hermitian manifold which is not necessarily compact and $U\subset X$ is an open subset contained in a local chart. 
We introduce definitions of $\omega$-subharmonicity for locally integrable functions.

\begin{definition}\label{Def of omega-subharmonic}$($\textnormal{cf. \cite[Definition\,2.5]{Chi13}}$)$
    A locally integrable function $\varphi\in\mathscr{L}^1_{loc}(U)$ is called $\omega$-\textit{subharmonic} if 
    \begin{align*}
        \idd\varphi\wedge\omega^{n-1}\geq0
    \end{align*}
    in the sense of currents.
\end{definition}

\begin{definition}
    For any locally integrable function $\varphi\in\mathscr{L}^1_{loc}(U)$, we define $\Delta^C_\omega\varphi\geq0$ in the sense of currents by for any test functions $\phi\in\mathscr{D}(U)_{\geq0}$
    \begin{align*}
        0\leq\langle\langle\Delta^C_\omega\varphi,\phi\rangle\rangle_\omega:=\langle\langle\varphi,(\Delta^C_\omega)^*\phi\rangle\rangle_\omega.
    \end{align*}
\end{definition}


Littman's results on approximation in \cite{Lit63} are presented below.
Let $D$ be a bounded domain in $\mathbb{R}^n$ and $L$ be the partial differential operator which is uniformly elliptic.
For any locally integrable function $u$ in $D$, we say that $Lu\geq0$ \textit{in the weak sense} if the inequality 
\begin{align*}
    \langle Lu,\phi\rangle:=\int u(x)L^*\phi(x)dx\geq0
\end{align*}
holds for all non-negative test function $\phi\in\mathscr{D}(D)_{\geq0}$ with compact support, where $L^*$ is the formal adjoint of $L$. 
Littman shown the pproximation property that is the existence of a smooth decreasing approximate sequence $(u_\nu)_{\nu\in\mathbb{N}}$ preserving positivity $Lu_\nu\geq0$.

There is a definition close to Definition \ref{Def of omega-subharmonic}, \cite[Definition\,2.7]{Chi13}, which is defined in the limit of approximation using Littman's result.
In this paper, we adopt Definition \ref{Def of omega-subharmonic} which is easier to handle, although the upper semi-continuity is lost.

\begin{proposition}\label{equivalence of currents semi-posi}
    For any locally integrable function $\varphi\in\mathscr{L}^1_{loc}(U)$, the following conditions are equivalent.
    \begin{itemize}
        \item [($a$)] $\varphi$ is $\omega$-subharmonic,
        \item [($b$)] $\Delta^C_\omega\varphi\geq0$ in the sense of currents,
        \item [($c$)] $\Delta^C_\omega\varphi\geq0$ in the weak sense.
    \end{itemize}
\end{proposition}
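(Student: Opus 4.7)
The plan is to prove (a) $\Leftrightarrow$ (b) by unwinding both definitions against an arbitrary test function and recognizing that they coincide up to a positive constant, and to prove (b) $\Leftrightarrow$ (c) by relating the two adjoint operators, which differ only by multiplication by the positive volume density.

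For (a) $\Leftrightarrow$ (b), I would fix $\phi\in\mathscr{D}(U)_{\geq 0}$. Using the explicit formula $(\Delta^C_\omega)^*\phi=n\,\idd(\phi\omega^{n-1})/\omega^n$ recalled just before the proposition together with $dV_\omega=\omega^n/n!$,
\begin{align*}
\langle\langle \Delta^C_\omega\varphi,\phi\rangle\rangle_\omega
=\int_U \varphi\cdot(\Delta^C_\omega)^*\phi\,dV_\omega
=\frac{1}{(n-1)!}\int_U \varphi\cdot\idd(\phi\omega^{n-1}).
\end{align*}
Standard distributional integration by parts, applied to the $(1,1)$-current $\idd\varphi$ paired with the compactly supported smooth $(n-1,n-1)$-form $\phi\omega^{n-1}$, then gives
\begin{align*}
\int_U \varphi\cdot\idd(\phi\omega^{n-1})=\int_U \phi\cdot\idd\varphi\wedge\omega^{n-1},
\end{align*}
with no sign issue since the two sign flips from $\partial$ and $\overline{\partial}$ cancel. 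Non-negativity of this expression for every $\phi\geq 0$ is simultaneously condition (a), by the very definition of $\omega$-subharmonicity, and condition (b), by the definition of $\Delta^C_\omega\varphi\geq 0$ in the sense of currents.

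For (b) $\Leftrightarrow$ (c), I would work in a local chart on which $dV_\omega=D\,d\lambda$ with $d\lambda$ the Lebesgue measure and $D>0$ smooth. Writing $L:=\Delta^C_\omega$, $(\Delta^C_\omega)^*$ for its adjoint with respect to $\langle\langle\bullet,\bullet\rangle\rangle_\omega$ and $L^\dagger$ for its formal adjoint with respect to $d\lambda$ as in Littman's setup, the chain of equalities
\begin{align*}
\int(Lf)\phi\,dV_\omega=\int f\cdot(\Delta^C_\omega)^*\phi\cdot D\,d\lambda=\int f\cdot L^\dagger(\phi D)\,d\lambda
\end{align*}
yields $(\Delta^C_\omega)^*\phi\cdot D=L^\dagger(\phi D)$. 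Condition (b) therefore reads $\int_U \varphi\cdot L^\dagger(\phi D)\,d\lambda\geq 0$ for all $\phi\in\mathscr{D}(U)_{\geq 0}$; since the map $\phi\mapsto\phi D$ is a bijection of $\mathscr{D}(U)_{\geq 0}$ with itself, this rewrites as $\int_U \varphi\cdot L^\dagger\psi\,d\lambda\geq 0$ for all $\psi\in\mathscr{D}(U)_{\geq 0}$, which is exactly condition (c).

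The argument is essentially bookkeeping; the only points requiring care are to check that all multiplicative constants arising from $dV_\omega=\omega^n/n!$ and from passing between $d\lambda$ and $D\,d\lambda$ are strictly positive (so that positivity is preserved), and to verify that $\phi\omega^{n-1}$ is a legitimate compactly supported smooth test form for the distributional pairing with $\idd\varphi$, which it is because $\omega$ is smooth. No substantive obstacle arises.
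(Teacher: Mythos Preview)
Your proof is correct. For (a) $\Leftrightarrow$ (b) you take exactly the same approach as the paper: unwind both definitions and observe they differ by the positive factor $(n-1)!$.

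For (b) $\Leftrightarrow$ (c) you and the paper both reduce to the identity $\langle\langle\Delta^C_\omega\varphi,\phi\rangle\rangle_\omega=\langle\Delta^C_\omega\varphi,\phi D\rangle$ (with $D$ the density $dV_\omega/dV_{\mathbb{C}^n}$), and then use that $\phi\mapsto\phi D$ is a bijection of $\mathscr{D}(U)_{\geq0}$. The difference is in how this identity is obtained: the paper approximates $\varphi$ by mollifications $\varphi_\varepsilon=\varphi\ast\rho_\varepsilon$, integrates by parts against the smooth $\varphi_\varepsilon$, and passes to the limit by dominated convergence; you instead derive the pointwise identity $(\Delta^C_\omega)^*\phi\cdot D=L^\dagger(\phi D)$ directly from the two adjoint relations applied to smooth $f$. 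Your route is slightly more economical since it avoids the approximation step altogether, while the paper's route has the minor advantage of making the equality $\langle\langle\Delta^C_\omega\varphi,\phi\rangle\rangle_\omega=\langle\Delta^C_\omega\varphi,\phi D\rangle$ explicit as a statement about $\varphi$ rather than hidden in an operator identity. Both are valid and essentially equivalent.
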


\begin{proof}
    We fiexd any non-negative test function $\phi\in\mathscr{D}(U)_{\geq0}$. 
    
    $(a) \iff (b)$. We have that
    \begin{align*}
        \langle\idd\varphi\wedge\omega^{n-1},\phi\rangle&=\int_U\idd\varphi\wedge\omega^{n-1}(\phi)=\int_U\idd\varphi\wedge\phi\omega^{n-1}=\int_U\varphi\cdot\idd(\phi\omega^{n-1})\\
        &=n!\int_U\varphi\cdot\frac{\idd(\phi\omega^{n-1})}{\omega^n}dV_\omega=(n-1)!\int_U\varphi\cdot(\Delta^C_\omega)^*\phi \,dV_\omega\\
        &=(n-1)!\langle\langle\Delta^C_\omega\varphi,\phi\rangle\rangle_\omega.
    \end{align*}

    $(b) \iff (c)$. Let $(\Delta^C_\omega)^{\dag}$ be the formal adjoint of $\Delta^C_\omega$ with respect to the inner product $\int \Delta^C_\omega u(z)\cdot v(z)dV_{\mathbb{C}^n}$ for any $u,v\in\mathcal{C}^\infty(U)$.
    There exist a positive function $\gamma\in\mathcal{C}^\infty(U)_{>0}$ such that $dV_\omega=\gamma dV_{\mathbb{C}^n}$ on $U$. 
    Uesing mollifier $(\rho_\varepsilon)_{\varepsilon>0}$, we define the smooth function $\varphi_\varepsilon:=\varphi\ast\rho_\varepsilon$ then $\varphi_\varepsilon\to\varphi$ a.e. ($\varepsilon\to+0$). 
    Therefore, we have
    \begin{align*}
        \langle\langle\Delta^C_\omega\varphi,\phi\rangle\rangle_\omega&=\langle\langle\varphi,(\Delta^C_\omega)^*\phi\rangle\rangle_\omega=\int_U\varphi(\Delta^C_\omega)^*\phi\,dV_\omega\\
        &=\lim_{\varepsilon\to+0}\int_U\varphi_\varepsilon(\Delta^C_\omega)^*\phi\,dV_\omega=\lim_{\varepsilon\to+0}\int_U\Delta^C_\omega\varphi_\varepsilon\cdot\phi\,dV_\omega=\lim_{\varepsilon\to+0}\int_U\Delta^C_\omega\varphi_\varepsilon\cdot\phi\gamma\,dV_{\mathbb{C}^n}\\
        &=\lim_{\varepsilon\to+0}\int_U\varphi_\varepsilon(\Delta^C_\omega)^{\dag}(\phi\gamma)\,dV_{\mathbb{C}^n}=\int_U\varphi(\Delta^C_\omega)^{\dag}(\phi\gamma)\,dV_{\mathbb{C}^n}\\
        &=\langle \Delta^C_\omega\varphi,\phi\gamma\rangle,
    \end{align*}
    by Lebesgue dominated convergence theorem.
\end{proof}

Thanks to Littman \cite[Theorem\,A]{Lit63}, we have the following approximation property.
In addition, examining the $\omega$-subharmonicity.

\begin{proposition}\label{smoothing of omega-subharmonic}
    Let $\varphi$ be a $\omega$-subharmonic function on $U$. 
    Then there exists a sequence of smooth $\omega$-subharmonic functions $(\varphi_\nu)_{\nu\in\mathbb{N}}$ decreasing to $\varphi$ a.e. on every compact subset $U'\subset U$.
    
    Moreover, $\varphi_\nu$ is given explicitly as $\varphi_\nu(z)=\int_U K_\nu(\zeta,z)\varphi(\zeta)dV_\zeta$, 
    where the kernel $K_\nu (\zeta, z)$ is smooth non-negative and is constructed explicitly from the Green's function of $\Delta^C_\omega$ and $U$, and $\int_UK_\nu(\zeta,z)dV_\zeta\to1$ uniformly in $z\in U'$.
\end{proposition}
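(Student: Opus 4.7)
The plan is to reduce the statement to a direct application of Littman's approximation theorem \cite[Theorem A]{Lit63}. First, since $U$ is contained in a single local chart, I identify it with an open subset of $\mathbb{C}^n\cong\mathbb{R}^{2n}$ via holomorphic coordinates $(z_1,\ldots,z_n)$ and express
\[
    \Delta^C_\omega = \sum_{j,k} g^{jk}\,\partial_{z_j}\partial_{\bar z_k},
\]
a real second-order linear differential operator whose principal symbol is the Hermitian matrix $(g^{jk})$, positive definite at every point of $U$. Consequently, on any relatively compact subdomain $D\Subset U$ the operator $\Delta^C_\omega$ is uniformly elliptic with smooth coefficients, which is exactly the regularity demanded by Littman.

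Next, by Proposition \ref{equivalence of currents semi-posi}, the hypothesis that $\varphi$ is $\omega$-subharmonic is equivalent to $\Delta^C_\omega\varphi\geq0$ in the weak sense. Fixing a compact $U'\subset U$ and an intermediate bounded domain $D$ with $U'\Subset D\Subset U$ whose boundary is smooth enough to admit a Green's function for $\Delta^C_\omega$, I apply Littman's Theorem~A to $\varphi|_D$. This yields a sequence of smooth functions $\varphi_\nu$ on $D$, decreasing almost everywhere to $\varphi$, with $\Delta^C_\omega\varphi_\nu\geq0$ in the classical sense, and of the explicit form
\[
    \varphi_\nu(z)=\int_D K_\nu(\zeta,z)\,\varphi(\zeta)\,dV_\zeta,
\]
where the smooth non-negative kernel $K_\nu$ is built from the Green's function of $\Delta^C_\omega$ on $D$ together with a suitable exhaustion, and $\int_D K_\nu(\zeta,z)\,dV_\zeta\to 1$ uniformly in $z\in U'$.

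To conclude, each $\varphi_\nu$ is smooth and satisfies $\Delta^C_\omega\varphi_\nu\geq0$ pointwise, hence as a current; by Proposition \ref{equivalence of currents semi-posi}, this says precisely that $\idd\varphi_\nu\wedge\omega^{n-1}\geq0$, i.e.\ $\varphi_\nu$ is $\omega$-subharmonic. Exhausting $U$ by an increasing sequence of relatively compact subdomains $U_k$ and performing a standard diagonal extraction produces a single decreasing approximating sequence valid on every compact subset of $U$.

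The main technical obstacle is matching the formal set-up of \cite{Lit63} to the present Hermitian situation: Littman works with the formal adjoint of $\Delta^C_\omega$ taken with respect to Lebesgue measure on $\mathbb{R}^{2n}$, whereas the weak formulation inherent in Definition \ref{Def of omega-subharmonic} uses $(\Delta^C_\omega)^\ast$ with respect to $dV_\omega=\gamma\,dV_{\mathbb{C}^n}$. The chain of identities in the proof of Proposition \ref{equivalence of currents semi-posi} performs exactly this conversion through the smooth positive density $\gamma$, so the weak-sign hypothesis transfers without loss. Once this translation is made, the monotone construction of $\varphi_\nu$ and the preservation of the sign of $\Delta^C_\omega\varphi_\nu$ are inherited verbatim from \cite[Theorem A]{Lit63}, and only the choice of exhaustion requires any care, to ensure the kernels $K_\nu$ concentrate correctly on each prescribed compact subset.
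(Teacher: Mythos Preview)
Your proposal is correct and follows the same route as the paper, which does not give a separate proof but simply records the proposition as a direct consequence of Littman's \cite[Theorem~A]{Lit63}; you have supplied precisely the missing details (uniform ellipticity of $\Delta^C_\omega$ in a chart, the translation via Proposition~\ref{equivalence of currents semi-posi} between the weak formulations for $dV_\omega$ and for Lebesgue measure, and the resulting $\omega$-subharmonicity of the regularisations).

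One small remark: the final diagonal extraction over an exhaustion $U_k\nearrow U$ is harmless but slightly at odds with the second assertion of the proposition, which records an explicit kernel $K_\nu(\zeta,z)$ built from the Green's function of $\Delta^C_\omega$ on $U$ itself. Littman's construction on a fixed domain already produces one sequence whose convergence $\int K_\nu(\zeta,z)\,dV_\zeta\to1$ is uniform on every compact subset of that domain, so once a Green's function for $\Delta^C_\omega$ on $U$ is available no further patching is needed and the explicit integral representation survives. If instead one really wants to pass through intermediate domains $D\Subset U$, the diagonalised sequence still decreases to $\varphi$ and remains $\omega$-subharmonic, but it is no longer given by a single kernel formula over $U$; the paper's phrasing implicitly takes the former route.
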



\begin{theorem}\label{omega-SH for any omega then PSH}
    Let $\varphi\in\mathscr{L}^1_{loc}(U)$ be a locally integrable function on $U$. If $\varphi$ is $\omega$-subharmonic for any Hermitian metric $\omega$, then $\idd\varphi\geq0$ in the sense of currents. 
    In particular, there exists a plurisubharmonic function $\widetilde{\varphi}$ on $U$ such that $\varphi=\widetilde{\varphi}$ a.e.
\end{theorem}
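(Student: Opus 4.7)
The statement is local in nature, so I fix an arbitrary point $x_0\in U$ and work in a small coordinate ball $V\Subset U$ around it; the goal is to show $i\partial\overline\partial\varphi\geq 0$ as a current on $V$. The plan is to upgrade the pointwise linear-algebra fact ``a Hermitian matrix $M$ is positive semi-definite iff $\mathrm{tr}(MA)\geq 0$ for every positive-definite Hermitian matrix $A$'' to a distributional statement, by exploiting the freedom to vary $\omega$ over metrics that are constant-coefficient on $V$ in the chart.

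Concretely, for any constant positive-definite Hermitian matrix $A=(a^{jk})$ I would construct (using a partition of unity and any reference Hermitian metric on $X$) a Hermitian metric $\omega_A$ whose expression in a neighborhood of $V$ is the constant-coefficient metric for which $\mathrm{tr}_{\omega_A}=\sum_{j,k}a^{jk}\partial_j\partial_{\overline k}$. By hypothesis $\varphi$ is $\omega_A$-subharmonic, and Proposition \ref{equivalence of currents semi-posi} then produces the distributional inequality
\[
\sum_{j,k}a^{jk}\,\partial_j\partial_{\overline k}\varphi\;\geq\;0\qquad\text{on }V.
\]
Now fix $\xi=(\xi_1,\dots,\xi_n)\in\mathbb{C}^n$. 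The rank-one matrix $(\xi_j\overline\xi_k)$ is positive semi-definite, so $A_\varepsilon:=(\xi_j\overline\xi_k)+\varepsilon\,\mathrm{Id}$ is strictly positive-definite for every $\varepsilon>0$. Applying the above to $A_\varepsilon$, pairing with an arbitrary non-negative test function $\phi\in\mathscr{D}(V)_{\geq 0}$, and letting $\varepsilon\to 0^+$ in the resulting scalar inequality yields
\[
\Bigl\langle\sum_{j,k}\xi_j\overline\xi_k\,\partial_j\partial_{\overline k}\varphi,\,\phi\Bigr\rangle\;\geq\;0.
\]
Since $\xi\in\mathbb{C}^n$ and $\phi\geq 0$ are arbitrary, this is precisely the standard distributional characterization of positivity for the real $(1,1)$-current $i\partial\overline\partial\varphi$ on $V$; varying $x_0$ gives positivity on all of $U$.

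For the plurisubharmonic representative, I would invoke the classical theorem of Lelong: the mollifications $\varphi\ast\rho_\varepsilon$ are smooth and plurisubharmonic (convolution of a positive $(1,1)$-current with a non-negative smooth kernel is a pointwise-positive smooth form), and for a radial decreasing mollifier they decrease in $\varepsilon$ to a plurisubharmonic function $\widetilde\varphi$ with $\widetilde\varphi=\varphi$ almost everywhere. The delicate step in the whole argument is the $\varepsilon\to 0^+$ limit in the rank-one approximation: one cannot take a pointwise limit of distributions but must fix the test function $\phi$ first and only then pass the resulting numerical limit, relying on the fact that the positive cone of distributions is sequentially closed in the weak-$\ast$ topology. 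The remaining ingredients—the partition-of-unity extension of $\omega_A$, the pointwise Hermitian linear algebra, and the Lelong regularization—are routine or classical.
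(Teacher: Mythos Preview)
Your proposal is correct and follows essentially the same strategy as the paper: both arguments test $i\partial\overline\partial\varphi$ against a rank-one direction $\xi$ by choosing constant-coefficient Hermitian metrics whose trace operator approximates $\sum_{j,k}\xi_j\overline{\xi}_k\,\partial_j\partial_{\overline k}$, namely the paper's $\omega_\varepsilon=\tfrac{i}{|\zeta|^2}dw_1\wedge d\overline w_1+\tfrac{i}{\varepsilon}\sum_{j\geq2}dw_j\wedge d\overline w_j$ after a coordinate change aligning $\xi$ with $\partial_{w_1}$, versus your $A_\varepsilon=(\xi_j\overline{\xi}_k)+\varepsilon\,\mathrm{Id}$ in the original chart. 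The only cosmetic differences are that the paper argues by contradiction and performs an explicit coordinate rotation, while you proceed directly; the $\varepsilon\to0$ limiting step and the Lelong regularization for the plurisubharmonic representative are handled identically.
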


\begin{proof}
    The condition $\idd\varphi\geq0$ in the sense of currents is equivalent to that  
    \begin{align*}
        \sum\frac{\partial^2\varphi}{\partial z_j\partial\overline{z}_k}\xi_j\overline{\xi}_k
    \end{align*}
    is positive distribution for any $0\ne\xi\in\mathbb{C}^n$. Assume that $\varphi$ is not plurisubharmonic. Then there exists $\xi=\sum\xi_j\frac{\partial}{\partial z_j}\in T_U\cong\mathbb{C}^n$ such that $\idd\varphi(\xi)=\sum\frac{\partial^2\varphi}{\partial z_j\partial\overline{z}_k}\xi_j\overline{\xi}_k$ is not positive distribution, 
    i.e. there exists a test function $\phi\in\mathscr{D}(U)_{\geq0}$ such that 
    \begin{align*}
        C:=\int_U\sum\frac{\partial^2\varphi}{\partial z_j\partial\overline{z}_k}\xi_j\overline{\xi}_k\cdot\phi\, dV_{\mathbb{C}^n}<0.
    \end{align*}

    Here, there exists local coordinates $(w_1,\cdots,w_n)$ such that $\xi$ and $\partial/\partial w_1$ are parallel, i.e. $\xi=\zeta\frac{\partial}{\partial w_1}$ for a non-zero number $0\ne\zeta\in\mathbb{R}$.
    Then we get 
    \begin{align*}
        \sum\frac{\partial^2\varphi}{\partial z_j\partial\overline{z}_k}\xi_j\overline{\xi}_k=\idd\varphi(\xi)=|\zeta|^2\idd\varphi\Bigl(\frac{\partial}{\partial w_1}\Bigr)=|\zeta|^2\frac{\partial^2\varphi}{\partial w_1\partial\overline{w}_1}.
    \end{align*}

    For any $\varepsilon>0$, let $\omega_\varepsilon$ be a Hermitian metric by
    \begin{align*}
        \omega_\varepsilon:=\frac{i}{|\zeta|^2}dw_1\wedge d\overline{w}_1+\frac{i}{\varepsilon}\sum_{2\leq j\leq n} dw_j\wedge d\overline{w}_j,
    \end{align*}
    and there exists a positive function $\gamma_\varepsilon\in\mathcal{C}^{\infty}(U)_{>0}$ such that $\gamma_\varepsilon\, dV_{\omega_\varepsilon}=dV_{\mathbb{C}^n}$.
    By $\omega$-subharmonicity of $\varphi$, for any $\varepsilon>0$ and any test function $\phi\gamma_\varepsilon\in\mathscr{D}(U)_{\geq0}$ we have 
    \begin{align*}
        0&\leq\frac{1}{(n-1)!}\langle\idd\varphi\wedge\omega^{n-1}_\varepsilon,\phi\gamma_\varepsilon\rangle=\frac{1}{(n-1)!}\int_U\idd\varphi\wedge \phi\gamma_\varepsilon\omega^{n-1}_\varepsilon\\
        &=\int_U\frac{n\idd\varphi\wedge\omega^{n-1}_\varepsilon}{\omega^n_\varepsilon}\phi\gamma_\varepsilon\, dV_{\omega_\varepsilon}=\int_Utr_{\omega_\varepsilon}\idd\varphi\cdot\phi\,dV_{\mathbb{C}^n}\\
        &=\int_U\Bigl(|\zeta|^2\frac{\partial^2\varphi}{\partial w_1\partial\overline{w}_1}+\varepsilon\sum_{2\leq j\leq n}\frac{\partial^2\varphi}{\partial w_j\partial\overline{w}_j}\Bigr)\cdot\phi\,dV_{\mathbb{C}^n}\\
        &=C+\varepsilon\int_U\sum_{2\leq j\leq n}\frac{\partial^2\varphi}{\partial w_j\partial\overline{w}_j}\cdot\phi\,dV_{\mathbb{C}^n}.
    \end{align*}
    
    Therefore, for the following real number 
    \begin{align*}
        I:=\int_U\sum_{2\leq j\leq n}\frac{\partial^2\varphi}{\partial w_j\partial\overline{w}_j}\cdot\phi\,dV_{\mathbb{C}^n},
    \end{align*}
    there is $\varepsilon_0>0$ such that $C+\varepsilon_0 I<0$. This contradicts $\omega_{\varepsilon_0}$-subharmonicity of $\varphi$.
\end{proof}

We define strictly $\omega$-subharmonicity by reference to strictly plurisubharmonicity.

\begin{definition}
    A locally integrable function $\varphi\in\mathscr{L}^1_{loc}(U)$ is called \textit{strictly} $\omega$-\textit{subharmonic} if 
    for any $x\in U$, there exist a neighborhood $V\subset U$ of $x$ and $c_x>0$ such that 
    \begin{align*}
        \idd\varphi\wedge\omega^{n-1}\geq c_x\omega^n
    \end{align*}
    on $V$ in the sense of currents.
\end{definition}

\begin{proposition}\label{smoothing of strictly omega-subharmonic}
    Let $\varphi\in\mathscr{L}^1_{loc}(U)$ be a locally integrable function on $U$. If $\varphi$ is strictly $\omega$-subharmonic then for any $x\in U$
    there exist a neighborhood $V$ of $x$, $c_x>0$ and a sequence of smooth strictly $\omega$-subharmonic functions $(\varphi_\nu)_{\nu\in\mathbb{N}}$ decreasing to $\varphi$ a.e. on $V$ 
    such that for any $\nu\in\mathbb{N}$, $\idd\varphi_\nu\wedge\omega^{n-1}\geq c_x\omega^n$ on $V$ in the sense of currents, which is called a uniformly positive condition. 
\end{proposition}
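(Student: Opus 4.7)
The plan is to reduce the strict case to Proposition~\ref{smoothing of omega-subharmonic} by writing $\varphi$ locally as an honestly $\omega$-subharmonic piece plus a smooth model function that carries the ``strict'' part of the inequality, smoothing the $\omega$-subharmonic piece, and adding the model back. Concretely, I would fix $x\in U$ and choose local holomorphic coordinates $(z_1,\ldots,z_n)$ centred at $x$ such that $\omega(x)=i\sum_{j=1}^{n}dz_j\wedge d\overline{z}_j$. Setting $\rho(z):=|z|^2$, one has $\idd\rho=i\sum_j dz_j\wedge d\overline{z}_j$, so $\idd\rho\wedge\omega^{n-1}=\omega^n$ at $x$. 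By continuity of the metric coefficients, after shrinking to a sufficiently small chart-neighbourhood $V$ of $x$ the two-sided estimate
\[
\tfrac12\,\omega^n\;\leq\;\idd\rho\wedge\omega^{n-1}\;\leq\;2\,\omega^n
\]
holds pointwise on $V$.

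Next, on $V$ I would set $\widetilde{\varphi}:=\varphi-\tfrac{c_x}{2}\rho$, where $c_x>0$ is the constant witnessing the strict $\omega$-subharmonicity of $\varphi$ near $x$. Combining the hypothesis $\idd\varphi\wedge\omega^{n-1}\geq c_x\omega^n$ with the \emph{upper} half of the estimate above gives
\[
\idd\widetilde{\varphi}\wedge\omega^{n-1}=\idd\varphi\wedge\omega^{n-1}-\tfrac{c_x}{2}\idd\rho\wedge\omega^{n-1}\geq c_x\omega^n-\tfrac{c_x}{2}\cdot 2\,\omega^n=0
\]
in the sense of currents, so $\widetilde{\varphi}$ is $\omega$-subharmonic on $V$ in the sense of Definition~\ref{Def of omega-subharmonic}.

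I would then apply Proposition~\ref{smoothing of omega-subharmonic} to $\widetilde{\varphi}$ on $V$ to obtain a decreasing sequence $(\widetilde{\varphi}_\nu)_{\nu\in\mathbb{N}}$ of smooth $\omega$-subharmonic functions converging a.e.\ to $\widetilde{\varphi}$ on every $V'\Subset V$, and set $\varphi_\nu:=\widetilde{\varphi}_\nu+\tfrac{c_x}{2}\rho$. Each $\varphi_\nu$ is smooth, the sequence $\varphi_\nu$ decreases a.e.\ to $\widetilde{\varphi}+\tfrac{c_x}{2}\rho=\varphi$ on $V'$, and the \emph{lower} half of the estimate above yields
\[
\idd\varphi_\nu\wedge\omega^{n-1}=\idd\widetilde{\varphi}_\nu\wedge\omega^{n-1}+\tfrac{c_x}{2}\idd\rho\wedge\omega^{n-1}\geq 0+\tfrac{c_x}{2}\cdot\tfrac12\,\omega^n=\tfrac{c_x}{4}\omega^n
\]
on $V'$; after relabelling $c_x/4$ as the new constant, this is exactly the uniform positivity condition in the statement.

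The only mildly delicate point will be the continuity argument producing the two-sided bound on $\idd\rho\wedge\omega^{n-1}$, but this is routine because the metric coefficients are continuous and equal $\delta_{jk}$ at the base point. I do not foresee a genuine obstacle: the whole plan hinges on the linear decomposition $\varphi=\widetilde{\varphi}+\tfrac{c_x}{2}\rho$, which converts the strict smoothing problem into the already-proved non-strict smoothing of Proposition~\ref{smoothing of omega-subharmonic}.
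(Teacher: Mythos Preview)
Your proposal is correct and follows essentially the same strategy as the paper: subtract a smooth potential carrying the strict positivity, apply Proposition~\ref{smoothing of omega-subharmonic} to the $\omega$-subharmonic remainder, and add the potential back. The only cosmetic difference is that the paper phrases the model function as a K\"ahler potential $\psi$ for an auxiliary K\"ahler metric $\widetilde{\omega}$ with $\delta\omega\leq\widetilde{\omega}\leq\omega$, whereas you work directly with $\rho=|z|^2$ in normalised coordinates and obtain the comparison constants $\tfrac12$ and $2$ by continuity; these are the same idea with a specific choice of $\widetilde{\omega}=\idd|z|^2$.
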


\begin{proof}
    By the assumption, there exist a neighborhood $W$ of $x$ and $C_x>0$ such that $\idd\varphi\wedge\omega^{n-1}\geq C_x\omega^n$ on $W$ in the sense of currents.
    Here, there exist a \kah metric $\widetilde{\omega}$ and $\delta>0$ such that $\omega\geq\widetilde{\omega}\geq \delta\omega$ on $U$, and then there exists a smooth \kah potential $\psi$ of $\widetilde{\omega}$, i.e. $\idd\psi=\widetilde{\omega}$ on $W$.
    From the inequality
    \begin{align*}
        \idd\varphi\wedge\omega^{n-1}\geq C_x\omega^n=C_x\omega\wedge\omega^{n-1}\geq C_x\widetilde{\omega}\wedge\omega^{n-1}=C_x\idd\psi\wedge\omega^{n-1},
    \end{align*}
    i.e. $\idd(\varphi-C_x\psi)\wedge\omega^{n-1}\geq0$, the function $\varphi-C_x\psi$ is weakly $\omega$-subharmonic on $W$.

    By Proposition \ref{smoothing of omega-subharmonic}, there exists a sequence of smooth $\omega$-subharmonic functions $(\varPhi_\nu)_{\nu\in\mathbb{N}}$ decreasing to $\varphi-C_x\psi$ a.e. on a open subset $V\subset W$.
    Define the sequence of smooth functions $(\varphi_\nu)_{\nu\in\mathbb{N}}$ on $V$ by $\varphi_\nu:=\varPhi_\nu+C_x\psi$, then $\varphi_\nu$ decreasing to $\varphi$ a.e. and satisfying a uniformly positive condition;
    \begin{align*}
        \idd\varphi_\nu\wedge\omega^{n-1}\geq C_x\idd\psi\wedge\omega^{n-1}=C_x\widetilde{\omega}\wedge\omega^{n-1}\geq C_x\delta\omega^n
    \end{align*}
    on $V$ for any $\nu\in\mathbb{N}$, by $\omega$-subharmonicity of $\varPhi_\nu$.
\end{proof}

This \textit{uniformly positive condition} is necessary for strictly positivity to be inherited from an approximation, and even if an approximation exists without this condition, it is not necessarily inherited.
In fact, for any locally holomorphic function $f\in \mathcal{O}_U$, there exists a sequence of smooth strictly plurisubharmonic functions $(\log(|f|+\varepsilon))_{\varepsilon>0}$ decreasing to $\log|f|$, but its limit $\log|f|$ is not strictly plurisubharmonic.

\begin{proposition}\label{exists smoothing then (strictly) omega-SH as currents}
    Let $\omega$ be a Hermitian metric on $X$ and $c$ be a non-negative number.
    For any locally integrable function $\varphi\in\mathscr{L}^1_{loc}(U)$, if there exists a sequence of smooth functions $(\varphi_\nu)_{\nu\in\mathbb{N}}$ decreasing to $\varphi$ a.e. on $U$ such that $\idd\varphi_\nu\wedge\omega^{n-1}\geq c\omega^n$ for any $\nu\in\mathbb{N}$ on $U$,
    then $\idd\varphi\wedge\omega^{n-1}\geq c\omega^n$ on $U$ in the sense of currents.
\end{proposition}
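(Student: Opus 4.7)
The plan is to reduce the claim to a limiting argument via integration by parts against a fixed test function. Let $\phi\in\mathscr{D}(U)_{\geq 0}$ be an arbitrary nonnegative test function, with compact support $K\subset U$. The goal is to show
\[
\langle \idd\varphi\wedge\omega^{n-1},\phi\rangle \;\geq\; c\,\langle\omega^n,\phi\rangle.
\]
By the distributional definition of $\idd\varphi$, the left-hand side equals $\int_U \varphi\cdot\idd(\phi\omega^{n-1})$, so the entire problem reduces to exchanging a limit in $\nu$ with integration against the smooth compactly supported form $\idd(\phi\omega^{n-1})$.

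Since each $\varphi_\nu$ is smooth, Stokes' theorem (applied on the compact support of $\phi$) gives
\[
\int_U \varphi_\nu\cdot\idd(\phi\omega^{n-1}) \;=\; \int_U \phi\cdot\idd\varphi_\nu\wedge\omega^{n-1} \;\geq\; c\int_U \phi\,\omega^n,
\]
using the hypothesis $\idd\varphi_\nu\wedge\omega^{n-1}\geq c\omega^n$ together with $\phi\geq 0$. So the inequality we want is already established at the level of each $\varphi_\nu$; everything comes down to taking $\nu\to\infty$ on the left.

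For the passage to the limit, note that $(\varphi_\nu)$ is monotonically decreasing with $\varphi\leq \varphi_\nu\leq \varphi_1$ a.e.\ on $K$, and both $\varphi$ and $\varphi_1$ are locally integrable; hence $|\varphi_\nu|\leq |\varphi|+|\varphi_1|\in L^1(K)$. The smooth top-degree form $\idd(\phi\omega^{n-1})$ has bounded coefficient on $K$, so the integrands $\varphi_\nu\cdot\idd(\phi\omega^{n-1})/dV_{\mathbb{C}^n}$ are dominated by an integrable function. Lebesgue's dominated convergence theorem then yields
\[
\lim_{\nu\to\infty}\int_U \varphi_\nu\cdot\idd(\phi\omega^{n-1}) \;=\; \int_U \varphi\cdot\idd(\phi\omega^{n-1}) \;=\; \langle \idd\varphi\wedge\omega^{n-1},\phi\rangle.
\]
Combining with the pointwise-in-$\nu$ inequality from the previous paragraph gives the desired distributional bound.

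The argument has no real obstacle: monotone convergence is unavailable because $\idd(\phi\omega^{n-1})$ is a signed $(n,n)$-form, but the two-sided pointwise bound $\varphi\leq\varphi_\nu\leq\varphi_1$ together with the local integrability of $\varphi$ and $\varphi_1$ makes dominated convergence the natural and clean replacement. The only subtlety worth checking is that Stokes/integration by parts is applied correctly (which is immediate since $\phi\omega^{n-1}$ has compact support in $U$ and $\varphi_\nu$ is smooth); no regularity assumption on $\omega$ beyond smoothness is needed.
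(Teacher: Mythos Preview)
Your proof is correct and follows essentially the same route as the paper: integrate by parts against $\idd(\phi\omega^{n-1})$ for each smooth $\varphi_\nu$, then pass to the limit. The only difference is that the paper invokes Fatou's lemma for the limiting step, whereas your use of dominated convergence (via the two-sided bound $\varphi\leq\varphi_\nu\leq\varphi_1$) is arguably cleaner since the integrand is signed.
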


\begin{proof}
    By the inequality $\Delta^C_\omega\varphi_\nu\geq c$, i.e. $\idd\varphi_\nu\wedge\omega^{n-1}\geq c\omega^n$, for any $\phi\in\mathscr{D}(U)$ we get 
    \begin{align*}
        \langle c\omega^n,\phi\rangle=c\int\phi\cdot\omega^n&\leq\langle\idd\varphi_\nu\wedge\omega^{n-1},\phi\rangle
        =\int id\overline{\partial}\varphi_\nu\wedge(\phi\omega^{n-1})
        =\int\varphi_\nu\idd(\phi\omega^{n-1}).
    \end{align*}
    Hence, we have that 
    \begin{align*}
        \langle c\omega^n,\phi\rangle&\leq\lim_{\nu\to+\infty}\int\varphi_\nu\idd(\phi\omega^{n-1})\leq\int\limsup_{\nu\to+\infty}\varphi_\nu\idd(\phi\omega^{n-1})\\
        &=\int\lim_{\nu\to+\infty}\varphi_\nu\idd(\phi\omega^{n-1})=\int\varphi\idd(\phi\omega^{n-1})=\langle\varphi,\idd(\phi\omega^{n-1})\rangle\\
        &=\langle\idd\varphi,\phi\omega^{n-1}\rangle=\langle\idd\varphi\wedge\omega^{n-1},\phi\rangle,
    \end{align*}
    i.e. $\idd\varphi\wedge\omega^{n-1}\geq c\omega^n$ on $U$ in the sense of currents, by Fatou's lemma.
\end{proof}

\begin{proposition}\label{psh then omega-SH}
    Let $\omega$ be a Hermitian metric on $X$.
    If a function $\varphi$ is $($resp. strictly$)$ plurisubharmonic 
    then $\varphi$ is $($resp. strictly$)$ $\omega$-subharmonic. 
\end{proposition}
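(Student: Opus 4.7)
The plan is to handle the smooth case by a pointwise linear-algebra calculation and then reduce the general case to it by convolution, invoking Proposition \ref{exists smoothing then (strictly) omega-SH as currents} to pass back to the level of currents.

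First I would treat the smooth case. If $\varphi$ is smooth plurisubharmonic, then $\idd\varphi$ is pointwise a positive semi-definite Hermitian $(1,1)$-form, so the identity $\idd\varphi\wedge\omega^{n-1}=\frac{1}{n}(tr_\omega\idd\varphi)\,\omega^n$ together with the fact that $tr_\omega$ of a positive semi-definite $(1,1)$-form is non-negative (trace of the product of two positive Hermitian matrices) yields $\idd\varphi\wedge\omega^{n-1}\geq 0$ pointwise. If in addition $\varphi$ is strictly plurisubharmonic, then $\idd\varphi$ is positive definite at every point, and a compactness argument on a relatively compact neighborhood $V\Subset U$ of any $x\in U$ produces $c_x>0$ with $\idd\varphi\geq c_x\omega$ on $V$, which wedged with $\omega^{n-1}$ gives $\idd\varphi\wedge\omega^{n-1}\geq c_x\omega^n$.

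For the general case, working locally in a coordinate chart, I would regularize $\varphi$ by convolution with a standard mollifier $\rho_\varepsilon$, obtaining smooth plurisubharmonic functions $\varphi_\varepsilon:=\varphi\ast\rho_\varepsilon$ on a slightly shrunk domain, with $\varphi_\varepsilon\searrow\varphi$ as $\varepsilon\searrow 0$. The smooth case gives $\idd\varphi_\varepsilon\wedge\omega^{n-1}\geq 0$ pointwise, so Proposition \ref{exists smoothing then (strictly) omega-SH as currents} with $c=0$, applied along any sequence $\varepsilon_\nu\searrow 0$, promotes this to $\idd\varphi\wedge\omega^{n-1}\geq 0$ as currents, giving $\omega$-subharmonicity locally and hence on all of $U$ by covering.

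The hard part will be the strictly plurisubharmonic case in the non-smooth setting: the regularization must preserve a uniform pointwise lower bound on the Hessian, independently of $\varepsilon$. I would handle this by using that strict plurisubharmonicity at $x$ means, in local coordinates near $x$, that $\varphi-c|z|^2$ is plurisubharmonic for some $c>0$. Since convolution commutes with $\idd$, the current inequality $\idd\varphi\geq c\,\idd|z|^2$ passes to the smooth pointwise inequality $\idd\varphi_\varepsilon\geq c\,\idd|z|^2$ on a uniform neighborhood. Comparing the flat form $\idd|z|^2$ with $\omega$ on a relatively compact neighborhood produces $\delta>0$ with $\idd|z|^2\geq\delta\omega$; wedging with $\omega^{n-1}$ then yields the uniform bound $\idd\varphi_\varepsilon\wedge\omega^{n-1}\geq c\delta\,\omega^n$ independent of $\varepsilon$. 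A final application of Proposition \ref{exists smoothing then (strictly) omega-SH as currents}, now with constant $c\delta$, gives strict $\omega$-subharmonicity of $\varphi$.
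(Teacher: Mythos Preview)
Your proposal is correct and follows essentially the same approach as the paper: mollify $\varphi$, use the characterization of strict plurisubharmonicity via $\varphi - c|z|^2$ being plurisubharmonic to get a uniform lower bound $\idd\varphi_\varepsilon\geq c\,\idd|z|^2$, compare $\idd|z|^2$ to $\omega$ locally to obtain $\idd\varphi_\varepsilon\wedge\omega^{n-1}\geq c\delta\,\omega^n$, and conclude via Proposition~\ref{exists smoothing then (strictly) omega-SH as currents}. The paper is slightly terser (it does not separate out the smooth case and treats the semi-positive case simply as $c_x=0$), but the argument is the same.
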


\begin{proof}
    First, we assume that $\varphi$ is strictly plurisubharmonic, i.e. for any fiexd $x\in X$, there exist a neighborhood $V$ of $x$ and $C_x>0$ such that $\varphi-C_x|z|^2$ is plurisubharmonic on $V$.
    For an approximate identity $(\rho_\varepsilon)_{\varepsilon>0}$, the smooth function $\varphi_\varepsilon:=\varphi\ast\rho_\varepsilon$ on $V_\varepsilon:=\{z\in V\mid d_{\mathbb{C}^n}(z,\partial V)<\varepsilon\}$ is also strictly plurisubharmonic, i.e. $\idd\varphi_\varepsilon\geq C_x\idd(|z|^2\ast\rho_\varepsilon)=C_x\idd|z|^2$.
    Here, there exists $\delta>0$ such that $\idd|z|^2\geq\delta\omega$ on $V$.
    Then we have that $\idd\varphi_\varepsilon\wedge\omega^{n-1}\geq C_x\idd|z|^2\wedge\omega^{n-1}\geq\delta C_x\omega^n=c_x\omega^n$ on $V$, where $c_x=\delta C_x>0$. 
    If $\varphi$ is plurisubharmonic, it can be taken as $c_x=0$. Therefore, the proof concludes from Proposition \ref{exists smoothing then (strictly) omega-SH as currents}.
\end{proof}

\begin{proposition}\label{pull back omega-SH for hol immersion map}
    Let $f:X\to Y$ be a holomorphic immersion mapping between complex manifolds, $U$ be an open subset of $Y$ and $\omega$ be a Hermitian metric on $Y$.
    If a locally integrable function $\varphi\in\mathscr{L}^1_{loc}(U)$ is $($resp. strictly$)$ $\omega$-subharmonic then the pull back function $f^*\varphi$ is $($resp. strictly$)$ $f^*\omega$-subharmonic on $f^{-1}(U)$.
\end{proposition}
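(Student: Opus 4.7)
The plan is to reduce to the smooth case via approximation, transfer the positivity through the pullback for smooth functions, and then return to currents by invoking the earlier approximation tools.

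First, fix any point $p\in f^{-1}(U)$ and choose a coordinate chart $W\subset U$ around $f(p)$ small enough to be covered by the hypotheses of Proposition \ref{smoothing of omega-subharmonic} (resp.\ Proposition \ref{smoothing of strictly omega-subharmonic} in the strict case). This yields a smooth decreasing sequence $(\varphi_\nu)_{\nu\in\mathbb{N}}$ on $W$ with $\varphi_\nu\downarrow\varphi$ almost everywhere, satisfying either $\idd\varphi_\nu\wedge\omega^{n-1}\geq 0$ (semi case) or the uniformly positive lower bound $\idd\varphi_\nu\wedge\omega^{n-1}\geq c_{f(p)}\,\omega^n$ on $W$ for some constant $c_{f(p)}>0$ (strict case).

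Second, since each $\varphi_\nu$ is smooth and $f$ is holomorphic, pullback commutes with $\partial$, $\overline{\partial}$, and the exterior product, so
\begin{equation*}
\idd(f^*\varphi_\nu)\wedge (f^*\omega)^{n-1}=f^*\bigl(\idd\varphi_\nu\wedge\omega^{n-1}\bigr)\geq c\,f^*(\omega^n)=c\,(f^*\omega)^n
\end{equation*}
on $f^{-1}(W)$, where $c=0$ or $c=c_{f(p)}$ as appropriate. Here the immersion hypothesis is used twice: it guarantees that $f^*\omega$ is itself a Hermitian metric on $X$, and it ensures that the pullback of the positive top-degree form $\omega^n$ remains positive as $(f^*\omega)^n$.

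Third, the smooth sequence $f^*\varphi_\nu$ decreases almost everywhere to $f^*\varphi$ on $f^{-1}(W)$, since $f$ being an immersion is locally a biholomorphism onto its image and thus sends null sets to null sets under preimage. Applying Proposition \ref{exists smoothing then (strictly) omega-SH as currents} on $f^{-1}(W)$ with the Hermitian metric $f^*\omega$ then yields $\idd(f^*\varphi)\wedge(f^*\omega)^{n-1}\geq c\,(f^*\omega)^n$ on $f^{-1}(W)$ in the sense of currents. Since $p$ was arbitrary, this establishes that $f^*\varphi$ is (strictly) $f^*\omega$-subharmonic on all of $f^{-1}(U)$.

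The main obstacle is the strict case of the second step: the lower bound must be uniform in $\nu$ on a fixed neighborhood so that it survives the passage to the limit via Proposition \ref{exists smoothing then (strictly) omega-SH as currents}. This is precisely what the uniformly positive condition provided by Proposition \ref{smoothing of strictly omega-subharmonic} supplies, and it is the reason that proposition was framed with a uniform constant $c_{x}$ rather than merely a pointwise one.
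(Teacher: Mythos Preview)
Your proof is correct and follows essentially the same approach as the paper's own proof: localize near a point, invoke Proposition~\ref{smoothing of omega-subharmonic} (resp.\ Proposition~\ref{smoothing of strictly omega-subharmonic}) to obtain a smooth decreasing approximation with the (uniformly positive) lower bound, pull back the smooth inequality using that $f$ is holomorphic, and then apply Proposition~\ref{exists smoothing then (strictly) omega-SH as currents} to pass to the limit. Your write-up is slightly more explicit than the paper's in justifying why $f^*\omega$ is a genuine Hermitian metric and why the a.e.\ convergence survives pullback, but the structure and key tools are identical.
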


\begin{proof}
    We show strictly $f^*\omega$-subharmonicity. Fiexd any point $x\in f^{-1}(U)$. Due to the local property, it is sufficient to show this in some neighborhood of $x$.
    Let $y=f(x)$. By Proposition \ref{smoothing of strictly omega-subharmonic}, there exist a neighborhood $W$ of $y$, $c>0$ and a sequence of smooth strictly $\omega$-subharmonic functions $(\varphi_\nu)_{\nu\in\mathbb{N}}$ decreasing to $\varphi$ a.e.\! on $W$ 
    satisfying 
    $\idd\varphi_\nu\wedge\omega^{n-1}\geq c\omega^n$ for any $\nu\in\mathbb{N}$.
    There exists an open neighborhood $V$ of $x$ satisfying $f(V)\subset W$ and $f|_V$ is an embedding. For any $\nu\in\mathbb{N}$, we have that 
    \begin{align*}
        \idd f^*\varphi_\nu\wedge f^*\omega^{n-1}=f^*(\idd\varphi_\nu\wedge\omega^{n-1})\geq cf^*\omega^n,
    \end{align*}
    i.e. $\Delta^C_{f^*\omega}f^*\varphi_\nu\geq c/n$ on $V$. Hance, $f^*\varphi$ is strictly $f^*\omega$-subharmonic by Proposition \ref{exists smoothing then (strictly) omega-SH as currents}. 
    And $f^*\omega$ subharmonicity is also shown in the same way.
\end{proof}

\begin{proposition}\label{convex increasing of omega-SH is also omega-SH}
    For any smooth convex increasing function $\chi$, if a locally integrable function $\varphi$ on $U$, i.e. $\varphi\in\mathscr{L}^1_{loc}(U)$, is $\omega$-subharmonic then $\chi\circ\varphi$ is also $\omega$-subharmonic.

\end{proposition}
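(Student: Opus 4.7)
The plan is to reduce to the smooth case via Proposition \ref{smoothing of omega-subharmonic} and then pass to the limit using Proposition \ref{exists smoothing then (strictly) omega-SH as currents}.

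In the smooth case the claim is a pointwise identity: if $\varphi$ is smooth, the chain rule gives
\begin{align*}
\idd(\chi\circ\varphi)=\chi''(\varphi)\,i\partial\varphi\wedge\overline{\partial}\varphi+\chi'(\varphi)\,\idd\varphi.
\end{align*}
Wedging with $\omega^{n-1}$, the first term is non-negative because $\chi''\geq0$ by convexity and $i\partial\varphi\wedge\overline{\partial}\varphi$ is a semi-positive $(1,1)$-form, and the second term is non-negative because $\chi'\geq0$ by monotonicity and $\idd\varphi\wedge\omega^{n-1}\geq0$ by hypothesis. Hence $\chi\circ\varphi$ is smooth $\omega$-subharmonic whenever $\varphi$ is.

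For general $\varphi\in\mathscr{L}^1_{loc}(U)$, fix a relatively compact open subset $U'\subset U$ and apply Proposition \ref{smoothing of omega-subharmonic} to obtain smooth $\omega$-subharmonic functions $\varphi_\nu$ decreasing to $\varphi$ a.e.\ on $U'$. By the smooth computation each $\chi\circ\varphi_\nu$ is smooth and $\omega$-subharmonic, and since $\chi$ is continuous and increasing the sequence $\chi\circ\varphi_\nu$ decreases a.e.\ to $\chi\circ\varphi$. Before invoking Proposition \ref{exists smoothing then (strictly) omega-SH as currents} with $c=0$ one needs $\chi\circ\varphi\in\mathscr{L}^1_{loc}(U')$: the upper bound $\chi\circ\varphi\leq\chi\circ\varphi_1$ is locally bounded since $\varphi_1$ is smooth, and any tangent line to the graph of $\chi$ gives an affine lower bound $\chi(t)\geq at+b$ with $a\geq0$, so $\chi\circ\varphi\geq a\varphi+b$ lies in $\mathscr{L}^1_{loc}$. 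Proposition \ref{exists smoothing then (strictly) omega-SH as currents} then yields $\idd(\chi\circ\varphi)\wedge\omega^{n-1}\geq0$ in the sense of currents on $U'$, and since $\omega$-subharmonicity is a local condition this gives the claim on all of $U$.

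The only point that calls for any attention is verifying the local integrability of $\chi\circ\varphi$, and that follows at once from convexity and the existence of the smooth majorant $\varphi_1$. Everything else is a direct assembly of the smooth identity and the two approximation results already at hand, so I do not expect a substantive obstacle.
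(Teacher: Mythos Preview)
Your proof is correct and follows essentially the same approach as the paper: approximate via Proposition \ref{smoothing of omega-subharmonic}, compute the chain rule in the smooth case, and pass to the limit with Proposition \ref{exists smoothing then (strictly) omega-SH as currents}. You are in fact slightly more careful than the paper, which omits the verification that $\chi\circ\varphi\in\mathscr{L}^1_{loc}$ needed to invoke Proposition \ref{exists smoothing then (strictly) omega-SH as currents}; your affine-tangent lower bound and smooth-majorant upper bound fill that gap cleanly.
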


\begin{proof}
    By Proposition \ref{smoothing of omega-subharmonic}, there exists a sequence of smooth $\omega$-subharmonic functions $(\varphi_\nu)_{\nu\in\mathbb{N}}$ decreasing to $\varphi$ a.e. on every compact subset $U'\subset U$.
    For any $\nu\in\mathbb{N}$, we get 
    \begin{align*}
        \Delta^C_\omega\chi\circ\varphi_\nu(z)=\chi'\circ\varphi_\nu(z)\Delta^C_\omega\varphi_\nu(z)+\chi''\circ\varphi_\nu(z)|\partial\varphi_\nu|^2_\omega(z)\geq0,
    \end{align*}
    i.e. $\chi\circ\varphi_\nu$ is also $\omega$-subharmonic. Here $\chi\circ\varphi_\nu$ decreasing to $\chi\circ\varphi$ a.e. 
    Hance, $\chi\circ\varphi$ is also $\omega$-subharmonic by Proposition \ref{exists smoothing then (strictly) omega-SH as currents}.
\end{proof}

However, strictly $\omega$-subharmonicity is generally not inherited unless the function is smooth.
This is a similar phenomenon to strictly plurisubharmonicity.

\section{Singular $\omega$-trace positivity of holomorphic line bundles}

First, we introduce singular Hermitian metrics on 
line bundles and its positivity.

\begin{definition}$($\textnormal{cf.\,\cite{Dem93},\,\cite[Chapter\,3]{Dem10}}$)$ 
    A $\it{singular}$ $\it{Hermitian}$ $\it{metric}$ $h$ on a line bundle $L$ is a metric which is given in any trivialization $\tau:L|_U\xrightarrow{\simeq} U\times\mathbb{C}$ by 
    \begin{align*}
        ||\xi||^2_h=|\tau(\xi)|^2e^{-\varphi}, \qquad x\in U,\,\,\xi\in L_x
    \end{align*}
    where $\varphi\in\mathcal{L}^1_{loc}(U)$ is an arbitrary function, called the weight of the metric with respect to the trivialization $\tau$.
\end{definition}

\begin{definition}\label{def of psef & big}$($\textnormal{cf.\,\cite[Definition\,3.2]{Wat23}}$)$
    Let $L$ be a holomorphic line bundle on a complex manifold $X$ equipped with a singular Hermitian metric $h$. 
    \begin{itemize}
        \item $h$ is $\it{singular}$ $\it{semi}$-$\it{positive}$ if $i\Theta_{L,h}\geq0$ in the sense of currents,
        i.e. the weight of $h$ with respect to any trivialization coincides with some plurisubharmonic function almost everywhere.
        \item $h$ is $\it{singular}$ $\it{positive}$ if the weight of $h$ with respect to any trivialization coincides with some strictly plurisubharmonic function almost everywhere.
    \end{itemize}
\end{definition}

Clearly, singular semi-positivity is coincides with pseudo-effective on compact complex manifolds. 
Singular positivity 
also coincides with big on compact \kah manifolds by Demailly's definition and characterization (see \cite{Dem93},\,\cite[Chapter\,6]{Dem10}). 

Using $\omega$-subharmonicity, we define $\omega$-trace positivity for singular Hermitian metrics.

\begin{definition}\label{def of tr-posi for line bdl case}
    Let $L$ be a holomorphic line bundle on a complex manifold $X$ and $\omega$ be a Hermitian metric on $X$. 
    We say that a singular Hermitian metric $h$ on $L$ is 
    \begin{itemize}
        \item $\omega$-\textit{trace} \textit{semi}-\textit{positive} if the weight of $h$ with respect to any trivialization coincide with some $\omega$-subharmonic function almost everywhere,
        \item $\omega$-\textit{trace} \textit{quasi}-\textit{positive} if $h$ is $\omega$-trace semi-positive 
        and there exist at least one point $x_0\in X$ and an open neighborhood $U$ of $x_0$ such that the weight of $h$ with respect to any trivialization coincide with some strictly $\omega$-subharmonic function almost everywhere,
        \item $\omega$-\textit{trace} \textit{semi}-\textit{negative} (resp. 
        \textit{quasi}-\textit{negative}) if the dual metric $h^*$ on $L^*$ is $\omega$-trace semi-positive (resp. 
        quasi-positive).
    \end{itemize}
\end{definition}

This $\omega$-trace quasi-positivity is a weaker condition than already known in \cite[Theorem\,1.5]{Yan19}, such that the dual line bundle is not pseudo-effective.

\begin{theorem}\label{characterization of tr-posi, not psef}
    Let $L$ be a holomorphic line bundle on a compact complex manifold $X$.
    If there exist a Hermitian metric $\omega$ and a singular Hermitian metric $h$ on $L$ such that $h$ is $\omega$-trace quasi-positive, 
    then the dual line bundle $L^*$ is not pseudo-effective.
\end{theorem}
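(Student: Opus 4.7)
The plan is to argue by contradiction. Suppose $L^*$ carries a singular semi-positive metric $h^*$. Since any two singular Hermitian metrics on $L$ differ by a globally defined function, there is a single $F\in\mathscr{L}^1_{loc}(X)$ that in every local trivialization $\tau$ is given by $F=\varphi+\psi^*$, where $\varphi$ is the weight of $h$ in $\tau$ and $\psi^*$ is the weight of $h^*$ in the dual trivialization $\tau^*$. The identity
\[
\idd F\wedge\omega^{n-1}=\idd\varphi\wedge\omega^{n-1}+\idd\psi^*\wedge\omega^{n-1}
\]
holds as currents on $X$, and both summands are non-negative: the first by $\omega$-trace semi-positivity of $h$, the second because $\psi^*$ is plurisubharmonic and hence $\omega$-subharmonic by Proposition~\ref{psh then omega-SH}. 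In a neighborhood $V$ of the distinguished point $x_0$ from Definition~\ref{def of tr-posi for line bdl case}, strict $\omega$-subharmonicity of $\varphi$ moreover yields $\idd\varphi\wedge\omega^{n-1}\geq c\omega^n$ for some $c>0$, and hence $\idd F\wedge\omega^{n-1}\geq c\omega^n$ on $V$.

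Next I would pass to a Gauduchon representative $\hat\omega=e^{2f}\omega$ in the conformal class of $\omega$; such a metric exists on any compact Hermitian manifold and satisfies $\idd\hat\omega^{n-1}=0$. Because conformal rescaling multiplies $\omega^{n-1}$ and $\omega^n$ by the positive smooth factors $e^{2(n-1)f}$ and $e^{2nf}$ respectively, the two inequalities above carry over to $\hat\omega$ after absorbing the bounded factor $e^{-2f}$ on the compact manifold $X$, yielding $\idd F\wedge\hat\omega^{n-1}\geq 0$ on $X$ and $\idd F\wedge\hat\omega^{n-1}\geq c'\hat\omega^n$ on $V$ for some $c'>0$.

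Finally I would pair the positive $(n,n)$-current $\idd F\wedge\hat\omega^{n-1}$ with the constant test function $1$, which is legitimate because $X$ is compact. On one hand, distributional integration by parts together with the Gauduchon condition gives
\[
\bigl\langle\idd F\wedge\hat\omega^{n-1},1\bigr\rangle=\bigl\langle F,\idd\hat\omega^{n-1}\bigr\rangle=0,
\]
while positivity forces $\bigl\langle\idd F\wedge\hat\omega^{n-1},1\bigr\rangle\geq\int_V c'\hat\omega^n>0$, a contradiction that completes the argument. The main obstacle I anticipate is not the Gauduchon step itself but the bookkeeping needed to ensure that both the distributional integration by parts and the pointwise measure-positivity are legitimate for the merely $\mathscr{L}^1_{loc}$ function $F$; if needed, one can circumvent this by locally smoothing $\varphi$ via Proposition~\ref{smoothing of strictly omega-subharmonic} and $\psi^*$ by standard convolutions, running the argument in the smooth regime, and then passing to the limit using Fatou's lemma as in Proposition~\ref{exists smoothing then (strictly) omega-SH as currents}.
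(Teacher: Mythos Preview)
Your argument is correct and in spirit very close to the paper's, but packaged more cleanly. The paper proceeds directly: it takes a Gauduchon representative $\omega_G=e^f\omega$, covers $X$ by finitely many charts, uses Littman smoothing on each local weight $\varphi^{x_j}$ together with a partition of unity to show
\[
\int_X c_1^{BC}(L)\wedge\omega_G^{n-1}=\int_X i\Theta_{L,h}\wedge\omega_G^{n-1}>0,
\]
and then invokes Yang's characterization (Lemma~\ref{characterization of not psef}) to conclude that $L^*$ is not pseudo-effective.

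Your contradiction device with the global function $F=\varphi+\psi^*$ is a genuine simplification: it folds the appeal to Lemma~\ref{characterization of not psef} into the argument itself. Indeed, $i\partial\overline{\partial}F=i\Theta_{L,h}+i\Theta_{L^*,h^*}$, so pairing with $\hat\omega^{n-1}$ and using $\partial\overline{\partial}\hat\omega^{n-1}=0$ amounts precisely to the identity $\int c_1^{BC}(L)\wedge\hat\omega^{n-1}+\int c_1^{BC}(L^*)\wedge\hat\omega^{n-1}=0$ that underlies the relevant direction of that lemma. Thus you avoid both the partition-of-unity bookkeeping and the external citation, at the cost of starting from a hypothetical psh weight on $L^*$. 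The integration-by-parts step is already justified at the level of currents acting on smooth test forms on compact $X$, so the smoothing fallback you mention is not strictly needed, though it does no harm.
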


\begin{proof}
    For any $x\in X$, there exists a neighborhood $U_x$ of $x$ such that the weight function $\varphi^x$ of $h$ with respect to a trivialization is $\omega$-subharmonic on $U_x$.
    And there exist $x_0\in X$ and a neighborhood $U_0$ of $x_0$ such that the weight function $\varphi^{x_0}$ of $h$ as above is strictly $\omega$-subharmonic on $U_0$
    
    By Proposition \ref{smoothing of omega-subharmonic}, there exists a sequence of smooth $\omega$-subharmonic functions $(\varphi_\nu^x)_{\nu\in\mathbb{N}}$ decreasing to $\varphi^x$ a.e. on a subset $V_x$. Let $h_\nu^x:=e^{-\varphi_\nu^x}$ be a smooth $\omega$-trace semi-positive Hermitian metric on $L$ over $V_x$.
    Let $\omega_G=e^f\omega$ be a Gauduchon metric in the conformal class of $\omega$ \cite{Gau77}, then we obtain
    \begin{align*}
        tr_{\omega_G}i\Theta_{L,h^x_\nu}=e^{-f}tr_\omega i\Theta_{E,h^x_\nu}=e^{-f}tr_\omega \idd\varphi_\nu^x\geq0
    \end{align*}
    on $V_x$. Similar to above, by Proposition \ref{smoothing of strictly omega-subharmonic} there exists a sequence of smooth strictly $\omega$-subharmonic functions $(\varphi_\nu^{x_0})_{\nu\in\mathbb{N}}$ decreasing to $\varphi^{x_0}$ a.e. on a subset $V_{x_0}$ which satisfies a uniformly positive condition, 
    i.e. there exists $c>0$ such that for any $\nu\in\mathbb{N}$, $\Delta_\omega^C\varphi_\nu\geq c$ on $V_{x_0}$. 
    Then $h_\nu^{x_0}:=e^{-\varphi_\nu^x}$ is a smooth $\omega$-trace positive Hermitian metric on $L$ over $V_{x_0}$. 
    We also obtain on $V_{x_0}$ for any $\nu\in\mathbb{N}$,
    \begin{align*}
        tr_{\omega_G}i\Theta_{L,h^{x_0}_\nu}=e^{-f}tr_\omega i\Theta_{E,h^{x_0}_\nu}=e^{-f}tr_\omega \idd\varphi_\nu^{x_0}\geq ce^{-f}.
    \end{align*}

    By compact-ness of $X$, in addition to point $x_0$ there exists a finite points $x_1,\cdots,x_N$ such that the sets $\{V_{x_j}\}_{j\in I_N}$ is the open cover of $X$ where $I_N:=\{0,1,\cdots,N\}$, i.e. $\bigcup_{j\in I_N}V_{x_j}=X$.
    Let $\{\rho_j\}_{j\in I_N}$ be a partitions of unity subordinate to the open cover $\{V_{x_j}\}_{j\in I_N}$, i.e. $\rho_j\in\mathscr{D}(V_{x_j})_{\geq0}$ satisfies $\mathrm{supp}\,\rho_j\subset\subset V_{x_j}$,
    then we have that 
    \begin{align*}
        \int_X i\Theta_{L,h}\wedge\omega^{n-1}_G=\int_Xi\Theta_{L,h}\wedge\sum_{j\in I_N}\rho_j\omega^{n-1}_G=\sum_{j\in I_N}\int_{V_{x_j}}i\Theta_{L,h}\wedge\rho_j\omega^{n-1}_G.
    \end{align*}

    For any $j\in I_N\setminus\{0\}$, we get 
    \begin{align*}
        \int_{V_{x_j}}i\Theta_{L,h}\wedge\rho_j\omega^{n-1}_G&=\int_{V_{x_j}}\idd\varphi^{x_j}\wedge\rho_j\omega^{n-1}_G=\int_{V_{x_j}}\varphi^{x_j}\cdot\idd(\rho_j\omega^{n-1}_G)\\
        &=\int_{V_{x_j}}\lim_{\nu\to+\infty}\varphi^{x_j}_\nu\cdot\idd(\rho_j\omega^{n-1}_G)\\
        &\geq\limsup_{\nu\to+\infty}\int_{V_{x_j}}\varphi^{x_j}_\nu\cdot\idd(\rho_j\omega^{n-1}_G)\\
        &\geq0.
    \end{align*}
    In fact, each integrable is semi-positive by the following inequality,
    \begin{align*}
        \int_{V_{x_j}}\varphi^{x_j}_\nu\wedge\idd(\rho_j\omega^{n-1}_G)&=\int_{V_{x_j}}\idd\varphi^{x_j}_\nu\wedge\rho_j\omega^{n-1}_G=(n-1)!\int_{V_{x_j}}\rho_j\frac{n\idd\varphi^{x_j}_\nu\wedge\omega^{n-1}_G}{\omega^n_G}dV_{\omega_G}\\
        &=(n-1)!\int_{V_{x_j}}\rho_j\cdot tr_{\omega_G}\idd\varphi^{x_j}_\nu \,dV_{\omega_G}\\
        &=(n-1)!\int_{V_{x_j}}\rho_j\cdot e^{-f}\cdot tr_\omega\idd\varphi^{x_j}_\nu \,dV_{\omega_G}\\
        &\geq0.
    \end{align*}

    Moreover, in the $j=0$ case, similar to above we get 
    \begin{align*}
        \int_{V_{x_0}}i\Theta_{L,h}\wedge\rho_0\omega^{n-1}_G&\geq\liminf_{\nu\to+\infty}\int_{V_{x_0}}\varphi^{x_0}_\nu\wedge\idd(\rho_0\omega^{n-1}_G)\\
        &=(n-1)!\liminf_{\nu\to+\infty}\int_{V_{x_0}}\rho_0\cdot e^{-f}\cdot tr_{\omega}\idd\varphi^{x_0}_\nu \,dV_{\omega_G}\\
        &\geq c(n-1)!\int_{V_{x_0}}\rho_0\cdot e^{-f}\, dV_{\omega_G}\\
        &>0.
    \end{align*}

    Here, it is well-known that there exist a smooth Hermitian metric $h_0$ on $L$ and a real valued function $\psi\in\mathscr{L}^1(X,\mathbb{R})$ such that $i\Theta_{L,h}=i\Theta_{L,h_0}+\idd\psi$. 
    Hance, we have that 
    \begin{align*}
        \int_X c^{BC}_1(L)\wedge\omega^{n-1}_G&=\int_X i\Theta_{L,h_0}\wedge\omega^{n-1}_G=\int_X i\Theta_{L,h}\wedge\omega^{n-1}_G=\sum_{j\in I_N}\int_{V_{x_j}}i\Theta_{L,h}\wedge\rho_j\omega^{n-1}_G\\
        &\geq\int_{V_{x_0}}i\Theta_{L,h}\wedge\rho_0\omega^{n-1}_G
        \geq c(n-1)!\int_{V_{x_0}}\rho_0\cdot e^{-f}\, dV_{\omega_G}
        >0,
    \end{align*}
    by $\partial\overline{\partial}\omega^{n-1}_G=0$.
    By following lemma, the dual line bundle $L^*$ is not pseudo-effective.
\end{proof}

\begin{lemma}\label{characterization of not psef}$($\textnormal{cf.\,\cite[Proposition\,3.2]{Yan19}}$)$
    Let $L$ be a holomorphic line bundle on a compact complex manifold $X$.
    The following statements are equivalent.
    \begin{itemize}
        \item [$(a)$] the dual line bundle $L^*$ is not pseudo-effective,
        \item [$(b)$] there exists a Gauduchon metric $\omega_G$ such that 
        \begin{align*}
            \int_X c^{BC}_1(L)\wedge\omega^{n-1}_G>0.
        \end{align*}
    \end{itemize}
\end{lemma}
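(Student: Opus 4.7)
The proof splits into two directions; $(b)\Rightarrow(a)$ is elementary while $(a)\Rightarrow(b)$ is the substantive content, requiring a Hahn-Banach separation between the pseudo-effective cone and the cone generated by Gauduchon classes.

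For $(b)\Rightarrow(a)$, I would argue by contrapositive. Assume $L^*$ is pseudo-effective, take a singular Hermitian metric $h^*$ on $L^*$ with $i\Theta_{L^*,h^*}\geq0$ as currents, and dualize to obtain a singular Hermitian metric $h=(h^*)^*$ on $L$ with $i\Theta_{L,h}=-i\Theta_{L^*,h^*}\leq0$ as currents. Fixing a smooth reference metric $h_0$ on $L$, there exists $\psi\in\mathscr{L}^1(X,\mathbb{R})$ with $i\Theta_{L,h}=i\Theta_{L,h_0}+\idd\psi$, so for any Gauduchon metric $\omega_G$,
\begin{align*}
    \int_X c^{BC}_1(L)\wedge\omega^{n-1}_G
    =\int_X i\Theta_{L,h_0}\wedge\omega^{n-1}_G
    =\int_X i\Theta_{L,h}\wedge\omega^{n-1}_G\leq 0,
\end{align*}
since $\int_X\idd\psi\wedge\omega^{n-1}_G=\int_X\psi\cdot\idd\omega^{n-1}_G=0$ by the Gauduchon condition $\partial\overline{\partial}\omega^{n-1}_G=0$. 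This contradicts the strict positivity in $(b)$.

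For $(a)\Rightarrow(b)$, the plan is to invoke the Lamari-type duality valid on general compact complex manifolds, which identifies the closed cone of pseudo-effective Bott-Chern $(1,1)$-classes with the dual, under the integration pairing, of the closed convex cone generated by Aeppli classes of $\omega^{n-1}_G$ as $\omega_G$ ranges over Gauduchon metrics. If no Gauduchon metric realized the strict inequality in $(b)$, then $\int_X c^{BC}_1(L)\wedge\omega^{n-1}_G\leq 0$ for every Gauduchon $\omega_G$, i.e. the class $-c^{BC}_1(L)$ pairs non-negatively against every such $\{\omega^{n-1}_G\}$. By the duality this forces $-c^{BC}_1(L)$ into the pseudo-effective cone, contradicting $(a)$. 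Thus the desired Gauduchon metric must exist.

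The main obstacle is the duality step itself: establishing that the closed cone generated by $\{\omega^{n-1}_G\}$ is exactly the polar of the pseudo-effective cone combines Hahn-Banach separation with the regularization of closed $(1,1)$-currents and relies crucially on Gauduchon's theorem \cite{Gau77} asserting existence of a Gauduchon representative in each conformal class of Hermitian metrics. Since this equivalence is precisely the content of \cite[Proposition\,3.2]{Yan19}, I would cite that result rather than reprove it.
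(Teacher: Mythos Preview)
The paper does not actually prove this lemma; it is stated with the citation \textnormal{(cf.\,\cite[Proposition\,3.2]{Yan19})} and used as a black box in the proof of Theorem~\ref{characterization of tr-posi, not psef}. Your proposal therefore provides strictly more than the paper does: the easy direction $(b)\Rightarrow(a)$ is correctly argued via the Gauduchon condition $\partial\overline{\partial}\omega_G^{n-1}=0$, and for $(a)\Rightarrow(b)$ you correctly identify the Lamari-type duality between the pseudo-effective cone and the cone of Gauduchon classes as the essential content, ultimately deferring to \cite{Yan19}. This matches the approach of the original reference, and your concluding decision to cite rather than reprove is exactly what the present paper does.
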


\begin{corollary}
    Let $L$ be a holomorphic line bundle on a compact complex manifold $X$.
    If there exist a Hermitian metric $\omega$ and a singular Hermitian metric $h$ on $L$ such that $h$ is $\omega$-trace quasi-positive $($resp. semi-positive$)$, 
    then we have that 
    \begin{align*}
        \int_Xc_1(L)\wedge\omega^{n-1}>0,\quad(\mathrm{resp}.\,\geq0).
    \end{align*}

    In particular, if $\omega$ is \kah then $\mathrm{deg}\,_\omega(L)>0$ $($resp. $\geq0)$.
\end{corollary}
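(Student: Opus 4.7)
The plan is to deduce the Corollary by reading off the quantitative content of the proof of Theorem~\ref{characterization of tr-posi, not psef}, which already establishes strict positivity of $\int_X c_1^{BC}(L)\wedge\omega_G^{n-1}$ for the Gauduchon representative $\omega_G$ in the conformal class of $\omega$, rather than merely the qualitative non-pseudo-effectiveness of $L^*$.

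First I would check that $\omega$-trace semi-positivity (resp.\,quasi-positivity) of $h$ is preserved under the conformal change $\omega\mapsto\omega_G=e^f\omega$ guaranteed by Gauduchon's theorem. Since $\omega_G^{n-1}=e^{(n-1)f}\omega^{n-1}$ and $e^{(n-1)f}>0$ is smooth, the inequality $\idd\varphi\wedge\omega^{n-1}\geq 0$ multiplies up to $\idd\varphi\wedge\omega_G^{n-1}\geq 0$; strict $\omega$-subharmonicity $\idd\varphi\wedge\omega^{n-1}\geq c\omega^n$ likewise gives $\idd\varphi\wedge\omega_G^{n-1}\geq ce^{-f}\omega_G^n$ on any relatively compact neighborhood, which is still a uniformly positive lower bound in the sense of Proposition~\ref{smoothing of strictly omega-subharmonic}. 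Thus $h$ is $\omega_G$-trace semi-positive (resp.\,quasi-positive) whenever it is $\omega$-trace semi-positive (resp.\,quasi-positive).

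Next I would replay the final computation of Theorem~\ref{characterization of tr-posi, not psef}: using the Bott--Chern decomposition $i\Theta_{L,h}=i\Theta_{L,h_0}+\idd\psi$, the identity $\partial\overline{\partial}\omega_G^{n-1}=0$, and a partition of unity $\{\rho_j\}_{j\in I_N}$ subordinate to the open cover $\{V_{x_j}\}_{j\in I_N}$, Stokes yields
\begin{align*}
    \int_X c_1^{BC}(L)\wedge\omega_G^{n-1}=\sum_{j\in I_N}\int_{V_{x_j}}i\Theta_{L,h}\wedge\rho_j\omega_G^{n-1}.
\end{align*}
The Fatou-type step already carried out in that proof shows every summand is $\geq 0$ under $\omega_G$-subharmonicity of the local weights alone, and in the quasi-positive case the $j=0$ term is strictly positive thanks to the uniform lower bound from strict $\omega_G$-subharmonicity at $x_0$. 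Summing therefore gives $\geq 0$ in the semi-positive case and $>0$ in the quasi-positive case.

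Finally, for the ``In particular'' clause, a \kah metric is automatically Gauduchon (since $d\omega=0$ forces $\partial\overline{\partial}\omega^{n-1}=0$), so one may take $\omega_G=\omega$; the inequality then reads $\int_Xc_1(L)\wedge\omega^{n-1}>0$ (resp.\,$\geq 0$), which up to the standard dimensional factor is exactly $\mathrm{deg}\,_\omega(L)>0$ (resp.\,$\geq 0$). I expect no genuine obstacle: the semi-positive case is handled by simply dropping the $j=0$ strict contribution in the argument above, and the only subtlety worth recording is that the first, non-\kah statement is most naturally interpreted using the Gauduchon representative, which is harmless since we have just shown that conformal passage to $\omega_G$ preserves all hypotheses.
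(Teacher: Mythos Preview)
Your proposal is correct and follows exactly the approach implicit in the paper: the Corollary is stated without proof because it is meant to be read off from the computation in Theorem~\ref{characterization of tr-posi, not psef}, and you have correctly extracted that content, including the observation that for the non-\kah\ statement one should pass to the Gauduchon representative $\omega_G$ (the conformal invariance step you supply is exactly what makes this legitimate). The semi-positive case is indeed obtained by simply dropping the strict $j=0$ contribution, and the \kah\ case follows since then $\omega_G=\omega$.
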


From \cite[Theorem\,1.5]{Yan19}, we immediately obtain the following corollary.

\begin{corollary}
    Let $L$ be a holomorphic line bundle on a compact complex manifold $X$.
    The following statements are equivalent.
    \begin{itemize}
        \item there exist a Hermitian metric $\omega$ and a singular Hermitian metric $h$ on $L$ such that $h$ is $\omega$-trace quasi-positive, 
        \item $L$ is $(n-1)$-positive which means that there exists a smooth Hermitian metric $h$ on $L$ such that $i\Theta_{L,h}$ has at least one positive eigenvalue at any point on $X$,
        \item $L^*$ is not pseudo-effective. 
    \end{itemize}
\end{corollary}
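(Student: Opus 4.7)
The plan is to close a short cyclic chain of implications $(1)\Rightarrow(3)\Rightarrow(2)\Rightarrow(1)$, combining the just-proven Theorem \ref{characterization of tr-posi, not psef} with the cited [Yan19, Theorem 1.5].

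The implication $(1)\Rightarrow(3)$ is exactly the content of Theorem \ref{characterization of tr-posi, not psef}: if $L$ admits a singular Hermitian metric that is $\omega$-trace quasi-positive for some Hermitian metric $\omega$, then $L^*$ is not pseudo-effective. Hence no further argument is required for this step.

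For $(3)\Rightarrow(2)$ I would invoke one direction of [Yan19, Theorem 1.5]: on a compact complex manifold, if $L^*$ is not pseudo-effective then $L$ carries a smooth Hermitian metric $h$ such that $i\Theta_{L,h}$ has at least one positive eigenvalue at every point, i.e.\ $L$ is $(n-1)$-positive. Conversely, for $(2)\Rightarrow(1)$, the same theorem of Yang produces, starting from the $(n-1)$-positive smooth metric $h$, a Hermitian metric $\omega$ on $X$ with $tr_\omega i\Theta_{L,h}>0$ pointwise on $X$. Regarding $h$ as a singular Hermitian metric, its local weights $\varphi$ are smooth and satisfy
\[
    \idd\varphi\wedge\omega^{n-1} \;=\; \tfrac{1}{n}\bigl(tr_\omega i\Theta_{L,h}\bigr)\,\omega^n \;>\;0
\]
at every point. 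By compactness of $X$, around each $x\in X$ one finds a neighborhood $V$ and a constant $c_x>0$ such that $\idd\varphi\wedge\omega^{n-1}\ge c_x\omega^n$ on $V$; thus $\varphi$ is strictly $\omega$-subharmonic near every point and, via Definition \ref{def of tr-posi for line bdl case}, $h$ is $\omega$-trace quasi-positive.

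There is no serious obstacle: Theorem \ref{characterization of tr-posi, not psef} supplies the genuinely new direction, and Yang's theorem furnishes the two remaining links of the cycle. The only bookkeeping point is the elementary observation that a smooth Hermitian metric with pointwise positive $\omega$-trace automatically meets the strict $\omega$-subharmonicity condition of Definition \ref{def of tr-posi for line bdl case}, so that the smooth notion used in Yang's theorem embeds into the singular framework of this section.
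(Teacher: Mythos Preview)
Your proposal is correct and follows essentially the same route as the paper, which simply records that the corollary follows ``immediately'' from combining Theorem \ref{characterization of tr-posi, not psef} with \cite[Theorem\,1.5]{Yan19}. You spell out the cycle $(1)\Rightarrow(3)\Rightarrow(2)\Rightarrow(1)$ and the bookkeeping that a smooth metric with $tr_\omega i\Theta_{L,h}>0$ everywhere is in particular $\omega$-trace quasi-positive in the sense of Definition \ref{def of tr-posi for line bdl case}; the paper leaves this implicit.
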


Here, a holomorphic line bundle $L\to X$ is called $q$-\textit{positive} if there exists a smooth Hermitian metric $h$ on $L$ such that $i\Theta_{L,h}$ has at least $(n-q)$ positive eigenvalues at every point on $X$, where $\mathrm{dim}\,X=n$.

\section{Positivity of smooth Hermitian metric on holomorphic vector bundles}

In this section, we give characterizations of various positivity without using Chern curvature for smooth Hermitian metrics.

Let $\omega$ be a Hermitian metric on a complex manifold $X$ of dimension $n$. 
For any $x\in X$, there exists a local coordinates $(z_1,\cdots,z_n)$ centered $x$ such that $\omega=i\sum dz_j\wedge d\overline{z}_j+O(|z|)$. 
We say that this coordinates is the \textit{standard coordinates of} $\omega$ \textit{at} $x$.
Let $E$ be a holomorphic vector bundle and $h$ be a smooth Hermitian metric on $E$.
For any point $x\in X$ and any locally coordinates $z=(z_1,\ldots,z_n)$, we can written
\begin{align*}
    i\Theta_{E,h}=i\sum c_{jk\lambda\mu}dz_j\wedge d\overline{z}_k\otimes e^*_\lambda\otimes e_\mu,
\end{align*}
at $x$, where $e=(e_1,\ldots,e_r)$ is an orthonormal frame on a neighborhood. Therefore,
\begin{align*}
    \omega(x)=i\sum dz_j\wedge d\overline{z}_j,\,\,
    tr_\omega i\Theta_{E,h}(x)=\sum c_{jj\lambda\mu}e^*_\lambda\otimes e_\mu,\,\, h(x)=\sum e^*_\lambda\otimes e_\lambda,
\end{align*}
where $tr_\omega i\Theta_{E,h}\in\Gamma(X,\mathrm{End}(E))$.
We have that $tr_\omega i\Theta_{E,h}<0$ at $x$ if and only if there exists $c_x>0$ such that $tr_\omega i\Theta_{E,h}\leq-c_xh$ at $x$.


\begin{definition}
    Under the above setting, we say that $h$ is 
    \begin{itemize}
        \item $\omega$-\textit{trace semi}-\textit{positive} on $X$ if $tr_\omega i\Theta_{E,h}\geq0$ at any point of $X$,
        \item $\omega$-\textit{trace semi}-\textit{negative} if the dual metric $h^*$ on $E^*$ is $\omega$-trace semi-positive,
    \end{itemize}
    where $tr_\omega i\Theta_{E,h}=-tr_\omega i\Theta_{E^*,h^*}$.
    
    Moreover, we define $\omega$-\textit{trace positivity} and \textit{negativity} in the similar manner.
\end{definition}

We consider characterizations of $\omega$-trace positivity without using curvature.

\begin{proposition}\label{characterization of tr-posi at point}
    Let $\omega$ be a Hermitian metric and $h$ be a smooth Hermitian metric on $E$. Then for any fiexd point $x\in X$, the following conditions are equivalent;
    \begin{itemize}
        \item [($a$)] $tr_\omega i\Theta_{E,h}<0$ at $x$, i.e. there exists $c>0$ such that $tr_\omega i\Theta_{E,h}\leq-ch$ at x,
        \item [($b$)] There exists $c>0$ such that for any $s\in\mathcal{O}(E)_x$, $\Delta^C_\omega|s|^2_h\geq c|s|^2_h$ at $x$, 
        \item [($c$)] There exists $c>0$ such that for any $s\in\mathcal{O}(E)_x$, $\Delta^C_\omega\log|s|^2_h\geq c$ at $x$. 
    \end{itemize}
    Moreover, the case $tr_\omega i\Theta_{E,h}\leq0$ at $x$ corresponds to $c=0$.
\end{proposition}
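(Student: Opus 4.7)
The plan is to derive both equivalences from the two standard Chern curvature identities
\begin{align*}
\idd|s|^2_h &= i\{D's, D's\}_h - \{i\Theta_{E,h}s, s\}_h,\\
\idd\log|s|^2_h &= \frac{i\{D's, D's\}_h|s|^2_h - i\{D's, s\}_h\wedge\{s, D's\}_h}{|s|^4_h} - \frac{\{i\Theta_{E,h}s, s\}_h}{|s|^2_h},
\end{align*}
valid for any local holomorphic section $s$ of $(E, h)$, where $\{\cdot, \cdot\}_h$ denotes the sesquilinear pairing combining the Hermitian product on $E$ with the wedge product of forms. Applying $tr_\omega$ in standard coordinates of $\omega$ at $x$ (so that $g^{jk}(x) = \delta^{jk}$) gives $tr_\omega i\{D's, D's\}_h(x) = \sum_j |D'_j s|^2_h$ and $tr_\omega\{i\Theta_{E,h}s, s\}_h(x) = \langle(tr_\omega i\Theta_{E,h})s, s\rangle_h$, where $D'_j = D'_{\partial/\partial z_j}$.

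For $(a)\Rightarrow(b)$ and $(a)\Rightarrow(c)$: the traced numerator in the log identity equals $\sum_j(|D'_j s|^2_h\,|s|^2_h - |h(D'_j s, s)|^2) \geq 0$ by the pointwise Cauchy-Schwarz inequality, and the gradient term appearing in (b) is manifestly $\geq 0$. Combined with the hypothesis (a), which gives $-\langle(tr_\omega i\Theta_{E,h})s, s\rangle_h \geq c|s|^2_h$ at $x$, one obtains $\Delta^C_\omega|s|^2_h(x) \geq c|s|^2_h(x)$ and $\Delta^C_\omega\log|s|^2_h(x) \geq c$ for every germ $s \in \mathcal{O}(E)_x$.

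For $(b)\Rightarrow(a)$ and $(c)\Rightarrow(a)$: fix any $v \in E_x$ and choose a normal holomorphic frame $(e_\lambda)$ of $E$ at $x$, i.e.\ one with $h(e_\lambda, e_\mu)(z) = \delta_{\lambda\mu} + O(|z|^2)$, which always exists via a holomorphic linear change of a local trivialization. Setting $s := \sum v_\lambda e_\lambda$ with constant coefficients satisfying $v = \sum v_\lambda e_\lambda(x)$ produces a local holomorphic section with $s(x) = v$ and $D's(x) = 0$, so both formulas collapse at $x$ to
\begin{align*}
\Delta^C_\omega|s|^2_h(x) = -\langle(tr_\omega i\Theta_{E,h})v, v\rangle_h, \qquad \Delta^C_\omega\log|s|^2_h(x) = -\frac{\langle(tr_\omega i\Theta_{E,h})v, v\rangle_h}{|v|^2_h}.
\end{align*}
Substituting (b) or (c) and letting $v$ range over $E_x$ yields $\langle(tr_\omega i\Theta_{E,h})v, v\rangle_h \leq -c|v|^2_h$, which is exactly (a). The ``moreover'' statement corresponding to $tr_\omega i\Theta_{E,h}\leq 0$ at $x$ is the same argument with $c = 0$.

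The only real obstacle is the pointwise Cauchy-Schwarz estimate used in $(a)\Rightarrow(c)$; the decisive trick is to pass to standard coordinates of $\omega$ at $x$ first, so that $tr_\omega$ is an ordinary diagonal sum and the comparison reduces term by term to the classical inequality $|h(D'_j s, s)|^2 \leq |D'_j s|^2_h\,|s|^2_h$. Once this is in hand, the construction of the normal holomorphic frame and the resulting vanishing of the gradient terms make the reverse implications essentially automatic.
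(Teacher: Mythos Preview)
Your proof is correct and follows essentially the same approach as the paper: both use standard coordinates of $\omega$ at $x$, the curvature identities for $\idd|s|^2_h$ and $\idd\log|s|^2_h$, Cauchy-Schwarz for the forward implication to $(c)$, and a holomorphic section with $D's(x)=0$ (constructed via a normal frame, as you do explicitly) for the reverse implications.
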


\begin{proof}
    Define $\Delta_{\idd|z|^2}=\sum\partial_{z_j}\partial_{\overline{z}_j}$ for a locally holomorphic coordinates $z=(z_1,\ldots,z_n)$.
    For any standard coordinates of $\omega$ at $x$, we have that $\Delta^C_\omega=\Delta_{\idd|z|^2}$ at $x$.
    $(a) \Longrightarrow (b)$. Let $z=(z_1,\cdots,z_n)$ be a standard coordinate of $\omega$ at centered $x$.
    From the equality $\idd|s|^2_h=\sum(D'^h_{z_j}s,D'^h_{z_k}s)_hdz_j\wedge d\overline{z}_k-\sum(\Theta^h_{jk}s,s)_hdz_j\wedge d\overline{z}_k$ for any $s\in\mathcal{O}(E)_x$, we have 
    \begin{align*}
        \Delta_{\idd|z|^2}|s|^2_h=tr_{\idd|z|^2}\idd|s|^2_h&=\sum||D'^h_{z_j}s||^2_h-\sum(\Theta^h_{jj}s,s)_h\\
        &=\sum||D'^h_{z_j}s||^2_h-tr_{\idd|z|^2}i\Theta_{E,h}(s,\overline{s})\\
        &\geq-tr_{\idd|z|^2}i\Theta_{E,h}(s,\overline{s})\geq c|s|^2_h.
    \end{align*}

    $(b) \Longrightarrow (a)$. For any $0\ne a\in E_x$, there exists $0\ne s\in\mathcal{O}(E)_x$ such that $s(x)=a$ and $D'^hs(x)=0$.
    By the assumption and the equality $\Delta_{\idd|z|^2}|s|^2_h=tr_{\idd|z|^2}\idd|s|^2_h=\sum||D'^h_{z_j}s||^2_h-\sum(\Theta^h_{jj}s,s)_h$, we have that 
    \begin{align*}
        c|a|^2_h\leq\Delta_{\idd|z|^2}|a|^2_h=\Delta_{\idd|z|^2}|s|^2_h|_{z=0}=-\sum(\Theta^h_{jj}a,a)_h=-tr_{\idd|z|^2}i\Theta_{E,h}(a,\overline{a})
    \end{align*}
    at $x$, i.e. $tr_\omega i\Theta_{E,h}\leq-ch$ at $x$.
    
    $(a) \Longrightarrow (c)$. Let $z=(z_1,\cdots,z_n)$ be a standard coordinate of $\omega$ at centered $x$.
    Here, for any $s\in\mathcal{O}(E)_x$ and any $\xi\in T_X$ we obtain 
    \begin{align*}
        \idd\log|s|^2_h(\xi)&=i(|s|^2_h|D'^hs\cdot \xi|^2_h-|\langle D'^hs\cdot \xi,s\rangle_h|^2)/|s|^4_h-\langle i\Theta_{E,h}(\xi,\overline{\xi})s,s\rangle_h/|s|^2_h\\
        &\geq-\langle i\Theta_{E,h}(\xi,\overline{\xi})s,s\rangle_h/|s|^2_h,
    \end{align*}
    by Chauchy-Schwarz. Then we have that
    \begin{align*}
        \Delta_{\idd|z|^2}\log|s|^2_h=tr_{\idd|z|^2}\idd\log|s|^2_h&=\sum\partial\overline{\partial}\log|s|^2_h(\partial/\partial z_j)\\
        &\geq-\sum\langle \Theta_{E,h}(\partial/\partial z_j,\partial/\partial \overline{z}_j)s,s\rangle_h/|s|^2_h\\
        &=-tr_{\idd|z|^2}i\Theta_{E,h}(s,\overline{s})/|s|^2_h\geq c_x.
    \end{align*}

    $(c) \Longrightarrow (a)$. This is shown similar to $(b) \Longrightarrow (a)$.
\end{proof}

\begin{proposition}\label{characterization of tr-posi on nbd}
    Let $\omega$ be a Hermitian metric and $h$ be a smooth Hermitian metric on $E$. Then the following conditions are equivalent;
    \begin{itemize}
        \item [($a$)] $tr_\omega\Theta_{E,h}<0$, 
        \item [($b$)] For any $x\in X$, there exist a neighborhood $U$\! of $x$, $c_x>0$ and a sequence of smooth Hermitian metrics $(h_\nu)_{\nu\in\mathbb{N}}$ on $U$ decreasing to $h$
        such that for any $\nu\in\mathbb{N}$ and any holomorphic section $s\!\in\! H^0(U,E)$, we have that $\Delta^C_\omega|s|^2_{h_\nu}\!\geq\! c_x|s|^2_{h_\nu}$ on $U$\!,
        \item [($c$)] For any $x\in X$, there exist a neighborhood $U$\! of $x$, $c_x>0$ and a sequence of smooth Hermitian metrics $(h_\nu)_{\nu\in\mathbb{N}}$ on $U$ decreasing to $h$ 
        such that for any $\nu\in\mathbb{N}$ and any holomorphic section $s\in\! H^0(U,E)$, we have that $\Delta^C_\omega\log|s|^2_{h_\nu}\!\geq\! c_x$ on $U$\!.
    \end{itemize}
\end{proposition}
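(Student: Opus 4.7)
The plan is to reduce both equivalences to the pointwise characterization in Proposition~\ref{characterization of tr-posi at point}, using the trivial constant sequence $h_\nu\equiv h$ for the forward directions and a distributional monotone-limit argument for the converses. For $(a)\Rightarrow(b)$ and $(a)\Rightarrow(c)$, I take $h_\nu\equiv h$. Given $x\in X$, continuity of the smooth endomorphism $tr_\omega i\Theta_{E,h}$ lets me shrink a neighborhood $U$ of $x$ so that $tr_\omega i\Theta_{E,h}\leq-c_x h$ on $U$ for some $c_x>0$. For any $s\in H^0(U,E)$, the intrinsic identity $\idd|s|^2_h=\{D'^h s,D'^h s\}_h-\{i\Theta_{E,h}s,s\}_h$, a coordinate-free version of the formula used in Proposition~\ref{characterization of tr-posi at point}, gives after tracing with $\omega$
\begin{equation*}
\Delta^C_\omega|s|^2_h=tr_\omega\{D'^h s,D'^h s\}_h-\langle tr_\omega i\Theta_{E,h}\,s,s\rangle_h\;\geq\;c_x|s|^2_h,
\end{equation*}
since the first term on the right is the $\omega$-trace of a positive semi-definite $(1,1)$-form. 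The analogous bound $\Delta^C_\omega\log|s|^2_h\geq c_x$ is the Cauchy--Schwarz computation from $(a)\Rightarrow(c)$ of Proposition~\ref{characterization of tr-posi at point}, valid pointwise on $U$.

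For $(b)\Rightarrow(a)$ (and identically $(c)\Rightarrow(a)$), fix $x$ and take the promised sequence $(h_\nu)$ on a neighborhood $U$. Since $h_\nu\searrow h$ as Hermitian metrics, $|s|^2_{h_\nu}(y)\searrow|s|^2_h(y)$ pointwise on $U$ for every $s\in H^0(U,E)$. Testing the smooth inequality $\Delta^C_\omega|s|^2_{h_\nu}\geq c_x|s|^2_{h_\nu}$ against any $\phi\in\mathscr{D}(U)_{\geq0}$ yields
\begin{equation*}
c_x\int_U\phi\,|s|^2_{h_\nu}\,dV_\omega\;\leq\;\int_U|s|^2_{h_\nu}\,(\Delta^C_\omega)^*\phi\,dV_\omega,
\end{equation*}
and monotone convergence sends both sides to their $h$-analogs. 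Hence $\Delta^C_\omega|s|^2_h\geq c_x|s|^2_h$ on $U$ in the sense of distributions; because $h$ is smooth this is a classical pointwise inequality, and applying Proposition~\ref{characterization of tr-posi at point} $(b)\Rightarrow(a)$ at every $y\in U$ forces $tr_\omega i\Theta_{E,h}<0$ on $U$. Letting $x$ range over $X$ gives $(a)$. The implication $(c)\Rightarrow(a)$ is identical, using instead $\log|s|^2_{h_\nu}\searrow\log|s|^2_h$.

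The only subtle point is this limit passage: a merely pointwise decreasing sequence of smooth Hermitian metrics carries no a priori control on its curvature tensors or higher derivatives, so a naive limit of $\Theta_{E,h_\nu}$ is unavailable. The remedy is to work only with the scalar quantities $|s|^2_{h_\nu}$ and $\log|s|^2_{h_\nu}$, which do converge monotonely, and to move the one derivative that appears onto the test function via the adjoint $(\Delta^C_\omega)^*$. Smoothness of the limit metric $h$ then converts the resulting distributional inequality back into a pointwise one, closing the argument with Proposition~\ref{characterization of tr-posi at point}.
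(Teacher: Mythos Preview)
Your proof is correct, but the forward direction takes a genuinely different (and more economical) route than the paper's. For $(a)\Rightarrow(b),(c)$ you use the constant sequence $h_\nu\equiv h$ and the intrinsic identity $\idd|s|^2_h=\{D'^hs,D'^hs\}_h-\langle i\Theta_{E,h}s,s\rangle_h$, which after tracing with $\omega$ gives the inequality on the whole neighborhood at once. The paper instead constructs a nontrivial approximation $h_\nu:=h\ast\rho_\nu$ by mollification, verifies that the inequality $\Delta_{\idd|z|^2}|s|^2_{h_\nu}\geq C_x|s|^2_{h_\nu}$ is preserved under convolution, and then has to pass from the flat Laplacian $\Delta_{\idd|z|^2}$ back to $\Delta^C_\omega$ via a somewhat delicate argument about shrinking neighborhoods (since $tr_{\idd|z|^2}=tr_\omega+O(|z|)$ only at the center). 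Your shortcut is legitimate for the equivalence as stated; what the paper's longer argument buys is the explicit demonstration that a \emph{nontrivial} decreasing approximation exists, which is exactly the template later used to \emph{define} $\omega$-trace negativity for singular metrics (Definition 5.9). So your proof establishes the proposition, while the paper's proof additionally motivates the singular definition.

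For $(b),(c)\Rightarrow(a)$ both approaches coincide: you spell out the limit passage, the paper simply cites Proposition~\ref{exists smoothing then (strictly) omega-SH as currents}. One small correction: your appeal to ``monotone convergence'' on the right-hand integral $\int_U|s|^2_{h_\nu}(\Delta^C_\omega)^*\phi\,dV_\omega$ is not quite right, since $(\Delta^C_\omega)^*\phi$ changes sign; you want dominated convergence with the integrable majorant $|s|^2_{h_1}\,|(\Delta^C_\omega)^*\phi|$ (available because $h_1$ is smooth and $\phi$ has compact support).
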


\begin{proof}
    $(a) \Longrightarrow (b)$. For any $x\in X$, let $z=(z_1,\cdots,z_n)$ be a standard coordinate of $\omega$ at centered $x$.
    Then there exists $\widetilde{C}>0$ such that $tr_\omega i\Theta_{E,h}=tr_{\idd|z|^2}i\Theta_{E,h}\leq-\widetilde{C}h$ at $x$. By smooth-ness of $h$, there exist a neighborhood $U$ of $x$ and $C_x>0$ such that $tr_{\idd|z|^2}i\Theta_{E,h}\leq-C_xh$ on $U$.
    Similar to the proof of Proposition \ref{characterization of tr-posi at point}, for any $y\in U$ and any $s\in\mathcal{O}(E)_y$ we have that 
    \begin{align*}
        \Delta_{\idd|z|^2}|s|^2_h=tr_{\idd|z|^2}\idd|s|^2_h&=\sum||D'^h_{z_j}s||^2_h-\sum(\Theta^h_{jj}s,s)_h\\
        &=\sum||D'^h_{z_j}s||^2_h-tr_{\idd|z|^2}i\Theta_{E,h}(s,\overline{s})\\
        &\geq-tr_{\idd|z|^2}i\Theta_{E,h}(s,\overline{s})\geq C_x|s|^2_h.
    \end{align*}

    Let $h_\nu:=h\ast\rho_\nu$, where $(\rho_\nu)_{\nu\in\mathbb{N}}$ is an approximation to the identity. 
    Then we have $\Delta_{\idd|z|^2}|s|^2_{h\ast\rho_\nu}\geq C_x|s|^2_{h\ast\rho_\nu}$ on $U_\nu=\{x\in U\mid d_{\mathbb{C}^n}(x,\partial U)>1/\nu\}$. In fact, by the equality
    \begin{align*}
        |s|^2_{h\ast\rho_\nu}=\int|s|^2_{h^{(w)}}(z)\rho_\nu(w)dV_w,
    \end{align*}
    where $h^{(w)}(z):=h(z-w)$, we have that 
    \begin{align*}
        \Delta_{\idd|z|^2}|s|^2_{h\ast\rho_\nu}-C_x|s|^2_{h\ast\rho_\nu}&=(\Delta_{\idd|z|^2}-C_x)|s|^2_{h\ast\rho_\nu}\\
        &=\int(\Delta_{\idd|z|^2}-C_x)|s|^2_{h^{(w)}}(z)\rho_\nu(w)dV_w
        \geq0,
    \end{align*}
    where $(\Delta_{\idd|z|^2}-C_x)|s|^2_{h^{(w)}}(z)=(\Delta_{\idd|z|^2}-C_x)|s^{(-w)}|^2_h(z-w)\geq0$ by $s^{(w)}(z)=s(z-w)$ is also holomorphic.
    Similar to the proof of Proposition \ref{characterization of tr-posi at point} in $(b) \Longrightarrow (a)$, we obtain that $tr_{\idd|z|^2}i\Theta_{E,h_\nu}\leq-C_x h_\nu$ on $U$. 
    By smooth-ness of $h_\nu$ and $h$ and $tr_{\idd|z|^2}=tr_\omega+O(|z|)$ at $x$, there exist a neighborhood $W\subset\subset U$ of $x$ such that $tr_\omega i\Theta_{E,h}\leq-c_x h$ on $V$ and a neighborhood $V_\nu\subset\subset U$ of $x$ such that $tr_\omega i\Theta_{E,h_\nu}\leq-c_x h_\nu$ on $V_\nu$, where $c_x:=C_x/2$.
    Since $h_\nu$ smoothly converges to $h$ ($\nu\to+\infty$), $V_\nu$ approaches $W$. Then there exist $\nu_0\in\mathbb{N}$ and an open subset $V\subset W\cap\bigcap_{\nu\geq\nu_0}V_\nu\ne\emptyset$ such that 
    for any $y\in V$ and any $s\in\mathcal{O}(E)_y$ we have that $\Delta^C_\omega|s|^2_{h_\nu}\geq c_x|s|^2_{h_\nu}$ at $y$ from Proposition \ref{characterization of tr-posi at point}.

    Finally, we show decreasing-ness. 
    Here, for any constant section $\sigma\in H^0(U,E)$, we get $\Delta_{\idd|z|^2}|\sigma|^2_h\geq C_x|\sigma|^2_h$ for the standard coordinates $z$ of $\omega$ centered $x$. 
    By subharmonicity of $|\sigma|^2_h$, 
    we have that $|\sigma|^2_h\ast\rho_\nu=|\sigma|^2_{h_\nu}$ decreases to $|\sigma|^2_h$, i.e. $h_\nu\searrow h$, if $\nu\to+\infty$.

    $(b) \Longrightarrow (a)$. This follows immediately from Proposition \ref{exists smoothing then (strictly) omega-SH as currents}.

    $(a) \iff (c)$. It follows as in the proof of $(a) \iff (b)$ using Proposition \ref{characterization of tr-posi at point}.
\end{proof}

We denote the  \textit{curvature operator} $[i\Theta_{E,h},\Lambda_\omega]$ on $\bigwedge^{p,q}T^*_X\otimes E$ by $A^{p,q}_{E,h,\omega}$.
The fact that the curvature operator $[i\Theta_{E,h},\Lambda_\omega]$ is positive (resp. semi-positive) definite on $\bigwedge^{p,q}T^*_X\otimes E$ is simply written as $A^{p,q}_{E,h,\omega}>0$ (resp. $\geq 0$).

\begin{proposition}\label{characterization of tr-posi and A^(n,n) posi}
    Let $h$ be a smooth Hermitian metric on $E$. For any Hermitian metric $\omega$, the following conditions are equivalent.
    \begin{itemize}
        \item $h$ is $\omega$-trace semi-positive, i.e. $tr_\omega i\Theta_{E,h}\geq0$,
        \item the curvature operator $A^{n,n}_{E,h,\omega}$ for $(n,n)$-forms is semi-positive, i.e. $A^{n,n}_{E,h,\omega}\geq0$.
    \end{itemize}
    The same equivalence holds for positivity.
\end{proposition}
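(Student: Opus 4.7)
The plan is to reduce both conditions to a single pointwise algebraic computation on $\Lambda^{n,n}T^*_X \otimes E$, using the fact that this bundle has a very simple structure (the form part is rank one).

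First, I would fix an arbitrary point $x\in X$, pick standard coordinates $(z_1,\ldots,z_n)$ of $\omega$ at $x$ and an orthonormal frame $(e_1,\ldots,e_r)$ of $E$ near $x$, so at $x$
\[
 \omega = i\sum_j dz_j\wedge d\overline{z}_j,\qquad i\Theta_{E,h} = i\sum c_{jk\lambda\mu}\,dz_j\wedge d\overline{z}_k\otimes e^*_\lambda\otimes e_\mu.
\]
Since $\Lambda^{n,n}T^*_{X,x}$ is one-dimensional, every $u\in \Lambda^{n,n}T^*_{X,x}\otimes E_x$ can be written as $u=\tau\otimes s$ with $\tau=dV_\omega = \omega^n/n!$ and $s\in E_x$. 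The key is to compute $A^{n,n}_{E,h,\omega}u = [i\Theta_{E,h},\Lambda_\omega]u$ explicitly.

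Second, I would note that $i\Theta_{E,h}\wedge\tau = 0$, because it would be a section of $\Lambda^{n+1,n+1}T^*_X\otimes E = 0$. Hence
\[
 A^{n,n}_{E,h,\omega}(\tau\otimes s) \;=\; i\Theta_{E,h}\bigl(\Lambda_\omega\tau\otimes s\bigr).
\]
Now I would compute $\Lambda_\omega\tau$. Using the standard Lefschetz identity $[\Lambda_\omega,L_\omega]=n-p-q$ on $(p,q)$-forms (which is purely algebraic, hence valid for arbitrary Hermitian $\omega$), one verifies inductively that $\Lambda_\omega\omega^k = k(n-k+1)\omega^{k-1}$; in particular $\Lambda_\omega\omega^n = n\omega^{n-1}$, so
\[
 \Lambda_\omega\tau = \Lambda_\omega\frac{\omega^n}{n!} = \frac{\omega^{n-1}}{(n-1)!}.
\]

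Third, using the pointwise identity $\alpha\wedge\omega^{n-1} = \tfrac{1}{n}(tr_\omega\alpha)\,\omega^n$ valid for any $(1,1)$-form $\alpha$, I would apply this to the endomorphism-valued entries $\Theta^\mu_\lambda$ of $i\Theta_{E,h}$ and obtain
\[
 i\Theta_{E,h}\wedge \frac{\omega^{n-1}}{(n-1)!}\otimes s \;=\; \tau\otimes\bigl(tr_\omega i\Theta_{E,h}\bigr)(s).
\]
Combining, $A^{n,n}_{E,h,\omega}(\tau\otimes s) = \tau\otimes (tr_\omega i\Theta_{E,h})(s)$, so
\[
 \bigl\langle A^{n,n}_{E,h,\omega}(\tau\otimes s),\,\tau\otimes s\bigr\rangle_{\omega,h} = |\tau|^2_\omega\,\bigl\langle (tr_\omega i\Theta_{E,h})(s),s\bigr\rangle_h.
\]
Since $|\tau|^2_\omega>0$ and every $u\in\Lambda^{n,n}T^*_X\otimes E$ is of the form $\tau\otimes s$, semi-positivity (respectively positivity) of $A^{n,n}_{E,h,\omega}$ at $x$ is equivalent to semi-positivity (respectively positivity) of the Hermitian endomorphism $tr_\omega i\Theta_{E,h}$ at $x$. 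Since $x$ is arbitrary, both equivalences follow.

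The only potentially delicate step is the Lefschetz identity $\Lambda_\omega\omega^n = n\omega^{n-1}$ in the non-Kähler setting, but as $L_\omega$ and $\Lambda_\omega$ are purely algebraic fiberwise operators built from the Hermitian pairing, the $\mathfrak{sl}_2$-relations and the recursion are identical to the Kähler case; no curvature-side issue arises. The rest is a direct identification.
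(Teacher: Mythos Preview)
Your proof is correct and follows essentially the same approach as the paper: both fix a point, choose standard coordinates of $\omega$ and an $h$-orthonormal frame, and compute $A^{n,n}_{E,h,\omega}u$ explicitly to obtain $\langle A^{n,n}_{E,h,\omega}u,u\rangle_{\omega,h}=\sum_{j,\lambda,\mu} c_{jj\lambda\mu}u_\lambda\overline{u}_\mu$. The only cosmetic difference is that the paper evaluates $\Lambda_\omega(dz_N\wedge d\overline{z}_N)$ by direct interior-product computation, whereas you package the same calculation via the Lefschetz relation $\Lambda_\omega\omega^n=n\omega^{n-1}$ and the trace identity $\alpha\wedge\omega^{n-1}=\tfrac{1}{n}(tr_\omega\alpha)\omega^n$.
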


\begin{proof}
    For any $x\in X$, 
    there exists a standard coordinates $z=(z_1,\cdots,z_n)$ of $\omega$ at $x$. 
    Let $e=(e_1,\cdots,e_r)$ be an orthonormal frame of $E$ with respect to $h$ on a neighborhood of $x$. 
    Then we can write $i\Theta_{E,h}=i\sum c_{jk\lambda\mu}dz_j\wedge d\overline{z}_k\otimes e^*_\lambda\otimes e_\mu$,
    at $x$, then we get $tr_\omega i\Theta_{E,h}=\sum c_{jj\lambda\mu}e^*_\lambda\otimes e_\mu$ at $x$. 
    Thus, $\omega$-trace semi-positivity of $h$ represents the condition that $tr_\omega i\Theta_{E,h}(s)=\sum c_{jj\lambda\mu}s_\lambda\overline{s}_\mu\geq0$ for any $s=\sum s_\lambda e_\lambda\in E_x$.

    In other hand, for any $(n,n)$-forms $u=\sum u_\lambda dz_N\wedge d\overline{z}_N\otimes e_\lambda\in\bigwedge^{n,n}T_{X,x}\otimes E_x$ where $dz_N=dz_1\wedge\cdots\wedge dz_n$, we have that 
    \begin{align*}
        A^{n,n}_{E,h,\omega}u&=i\Theta_{E,h}\wedge\Lambda_\omega u\\
        &=i\Theta_{E,h}\wedge i(-1)^n\sum u_\lambda\Bigl(\frac{\partial}{\partial z_s} \lrcorner\, dz_N\Bigr)\wedge\Bigl(\frac{\partial}{\partial \overline{z}_s} \lrcorner\, d\overline{z}_N\Bigr)\otimes e_\lambda\\
        &=\sum c_{jk\lambda\mu}u_\lambda dz_j\wedge\Bigl(\frac{\partial}{\partial z_s} \lrcorner\, dz_N\Bigr)\wedge d\overline{z}_k\wedge\Bigl(\frac{\partial}{\partial \overline{z}_s} \lrcorner\, d\overline{z}_N\Bigr)\otimes e_\lambda\\
        &=\sum c_{jj\lambda\mu}u_\lambda dz_N\wedge d\overline{z}_N\otimes e_\lambda.
    \end{align*}
    Therefore, we get $\langle A^{n,n}_{E,h,\omega}u,u\rangle_{h,\omega}=\sum c_{jj\lambda\mu}u_\lambda \overline{u}_\mu$.
\end{proof}

Let $(z_1,\cdots,z_n)$ be local coordinates, then Chern curvature tensor $\Theta_{E,h}$ is written as 
\begin{align*}
    \Theta_{E,h}=\sum_{1\leq j,k\leq n}\Theta^h_{jk}dz_j\wedge d\overline{z}_k,
\end{align*}
where the coefficients are written as $\Theta^h_{jk}=[D'^h_{z_j},\overline{\partial}_{z_k}]$ and $\overline{\partial}_{z_j}=\partial/\partial\overline{z}_j$.

\begin{definition}$(\mathrm{cf.}$~\cite[ChapterVII]{Dem-book}$)$
    Let $E$ be a holomorphic vector bundle on a complex manifold $X$.
    We say that a smooth Hermitian metric $h$ on $E$ is \textit{Griffiths positive} (resp. \textit{negative}) if for any $x\in X$, $0\ne\xi\in T_{X,x}$ and any $0\ne s\in E_x$, we have
        \begin{align*}
            \Theta_{E,h}(u\otimes\xi)=\sum(\Theta^h_{jk}u,u)_h\xi_j\overline{\xi}_k>0 \quad (\mathrm{resp}. <0).
        \end{align*}
\end{definition}

Let $h$ be a smooth Hermitian metric on $E$ and $u=(u_1,\cdots,u_n)$ be an $n$-tuple of locally holomorphic sections of $E$. We define $T^h_u$, an $(n-1,n-1)$-form through
\begin{align*}
    T^h_u=\sum(u_j,u_k)_h\widehat{dz_j\wedge d\overline{z}_k}
\end{align*}
where $(z_1,\cdots,z_n)$ are local coordinates on $X$ and $\widehat{dz_j\wedge d\overline{z}_k}$ 
satisfying $idz_j\wedge d\overline{z}_k\wedge\widehat{dz_j\wedge d\overline{z}_k}=dV_{\mathbb{C}^n}$.
Then a short computation yields that 
$h$ is Griffiths semi-negative if and only if $T^h_{\xi u}$ is plurisubharmonic in the sense that $\idd T^h_{\xi u}\geq0$ where $u_j=\xi_ju$ for any $\xi\in\mathbb{C}^n$ and any $u\in\mathcal{O}(E)_x$ (see \cite{Ber09,Rau15}). 
In fact, if $D'^hu=0$ at $x$ then we get 
\begin{align*}
   \idd T^h_{\xi u}=-\sum(\Theta^h_{jk}u,u)_h\xi_j\overline{\xi}_kdV_{\mathbb{C}^n},
\end{align*}
at $x$ by the following equations that for any $u,v\in\mathcal{O}(E)$ 
\begin{align*}
    \frac{\partial^2}{\partial z_j\partial \overline{z}_k}(u,v)_h&=(D'^h_{z_j}u,D'^h_{z_k}v)_h-(\Theta^h_{jk}u,v)_h,\\
    \idd T^h_u
    &=||\sum D'^h_{z_j}u_j||^2_h-\sum(\Theta^h_{jk}u_j,u_k)_hdV_{\mathbb{C}^n}.
\end{align*}

We already know the following characterization.

\begin{proposition}\label{characterization of Grif posi for smooth}
    For a smooth Hermitian metric $h$ on $E$ and any fiexd point $x\in X$, the following conditions are equivalent.
    \begin{itemize}
        \item $h$ is Griffiths semi-negative $($resp. negative$)$ at $x$, 
        \item $\log|u|^2_h$ is $($resp. strictly$)$ plurisubharmonic for any $0\ne u\in\mathcal{O}(E)_x$,
        \item there exists $\delta>0$ such that 
        \begin{align*}
            \idd|u|^2_h\geq\delta|u|^2_h\idd|z|^2 \quad (\mathit{resp}.\, \geq0)
        \end{align*}
        for any $0\ne u\in\mathcal{O}(E)_x$, where $(z_1,\cdots,z_n)$ is local coordinates.
    \end{itemize}
\end{proposition}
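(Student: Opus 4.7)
My plan is to deduce all three equivalences from a single curvature identity, namely the formula already displayed in the excerpt:
\begin{align*}
    \idd|u|^2_h &= \sum(D'^h_{z_j}u,D'^h_{z_k}u)_h\,i\,dz_j\wedge d\overline{z}_k - \sum(\Theta^h_{jk}u,u)_h\,i\,dz_j\wedge d\overline{z}_k,
\end{align*}
valid for any $u\in\mathcal{O}(E)_x$, together with its counterpart for $\log|u|^2_h$,
\begin{align*}
    \idd\log|u|^2_h = \frac{i}{|u|^4_h}\Bigl(|u|^2_h(D'^h_{z_j}u,D'^h_{z_k}u)_h - (D'^h_{z_j}u,u)_h\overline{(D'^h_{z_k}u,u)_h}\Bigr)dz_j\wedge d\overline{z}_k - \frac{(\Theta^h_{jk}u,u)_h}{|u|^2_h}\,i\,dz_j\wedge d\overline{z}_k.
\end{align*}
The structural observation is that the first sum in each formula, evaluated on $(\xi,\overline{\xi})$, is \emph{non-negative}: for $\idd|u|^2_h$ this is $\sum\|D'^h_{z_j}u\,\xi_j\|^2_h\geq 0$, and for $\idd\log|u|^2_h$ the Cauchy--Schwarz inequality applied to $D'^h_\xi u$ and $u$ in $(E_x,h)$ gives non-negativity of the numerator. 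Thus in both expressions the sign of the curvature contribution is the only thing that matters for semi-negativity.

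For the forward implications $(1)\Rightarrow(2)$ and $(1)\Rightarrow(3)$, I would simply substitute Griffiths semi-negativity $\Theta_{E,h}(u\otimes\xi)\leq 0$ into the two formulas above, which immediately gives $\idd\log|u|^2_h\geq 0$ and $\idd|u|^2_h\geq 0$. For the strict/negative version, Griffiths negativity gives a strict inequality pointwise; to promote this to the uniform bound $\idd|u|^2_h\geq\delta|u|^2_h\,\idd|z|^2$, I would use compactness of the product of unit spheres in $E_x$ and $T_{X,x}$ to extract a uniform $\delta>0$.

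For the reverse implications $(2)\Rightarrow(1)$ and $(3)\Rightarrow(1)$, the standard device is: for any given $a\in E_x$ with $a\neq 0$ and any $\xi\in T_{X,x}$, choose a local holomorphic section $u\in\mathcal{O}(E)_x$ with $u(x)=a$ and $D'^h u(x)=0$ (possible since the Chern connection is torsion-free in the holomorphic direction and one can solve prescribed value plus vanishing first jet via an orthonormal frame normal at $x$). Plugging $D'^h u(x)=0$ into either of the two formulas above eliminates the non-negative term, leaving
\begin{align*}
    \idd|u|^2_h(\xi,\overline{\xi})\big|_x = -\Theta_{E,h}(a\otimes\xi),\qquad
    \idd\log|u|^2_h(\xi,\overline{\xi})\big|_x = -\Theta_{E,h}(a\otimes\xi)/|a|^2_h,
\end{align*}
so the hypothesis in (2) or (3) translates directly into Griffiths semi-negativity, and likewise negativity in the strict case (where the uniform $\delta$ in (3) yields $\Theta_{E,h}(a\otimes\xi)\leq -\delta|a|^2_h|\xi|^2$).

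The only step that requires a little care is the local existence of $u\in\mathcal{O}(E)_x$ with $u(x)=a$ and $D'^h u(x)=0$: I would handle this by taking a holomorphic frame $(e_1,\dots,e_r)$ near $x$, writing $a=\sum a_\lambda e_\lambda(x)$, and correcting the constant section $\sum a_\lambda e_\lambda$ by a first-order holomorphic polynomial in local coordinates chosen so as to cancel the Christoffel symbols of $h$ at $x$. This is the main technical ingredient; once it is in place, all the equivalences follow mechanically from the two identities above.
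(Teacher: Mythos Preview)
Your proposal is correct and matches the paper's approach. The paper does not provide a formal proof of this proposition (it is stated as ``already known''), but the discussion immediately preceding it displays exactly the curvature identities you use, together with the observation that choosing $u$ with $D'^hu(x)=0$ kills the first term; moreover, the paper carries out precisely this argument---including the Cauchy--Schwarz step for $\log|u|^2_h$ and the existence of a section with $u(x)=a,\ D'^hu(x)=0$---in full detail in the proof of the analogous Proposition~\ref{characterization of tr-posi at point}, so your write-up is essentially what the author has in mind.
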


In other words, the case $\delta=0$ is simply $|u|^2_h$ is plurisubharmonic. 
The inequality $\idd|u|^2_h\geq\delta|u|^2_h\idd|z|^2$ can be rewritten as $\idd T^h_{\xi u}\geq\delta|u|^2_hdV_{\mathbb{C}^n}$ for any $0\ne\xi\in\mathbb{C}^n$.

\section{Definitions and relationships for singular $\omega$-trace and Griffiths positivity}
\subsection{Definition of singular Griffiths positivity}

In this subsection, we introduce the definition of singular Hermitian metrics and singular Griffiths positivity for holomorphic vector bundles.
Notions of singular Hermitian metrics for holomorphic vector bundles were introduced and investigated (cf. \cite{BP08,deC98}). 
However, it is known that we cannot always define the curvature currents with measure coefficients (see \cite{Rau15}). 
Thus, Griffiths semi-negativity or semi-positivity (see \cite{BP08,Rau15}) is defined without using the curvature currents by using plurisubharmonicity.

\begin{definition}\label{I def of sHm on vect bdl}$($\textnormal{cf.\,\cite[Definition\,2.2.1]{PT18}}$)$ 
    We say that $h$ is a \textit{singular Hermitian metric} on $E$ if $h$ is a measurable map from the base manifold $X$ to the space of non-negative Hermitian forms on the fibers satisfying $0<\mathrm{det}\,h<+\infty$ almost everywhere.
\end{definition}

We already know the following definition and characterization for singular Griffiths semi-positivity.

\begin{definition}\label{def Griffiths semi-posi sing}$($\textnormal{cf.\,\cite[Definition\,2.2.2]{PT18},\,\cite[Definition\,1.2]{Rau15}}$)$ 
    We say that a singular Hermitian metric $h$ on $E$ is 
    \begin{itemize}
        \item \textit{Griffiths semi-negative} if $\log|u|_h$ is plurisubharmonic for any local holomorphic section $u\in\mathcal{O}(E)$ of $E$.
        \item \textit{Griffiths semi-positive} if the dual metric $h^*$ on $E^*$ is Griffiths semi-negative.
    \end{itemize}
\end{definition}

\begin{proposition}\label{characterization of sing Grif semi-posi}$($\textnormal{cf.\,\cite[Proposition\,3.1]{BP08}}$)$
    Let $h$ be a singular Hermitian metric on $E$. Then the following conditions are equivalent.
    \begin{itemize}
        \item [($a$)] $h$ is Griffiths semi-negative, i.e. $\log|u|_h$ is plurisubharmonic for any local holomorphic section $u\in\mathcal{O}(E)$,
        \item [($b$)] $|u|^2_h$ is plurisubharmonic for any local holomorphic section $u\in\mathcal{O}(E)$,
        \item [($c$)] $T^h_{\xi u}$ is plurisubharmonic for any local holomorphic section $u\in\mathcal{O}(E)$ and any $\xi\in\mathbb{C}^n$, where $\xi u=(u_1,\cdots,u_n)$ and $u_j=\xi_ju$,
        \item [($d$)] for any Stein subset $S$ which satisfies $E|_S$ is trivial, there exists a sequence of smooth Griffiths semi-negative Hermitian metrics $(h_\nu)_{\nu\in\mathbb{N}}$ decreasing to $h$ pointwise a.e. on any relatively compact Stein subset of $S$, where $h_\nu:=h\ast\rho_\nu$.
    \end{itemize} 
    Here, $(\rho_\nu)_{\nu\in\mathbb{N}}$ is an approximate identity on $S$.
\end{proposition}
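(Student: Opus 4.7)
The plan is to establish the cycle $(a) \iff (b) \iff (c)$ by elementary distributional arguments, and then close the loop by $(b) \iff (d)$ via mollification. For $(a) \Rightarrow (b)$: since $|u|^2_h = \exp(2\log|u|_h)$ and $t \mapsto e^{2t}$ is convex increasing, pluri-subharmonicity is preserved. For $(b) \Rightarrow (a)$: apply $(b)$ to $e^f u$ for an arbitrary local holomorphic $f$, obtaining that $|e^f u|^2_h = e^{2\mathrm{Re}(f)}|u|^2_h$ is psh for every $f$; reading the resulting inequality as a quadratic form in $df$ (the standard Kiselman trick) yields $|u|^2_h \cdot \idd |u|^2_h \geq \partial|u|^2_h \wedge \bar\partial|u|^2_h$, i.e.\ $\idd \log|u|_h \geq 0$. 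In the singular setting this is legitimized by passing to the smooth mollifications produced in Step $(d)$ below and taking limits.

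For $(b) \iff (c)$: substituting $u_j = \xi_j u$ with $\xi \in \mathbb{C}^n$ constant into the definition of $T^h$ gives
\begin{align*}
T^h_{\xi u} = |u|^2_h \sum_{j,k} \xi_j \bar\xi_k \widehat{dz_j\wedge d\bar z_k}.
\end{align*}
Since the $\xi_j \bar\xi_k$ and the $(n-1,n-1)$-forms $\widehat{dz_j\wedge d\bar z_k}$ have constant coefficients, a direct distributional computation using the relation $i dz_l \wedge d\bar z_m \wedge \widehat{dz_j\wedge d\bar z_k} = \delta_{lj}\delta_{mk}\, dV_{\mathbb{C}^n}$ yields
\begin{align*}
\idd T^h_{\xi u} = \Bigl( \sum_{j,k} \xi_j \bar\xi_k \frac{\partial^2 |u|^2_h}{\partial z_j \partial \bar z_k} \Bigr) dV_{\mathbb{C}^n},
\end{align*}
so positivity as a current for every $\xi$ and every local holomorphic $u$ is precisely $\idd|u|^2_h \geq 0$ for every such $u$.

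For $(b) \Rightarrow (d)$: after trivializing $E|_S$, view $h$ as a measurable Hermitian matrix-valued function and set $h_\nu := h \ast \rho_\nu$ entry-wise, smooth on any relatively compact subset of $S$. To verify that $h_\nu$ is smoothly Griffiths semi-negative, by Proposition \ref{characterization of Grif posi for smooth} it suffices to show that $|u|^2_{h_\nu}$ is psh for every local holomorphic $u$. The key identity is
\begin{align*}
|u|^2_{h_\nu}(z) = \int \bigl\langle h(z-w)\, u(z), u(z) \bigr\rangle \rho_\nu(w)\, dV_w,
\end{align*}
and for each fixed $w$ the section $\tilde u_w(z') := u(z'+w)$ is holomorphic with $|\tilde u_w|^2_h(z-w)$ equal to the integrand; by $(b)$ applied to $\tilde u_w$ together with translation invariance, the integrand is psh in $z$, and integration against $\rho_\nu \geq 0$ preserves psh-ness. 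For the decreasing property, apply $(b)$ to constant sections $a \in \mathbb{C}^r$ to get $|a|^2_h$ psh; then $|a|^2_{h_\nu} = |a|^2_h \ast \rho_\nu \searrow |a|^2_h$ a.e.\ by the standard mollifier property for psh functions with radially symmetric $\rho_\nu$, which is precisely $h_\nu \searrow h$ pointwise a.e.

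Finally $(d) \Rightarrow (a)$ is immediate: each $h_\nu$ being smooth Griffiths semi-negative yields $\log|u|_{h_\nu}$ psh by Proposition \ref{characterization of Grif posi for smooth}, and $\log|u|_{h_\nu} \searrow \log|u|_h$ a.e., whose decreasing limit is psh. The main obstacle I anticipate is the $(b) \Rightarrow (d)$ direction, specifically the ``moving holomorphic section'' argument above, since there is no clean identity $|u|^2_{h_\nu} = |u|^2_h \ast \rho_\nu$ for non-constant $u$ and one must instead replace $u$ by the translated holomorphic section $\tilde u_w$ before invoking $(b)$.
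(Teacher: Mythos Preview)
Your argument is correct and is essentially the standard one. Note that the paper does not supply its own proof of this proposition: it is stated with a citation to \cite[Proposition\,3.1]{BP08} and used as background, so there is nothing to compare against here beyond confirming that your route is the expected one. In particular, the translated-section identity $|u|^2_{h_\nu}(z)=\int |\tilde u_w|^2_h(z-w)\rho_\nu(w)\,dV_w$ with $\tilde u_w(\zeta)=u(\zeta+w)$ is exactly the device the paper itself reuses later (see the proof of Proposition~\ref{characterization of tr-posi on nbd}), and the decrease $h_\nu\searrow h$ via psh-ness of $|a|^2_h$ for constant $a$ is likewise the standard step.

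One small organizational remark: your direct $(b)\Rightarrow(a)$ via the quadratic/Kiselman trick is, as you acknowledge, only valid after passing to the smooth approximants from $(d)$. Since you establish $(b)\Rightarrow(d)$ without using $(a)$, it is cleaner to present the implication simply as $(b)\Rightarrow(d)\Rightarrow(a)$ and omit the quadratic discussion, which otherwise reads as a separate argument that is not actually carried out in the singular setting.
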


Recently, this triviality of $E$ within the condition $(d)$ was omitted in \cite{DNWZ23}.
Using Proposition \ref{characterization of Grif posi for smooth}, we introduce a definition of Griffiths positivity with respect to singular Hermitian metric.

\begin{definition}\label{def of strictly Grif posi as sing}
    Let $X$ be a complex manifold and $E$ be a holomorphic vector bundle on $X$. 
    We say that a singular Hermitian metric $h$ on $E$ is 
    \begin{itemize}
        \item \textit{Griffiths negative at point} $x\in X$ if there exist an open neighborhood $U$ of $x$ and $\delta>0$ such that for any local holomorphic section $u\in H^0(U,E)$, we have 
        \begin{align*}
            \idd|u|^2_h\geq\delta|u|^2_h\idd|z|^2
        \end{align*}
        on $U$ in the sense of currents, where $(z_1,\cdots,z_n)$ is local coordinates of $U$,
        \item \textit{Griffiths negative} if $h$ is Griffiths negative at any point $x\in X$,
        \item \textit{Griffiths positive} (resp. \textit{at point} $x\in X$) if the dual metric $h^*$ on $E^*$ is Griffiths negative (resp. at point $x\in X$).
    \end{itemize}
\end{definition}

There is a definition close to Definition \ref{def of strictly Grif posi as sing}, \cite[Definition\,6.1]{Rau15}, 
which assumes that $F=\{z\mid\mathrm{det}\,h(z)=0\}$ is a closed set and that there exists an exhaustion of open sets $\{U_j\}_{j\in\mathbb{N}}$ of $F^c$ such that $\mathrm{det}\,h>1/j$ on $U_j$.
In this paper, we adopt Definition \ref{def Griffiths semi-posi sing} as a more general setting and show that this definition induces singular positivity for the tautological line bundle.
We obtain an equivalence between positivity and existence of the approximation, as in Proposition \ref{characterization of sing Grif semi-posi}.

\begin{proposition}\label{characterization of Grif nega by smoothing}
    Let $h$ be a singular Hermitian metric on $E$. Then the following conditions are equivalent.
    \begin{itemize}
        \item [($a$)] $h$ is Griffiths negative, 
        \item [($b$)] for any $x\in X$ there exist an open neighborhood $U$ of $x$ and $\delta>0$ such that for any local holomorphic section $u\in H^0(U,E)$ and any $\xi\in\mathbb{C}^n$, 
        \begin{align*}
            \idd T^h_{\xi u}\geq\delta|u|^2_hdV_{\mathbb{C}^n},
        \end{align*}
        where $\xi u=(u_1,\cdots,u_n)$ and $u_j=\xi_ju$,
        \item [($c$)] for any Stein subset $S$ which satisfies $E|_S$ is trivial, 
        there exists a sequence of smooth Griffiths negative Hermitian metrics $(h_\nu)_{\nu\in\mathbb{N}}$ on $S$ 
        decreasing to $h$ pointwise a.e. 
        which satisfies there exists $\delta>0$ such that 
        \begin{align*}
            \quad \idd|u|^2_{h_\nu}\geq\delta|u|^2_{h_\nu}\idd|z|^2,  \quad
            i.e. \quad \idd T^{h_\nu}_{\xi u}\geq\delta|u|^2_{h_\nu}dV_{\mathbb{C}^n},
        \end{align*}
        for any $\nu\in\mathbb{N}$, any $x\in\widetilde{S}$, any $0\ne\xi\in T_{X,x}$ and any $0\ne u\in E_x$.
    \end{itemize}
    Furthermore, if $h$ is Griffiths negative then the following condition $(d)$ holds, 
    \begin{itemize}
        \item [($d$)] $\log|u|_h$ is strictly plurisubharmonic for any local holomorphic section $u\in\mathcal{O}(E)$.
    \end{itemize}
\end{proposition}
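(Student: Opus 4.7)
I will prove $(a)\iff(b)$ by a direct local identity, $(a)\iff(c)$ by mollification on a Stein trivialization, and then deduce $(d)$ from $(a)$ via the smoothings produced in $(c)$.

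For $(a)\iff(b)$, the key point is that since $u_j=\xi_j u$, the form $T^h_{\xi u}=|u|^2_h\cdot\gamma_\xi$ factors as the scalar $|u|^2_h$ times the constant-coefficient $(n-1,n-1)$-form $\gamma_\xi=\sum_{j,k}\xi_j\overline\xi_k\widehat{dz_j\wedge d\overline z_k}$. Hence $\idd T^h_{\xi u}=\idd|u|^2_h\wedge\gamma_\xi$ in the sense of currents, and the basic linear-algebraic identity $(i\alpha_{jk}dz_j\wedge d\overline z_k)\wedge\gamma_\xi=\bigl(\sum\alpha_{jk}\xi_j\overline\xi_k\bigr)dV_{\mathbb{C}^n}$ applied in turn to $\alpha=\idd|u|^2_h$ and $\alpha=|u|^2_h\,\idd|z|^2$ shows that each of the two inequalities can be rewritten as the other (with $\delta$ absorbing the factor $|\xi|^2$). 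This is just the distributional version of the smooth calculation recalled before Proposition \ref{characterization of Grif posi for smooth}.

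For $(a)\Rightarrow(c)$, I will mimic the convolution argument in the proof of Proposition \ref{characterization of tr-posi on nbd}. Fix a Stein open $S\subset X$ over which $E$ is trivial, a relatively compact Stein subdomain $\widetilde S\Subset S$, and an approximate identity $(\rho_\nu)$ on $S$. Set $h_\nu:=h*\rho_\nu$ by coefficient-wise convolution in the trivialization. The monotone decrease $h_\nu\searrow h$ a.e.\ on $\widetilde S$ follows from Proposition \ref{characterization of sing Grif semi-posi}(d), since $(a)$ in particular entails Griffiths semi-negativity. To propagate the strict inequality, use the translate trick: for each $w$ the shifted metric $h^{(w)}(z):=h(z-w)$ satisfies $|u|^2_{h^{(w)}}(z)=|v|^2_h(z-w)$ with $v(z'):=u(z'+w)$ a holomorphic section, and hypothesis $(a)$ applied to $v$ gives $\idd_z|u|^2_{h^{(w)}}(z)\geq\delta|u|^2_{h^{(w)}}(z)\idd|z|^2$ on $\widetilde S$. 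Integrating against $\rho_\nu(w)\,dV_w$ yields
\begin{align*}
    \idd|u|^2_{h_\nu}\geq\delta|u|^2_{h_\nu}\,\idd|z|^2
\end{align*}
on $\widetilde S$ for every $\nu$, so by Proposition \ref{characterization of Grif posi for smooth} the smooth metric $h_\nu$ is Griffiths negative with the uniform constant $\delta$.

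The converse $(c)\Rightarrow(a)$ is a routine limit argument exactly as in Proposition \ref{exists smoothing then (strictly) omega-SH as currents}: pair the smooth inequality with a positive $(n-1,n-1)$ test form and pass $\nu\to\infty$ using monotone convergence and Fatou's lemma. Finally, $(a)\Rightarrow(d)$ follows by applying the smooth equivalence in Proposition \ref{characterization of Grif posi for smooth} to each $h_\nu$: one obtains $\idd\log|u|^2_{h_\nu}\geq\delta'\idd|z|^2$ with a uniform $\delta'$, and since $\log|u|^2_{h_\nu}\searrow\log|u|^2_h$ by monotonicity of $\log$, the same Fatou-style argument yields $\idd\log|u|^2_h\geq\delta'\idd|z|^2$ in the current sense, i.e.\ strict plurisubharmonicity of $\log|u|_h$. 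The main technical obstacle is the $(a)\Rightarrow(c)$ step: one must carefully arrange the shrinking of open sets so that the translated section $v=u(\bullet+w)$ lies in a region where the local inequality of Definition \ref{def of strictly Grif posi as sing} with the uniform constant $\delta$ remains valid, and one must simultaneously arrange decreasing-ness of $h_\nu$ on $\widetilde S$; this mirrors the neighborhood bookkeeping in the proof of Proposition \ref{characterization of tr-posi on nbd}.
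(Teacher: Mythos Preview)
Your proposal is correct and follows essentially the same approach as the paper: $(a)\iff(b)$ is a direct rewording (the paper simply calls it ``trivial''; you spell out the linear-algebraic identity), $(b)\Rightarrow(c)$ is the mollification $h_\nu=h*\rho_\nu$ with the translate trick (the paper defers this to \cite[Proposition~6.2]{Rau15}), $(c)\Rightarrow(b)$ is the Fatou-type passage to the limit, and $(c)\Rightarrow(d)$ uses the uniform bound $\idd\log|u|^2_{h_\nu}\geq\delta\,\idd|z|^2$ obtained from Cauchy--Schwarz together with the decreasing limit. The only cosmetic difference is that for $(d)$ you invoke Proposition~\ref{characterization of Grif posi for smooth}, whereas the paper writes out the Cauchy--Schwarz inequality for $\idd\log|u|^2_{h_\nu}$ explicitly; note that the \emph{statement} of that proposition only gives strict plurisubharmonicity of $\log|u|^2_{h_\nu}$, so to extract the uniform constant $\delta'=\delta$ you are really using its proof, which is exactly the Cauchy--Schwarz computation the paper performs.
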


\begin{proof}
    $(a) \iff (b)$ is trivial and just rewording.

    $(c) \Longrightarrow (b)$. It is sufficient to show that for any fiexd test function $\phi\in\mathscr{D}(U)_{\geq0}$, 
    \begin{align*}
        0\leq(\idd T^h_{\xi u}-\delta|u|^2_hdV_{\mathbb{C}^n})(\phi)&=\int_U T^h_{\xi u}(z)\wedge\idd\phi(z)-\delta|u|^2_h(z)\phi(z)dV_{\mathbb{C}^n}\\
        &=\int_U|u|^2_h(z)\Bigl(\sum \xi_j\overline{\xi}_k\phi_{jk}(z)-\delta\phi(z)\Bigr)dV_{\mathbb{C}^n},
    \end{align*}
    where $\phi_{jk}=\frac{\partial^2\phi}{\partial z_j\partial \overline{z}_k}$,
    for any local section $0\ne u\in H^0(U,E)$ and any $0\ne\xi\in\mathbb{C}^n$. 
    By the assumption, $\idd T^{h_\nu}_{\xi u}\geq\delta|u|^2_{h_\nu}dV_{\mathbb{C}^n}\geq\delta|u|^2_hdV_{\mathbb{C}^n}$ on $U$ in the sense of currents, i.e. 
    \begin{align*}
        \delta\int_U|u|^2_h(z)\phi(z)dV_{\mathbb{C}^n}\leq\int_U|u|^2_{h_\nu}(z)\sum \xi_j\overline{\xi}_k\phi_{jk}(z)dV_{\mathbb{C}^n}.
    \end{align*}

    Let $K:=\{x\in U\mid \lim_{\nu\to+\infty}|u|^2_{h_\nu}(x)=|u|^2_h(x)\}$, then the Lebesgue measure of $U\setminus K$ is zero by $(h_\nu)_{\nu\in\mathbb{N}}$ decreasing to $h$ pointwise a.e.
    By Fatou's lemma, we have that 
    \begin{align*}
        \delta\int_U|u|^2_h\phi\, dV_{\mathbb{C}^n}&\leq\limsup_{\nu\to+\infty}\int_U|u|^2_{h_\nu}(z)\sum \xi_j\overline{\xi}_k\phi_{jk}(z)dV_{\mathbb{C}^n}\\
        &\leq\int_U\limsup_{\nu\to+\infty}|u|^2_{h_\nu}(z)\sum \xi_j\overline{\xi}_k\phi_{jk}(z)dV_{\mathbb{C}^n}
        \!=\!\!\int_K\lim_{\nu\to+\infty}|u|^2_{h_\nu}(z)\sum \xi_j\overline{\xi}_k\phi_{jk}(z)dV_{\mathbb{C}^n}\\
        &=\int_K|u|^2_h(z)\sum \xi_j\overline{\xi}_k\phi_{jk}(z)dV_{\mathbb{C}^n}
        =\int_U|u|^2_h(z)\sum \xi_j\overline{\xi}_k\phi_{jk}(z)dV_{\mathbb{C}^n}.
    \end{align*}

    $(b) \Longrightarrow (c)$. This is already known in \cite[Proposition\,6.2]{Rau15} and can construct the sequence $(h_\nu)_{\nu\in\mathbb{N}}$ by defining $h_\nu:=h\ast\rho_\nu$, where $(\rho_\nu)$ is an approximate identity on $S$.
    Here, for any constant section $\sigma\in H^0(U,E)$, we have that $|\sigma|^2_{h_\nu}=|\sigma|^2_h\ast\rho_\nu$ decreasing to $|\sigma|^2_h$ pointwise a.e. 
    by plurisubharmonicity of $|\sigma|^2_h$.

    $(c) \Longrightarrow (d)$. By the assumption, we get $-\langle i\Theta_{E,h_\nu}a,a\rangle_{h_\nu}\geq\delta|a|^2_{h_\nu}\idd|z|^2$ for any point $x\in X$ and any $a\in E_x$.
    Therefore, we have that 
    \begin{align*}
        \idd\log|u|^2_{h_\nu}&=i\frac{|s|^2_{h_\nu}|D'^{h_\nu}s|^2_{h_\nu}-\langle D'^{h_\nu}s,s\rangle_{h_\nu}\wedge\langle s,D'^{h_\nu}s\rangle_{h_\nu}}{|s|^4_{h_\nu}}-\frac{\langle i\Theta_{E,h_\nu}s,s\rangle_{h_\nu}}{|s|^2_{h_\nu}}\\
        &\geq-\frac{\langle i\Theta_{E,h_\nu}s,s\rangle_{h_\nu}}{|s|^2_{h_\nu}}
        \geq\delta\idd|z|^2,
    \end{align*}
    by Chauchy-Schwarz inequality. 
    Hance, $\log|s|^2_{h_\nu}-\delta|z|^2$ are smooth plurisubharmonic functions decreasing to $\log|s|^2_h-\delta|z|^2$, which is also plurisubharmonic. 
\end{proof}

\begin{remark}
    In general, singular Griffiths negativity does not hold from condition $(d)$. 
    However, the equivalence holds in the case of holomorphic line bundles.
\end{remark}

In fact, even if $\varphi$ is strictly plurisubharmonic, $e^\varphi$ is not necessarily strictly plurisubharmonic.
For example, if we take $\log|z_1|+|z|^2$ as a strictly plurisubharmonic function on $\mathbb{C}^n$ then $|z_1|e^{|z|^2}$ is not strictly plurisubharmonic.

Finally, we introduce the following definition of Griffiths quasi-positivity.

\begin{definition}\label{def of Grif quai-posi as sing}
    Let $X$ be a complex manifold and $E$ be a holomorphic vector bundle on $X$. 
    We say that a singular Hermitian metric $h$ on $E$ is 
    \begin{itemize}
        \item \textit{Griffiths quasi-negative} if $h$ is Griffiths semi-negative on $X$ and there exists at least one point $x_0\in X$ such that $h$ is Griffiths negative at $x_0$,
        \item \textit{Griffiths quasi-positive} if the dual metric $h^*$ on $E^*$ is Griffiths quasi-negative.
    \end{itemize}
\end{definition}

\subsection{Definition of singular $\omega$-trace positivity and its relationship to singular Griffiths positivity}

We define $\omega$-trace positivity for singular Hermitian metric using Proposition \ref{characterization of tr-posi at point} and \ref{characterization of tr-posi on nbd}.
Moreover, we establish the relation to Griffiths positivity.

\begin{definition}
    Let $\omega$ be a Hermitian metric and $E$ be a holomorphic vector bundle on $X$. 
    We say that a singular Hermitian metric $h$ on $E$ is 
    \begin{itemize}
        \item $\omega$-\textit{trace semi}-\textit{negative} if for any $x\in X$ and any local holomorphic section $s\in\mathcal{O}(E)_x$, $\log|s|^2_h$ is $\omega$-subharmonic on a neighborhood of $x$,
        \item $\omega$-\textit{trace semi}-\textit{positive} if the dual metric $h^*$ on $E^*$ is $\omega$-trace semi-negative.
    \end{itemize}
\end{definition}

This definition is clearly consistent with the case of line bundles, i.e. Definition \ref{def of tr-posi for line bdl case}.

\begin{definition}
    Let $\omega$ be a Hermitian metric and $E$ be a holomorphic vector bundle on $X$. 
    We say that a singular Hermitian metric $h$ on $E$ is 
    \begin{itemize}
        \item $\omega$-\textit{trace} \textit{negative at point} $x\in X$ if there exist a neighborhood $U$ of $x$, $c_x>0$ and a sequence of smooth Hermitian metrics $(h_\nu)_{\nu\in\mathbb{N}}$ on $U$ decreasing to $h$ pointwise
        such that for any $\nu\in\mathbb{N}$ and any local holomorphic section $s\in H^0(U,E)$, we have that $\Delta^C_\omega\log|s|^2_{h_\nu}\geq c_x$ on $U$,
        \item $\omega$-\textit{trace} \textit{negative} if $h$ is $\omega$-trace negative at any point $x\in X$,
        \item $\omega$-\textit{trace} \textit{positive} (resp. \textit{at point} $x\in X$) if the dual metric $h^*$ on $E^*$ is $\omega$-trace negative (resp. at point $x\in X$).
    \end{itemize}
\end{definition}

By Theorem \ref{omega-SH for any omega then PSH} and Proposition \ref{characterization of Grif nega by smoothing}, we have the following.

\begin{theorem}\label{characterization of sing Grif semi-posi and tr-omega semi-posi} 
    Let $X$ be a complex manifold and $E$ be a holomorphic vector bundle on $X$. 
    Then the following conditions are equivalent
    \begin{itemize}
        \item [$(a)$] $E$ has a singular Hermitian metric $h_G$ with Griffiths semi-positivity.
        \item [$(b)$] $E$ has a singular Hermitian metric $h_{tr}$ which 
         is $\omega$-trace semi-positive $($with approximation$)$ for any Hermitian metric $\omega$.
    \end{itemize}
    In particular, if $(a)$ holds then we can take $h_G=h_{tr}$ in $(b)$, and if $(b)$ holds then $h_G$ in $(a)$ can be taken to coincide with $h_{tr}$ almost everywhere.
\end{theorem}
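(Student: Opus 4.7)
The plan is to prove the two implications separately, using Proposition~\ref{psh then omega-SH} for $(a)\Rightarrow(b)$ and Theorem~\ref{omega-SH for any omega then PSH} for $(b)\Rightarrow(a)$. Both directions are naturally phrased through the dual metric, since singular Griffiths semi-positivity and singular $\omega$-trace semi-positivity are defined via $h^{*}$.

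For $(a)\Rightarrow(b)$, I take $h_{tr}:=h_G$. For any local holomorphic section $s$ of $E^{*}$, Griffiths semi-positivity of $h_G$ gives that $\log|s|^{2}_{h_G^{*}}$ is plurisubharmonic, and Proposition~\ref{psh then omega-SH} then shows this function is $\omega$-subharmonic for every Hermitian metric $\omega$; hence $h_G$ is $\omega$-trace semi-positive for every $\omega$. For the \emph{with approximation} clause, Proposition~\ref{characterization of sing Grif semi-posi}(d) furnishes, on any Stein trivializing subset, a decreasing sequence of smooth Griffiths semi-negative metrics $h_\nu^{*}\searrow h_G^{*}$; each dual $h_\nu$ is then smooth Griffiths semi-positive, and hence smooth $\omega$-trace semi-positive for every $\omega$, since the trace of a Griffiths semi-positive curvature operator against a positive $(1,1)$-form is semi-positive.

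For $(b)\Rightarrow(a)$, I would start with a local holomorphic section $s$ of $E^{*}$. By hypothesis $\log|s|^{2}_{h_{tr}^{*}}$ is $\omega$-subharmonic for every Hermitian metric $\omega$, so Theorem~\ref{omega-SH for any omega then PSH} yields a plurisubharmonic function agreeing with it almost everywhere. To assemble this section-wise information into a genuine singular Griffiths semi-positive metric, I invoke the \emph{with approximation} clause, which supplies smooth $\omega$-trace semi-positive approximants $h_\nu$ of $h_{tr}$. A short argument with a one-parameter family of degenerate Hermitian metrics concentrating mass along a chosen tangent direction $\xi$ (scaling one coordinate by $\varepsilon$ and the others by $\varepsilon^{-1}$, then multiplying through by $\varepsilon$ and sending $\varepsilon\to 0$) shows that a smooth metric which is $\omega$-trace semi-positive for every $\omega$ is automatically Griffiths semi-positive. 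Hence each $h_\nu$ is smooth Griffiths semi-positive; applying Proposition~\ref{characterization of sing Grif semi-posi}(d) in the decreasing direction to the duals $h_\nu^{*}$ yields a singular Griffiths semi-positive metric $h_G$ coinciding with $h_{tr}$ almost everywhere.

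The main obstacle I anticipate is the coherence issue in $(b)\Rightarrow(a)$: Theorem~\ref{omega-SH for any omega then PSH} is applied one section $s$ at a time, and reassembling the resulting plurisubharmonic representatives into a single singular Hermitian metric $h_G$ on $E$ is what forces the use of the \emph{with approximation} hypothesis rather than of the bare $\omega$-trace semi-positivity. The key smooth step, "$\omega$-trace semi-positive for every $\omega$ implies Griffiths semi-positive", carries the geometric content of the implication; the rest is a standard decreasing-limit argument for plurisubharmonic functions via the duality.
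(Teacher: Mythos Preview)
Your argument for $(a)\Rightarrow(b)$ is correct and matches the paper's (terse) proof.

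For $(b)\Rightarrow(a)$ there is a genuine gap. The ``with approximation'' hypothesis gives, \emph{for each fixed} Hermitian metric $\omega$, a local sequence $(h_\nu^{\omega})_{\nu}$ of smooth $\omega$-trace semi-positive metrics increasing to $h_{tr}$. It does \emph{not} give a single sequence $(h_\nu)_\nu$ which is simultaneously $\omega$-trace semi-positive for every $\omega$. Your degenerating-metric argument (which is correct as a smooth statement and parallels the proof of Theorem~\ref{omega-SH for any omega then PSH}) requires precisely this: that one smooth metric be $\omega$-trace semi-positive for \emph{all} $\omega$. Since the approximants furnished by the hypothesis may depend on $\omega$, you cannot conclude that any $h_\nu^{\omega}$ is Griffiths semi-positive, and the rest of the argument collapses.

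The paper avoids this by not using the approximation hypothesis at all in $(b)\Rightarrow(a)$; it works from bare $\omega$-trace semi-positivity. From Theorem~\ref{omega-SH for any omega then PSH} one gets $i\partial\overline{\partial}\log|u|^2_{h_{tr}^*}\geq 0$ as a current for every local holomorphic $u\in\mathcal{O}(E^*)$, and then, via a lemma on composition with convex increasing functions, $i\partial\overline{\partial}|u|^2_{h_{tr}^*}\geq 0$ as a current, i.e.\ $i\partial\overline{\partial}T^{h_{tr}^*}_{\xi u}\geq 0$ for all $\xi$. Now the paper constructs its \emph{own} approximation by mollifying the metric, $h_\nu^*:=h_{tr}^*\ast\rho_\nu$, and checks directly that $i\partial\overline{\partial}T^{h_\nu^*}_{\xi u}\geq 0$ by writing $T^{h_\nu^*}_{\xi u}$ as an integral of translates of $T^{h_{tr}^*}_{\xi u}$. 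Thus each $h_\nu^*$ is smooth Griffiths semi-negative by Proposition~\ref{characterization of sing Grif semi-posi}(c), and one defines $h_G^*$ as the entrywise pointwise limit of $h_\nu^*$; since $\log|u|^2_{h_\nu^*}$ decreases to $\log|u|^2_{h_G^*}$, the limit is plurisubharmonic, so $h_G$ is Griffiths semi-positive and equals $h_{tr}$ a.e. The point is that the mollified sequence is independent of any choice of $\omega$, which is exactly what your approach was missing.
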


\begin{proof}
    $(a) \Longrightarrow (b)$. This follows immediately by Proposition \ref{psh then omega-SH}.

    $(b) \Longrightarrow (a)$. Let $h:=h_{tr}$ for short. By the assumption and Theorem \ref{omega-SH for any omega then PSH}, we obtain $\idd\log|u|^2_{h^*}\geq0$ in the sense of currents on $U$ for any local holomorphic section $u\in H^0(U,E^*)$. 
    From the following lemma, we get $\idd|u|^2_{h^*}\geq0$ in the sense of currents on $U$.
    In other words, $\idd T^{h^*}_{\xi u}\geq0$ in the sense of currents on $U$ for any $0\ne\xi\in\mathbb{C}^n$, i.e. 
    \begin{align*}
        0\leq\idd T^{h^*}_{\xi u}(\phi)=\int_U T^{h^*}_{\xi u}\wedge\idd\phi
    \end{align*}
    for any test function $\phi\in\mathscr{D}(U)_{\geq0}$. 
    Here, we get 
    \begin{align*}
        |u|^2_{h^*_\nu}(z)=\int|u|^2_{h^*_{(w)}}(z)\rho_\nu(w)dV_w, \quad T^{h^*_\nu}_{\xi u}(z)=\int T^{h^*_{(w)}}_{\xi u}(z)\rho_\nu(w)dV_w,
    \end{align*}
    where $h^*_{(w)}(z):=h^*(z-w)$. We have that 
    \begin{align*}
        \idd T^{h_\nu^*}_{\xi u}(\phi)&=\int_U T^{h_\nu^*}_{\xi u}(z)\wedge\idd\phi(z)
        =\int_z\Bigl\{\int_wT^{h^*_{(w)}}_{\xi u}(z)\rho_\nu(w)dV_w\Bigr\}\wedge\idd\phi(z)\\
        &=\int_{w\in\mathrm{supp}\,\rho_\nu} \rho_\nu(w)\Bigl\{\int_z T^{h^*_{(w)}}_{\xi u}(z)\wedge\idd\phi(z)\Bigr\}dV_w\\
        &\geq0,
    \end{align*}
    where the translation of $u$ is also holomorphic. Hance, $h^*_\nu$ is smooth Griffiths semi-positive and decreasing to $h$ pointwise a.e.
    
    Define the singular Hermitian metric $h^*_G$ by $h^*_G(z):=\lim_{\nu\to+\infty}h^*_\nu(z)$. 
    In detail, $h^*_G$ is defined as follows. Let $h^*_G(z):=(H^*_{jk}(z))$ and $h^*(z):=(h^*_{jk}(z))$ be local matrix representations for the same holomorphic frame. 
    For constant vectors $v_1={}^t(1,0,\cdots,0),\cdots,v_r$ $={}^t(0,\cdots,0,1)$, define $H^*_{jj}$ as the decreasing limit of $|v_j|^2_{h^*_\nu}$ converging pointwise, i.e. $H^*_{jj}(z):=\lim_{\nu\to+\infty}|v_j|^2_{h^*_\nu}(z)$, then $H^*_{jj}$ is a function with plurisubharmonicity and coincides with $h^*_{jj}$ a.e. 
    For constant vectors $v={}^t(1,1,\cdots,0)$ and $v'={}^t(1,i,\cdots,0)$, we have that $\lim_{\nu\to+\infty}|v|^2_{h^*_\nu}:=|v|^2_{h^*_G}=H^*_{11}+H^*_{22}+2\mathrm{Re}H^*_{12}$ and $\lim_{\nu\to+\infty}|v'|^2_{h^*_\nu}:=|v'|^2_{h^*_G}=H^*_{11}+H^*_{22}+2\mathrm{Im}H^*_{12}.$
    From this, $H^*_{12}$ can be defined as a function and coincides with $h^*_{12}$ a.e. By finitely repeating this process, $h^*_G$ is defined and coincides with $h^*$ a.e.

    Since 
    $\log|u|^2_{h^*_G}=\lim_{\nu\to+\infty}\log|u|^2_{h^*_\nu}$ is also plurisubharmonic by decreasing-ness, we have that 
    $h_G$ is Griffiths semi-positive.
\end{proof}

\begin{lemma}
    Let $\varphi\in\mathscr{L}^1_{loc}(U)$ be a locally integrable function on $U$. If $\varphi$ satisfies $\idd\varphi\geq0$ in the sense of currents on $U$, then $\idd\chi\circ\varphi\geq0$ in the sense of currents on $U$ for any smooth increasing convex function $\chi$.
\end{lemma}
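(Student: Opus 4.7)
The plan is to reduce to the smooth case by mollification, apply the chain-rule identity for $\idd$, and pass to the distributional limit. Since $\idd\varphi\geq0$ on $U$ and $\varphi\in\mathscr{L}^1_{loc}(U)$, the function $\varphi$ coincides almost everywhere with a plurisubharmonic function. Working locally on a ball $B\subset U$, the mollifications $\varphi_\varepsilon:=\varphi\ast\rho_\varepsilon$ with an approximate identity $(\rho_\varepsilon)_{\varepsilon>0}$ are smooth plurisubharmonic functions on $B_{\varepsilon_0}:=\{z\in B\mid d(z,\partial B)>\varepsilon_0\}$ for $\varepsilon<\varepsilon_0$, and they decrease pointwise a.e. to $\varphi$ as $\varepsilon\to+0$.

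For the smooth plurisubharmonic $\varphi_\varepsilon$, the chain rule yields
\begin{align*}
    \idd(\chi\circ\varphi_\varepsilon)=\chi''\circ\varphi_\varepsilon\cdot i\partial\varphi_\varepsilon\wedge\overline{\partial}\varphi_\varepsilon+\chi'\circ\varphi_\varepsilon\cdot\idd\varphi_\varepsilon\geq0,
\end{align*}
since $\chi'\geq0$ and $\chi''\geq0$ by the increasing-convex hypothesis, $i\partial\varphi_\varepsilon\wedge\overline{\partial}\varphi_\varepsilon$ is a positive $(1,1)$-form, and $\idd\varphi_\varepsilon\geq0$ by plurisubharmonicity. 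Hence $\chi\circ\varphi_\varepsilon$ is smooth plurisubharmonic, and because $\chi$ is increasing and $\varphi_\varepsilon\searrow\varphi$ a.e., also $\chi\circ\varphi_\varepsilon\searrow\chi\circ\varphi$ a.e.

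To pass this to a current inequality, test against any non-negative $(n-1,n-1)$-form $\alpha$ with compact support contained in $B_{\varepsilon_0}$:
\begin{align*}
    0\leq\langle\idd(\chi\circ\varphi_\varepsilon),\alpha\rangle=\int_B\chi\circ\varphi_\varepsilon\cdot\idd\alpha.
\end{align*}
For $\varepsilon<\varepsilon_0$ the integrand $\chi\circ\varphi_\varepsilon$ is sandwiched between the smooth upper bound $\chi\circ\varphi_{\varepsilon_0}$ and the lower bound $\chi(0)+\chi'(0)\varphi_\varepsilon$ coming from convexity, both $L^1$ on the compact support of $\alpha$ using $\varphi\in\mathscr{L}^1_{loc}(U)$. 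Dominated convergence then gives $\int_B\chi\circ\varphi\cdot\idd\alpha\geq0$, and since $B_{\varepsilon_0}\subset U$ was arbitrary, $\idd(\chi\circ\varphi)\geq0$ holds on $U$ in the sense of currents. The main technical point is precisely this $L^1_{loc}$ passage to the limit against the \emph{signed} test density $\idd\alpha$; it is handled by the convexity-based two-sided bound just described, in direct analogy with the mechanism already used in Proposition \ref{exists smoothing then (strictly) omega-SH as currents} and Proposition \ref{convex increasing of omega-SH is also omega-SH}.
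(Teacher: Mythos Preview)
Your argument is correct in substance and differs from the paper's in how the limit is taken. Both proofs begin by noting that the mollifications $\varphi_\varepsilon$ are smooth plurisubharmonic, so that $\chi\circ\varphi_\varepsilon$ is smooth plurisubharmonic by the chain rule, and that $\chi\circ\varphi_\varepsilon\searrow\chi\circ\varphi$ a.e. The paper then avoids testing against a signed density altogether: instead of passing to the limit in $\langle\idd(\chi\circ\varphi_\varepsilon),\alpha\rangle$, it observes that $\idd(\chi\circ\varphi)\geq0$ is equivalent to $(\chi\circ\varphi)\ast\rho_\varepsilon$ being plurisubharmonic for every $\varepsilon$, and obtains the latter as the pointwise decreasing limit of the smooth plurisubharmonic functions $(\chi\circ\varphi_\nu)\ast\rho_\varepsilon$ as $\nu\to\infty$. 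This double-mollification trick sidesteps any explicit $L^1$ domination argument. Your direct route through dominated convergence is equally valid and slightly more elementary, at the cost of having to supply the two-sided bound.

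One small imprecision: your lower bound $\chi(0)+\chi'(0)\varphi_\varepsilon$ still depends on $\varepsilon$, so as written it is not a single dominating function. The fix is immediate: since $\varphi_\varepsilon\geq\varphi$ pointwise for the plurisubharmonic representative and $\chi'(0)\geq0$, one has $\chi\circ\varphi_\varepsilon\geq\chi(0)+\chi'(0)\varphi$, which is fixed and lies in $L^1$ on the compact support of $\alpha$. With that adjustment your domination argument goes through cleanly.
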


\begin{proof}
    The assumption is equivalent to that $\varphi_\nu:=\varphi\ast\rho_\nu$ is plurisubharmonic for any $\nu>0$. 
    Therefore, we show that $(\chi\circ\varphi)_\varepsilon:=(\chi\circ\varphi)\ast\rho_\varepsilon$ is plurisubharmonic for any $\varepsilon>0$. 
    By smooth plurisubharmonicity of $\chi\circ\varphi_\nu$, the functions $(\chi\circ\varphi)^\nu_\varepsilon:=(\chi\circ\varphi_\nu)\ast\rho_\varepsilon$ are also smooth plurisubharmonic and decreasing to $(\chi\circ\varphi)_\varepsilon$ pointwise.
    Hance, this function $(\chi\circ\varphi)_\varepsilon$ is plurisubharmonic.
\end{proof}

\begin{corollary}
    If $E$ has a singular Hermitian metric $h$ with Griffiths positivity, then there exists a singular Hermitian metric on $E$ which is $\omega$-trace positive for any Hermitian metric $\omega$ and coincides with $h$ almost everywhere.  
\end{corollary}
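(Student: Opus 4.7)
The plan is to run essentially the same construction as in the proof of Theorem \ref{characterization of sing Grif semi-posi and tr-omega semi-posi}, while tracking the strict positivity constant that Griffiths positivity provides. Since $h$ is Griffiths positive on $E$, by Definition \ref{def of strictly Grif posi as sing} the dual metric $h^*$ is Griffiths negative on $E^*$. I would first invoke Proposition \ref{characterization of Grif nega by smoothing} applied to $h^*$ to obtain, on each sufficiently small trivializing Stein subset $S \subset X$, a sequence of smooth Griffiths negative Hermitian metrics $h^*_\nu := h^* \ast \rho_\nu$ decreasing pointwise a.e.\ to $h^*$ together with a constant $\delta>0$ such that
\begin{align*}
\idd \log|u|^2_{h^*_\nu} \geq \delta\, \idd|z|^2
\end{align*}
on $S$ for every local holomorphic section $u$ of $E^*$, as in the $(c)\Longrightarrow(d)$ step of that proposition.

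Next, fix a point $x\in X$ and an arbitrary Hermitian metric $\omega$. The smooth function $tr_\omega\idd|z|^2$ is strictly positive, hence bounded below by some $c_\omega>0$ on a small relatively compact neighborhood $U$ of $x$. Applying $tr_\omega$ to the previous inequality and setting $c_x := \delta c_\omega>0$ gives
\begin{align*}
\Delta^C_\omega \log|u|^2_{h^*_\nu} = tr_\omega\idd\log|u|^2_{h^*_\nu} \geq c_x
\end{align*}
on $U$ for every $\nu\in\mathbb{N}$ and every local holomorphic section $u\in H^0(U,E^*)$. This is precisely the defining inequality of $\omega$-trace negativity at $x$.

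Finally, to produce the limit singular metric required in the statement, I would define the pointwise limit $\widetilde{h^*}(z):=\lim_{\nu\to+\infty} h^*_\nu(z)$ via the polarization procedure applied to constant sections in a local trivialization, exactly as carried out in the $(b)\Longrightarrow(a)$ direction of Theorem \ref{characterization of sing Grif semi-posi and tr-omega semi-posi}. This yields a bona fide singular Hermitian metric on $E^*$ which coincides with $h^*$ almost everywhere and to which the sequence $(h^*_\nu)$ decreases pointwise. Its dual $\widetilde{h}:=(\widetilde{h^*})^*$ is then a singular Hermitian metric on $E$ coinciding with $h$ almost everywhere, and the inequality above witnesses that $\widetilde{h^*}$ is $\omega$-trace negative at every $x$, equivalently that $\widetilde{h}$ is $\omega$-trace positive, for every Hermitian metric $\omega$.

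I do not expect a substantive new obstacle beyond what is already handled in Theorem \ref{characterization of sing Grif semi-posi and tr-omega semi-posi}: the only delicate point is the well-definedness and globalization of the limit metric $\widetilde{h^*}$, and the polarization argument used there transfers without change, since it uses only that each $|v|^2_{h^*_\nu}$ for a constant section $v$ is a decreasing sequence of plurisubharmonic functions, a property that remains true here.
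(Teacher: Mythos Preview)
Your proposal is correct and is precisely the argument the paper leaves implicit: combine the approximation with uniform strict constant furnished by Proposition \ref{characterization of Grif nega by smoothing} with the polarization/limit-metric construction from the proof of Theorem \ref{characterization of sing Grif semi-posi and tr-omega semi-posi}, then take the $\omega$-trace to obtain the defining inequality for $\omega$-trace negativity of $\widetilde{h^*}$. The only point worth making explicit is that the sequence $h^*_\nu=h^*\ast\rho_\nu$ decreases \emph{everywhere} (not merely a.e.) to $\widetilde{h^*}$ on constant sections, since mollifications of plurisubharmonic functions do, which is exactly what the definition of $\omega$-trace negativity requires.
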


\begin{remark}
    For singular Hermitian metrics $h$ with a.e. Griffiths semi-positivity $($cf. \cite[Def\,\,2.2.2]{PT18}$)$, the proof of Theorem \ref{characterization of sing Grif semi-posi and tr-omega semi-posi} gives a construction of a singular Hermitian metric which is Griffiths semi-positive and coincides with $h$ almost everywhere. 
\end{remark}

Using Littman's approximation method, we obtain the following proposition.

\begin{proposition}\label{smoothing of tr-omega semi-posi for constant section}
    Let $\omega$ be a Hermitian metric and $E$ be a holomorphic vector bundle equipped with a singular Hermitian metric $h$. 
    If $h$ is $\omega$-trace semi-negative then for any open subset $U$ contained in a local chart, 
    there exists a sequence of smooth Hermitian metrics $(h_\nu)_{\nu\in\mathbb{N}}$ decreasing to $h$ pointwise a.e. on every compact subset $U'\subset U$ such that 
    for any $x\in U'$ and any local constant section $\sigma$ of $E$, we have that $\Delta^C_\omega|\sigma|^2_{h_\nu}\geq0$.
\end{proposition}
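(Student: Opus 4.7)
The plan is to smooth the Hermitian metric $h$ entry-wise using Littman's kernel from Proposition \ref{smoothing of omega-subharmonic}, which is attached to the Chern Laplacian $\Delta^C_\omega$ on $U$ and is independent of the function being smoothed. Because the kernel is a non-negative linear integral operator, applying it to each matrix coefficient of $h$ in a fixed holomorphic frame yields a smooth matrix-valued function that (i) remains a Hermitian form at every point and (ii) inherits the required $\omega$-subharmonicity of $|\sigma|^2_h$ for constant sections $\sigma$.

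First I would reduce from $\log|\sigma|^2_h$ to $|\sigma|^2_h$. By $\omega$-trace semi-negativity, $\log|\sigma|^2_h$ is $\omega$-subharmonic on $U$ for every local holomorphic section $\sigma$, in particular for every constant section relative to a fixed trivialization. Applying Proposition \ref{convex increasing of omega-SH is also omega-SH} with $\chi(t)=e^t$ gives that $|\sigma|^2_h=e^{\log|\sigma|^2_h}$ is $\omega$-subharmonic on $U$ in the sense of currents. Next, fix a local holomorphic frame $e_1,\ldots,e_r$ of $E$ over $U$ and write the matrix entries $h_{jk}=\langle e_k,e_j\rangle_h$. Let $K_\nu$ be the Littman kernel for $\Delta^C_\omega$ on $U$ provided by Proposition \ref{smoothing of omega-subharmonic}, and define the smooth matrix-valued function $h_\nu$ entry-wise by
\begin{align*}
    h_{\nu,jk}(z):=\int_U K_\nu(\zeta,z)h_{jk}(\zeta)\,dV_\zeta.
\end{align*}
For any constant section $\sigma=\sum\sigma_j e_j$, linearity of integration gives
\begin{align*}
    |\sigma|^2_{h_\nu}(z)=\int_U K_\nu(\zeta,z)|\sigma|^2_h(\zeta)\,dV_\zeta,
\end{align*}
and Proposition \ref{smoothing of omega-subharmonic} then implies that $|\sigma|^2_{h_\nu}$ is smooth, satisfies $\Delta^C_\omega|\sigma|^2_{h_\nu}\geq 0$ on $U$, and decreases to $|\sigma|^2_h$ almost everywhere on every compact subset $U'\subset U$.

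It remains to verify that $h_\nu$ is a genuine smooth Hermitian metric (positive definite everywhere on $U'$) and that the sequence is pointwise decreasing as Hermitian forms. Since $K_\nu(\cdot,z)\geq 0$ is positive on a set of positive measure near $z$ and $|\sigma|^2_h>0$ almost everywhere for every nonzero $\sigma$ (because $\det h>0$ a.e. forces $h$ to be positive definite almost everywhere), the integral formula gives $|\sigma|^2_{h_\nu}(z)>0$ for every $z\in U'$ and every nonzero $\sigma$, so $h_\nu$ is positive definite. Decreasing-ness of the Hermitian forms $h_\nu$ then follows from the decreasing-ness of $|\sigma|^2_{h_\nu}$ for each constant $\sigma$, combined with the polarization identity applied to $\sigma\in\{e_j+e_k,\,e_j+i\,e_k\}$, the same matrix-reconstruction argument already used in the proof of Theorem \ref{characterization of sing Grif semi-posi and tr-omega semi-posi}. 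The main obstacle is precisely this coherent handling of the matrix: a naive mollification $h\ast\rho_\nu$ would not a priori preserve $\omega$-subharmonicity when $\omega$ has non-constant coefficients because $\Delta^C_\omega$ does not commute with translation, and Littman's kernel is used precisely to bypass this issue by being tailored to the operator $\Delta^C_\omega$ itself.
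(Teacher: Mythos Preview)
Your proposal is correct and follows essentially the same route as the paper: reduce from $\log|\sigma|^2_h$ to $|\sigma|^2_h$ via Proposition \ref{convex increasing of omega-SH is also omega-SH}, smooth the matrix of $h$ entry-wise with Littman's kernel $K_\nu$ so that $|\sigma|^2_{h_\nu}=\int_U K_\nu(\zeta,z)|\sigma|^2_h(\zeta)\,dV_\zeta$ for constant $\sigma$, and then read off $\Delta^C_\omega|\sigma|^2_{h_\nu}\geq0$ and the decreasing-to-$h$ property from the Littman construction. The only difference is presentational: the paper unpacks the Littman computation explicitly (writing $K_\nu=(\Delta^C_\omega)^\dagger_\zeta G_\nu$, applying Fubini, and reducing to $\langle\Delta^C_\omega|\sigma|^2_h,F_{G_\nu,\phi}\rangle\geq0$ in the weak sense), whereas you invoke Proposition \ref{smoothing of omega-subharmonic} as a black box, which is legitimate since that proposition already records that $\varphi_\nu(z)=\int_U K_\nu(\zeta,z)\varphi(\zeta)\,dV_\zeta$ is smooth, $\omega$-subharmonic, and decreases to $\varphi$ a.e.; you also make explicit the positive-definiteness of $h_\nu$, which the paper leaves implicit.
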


\begin{proof}
    Let $(\Delta^C_\omega)^\dagger$ be the formal adjoint of $\Delta^C_\omega$ on $U$ with respect to the inner product $\int_U\Delta^C_\omega u(z)\cdot v(z)dV_{U,z}$ for any $u,v\in\mathcal{C}^\infty(U)$.
    For the Green's function $g(\zeta,z)$ of the canonical Laplacian operator $\Delta^C_\omega$ with respect to the domain $U$, we let $G_\nu(\zeta,z)$ be the function constructed in \cite{Lit63} which satisfy $(\Delta^C_\omega)_zG_\nu(\zeta,z)\geq0$. 
    Therefore, we have 
    \begin{align*}
        F_{G_\nu,\phi}(\zeta):=\int_UG_\nu(\zeta,z)(\Delta^C_\omega)^\dagger_z\phi(z)dV_z\geq0
    \end{align*}
    for any test function $\phi\in\mathscr{D}(U)_{\geq0}$. 
    Here, $G_\nu(\zeta,z)=0$ for $|\zeta-z|\geq \rho_\nu$ and $\rho_\nu$ decreasing to $0$ if $\nu\to+\infty$ by the definition in \cite{Lit63}.
    Then the function $F_{G_\nu,\phi}(\zeta)$ has compact support for enough large $\nu$, i.e. $F_{G_\nu,\phi}\in\mathscr{D}(U)_{\geq0}$.

    Put the function $K_\nu(\zeta,z):=(\Delta^C_\omega)^\dagger_\zeta G_\nu(\zeta,z)$ as in \cite{Lit63}, then this functions satisfy $K_\nu(\zeta,z)\geq0$ and $\lim_{\nu\to+\infty}\int_UK_\nu(\zeta,z)dV_{\zeta}=1$ uniformly in $z\in U'$.
    We may assume that $U$ is small enough to make $E$ trivial, and let $e=(e_1,\cdots,e_r)$ be a holomorphic frame of $E$ on $U$ and $(h_{\lambda\mu})$ be a matrix representation of $h$ with respect to $e$.
    We define smooth Hermitian metrics $h_\nu$ by $h\ast K_\nu$. In other words, when $(h_{\nu,\lambda\mu})$ is the representation matrix of $h_\nu$ with respect to $e$, we define by
    \begin{align*}
        h_{\nu,\lambda\mu}(z):=\int_UK_\nu(\zeta,z)h_{\lambda\mu}(\zeta)dV_\zeta.
    \end{align*}

    For any fiexd local constant section $\sigma$ of $E$, i.e. $\sigma\in H^0(U,E)$, we have that 
    \begin{align*}
        |\sigma|^2_{h_\nu}(z)=\int_UK_\nu(\zeta,z)|\sigma|^2_h(\zeta)dV_\zeta\geq0
    \end{align*}
    and $h_\nu$ decreasing to $h$ a.e. pointwise by $\lim_{\nu\to+\infty}\int_UK_\nu(\zeta,z)dV_{\zeta}=1$.

    Finally, we show that $\Delta^C_\omega|\sigma|^2_{h_\nu}(\phi)\geq0$ for any test function $\phi\in\mathscr{D}(U)_{\geq0}$. 
    We have 
    \begin{align*}
        \Delta^C_\omega|\sigma|^2_{h_\nu}(\phi)&=\int_U(\Delta^C_\omega)_z|\sigma|^2_{h_\nu}(z)\phi(z)dV_z\\
        &=\int_U\phi(z)(\Delta^C_\omega)_z\Bigl\{\int_UK_\nu(\zeta,z)|\sigma|^2_h(\zeta)dV_\zeta\Bigr\}dV_z\\
        &=\int_U\phi(z)(\Delta^C_\omega)_z\Bigl\{\int_U|\sigma|^2_h(\zeta)(\Delta^C_\omega)^\dagger_\zeta G_\nu(\zeta,z)dV_\zeta\Bigr\}dV_z\\
        &=\int_U|\sigma|^2_h(\zeta)\Bigl\{\int_U(\Delta^C_\omega)_z(\Delta^C_\omega)^\dagger_\zeta G_\nu(\zeta,z)\cdot\phi(z)dV_z\Bigr\}dV_\zeta\\
        &=\int_U|\sigma|^2_h(\zeta)(\Delta^C_\omega)^\dagger_\zeta\Bigl\{\int_UG_\nu(\zeta,z)(\Delta^C_\omega)^\dagger_z\phi(z)dV_z\Bigr\}dV_\zeta\\
        &=\int_U|\sigma|^2_h(\zeta)(\Delta^C_\omega)^\dagger_\zeta F_{G_\nu,\phi}(\zeta) dV_\zeta.
    \end{align*}
    
    Here, $\log|\sigma|^2_h$ is $\omega$-subharmonic by the assumption, then $|\sigma|^2_h$ is also $\omega$-subharmonic from Proposition \ref{convex increasing of omega-SH is also omega-SH}. 
    By Proposition \ref{equivalence of currents semi-posi}, we obtain $\Delta^C_\omega|\sigma|^2_h\geq0$ in the weak sense. Hence, for a test function $F_{G_\nu,\phi}\in\mathscr{D}(U)_{\geq0}$, we have that 
    \begin{align*}
        0\leq\langle\Delta^C_\omega|\sigma|^2_h,F_{G_\nu,\phi}\rangle=\int_U|\sigma|^2_h(\zeta)(\Delta^C_\omega)^\dagger_\zeta F_{G_\nu,\phi}(\zeta) dV_\zeta=\Delta^C_\omega|\sigma|^2_{h_\nu}(\phi).
    \end{align*}
    This concludes the proof.
\end{proof}

From this proposition, we provide the following conjecture.

\begin{conjecture}\label{Conj 1}
    Under the situation in Proposition \ref{smoothing of tr-omega semi-posi for constant section}, can $\sigma$ be holomorphic instead of constant?
    In other words, is there an approximation $(h_\nu)_{\nu\in\mathbb{N}}$ at locally for the singular Hermitian metric $h$ which is $\omega$-trace semi-positive?
    Here, smooth Hermitian metrics $h_\nu$ must be $\omega$-trace semi-positive and decreasing to $h$.
\end{conjecture}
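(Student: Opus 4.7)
The plan is to attack Conjecture \ref{Conj 1} by going beyond the convolution-kernel framework of Proposition \ref{smoothing of tr-omega semi-posi for constant section}, whose argument crucially uses that $|\sigma|^2_h$ itself is $\omega$-subharmonic for constant $\sigma$ (via Proposition \ref{convex increasing of omega-SH is also omega-SH}). Passing from constant to varying holomorphic sections amounts, through the identity established in the proof of Proposition \ref{characterization of tr-posi at point},
\begin{align*}
    \Delta^C_\omega|s|^2_{h_\nu}=\sum_j\|D'^{h_\nu}_{z_j}s\|^2_{h_\nu}-tr_\omega i\Theta_{E,h_\nu}(s,\overline{s}),
\end{align*}
to producing smooth approximations whose full trace Chern curvature $tr_\omega i\Theta_{E,h_\nu}$ has the correct sign, not merely whose matrix entries have the correct sign of their Chern Laplacian.

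My first attempt would be a two-step construction. Start with $h_\nu^{(0)}:=h\ast K_\nu$ exactly as in the proof of Proposition \ref{smoothing of tr-omega semi-posi for constant section}, which already yields the desired Laplacian inequality for constant $\sigma$. Then introduce a conformal correction $h_\nu:=e^{-\psi_\nu}h_\nu^{(0)}$ where $\psi_\nu\searrow 0$ is a smooth function chosen so that $\idd\psi_\nu\cdot\mathrm{Id}_E$ dominates the positive part of $i\Theta_{E,h_\nu^{(0)}}$ in the $\omega$-trace sense. Because such a conformal rescaling shifts the Chern curvature by $\idd\psi_\nu\cdot\mathrm{Id}_E$, the corrected $h_\nu$ would then be $\omega$-trace semi-negative in the smooth sense, and would still decrease to $h$ pointwise a.e. The quantitative input required is a Littman-type estimate bounding the positive part of $tr_\omega i\Theta_{E,h_\nu^{(0)}}$ by some $\Delta^C_\omega\psi_\nu$ with $\psi_\nu\to 0$, which one could try to establish through a commutator estimate between the kernel $K_\nu$ and the Chern connection $D'^h$.

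As a parallel route I would mimic the smooth approximation of the Griffiths case in Proposition \ref{characterization of sing Grif semi-posi}(d) and Proposition \ref{characterization of Grif nega by smoothing}(c), where standard Euclidean mollification preserves the pointwise inequality $\idd|u|^2_h\geq 0$. Here one would test plurisubharmonicity against $\phi\cdot\omega^{n-1}$ rather than $\phi$, in order to isolate the $\omega$-trace component. The difficulty is that the Euclidean mollifier $\rho_\nu$ behaves well with the flat Laplacian but not with $\Delta^C_\omega$ on a general Hermitian manifold, which is precisely why Littman's kernel was needed, and transplanting the Griffiths argument therefore requires a hybrid kernel that mollifies in both senses simultaneously.

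The principal obstacle, common to both routes, is the nonlinearity of the Chern curvature in $h$ through $h^{-1}\partial h$; this nonlinearity is invisible for constant sections, which is exactly why the proposition succeeds, but it becomes sharply visible when $s$ varies holomorphically. The gradient term $\sum_j\|D'^{h_\nu}_{z_j}s\|^2_{h_\nu}$ must absorb the discrepancy between $tr_\omega i\Theta_{E,h_\nu}$ and the a priori ill-defined limit $tr_\omega i\Theta_{E,h}$, and I expect that controlling this discrepancy for a kernel adapted to the nonabelian $\overline{\partial}$-operator on $\mathrm{End}(E)$ will be the hard step. It is plausible that a full resolution requires additional regularity on $h$, for instance continuity of $\log\det h$ or local boundedness of weights, to make the smoothing compatible with the matrix-valued curvature operator.
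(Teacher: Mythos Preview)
The statement you are attempting to prove is not a theorem in the paper but an open \emph{conjecture}. The paper does not supply a proof: immediately after stating Conjecture~\ref{Conj 1}, the author remarks only that it is trivially true for line bundles and for metrics that are already Griffiths semi-positive, and then \emph{circumvents} the problem by introducing the separate notion ``$\omega$-trace semi-positive (resp.\ quasi-positive) with approximation,'' which simply \emph{assumes} the existence of the desired smoothing as part of the definition. All subsequent results (Theorem~\ref{tr-posi then deg max}, Theorem~\ref{V-thm for tr-quasi-posi with appro}, Theorem~\ref{rationally conected if tr-quasi posi}) are stated under this stronger hypothesis precisely because the conjecture is not settled.

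Your proposal is therefore not comparable to any proof in the paper; it is a research outline toward an open problem. As such it is reasonable in that it correctly identifies the central obstruction: the Littman kernel $K_\nu$ of Proposition~\ref{smoothing of tr-omega semi-posi for constant section} controls only the scalar Chern Laplacian of the matrix entries $h_{\lambda\mu}$, whereas $\omega$-trace semi-negativity of the smooth metric $h_\nu$ requires a sign on the nonlinear expression $tr_\omega(\overline{\partial}(h_\nu^{-1}\partial h_\nu))$. However, neither of your two routes closes this gap. For the conformal correction $h_\nu=e^{-\psi_\nu}h_\nu^{(0)}$, you would need $\Delta^C_\omega\psi_\nu$ to dominate the positive part of $tr_\omega i\Theta_{E,h_\nu^{(0)}}$ \emph{while} $\psi_\nu\searrow 0$; but there is no a priori bound on this positive part as $\nu\to\infty$, since $h$ is merely measurable and the curvature of $h_\nu^{(0)}$ may blow up. The ``commutator estimate between $K_\nu$ and $D'^h$'' you invoke is undefined: $D'^h$ does not exist for a singular $h$. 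For the second route, you correctly note that Euclidean mollification respects the flat Laplacian but not $\Delta^C_\omega$, yet offer no construction of the ``hybrid kernel.'' Your concluding paragraph essentially concedes that additional regularity on $h$ may be required---which is to say, the conjecture remains open, in agreement with the paper.
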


Obviously, this conjecture is correct in the case of holomorphic line bundles, and even if $h$ is more strongly Griffiths semi-positive.
Therefore, we define a new situation with such approximation below.

\begin{definition}
    Let $\omega$ be a Hermitian metric and $E$ be a holomorphic vector bundle on $X$. 
    We say that a singular Hermitian metric $h$ on $E$ is 
    \begin{itemize}
        \item $\omega$-\textit{trace semi}-\textit{negative with approximation} if for any $x\in X$, there exist a neighborhood $U$ of $x$ and a sequence of smooth $\omega$-trace semi-negative Hermitian metrics $(h_\nu)_{\nu\in\mathbb{N}}$ on $U$ decreasing to $h$ almost everywhere,
        \item $\omega$-\textit{trace quasi}-\textit{negative with approximation} if $h$ is $\omega$-trace semi-negative with approximation and there exists at least one point $x_0\in X$ such that $h$ is $\omega$-trace negative at $x_0$,
        \item $\omega$-\textit{trace semi}-\textit{positive} (resp. \textit{quasi}-\textit{positive}) \textit{with approximation} if the dual metric $h^*$ on $E^*$ is $\omega$-trace semi-negative (resp. quasi-negative) with approximation.
    \end{itemize}
\end{definition}

Clearly, if a singular Hermitian metric $h$ is $\omega$-trace semi-positive with approximation then $\omega$-trace semi-positive.
Paraphrasing Conjecture \ref{Conj 1}, it says the converse of this.
Moreover, 
if $h$ is Griffiths semi-positive (resp. quasi-positive) then $\omega$-trace semi-positive (resp. quasi-positive) with approximation for any Hermitian metric $\omega$. 

\section{Properties of various singular positivity}
\subsection{singular RC-positive and a vanishing theorem}

The notion of \textit{RC}-\textit{positivity} was introduced in \cite{Yan18} to resolve Yau's conjecture (see Conjecture \ref{Yau's conjecture}).
Let $E\to X$ be an RC-positive holomorphic vector bundle, then this positivity is such that the Serre line bundle $\mathcal{O}_E(1)\to\mathbb{P}(E)$ is $(n-1)$-positive with respect to the naturally induced metric, where $\mathrm{dim}\,X=n$.
In this subsection, we extend and define PC-positivity to singular Hermitian metric and investigate its properties.

\begin{definition}$($\textnormal{cf.\,\cite[Definition\,1.2]{Yan18}}$)$
    Let $E$ be a holomorphic vector bundle of $\mathrm{rank}\,r$ over a complex manifold $X$ equipped with a smooth Hermitian metric $h$. 
    We say that $h$ is \textit{RC}-\textit{positive} (resp. \textit{negative}) if for any $x\in X$ and any $0\ne a\in E_x$, there exists $0\ne\xi\in T_{X,x}$ such that 
    $\Theta_{E,h}(a\otimes\xi)=\sum(\Theta^h_{jk}a,a)_h\xi_j\overline{\xi}_k>0 \quad (\mathrm{resp}.\,<0)$.
\end{definition}

Here, if Griffiths positive then RC-positive, and if $\mathrm{rank}\,E\!=\!1$ then RC-positive and $(n-1)$-positive are equivalent,
and that $h$ is RC-positive if and only if $h^*$ is RC-negative.

\begin{proposition}
    Let $h$ be a smooth Hermitian metric on $E$. Then $h$ is RC-negative if and only if for any $x\in X$ and any $s\in\mathcal{O}(E)_x$ there exists $0\ne v\in\mathbb{C}^n$ such that  
    $\log|s|^2_h|_{L_v}$ is strictly subharmonic on a neighborhood of $x$, where $L_v:=v\cdot\mathbb{C}$ centered $x$.
\end{proposition}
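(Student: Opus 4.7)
My plan is to reduce both directions to the curvature identity for $\idd\log|s|^2_h$ that already appeared in the proof of Proposition~\ref{characterization of tr-posi at point}. Recall that for any local holomorphic section $s$ of $E$ near $x$ with $s(x)\neq 0$ and any tangent vector $v$,
\begin{equation*}
\idd\log|s|^2_h(v,\overline{v}) \;=\; \frac{|s|^2_h|D'^h_v s|^2_h - |\langle D'^h_v s, s\rangle_h|^2}{|s|^4_h} \;-\; \frac{\langle i\Theta_{E,h}(v,\overline{v})s, s\rangle_h}{|s|^2_h},
\end{equation*}
where the first term is non-negative by Cauchy--Schwarz and the second equals $-\Theta_{E,h}(s\otimes v)/|s|^2_h$. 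This is the single identity that drives the whole equivalence.

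For the forward implication, I would fix $x$ and $s\in\mathcal{O}(E)_x$. If $s(x)=a\neq 0$, RC-negativity hands me $v\neq 0$ with $\Theta_{E,h}(a\otimes v)<0$, and the identity immediately yields $\idd\log|s|^2_h(v,\overline{v})|_x>0$; smoothness of $h$ and $s$ near $x$ propagates this strict inequality to a neighborhood $U$ of $x$, and restricting to $L_v\cap U$ gives strict subharmonicity of $\log|s|^2_h|_{L_v}$. The degenerate case $s(x)=0$ is handled by picking a direction $v$ with $s|_{L_v}\not\equiv 0$ (which exists whenever $s$ is not the zero germ, since $\{s=0\}$ is analytic of positive codimension); then $\log|s|^2_h|_{L_v}$ carries a logarithmic pole at $x$, so its distributional Laplacian has a positive atom there and it is strictly subharmonic in a neighborhood.

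For the reverse implication, given any $0\neq a\in E_x$, I would choose a local holomorphic section $s$ around $x$ with $s(x)=a$ and $D'^h s(x)=0$ --- the same auxiliary-section trick used in Proposition~\ref{characterization of tr-posi at point}. The hypothesis produces $v\neq 0$ such that $\log|s|^2_h|_{L_v}$ is strictly subharmonic near $x$, and in particular $\idd\log|s|^2_h(v,\overline{v})|_x>0$. Since $D'^h s(x)=0$, the Cauchy--Schwarz term in the identity vanishes at $x$, so the identity collapses to
\begin{equation*}
0 \;<\; -\,\frac{\langle i\Theta_{E,h}(v,\overline{v})a,a\rangle_h}{|a|^2_h},
\end{equation*}
i.e.\ $\Theta_{E,h}(a\otimes v)<0$, which is precisely RC-negativity at $a$.

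The only genuine obstacles are bookkeeping: keeping the signs in the key identity straight and treating the case $s(x)=0$ in the forward direction carefully. No new idea beyond Proposition~\ref{characterization of tr-posi at point} is required, since the present statement is essentially its one-complex-direction refinement, with strict subharmonicity of $\log|s|^2_h|_{L_v}$ replacing strict positivity of the trace.
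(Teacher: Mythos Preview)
Your argument is essentially the paper's own: both directions rest on the curvature identity for $\idd\log|s|^2_h$ together with Cauchy--Schwarz, and the reverse direction uses the same auxiliary section with $s(x)=a$, $D'^h s(x)=0$.

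One small caveat on your extra case $s(x)=0$ (which the paper does not treat): the logarithmic pole contributes a positive Dirac mass to the Laplacian at $x$, but that alone does not force strict subharmonicity on a full neighbourhood along $L_v$ --- away from $x$ you still need $\Delta(\log|s|^2_h|_{L_v})>0$, and your choice of $v$ was made only to avoid $s|_{L_v}\equiv 0$, not via RC-negativity. The cleanest reading of the statement (and the one the paper implicitly adopts) is to take $s$ with $s(x)\neq 0$, which is exactly what the equivalence with RC-negativity requires.
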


\begin{proof}
    We assume RC-negativity of $h$. For any $s\in\mathcal{O}(E)_x$ and any $\xi\in T_{X,x}$, we have 
    \begin{align*}
        \idd\log|s|^2_h(\xi)=i\frac{|s|^2_h|D'^hs\cdot \xi|^2_h-|\langle D'^hs\cdot \xi,s\rangle_h|^2}{|s|^4_h}-\frac{\langle i\Theta_{E,h}(\xi,\overline{\xi})s,s\rangle_h}{|s|^2_h}
        \geq-\frac{\langle i\Theta_{E,h}(\xi,\overline{\xi})s,s\rangle_h}{|s|^2_h}
    \end{align*}
    by Chauchy-Schwarz inequality. Here, we write $i\Theta_{E,h}(s\otimes\xi)=\langle i\Theta_{E,h}(\xi,\overline{\xi})s,s\rangle_h$.
    
    From the assumption and smooth-ness of $h$, for any $0\ne s\in\mathcal{O}(E)_x$ there exists $0\ne v\in\mathbb{C}^n$ such that $\langle i\Theta_{E,h}(v,\overline{v})s,s\rangle_h<0$ on a neighborhood.
    Then we get $\idd\log|s|^2_h(v)\geq-\langle i\Theta_{E,h}(v,\overline{v})s,s\rangle_h/|s|^2_h>0$, i.e. $\log|s|^2_h|_{L_v}$ is strictly subharmonic.  

    Finally, we show the converse of this. 
    Here, for any $0\ne a\in E_x$ there exists $0\ne s\in\mathcal{O}(E)_x$ such that $s(x)=a$ and $D'^hs(x)=0$. Therefore, we get 
    \begin{align*}
        0<\idd\log|s|^2_h|_{L_v}=\idd\log|s|^2_h(v)=-\langle i\Theta_{E,h}(v,\overline{v})a,a\rangle_h/|a|^2_h
    \end{align*}
    at $x$. Hence, $h$ is RC-negative.
\end{proof}

Using this characterization, RC-positivity is extended to singular Hermitian metric without curvature currents.

\begin{definition}
    Let $E$ be a holomorphic vector bundle on a complex manifold $X$. 
    We say that a singular Hermitian metric $h$ on $E$ is 
    \begin{itemize}
        \item \textit{RC-negative} if for any $x\in X$ and any $s\in\mathcal{O}(E)_x$, the function $|s|^2_h$ is upper semi-continuous and there exists $v\in\mathbb{C}^n$ such that $\log|s|^2_h|_{L_v}$ is strictly subharmonic, where $L_v=v\cdot\mathbb{C}$ centered $x$,
        \item \textit{RC-positive} if the dual metric $h^*$ on $E^*$ is RC-negative. 
    \end{itemize}
\end{definition}

We obtain the following vanishing and properties.

\begin{proposition}\label{RC-posi then H^0=0}
    Let $E$ be a holomorphic vector bundle on a compact complex manifold $X$ equipped with a singular Hermitian metric $h$.
    If $h$ is RC-positive then $H^0(X,E^*)=0$.
\end{proposition}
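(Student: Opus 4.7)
The plan is to derive a contradiction from the maximum principle for strictly subharmonic functions of one complex variable. Suppose, for contradiction, that there exists a nonzero section $s\in H^0(X,E^*)$. Since $h$ is RC-positive on $E$, by definition the dual metric $h^*$ on $E^*$ is RC-negative; in particular $|s|^2_{h^*}$ is upper semi-continuous in a neighborhood of every point of $X$, hence globally upper semi-continuous on $X$. Because $X$ is compact, $|s|^2_{h^*}$ attains its supremum at some point $x_0\in X$. Since $s\not\equiv 0$ and $h^*$ is positive definite and finite off a measure-zero set (by the nondegeneracy condition $0<\det h<\infty$ a.e.\ in the definition of singular Hermitian metric), and since the RC-negativity of $h^*$ demands a strictly subharmonic restriction $\log|s|^2_{h^*}|_{L_v}$ (hence finite-valued and thus forcing $|s|^2_{h^*}$ itself to be finite along such lines), this supremum is a finite positive real number.

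Applying the RC-negativity condition to the germ of $s$ at $x_0$, I obtain $0\ne v\in\mathbb{C}^n$ such that $\log|s|^2_{h^*}|_{L_v}$ is strictly subharmonic on a neighborhood of $x_0$ in the complex line $L_v=x_0+v\cdot\mathbb{C}$. Because $\log$ is monotone, this one-variable restriction still attains a global maximum at $x_0$. But a strictly subharmonic function on a disk in $\mathbb{C}$ cannot attain an interior local maximum: for some $c>0$, the function $\log|s|^2_{h^*}|_{L_v}-c|w|^2$ (with $w$ the coordinate along $L_v$) remains subharmonic, yet has a strict local maximum at $x_0$, contradicting the classical maximum principle. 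This contradiction forces $s\equiv 0$, yielding $H^0(X,E^*)=0$.

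The main obstacle is a technical one, namely confirming that the supremum of $|s|^2_{h^*}$ is attained at a point where the function is finite and positive, so that the logarithm is well-defined and the RC-negativity condition can be applied directly there. This is mild: the nondegeneracy of the singular Hermitian metric makes $|s|^2_{h^*}$ finite and positive almost everywhere, and the finite-valuedness built into the subharmonicity requirement of the definition of RC-negative forces finiteness everywhere, so that the u.s.c. maximum on compact $X$ is automatically a positive real number where the rest of the argument applies without further modification.
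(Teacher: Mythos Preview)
Your proof is correct and follows essentially the same approach as the paper: assume a nonzero global section $s$ of $E^*$, use upper semi-continuity and compactness to locate a maximum of $|s|^2_{h^*}$, then apply the RC-negativity condition at that point to obtain a complex line along which $\log|s|^2_{h^*}$ is strictly subharmonic, contradicting the interior maximum via the one-variable maximum principle. Your version is slightly more explicit about why the maximum is finite and positive, but the argument is otherwise identical.
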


\begin{proof}
    Let $0\ne s\in H^0(X,E^*)$ then $|s|^2_{h^*}\geq0$ is upper semi-continuous. Then this has maximum at some point $p\in X$ and $|s|^2_{h^*}(p)>0$, i.e. $\log|s|^2_{h^*}$ has maximum at $p$.
    By RC-negativity of $h^*$, for any $q\in X$, there exists $0\ne v\in\mathbb{C}^n$ such that $\log|s|^2_{h^*}|_{L_v}$ is strictly subharmonic, where $L_v=v\cdot\mathbb{C}$ centered $p$.
    Thus, we take $p=q$ then there exists $r>0$ such that $\log|s|^2_{h^*}|_{L_v}$ has no maximum on $L_v\cap\mathbb{B}(p,r)$ and exists $x_0\in L_v\cap\partial\mathbb{B}(p,r)$
    such that $\log|s|^2_{h^*}(x)<\log|s|^2_{h^*}(x_0)$ for any $x\in L_v\cap\mathbb{B}(p,r)$.
    This is contradiction for $\log|s|^2_{h^*}(p)$ is maximum. Hence, $s=0$ and $H^0(X,E^*)=0$.
\end{proof}

\begin{theorem}\label{tr-posi then RC-posi}
    Let $\omega$ be a Hermitian metric and $E$ be a holomorphic vector bundle on $X$ equipped with a singular Hermitian metric $h$. 
    If $h$ is $\omega$-trace positive then $\Lambda^ph$ and $h^{\otimes k}$ are also $\omega$-trace positive, and in particular RC-positive. 
\end{theorem}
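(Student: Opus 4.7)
The plan is to split the theorem into two independent claims: first, that $\omega$-trace positivity is preserved under tensor powers $h^{\otimes k}$ and exterior powers $\Lambda^p h$; second, that $\omega$-trace positivity implies RC-positivity. Combining them gives the ``in particular'' statement for $\Lambda^p h$ and $h^{\otimes k}$.

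For the preservation claim I would dualize, setting $g:=h^*$, which is $\omega$-trace negative, and aim to show that $g^{\otimes k}$ and $\Lambda^p g$ are $\omega$-trace negative. From the definition, on a neighborhood $U$ of every $x\in X$ there is a smooth decreasing sequence $g_\nu\searrow g$ with $\Delta^C_\omega \log|s|^2_{g_\nu}\geq c_x$ for all local holomorphic sections $s$ of $E^*$ and all $\nu$; by Proposition \ref{characterization of tr-posi on nbd} this is equivalent to the pointwise endomorphism inequality $tr_\omega i\Theta_{E^*,g_\nu}\leq -c_x g_\nu$ on $U$. The curvature identity $i\Theta_{E^{*\otimes k}, g_\nu^{\otimes k}}=\sum_{i=1}^k \mathrm{Id}^{\otimes(i-1)}\otimes i\Theta_{E^*,g_\nu}\otimes \mathrm{Id}^{\otimes(k-i)}$, evaluated in a $g_\nu$-orthonormal frame, shows that the eigenvalues of $tr_\omega i\Theta_{E^{*\otimes k},g_\nu^{\otimes k}}$ are sums of $k$ eigenvalues of $tr_\omega i\Theta_{E^*,g_\nu}$, each $\leq -c_x$, giving $tr_\omega i\Theta_{E^{*\otimes k},g_\nu^{\otimes k}}\leq -kc_x\, g_\nu^{\otimes k}$. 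For the exterior power I use the $S_p$-invariant inclusion $\Lambda^p E^*\hookrightarrow E^{*\otimes p}$, which is preserved by the trace curvature operator, so the same bound restricts to $tr_\omega i\Theta_{\Lambda^p E^*,\Lambda^p g_\nu}\leq -pc_x\,\Lambda^p g_\nu$. The pointwise monotonicity $g_\nu^{\otimes k}\searrow g^{\otimes k}$ and $\Lambda^p g_\nu\searrow \Lambda^p g$ is straightforward (the tensor case via $A\otimes B - A'\otimes B' = A\otimes(B-B') + (A-A')\otimes B'\geq 0$ for $A\geq A'\geq 0$, $B\geq B'\geq 0$; the exterior case via Weyl monotonicity of subdeterminants of positive Hermitian matrices). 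Applying Proposition \ref{characterization of tr-posi on nbd} in reverse converts these endomorphism bounds back into the form $\Delta^C_\omega \log|s|^2\geq kc_x$ and $pc_x$ required by the definitions of $\omega$-trace negativity for $g^{\otimes k}$ and $\Lambda^p g$.

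For the RC-positivity implication I set $g=h^*$ and prove RC-negativity of $g$. Upper semi-continuity of $|s|^2_g$ is immediate from the decreasing pointwise limit of the continuous functions $|s|^2_{g_\nu}$. For the direction of strict subharmonicity, fix $x$ and $s\in\mathcal{O}(E^*)_x$ with $s(x)=a\neq 0$. In standard coordinates of $\omega$ at $x$, I choose a holomorphic extension with $D'^{g_\nu}s(x)=0$; the Cauchy--Schwarz computation from the proof of Proposition \ref{characterization of tr-posi at point} yields $\sum_j \idd\log|s|^2_{g_\nu}(e_j)(x)\geq c_x$, so for each $\nu$ some axial index $j(\nu)\in\{1,\dots,n\}$ satisfies $\idd\log|s|^2_{g_\nu}(e_{j(\nu)})(x)\geq c_x/n$. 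Pigeonholing on the finite set $\{1,\dots,n\}$ yields a fixed axial direction $j^*$ attained for infinitely many $\nu$; along this subsequence, smoothness of $g_\nu$ gives strict subharmonicity of $\log|s|^2_{g_\nu}|_{L_{e_{j^*}}}$ on some neighborhood $V_\nu$ of $x$. Combined with $\log|s|^2_{g_\nu}\searrow \log|s|^2_g$ and Proposition \ref{exists smoothing then (strictly) omega-SH as currents} applied on the complex line $L_{e_{j^*}}$, this should propagate to strict subharmonicity of $\log|s|^2_g|_{L_{e_{j^*}}}$ near $x$, establishing RC-negativity.

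The main obstacle is precisely this last transfer: the neighborhoods $V_\nu$ on which the axial Laplacian lower bound $\idd\log|s|^2_{g_\nu}(e_{j^*})\geq c_x/(2n)$ holds depend on $\nu$ and may shrink to $\{x\}$, so one cannot directly integrate against a fixed test function on $L_{e_{j^*}}$ to pass the strict bound to the singular limit. To get around this, I would exploit that the trace bound $\Delta^C_\omega\log|s|^2_{g_\nu}\geq c_x$ is uniform in $\nu$ on the \emph{fixed} neighborhood $U$ of $x$, yielding via Proposition \ref{exists smoothing then (strictly) omega-SH as currents} the current inequality $\idd\log|s|^2_g\wedge\omega^{n-1}\geq c_x\omega^n$ on $U$; from this, a slicing argument against test forms supported on short axial arcs, combined with a pigeonhole over finitely many axial directions performed uniformly in the disc radius, should single out a direction along which the singular limit is strictly subharmonic on a fixed small disc. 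Once RC-negativity of $g$ is established, combining it with the preservation result gives RC-positivity of $h^{\otimes k}$ and $\Lambda^p h$.
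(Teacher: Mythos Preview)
Your preservation argument for $\Lambda^p h$ and $h^{\otimes k}$ is correct and is essentially what the paper does: dualize, use the smooth approximating sequence $(h^*_\nu)$ from the definition of $\omega$-trace negativity, note that $tr_\omega i\Theta_{\Lambda^p E^*,\Lambda^p h^*_\nu}=\Lambda^p\bigl(tr_\omega i\Theta_{E^*,h^*_\nu}\bigr)\leq -c_x^p\,\Lambda^p h^*_\nu$ (and similarly for tensor powers), and pass back via Proposition~\ref{characterization of tr-posi at point} and Proposition~\ref{exists smoothing then (strictly) omega-SH as currents}.

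For the RC-positivity implication there is a genuine gap, and you have identified it yourself. Your pigeonhole on axial directions produces, for each $\nu$, a direction $e_{j(\nu)}$ and a neighborhood $V_\nu$ on which $\idd\log|s|^2_{g_\nu}(e_{j^*})\geq c_x/(2n)$, but the $V_\nu$ may collapse to $\{x\}$, and the proposed ``slicing plus pigeonhole'' repair does not work: the current inequality $\idd\log|s|^2_g\wedge\omega^{n-1}\geq c_x\omega^n$ lives on the $n$-dimensional neighborhood $U$ and carries no information about restrictions to complex lines, so you cannot integrate it against test functions supported on axial discs.

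The paper avoids this entirely by arguing \emph{by contradiction}. If $\Lambda^p h$ were not RC-positive, there would exist $x$ and $\sigma\in\mathcal{O}(\Lambda^p E^*)_x$ such that $\log|\sigma|^2_{\Lambda^p h^*}|_{L_v}$ fails strict subharmonicity for \emph{every} direction $v$; the paper infers from this that $-\idd\log|\sigma|^2_{\Lambda^p h^*}\geq 0$ as a current on a full neighborhood $W$ of $x$. But on $U\cap W$ the approximation gives $\Delta^C_\omega\log|\sigma|^2_{\Lambda^p h^*_\nu}\geq c_x^p$ uniformly in $\nu$, so Proposition~\ref{exists smoothing then (strictly) omega-SH as currents} yields $\idd\log|\sigma|^2_{\Lambda^p h^*}\wedge\omega^{n-1}\geq c_x^p\,\omega^n>0$, contradicting the nonpositivity. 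The point you are missing is that the contradiction hypothesis hands you a \emph{full-dimensional} current inequality, which can be paired directly with $\omega^{n-1}$; no restriction to lines is ever needed.
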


\begin{proof}
    Suppose that $\Lambda^ph$ is not RC-positive. 
    Therefore there exist $x\in X$ and $0\ne \sigma\in\mathcal{O}(\Lambda^pE^*)_x$ such that  
    for any $0\ne v\in\mathbb{C}^n$, $\log|\sigma|^2_{\Lambda^ph^*}|_{L_v}$ is not strictly subharmonic on a neighborhood of $x$, i.e. $-\idd\log|\sigma|^2_{\Lambda^ph^*}|_{L_v}\geq0$ in the sense of currents. 
    Then $-\idd\log|\sigma|^2_{\Lambda^ph^*}\geq0$ in the sense of currents on a neighborhood $W$ of $x$. 

    By the assumption, there exist a neighborhood $V$ of $x$, $c_x>0$ and a sequence of smooth Hermitian metrics $(h^*_\nu)_{\nu\in\mathbb{N}}$ decreasing to $h^*$ pointwise a.e. on $U$ such that $tr_\omega i\Theta_{E^*,h^*_\nu}\leq-c_xh_\nu^*$ on $V$.
    For smooth Hermitian metric $H$ on $E$, it is easy to see that 
    \begin{align*}
        i\Theta_{\Lambda^pE,\Lambda^pH}&=\Lambda^p i\Theta_{E,H}\in\Gamma(X,\Lambda^{1,1}T^*_X\otimes\mathrm{End}(\Lambda^pE)),\\
        tr_\omega i\Theta_{\Lambda^pE,\Lambda^pH}&=tr_\omega\Bigl(\Lambda^pi\Theta_{E,H}\Bigr)=\Lambda^p\Bigl(tr_\omega i\Theta_{E,H}\Bigr)\in\Gamma(X,\mathrm{End}(\Lambda^pE)).
    \end{align*}
    Therefore, we get $tr_\omega i\Theta_{\Lambda^pE^*,\Lambda^ph^*_\nu}\leq-c_x^p\Lambda^ph^*_\nu$ on $V$. Then $\Lambda^ph$ is also $\omega$-trace positive.

    Let $U:=W\cap V$ be a open subset. For any point $y\in U$, there exists a standard coordinate $(z_1,\ldots,z_n)$ of $\omega$ at $y$ 
    satisfying $\omega=\idd|z|^2$ at $y$.
    For any $\xi\in T_{X,y}$, we have 
    \begin{align*}
        \idd\log|\sigma|^2_{\Lambda^ph^*_\nu}(\xi)
        \geq-\frac{\langle i\Theta_{\Lambda^pE^*,\Lambda^ph^*_\nu}(\xi,\overline{\xi})\sigma,\sigma\rangle_{\Lambda^ph^*_\nu}}{|\sigma|^2_{\Lambda^ph^*_\nu}},
    \end{align*}
    by Chauchy-Schwarz and that 
    \begin{align*}
        \Delta_{\idd|z|^2}\log|\sigma|^2_{\Lambda^ph^*_\nu}&=tr_{\idd|z|^2}\partial\overline{\partial}\log|\sigma|^2_{\Lambda^ph^*_\nu}=\sum\partial\overline{\partial}\log|\sigma|^2_{\Lambda^ph^*_\nu}(\partial/\partial z_j)\\
        &\geq-\sum\langle i\Theta_{\Lambda^pE^*,\Lambda^ph^*_\nu}(\partial/\partial z_j,\partial/\partial \overline{z}_j)\sigma,\sigma\rangle_{\Lambda^ph^*_\nu}/|\sigma|^2_{\Lambda^ph^*_\nu}\\
        &=-tr_{\idd|z|^2}\Theta_{\Lambda^pE^*,\Lambda^ph^*_\nu}(\sigma,\overline{\sigma})/|\sigma|^2_{\Lambda^ph^*_\nu}\geq c^p_x,
    \end{align*}
    at $y$, i.e. $\Delta^C_\omega\log|\sigma|^2_{\Lambda^ph^*_\nu}\geq c^p_x$ at any $y\in U$.
    Then $\idd\log|\sigma|^2_{\Lambda^ph^*_\nu}\wedge\omega^{n-1}\geq c^p_x\omega^n$ on $U$.

    Hence, we obtain $\Delta^C_\omega\log|\sigma|^2_{\Lambda^ph^*}\geq c^p_x$ on $U$ in the sense of currents by Proposition \ref{exists smoothing then (strictly) omega-SH as currents}.
    This is a contradiction to $-\idd\log|\sigma|^2_{\Lambda^ph^*}\geq0$ in the sense of currents on $U$.

    The case of $h^{\otimes k}$ is shown in the same way.
\end{proof}

\subsection{Propositions and induced metrics on the tautological line bundle} 

In this subsection, we investigate the inheritance of various singular positivity to holomorphic subbundles and tautological line bundles.

\begin{proposition}\label{subbdl is also tr-negative}
    Let $X$ be a complex manifold, $\omega$ be a Hermitian metric on $X$ and $E$ be a holomorphic vector bundle equipped with a singular Hermitian metric $h$. We have 
    \begin{itemize}
        \item [($a$)] If $h$ is $\omega$-trace negative, then every subbundle $S$ of $E$ has the natural induced singular Hermitian metric $h_S$ which is $\omega$-trace negative,
        \item [($b$)] If $h$ is $\omega$-trace positive, then every quotient $Q$ of $E$ has the natural induced singular Hermitian metric $h_Q$ which is $\omega$-trace positive.
    \end{itemize}
    The similar results hold for $\omega$-trace semi-positive (resp. quasi-positive) with approximation, $\omega$-trace semi-positive and RC-positive, and their respective negativities.
\end{proposition}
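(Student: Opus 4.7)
The strategy is to handle (a) directly by restricting the approximating sequence, then deduce (b) by duality. Fix any $x\in X$. The hypothesis that $h$ is $\omega$-trace negative supplies a neighborhood $U$, a constant $c_x>0$, and smooth Hermitian metrics $(h_\nu)_{\nu\in\mathbb{N}}$ on $E|_U$ decreasing pointwise to $h$ such that $\Delta^C_\omega\log|s|^2_{h_\nu}\geq c_x$ for every $\nu$ and every $s\in H^0(U,E)$. Set $h_{S,\nu}:=h_\nu|_S$, the fiberwise restriction to the subbundle $S\subset E$. These are smooth Hermitian metrics on $S|_U$ which decrease pointwise to $h_S:=h|_S$, because if $h_\nu(z)\searrow h(z)$ as Hermitian forms on $E_z$ then the same holds after restriction to the subspace $S_z$. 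Any holomorphic section $\sigma\in H^0(U,S)$ is in particular a section of $E|_U$ with $|\sigma|^2_{h_{S,\nu}}=|\sigma|^2_{h_\nu}$, so the Chern Laplacian inequality $\Delta^C_\omega\log|\sigma|^2_{h_{S,\nu}}\geq c_x$ is inherited. Hence $h_S$ is $\omega$-trace negative at $x$, and since $x$ was arbitrary this proves (a).

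For (b), the plan is to pass to duals. A quotient $\pi:E\twoheadrightarrow Q$ dualizes to a subbundle inclusion $\pi^*:Q^*\hookrightarrow E^*$, and the natural induced metric $h_Q$ on $Q$ is by definition the one for which $(h_Q)^*=h^*|_{Q^*}$ under this inclusion. The hypothesis that $h$ is $\omega$-trace positive on $E$ is equivalent to $h^*$ being $\omega$-trace negative on $E^*$. Applying (a) to the subbundle $Q^*\subset E^*$ with ambient metric $h^*$, I obtain that $h^*|_{Q^*}=(h_Q)^*$ is $\omega$-trace negative on $Q^*$. Dualizing once more yields $\omega$-trace positivity of $h_Q$.

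The remaining variants follow by exactly the same template. For plain $\omega$-trace semi-negativity (no approximation), the fact that every local holomorphic section of $S$ is a section of $E$, combined with $|\sigma|^2_{h_S}=|\sigma|^2_h$, transports $\omega$-subharmonicity of $\log|\sigma|^2_h$ directly to $\log|\sigma|^2_{h_S}$. For the semi-negative and quasi-negative variants with approximation, the construction of the first paragraph applies verbatim, with $c_x=0$ in the semi case, and with the strict estimate retained only at the distinguished point $x_0$ in the quasi case. For RC-negativity, both the upper semi-continuity of $|\sigma|^2_{h_S}$ and the existence of a direction $v$ making $\log|\sigma|^2_{h_S}|_{L_v}$ strictly subharmonic are inherited trivially from the corresponding data for $h$. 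All positive counterparts follow by the dualization argument of the second paragraph. The one routine check that should not be skipped is that $h_S$ and $h_Q$ qualify as singular Hermitian metrics in the sense of Definition \ref{I def of sHm on vect bdl}, i.e. satisfy $0<\det<+\infty$ almost everywhere; this is immediate since positive definiteness of $h$ (resp.\ $h^*$) on the ambient fiber passes to subspaces. Rather than a serious obstacle, the only mild point of care is the bookkeeping of the roles of $E$ versus $E^*$ and of $S$ versus $Q$ when dualizing.
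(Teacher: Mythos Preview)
Your proof is correct and follows essentially the same approach as the paper: restrict the (approximating) metrics to the subbundle via the inclusion $S\hookrightarrow E$, use that holomorphic sections of $S$ are holomorphic sections of $E$ to transfer the Laplacian/subharmonicity inequalities, and derive (b) from (a) by dualizing the short exact sequence. The paper's argument is organized slightly differently (it phrases (a) uniformly with a constant $c\geq 0$ to cover the semi and strict cases at once), but the content is the same; your additional remark verifying that $h_S$ and $h_Q$ satisfy the $0<\det<+\infty$ a.e.\ condition is a reasonable extra check not made explicit in the paper.
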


\begin{proof}
    $(a)$ Let $j:S\hookrightarrow E$ be a holomorphic inclusion map. We define the natural singular Hermitian metric $h_S$ on $S$ induced from $h$ by $|u|_{h_S}:=|ju|_h$ for any section $u$ of $S$.
    Let $U$ be an open subset of $X$ and let $c\geq0$. If for any local holomorphic section $s\in H^0(U,E)$, we get $\Delta^C_\omega\log|s|^2_h\geq c$ in the sense of currents on $U$, 
    then for any local holomorphic section $u\in H^0(U,S)$ we have that $ju\in H^0(U,E)$ and $\Delta^C_\omega\log|u|^2_{h_S}$ $=\Delta^C_\omega\log|ju|^2_h\geq c$ in the sense of currents on $U$.
    This is obviously also holds for smooth Hermitian metrics.
    And, if there exists locally an approximate sequence $(h_\nu)_{\nu\in\mathbb{N}}$ decreases to $h$, then the induced approximate sequence $(h_{\nu,S})_{\nu\in\mathbb{N}}$ also decreases to $h_S$.

    $(b)$ There exists a holomorphic vector subbundle $S$ of $Q$ such that the sequence $0\longrightarrow S\longrightarrow E\longrightarrow Q\longrightarrow 0$ is exact.
    Thus, considering this dual sequence which is also exact, we can show by using $(a)$.
\end{proof}

Similar to the proof of Theorem \ref{tr-posi then RC-posi}, the following is obtained.

\begin{proposition}\label{tr-quasi-posi with appro then h^m is also tr-quasi-posi with appro}
    Let $X$ be a complex manifold, $\omega$ be a Hermitian metric on $X$ and $E$ be a holomorphic vector bundle equipped with a singular Hermitian metric $h$.
    If $h$ is $\omega$-trace semi-positive $($resp. quasi-positive$)$ with approximation, then $\Lambda^ph$ and $h^{\otimes k}$ are also $\omega$-trace semi-positive $($resp. quasi-positive$)$ with approximation.
\end{proposition}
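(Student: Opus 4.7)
The plan is to propagate the given local smooth approximation of $h^*$ to $\Lambda^p h^* = (\Lambda^p h)^*$ and $(h^*)^{\otimes k} = (h^{\otimes k})^*$ by naturality of the Chern curvature under exterior and tensor powers. Since $\omega$-trace (semi-)positivity with approximation is defined by duality through $\omega$-trace (semi-)negativity with approximation of the dual metric, the task reduces to exhibiting, on a neighborhood of any prescribed point, a decreasing sequence of smooth $\omega$-trace semi-negative Hermitian metrics (negative at $x_0$ in the quasi-positive case) converging a.e.\ to $\Lambda^p h^*$ and $(h^*)^{\otimes k}$.

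First I would fix $x \in X$, take a neighborhood $U$ of $x$ and the sequence $(h_\nu^*)_{\nu\in\mathbb{N}}$ of smooth $\omega$-trace semi-negative Hermitian metrics on $E^*|_U$ decreasing pointwise a.e.\ to $h^*$ that is provided by the hypothesis. Set $H_\nu^\wedge := \Lambda^p h_\nu^*$ and $H_\nu^\otimes := (h_\nu^*)^{\otimes k}$. These are smooth Hermitian metrics on $\Lambda^p E^*|_U$ and $(E^*)^{\otimes k}|_U$ that decrease pointwise a.e.\ to $\Lambda^p h^*$ and $(h^*)^{\otimes k}$: the monotonicity follows from Loewner monotonicity of the Gram-determinant description of $\Lambda^p$ and from the telescoping identity
\[
    h_1^{\otimes k} - h_2^{\otimes k} = \sum_{j=0}^{k-1} h_1^{\otimes j} \otimes (h_1 - h_2) \otimes h_2^{\otimes (k-1-j)} \geq 0.
\]

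Next I would verify that $H_\nu^\wedge$ and $H_\nu^\otimes$ are $\omega$-trace semi-negative via the functorial curvature identities
\begin{align*}
    i\Theta_{\Lambda^p E^*,\, \Lambda^p h_\nu^*} &= \Lambda^p\bigl(i\Theta_{E^*, h_\nu^*}\bigr), \\
    i\Theta_{(E^*)^{\otimes k},\, (h_\nu^*)^{\otimes k}} &= \bigl(i\Theta_{E^*, h_\nu^*}\bigr)^{\otimes k},
\end{align*}
where $\Lambda^p$ and $(\cdot)^{\otimes k}$ denote the canonical derivation extensions of endomorphisms. Taking $\omega$-trace commutes with these derivations, so
\[
    tr_\omega i\Theta_{\Lambda^p E^*,\, \Lambda^p h_\nu^*} = \Lambda^p\bigl(tr_\omega i\Theta_{E^*, h_\nu^*}\bigr), \qquad tr_\omega i\Theta_{(E^*)^{\otimes k},\, (h_\nu^*)^{\otimes k}} = \bigl(tr_\omega i\Theta_{E^*, h_\nu^*}\bigr)^{\otimes k},
\]
and the derivation of a negative semi-definite endomorphism (its eigenvalues being sums of $p$ or $k$ non-positive eigenvalues of the original) is itself negative semi-definite. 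Combined with the first step, this yields the semi-positive case.

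For the quasi-positive case, I would run the same derivation at the distinguished point $x_0 \in X$, where the local approximation additionally satisfies $tr_\omega i\Theta_{E^*, h_\nu^*} \leq -c_{x_0} h_\nu^*$ on some neighborhood of $x_0$. The derivation then yields $tr_\omega i\Theta_{\Lambda^p E^*, \Lambda^p h_\nu^*} \leq -p\,c_{x_0}\, \Lambda^p h_\nu^*$ and $tr_\omega i\Theta_{(E^*)^{\otimes k}, (h_\nu^*)^{\otimes k}} \leq -k\,c_{x_0}\, (h_\nu^*)^{\otimes k}$, which by Proposition \ref{characterization of tr-posi on nbd} translates into the required uniform lower bound $\Delta^C_\omega \log|s|^2_{H_\nu^\wedge} \geq c$ (resp.\ $\Delta^C_\omega \log|s|^2_{H_\nu^\otimes} \geq c$) for holomorphic sections $s$ on a neighborhood of $x_0$. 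The only point requiring any care is the commutation of $tr_\omega$ with the derivation extension on $\Lambda^p$ and $\otimes^k$; this is the same elementary identity already exploited in the proof of Theorem \ref{tr-posi then RC-posi}, so the whole argument is a minor adaptation of that proof, and I expect no further obstacle.
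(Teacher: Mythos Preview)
Your proposal is correct and follows essentially the same route as the paper, which merely states that the result is obtained ``similar to the proof of Theorem \ref{tr-posi then RC-posi}''; you are simply spelling out that adaptation, using the same functorial curvature identities $tr_\omega i\Theta_{\Lambda^p E^*,\Lambda^p h_\nu^*}=\Lambda^p(tr_\omega i\Theta_{E^*,h_\nu^*})$ and their tensor-power analogues. Your explicit verification of the Loewner monotonicity of $\Lambda^p h_\nu^*$ and $(h_\nu^*)^{\otimes k}$ is a detail the paper leaves implicit, and your constants $p\,c_{x_0}$ and $k\,c_{x_0}$ are the correct sharpening of the paper's unspecified $c_x^p$.
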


Let $\pi$ be the projection $\mathbb{P}(E)\to X$ and $L:=\mathcal{O}_E(1)$.
Any smooth Hermitian metric $h$ on $E$ introduces a \textit{canonical metric} $h_L$ on $L$ as follows:

Let $e=(e_1,\ldots,e_r)$ be a local holomorphic frame with respect to a given trivialization on $E$.
The corresponding holomorphic coordinates on $E^*$ are denoted by $(w_1,\ldots,w_r)$. 
There is a local section of $e_{L^*}$ of $L^*$ defined by 
\begin{align*}
    e_{L^*}=\sum_{1\leq\lambda\leq r}w_\lambda e^*_\lambda.
\end{align*}
Define the induced quotient metric $h_L$ on $L$ by the morphism $(\pi^*E,\pi^*h)\to L$. 
In other words, the dual metric $h_L^*$ on $L^*$ is a metric induced by the natural mapping $\iota:L^*\hookrightarrow(\pi^*E)^*=\pi^*E^*$.
If $(h_{\lambda\mu})$ is the matrix representation of $h$ with respect to the basis $e$, then $h_L$ can be written as 
\begin{align*}
    h_L=\frac{1}{||e_{L^*}||^2_{h_{L^*}}}=\frac{1}{||\iota\circ e_{L^*}||^2_{\pi^*h^*}}=\frac{1}{||e_{L^*}||^2_{h^*}}=\frac{1}{\sum h^*_{\lambda\mu}w_\lambda\overline{w}_\mu},
\end{align*}
where $h^*_{\lambda\mu}=h^{\mu\lambda}$ and $(h^{\lambda\mu}),(h^*_{\lambda\mu})$ are the matrix representation of $h^{-1}, h^*$, respectively.

The curvature of $(L,h_L)$ is 
    \begin{align*}
        i\Theta_{L,h_L}=-\idd\log h_L=\idd\log||e_{L^*}||^2_{h^*}=\idd\log\Bigl(\sum h^{\mu\lambda}w_\lambda\overline{w}_\mu\Bigr).
    \end{align*}

    We fix a point $t\in X$. 
    By \cite[ChapterV]{Dem-book}, 
    for any standard coordinate $z=(z_1,\ldots,z_n)$ of $\omega$ centered at point $t$, i.e. $\omega=i\sum dz_j\wedge d\overline{z}_j+O(|z|)$,
    there exists a local holomorphic frame $e=(e_1,\ldots,e_r)$ of $E$ around $t$ such that 
    \begin{align*}
        h_{\lambda\mu}=\delta_{\lambda\mu}-\sum c_{jk\lambda\mu}z_j\overline{z}_k+O(|z|^3),
    \end{align*}
    where these coefficients $c_{jk\lambda\mu}$ are of curvature $\Theta_{E,h}$ written as follows
    \begin{align*}
        i\Theta_{E,h}(t)=i\sum c_{jk\lambda\mu}dz_j\wedge d\overline{z}_k\otimes e^*_\lambda\otimes e_\mu.
    \end{align*}
    From the following inequality $i\Theta_{E^*,h^*}=-i\,{}^t\Theta_{E,h}$, 
    i.e. $c^*_{jk\lambda\mu}=-c_{jk\mu\lambda}$, we have that 
    \begin{align*}
        h^{\mu\lambda}=h^*_{\lambda\mu}=\delta_{\lambda\mu}-\sum c^*_{jk\lambda\mu}z_j\overline{z}_k+O(|z|^3)
        =\delta_{\lambda\mu}+\sum c_{jk\mu\lambda}z_j\overline{z}_k+O(|z|^3).
    \end{align*}

    Therefore, for any point $w\in E_t$, i.e. any point $p=(t,[w])\in\mathbb{P}(E)$ where $[w]=[w_1:\cdots:w_r]$ and $\pi(p)=t$, we have that 
    \begin{align*}
        i\Theta_{L,h_L}(p)&=i\sum c_{jk\mu\lambda}\frac{w_\lambda\overline{w}_\mu}{|w|^2}dz_j\wedge d\overline{z}_k+i\sum_{1\leq \lambda,\mu\leq r}\frac{|w|^2\delta_{\lambda\mu}-\overline{w}_\lambda w_\mu}{|w|^4}dw_\lambda\wedge d\overline{w}_\mu\\
        &=i\sum c_{jk\lambda\mu}\frac{w_\mu\overline{w}_\lambda}{|\overline{w}|^2}dz_j\wedge d\overline{z}_k+\omega_{FS}([w])\\
        &=i\frac{\Theta_{E,h}(\overline{w})}{|\overline{w}|^2_h}+\omega_{FS}([w]),
    \end{align*}
    where $\omega_{FS}$ is Fubini-Study metric on $\mathbb{P}(E^*_t)\cong\mathbb{P}^{r-1}$.

We show that Griffiths positivity as in Definition \ref{def of strictly Grif posi as sing} and $\omega$-trace negativity induce similar singular positivity and negativity in the tautological line bundle, respectively.

\begin{theorem}\label{E Grif posi then O_E(1) is also big}
    Let $X$ be a complex manifold and $E$ be a holomorphic vector bundle on $X$ equipped with a singular Hermitian metric $h$.
    If $h$ is Griffiths positive then the tautological line bundle $\mathcal{O}_E(1)\to\mathbb{P}(E)$ has a natural induced singular Hermitian metric and this metric is singular positive.
    In particular, if $X$ is compact then $\mathcal{O}_E(1)$ is big.
\end{theorem}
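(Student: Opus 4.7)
The plan is to work locally on Stein subsets $S\subset X$ trivializing $E$. Since $h$ is Griffiths positive, $h^*$ is Griffiths negative, so by Proposition \ref{characterization of Grif nega by smoothing} there exist smooth Griffiths negative Hermitian metrics $(h^*_\nu)_{\nu\in\mathbb{N}}$ on $E^*|_S$ decreasing a.e.\ to $h^*$, together with a uniform $\delta>0$ such that $\idd|u|^2_{h^*_\nu}\ge\delta|u|^2_{h^*_\nu}\idd|z|^2$ for every local holomorphic section $u$. Dualizing yields smooth Griffiths positive metrics $h_\nu:=(h^*_\nu)^*$ on $E|_S$, and the canonical construction from the excerpt produces smooth Hermitian metrics $h_{L,\nu}=1/\sum h^*_{\nu,\lambda\mu}w_\lambda\overline{w}_\mu$ on $L|_{\pi^{-1}(S)}$. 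Because the denominators decrease pointwise a.e., the corresponding weights $\varphi_{L,\nu}=\log\sum h^*_{\nu,\lambda\mu}w_\lambda\overline{w}_\mu$ decrease a.e.\ to the weight $\varphi_L$ of $h_L$.

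Next I would establish that each $h_{L,\nu}$ is smoothly positive, with a $\nu$-independent lower bound. At any point $p=(t,[w])\in\pi^{-1}(S)$, working in a standard coordinate of $\omega$ centred at $t$ and a normal frame of $(E,h_\nu)$ at $t$, the curvature formula recalled in the excerpt gives $i\Theta_{L,h_{L,\nu}}(p)=i\Theta_{E,h_\nu}(\overline{w})/|\overline{w}|^2_{h_\nu}+\omega_{FS}([w])$. The first summand is horizontally positive because the Griffiths positivity of $h_\nu$ (equivalent to the $\delta$-Griffiths negativity of $h^*_\nu$) forces the sum $\sum c_{jk\lambda\mu}w_\mu\overline{w}_\lambda/|w|^2\,dz_j\wedge d\overline{z}_k$ to dominate $\delta\,\idd|z|^2$ in the horizontal directions. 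The second is the Fubini--Study form, which is positive in the vertical directions of $\mathbb{P}(E)$. Combining the two yields a smooth strictly positive $(1,1)$-form $\theta$ on $\pi^{-1}(S)$ with $\idd\varphi_{L,\nu}\ge\theta$ for every $\nu$.

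Finally, the decreasing limit $\widetilde\varphi_L:=\lim_\nu\varphi_{L,\nu}$ is upper semi-continuous, not identically $-\infty$ (since $\det h^*>0$ a.e.), and coincides with $\varphi_L$ almost everywhere. A Fatou-type argument directly analogous to Proposition \ref{exists smoothing then (strictly) omega-SH as currents}, but applied to the operator $\idd$ rather than $\idd\wedge\omega^{n-1}$, transfers the bound $\idd\widetilde\varphi_L\ge\theta$ to the limit in the sense of currents; hence $\widetilde\varphi_L$ is strictly plurisubharmonic on $\pi^{-1}(S)$. Since $h_L$ is canonically defined, the local representatives glue, proving that $h_L$ is singular positive. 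When $X$ is compact, $\mathbb{P}(E)$ is compact \kah and by the Demailly characterization recalled in Section~3, singular positivity of $h_L$ is equivalent to bigness of $\mathcal{O}_E(1)$. The main obstacle is the middle step: combining the horizontal $\delta$-lower bound from Griffiths positivity with the vertical Fubini--Study contribution into a single smooth positive $(1,1)$-form $\theta$, uniformly in $\nu$ and in $[w]$, which requires controlling the off-centre mixed $dz\wedge d\overline{w}$ curvature terms that the clean formula above suppresses only at the chosen base point.
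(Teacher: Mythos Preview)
Your proposal is correct and follows essentially the same approach as the paper. The obstacle you flag is not real: the curvature identity $i\Theta_{L,h_{L,\nu}}(p)=i\Theta_{E,h_\nu}(\overline{w})/|\overline{w}|^2_{h_\nu}+\omega_{FS}([w])$ holds \emph{exactly} at the point $p$ in the normal coordinates chosen there (so no mixed $dz\wedge d\overline{w}$ terms appear), and since positivity of a smooth $(1,1)$-form is a pointwise condition this already yields the uniform lower bound; the paper simply takes $\theta=\delta_x\,\idd|z|^2+\widetilde{\omega}_{FS}$, where $\widetilde{\omega}_{FS}$ is a global $\overline{\partial}$-closed form on $\mathbb{P}(E)$ restricting to the Fubini--Study metric on each fibre.
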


\begin{proof}
    From Proposition \ref{characterization of sing Grif semi-posi}, for any $x\in X$, there exists a neighborhood $U$ of $x$, $\delta_x>0$ ans a sequence of smooth Griffiths negative Hermitian metrics $(h^*_\nu)_{\nu\in\mathbb{N}}$ on $U$ 
    decreasing to $h^*$ a.e. pointwise 
    such that $\idd|u|^2_{h^*_\nu}\geq\delta_x|u|^2_{h^*_\nu}\idd|z|^2$,
    for any $\nu\in\mathbb{N}$, any $y\in U$ and any $0\ne u\in E^*_y$. 
    By the inequality
    \begin{align*}
        \idd|v|^2_{h^*_\nu}=\sum_{j,k}(D'^{h^*_\nu}_{z_j}v,D'^{h^*_\nu}_{z_k}v)_{h^*_\nu}dz_j\wedge d\overline{z}_k-\sum_{j,k}(\Theta^{h^*_\nu}_{jk}v,v)_{h^*_\nu}dz_j\wedge d\overline{z}_k,
    \end{align*}
    for any local holomorphic section $v\in\mathcal{O}(E^*)_y$, we have that 
    \begin{align*}
        -\sum(\Theta^{h^*_\nu}_{jk}u,u)_{h^*_\nu}dz_j\wedge d\overline{z}_k\geq\delta_x|u|^2_{h^*_\nu}\idd|z|^2.
    \end{align*}
    In fact, for any $u\in E^*_y$ we can take $v\in\mathcal{O}(E^*)_y$ satisfying $v(y)=u$ and $D'^{h^*_\nu}v(y)=0$.

    There is a natural antilinear isometry between $E^*$ and $E$, which we will denote by $J_\nu$.
    Denote the pairing between $E^*$ and $E$ by $\langle\cdot,\cdot\rangle$ which satisfies that $\langle\xi,u\rangle=(u,J_\nu\xi)_{h_\nu}$
    for any local section $u$ of $E$ and any local section $\xi$ of $E^*$.
    Under the natural holomorphic structure on $E^*$, we obtain $\overline{\partial}_{z_j}\xi=J^{-1}_\nu D'^{h_\nu}_{z_j}J_\nu\xi$ and $D'^{h^*_\nu}_{z_j}\xi=J^{-1}_\nu\overline{\partial}_{z_j}J_\nu\xi$.
    Thus, for any local sections $\xi_j\in C^\infty(E^*)$ and $u_j\in C^\infty(E)$ satisfying $u_j=J_\nu\xi_j$, we get 
    \begin{align*}
        \sum(\Theta^{h^*_\nu}_{jk}\xi_j,\xi_k)_{h^*_\nu}=-\sum(\Theta^{h_\nu}_{jk}u_k,u_j)_{h_\nu},
    \end{align*}

    Hance, for any $\nu\in\mathbb{N}$, any $y\in U$ and any $0\ne \xi\in E_y$, we have that 
    \begin{align*}
        \Theta_{E,h_\nu}(\xi)=\sum(\Theta^{h_\nu}_{jk}\xi,\xi)_{h_\nu}dz_j\wedge d\overline{z}_k\geq\delta_x|\xi|^2_{h_\nu}\idd|z|^2.
    \end{align*}

    Let $h_L^\nu$ be canonical metrics on $L|_{\pi^{-1}(U)}$ induced by $h_\nu$ where 
    $L:=\mathcal{O}_E(1)$.
    Let $(e_1,\ldots,e_r)$ be a orthonormal basis on $E$, then we can write 
    \begin{align*}
        i\Theta_{E,h_\nu}&=i\sum c^\nu_{jk\lambda\mu}dz_j\wedge d\overline{z}_k\otimes e^*_\lambda\otimes e_\mu
    \end{align*}
    at $y\in U$.
    Then for any point $p=(y,[w])\in\mathbb{P}(E)$, i.e. any $w\in E_y$, we have 
    \begin{align*}
        i\Theta_{L,h_L^\nu}(p)=i\sum c^\nu_{jk\lambda\mu}\frac{w_\mu\overline{w}_\lambda}{|\overline{w}|^2_{h_\nu}}+\omega_{FS}([w])=i\frac{\Theta_{E,h_\nu}(\overline{w})}{|\overline{w}|^2_{h_\nu}}+\omega_{FS}([w])
        \geq \delta_x\idd|z|^2+\omega_{FS}([w]),
    \end{align*}
    i.e. $i\Theta_{L,h^\nu_L}\geq\delta_x\idd|z|^2+\widetilde{\omega}_{FS}$ on $\pi^{-1}(U)$, where
    there is a global metric $\widetilde{\omega}_{FS}$ on $\mathbb{P}(E)$ that is $\overline{\partial}$-closed and is the Fubini-Study metric when restricted to each fiber. 
    
    Here, $\delta_x\idd|z|^2+\widetilde{\omega}_{FS}$ is K\"ahler, and since the weights $\varphi_\nu$ of $h^\nu_L$ has a uniformaly positivity and is strictly plurisubharmonic and decreasing to the weight $\varphi$ of the canonical metric $h_L$ induced by $h$ a.e, then $\varphi$ coincides with some strictly plurisubharmonic function.
    Therefore, the singular Hermitian metric $h_L$ on $L$ is singular positive.
\end{proof}

\begin{theorem}\label{E tr-nega then O_E(1) is also tr-nega}
    Let $X$ be a compact complex manifold, $\omega$ be a Hermitian metric on $X$ and $E$ be a holomorphic vector bundle over $X$ equipped with a singular Hermitian metric $h$.
    Let $\mathcal{O}_E(1)\to\mathbb{P}(E)$ be the tautological line bundle of $E\to X$. 
    If $h$ is $\omega$-trace negative then there exists a Hermitian metric $\widetilde{\omega}$ on $\mathbb{P}(E)$ 
    such that the induced singular Hermitian metric $h_{O(1)}$ on $\mathcal{O}_E(1)$ is $\widetilde{\omega}$-trace negative.
\end{theorem}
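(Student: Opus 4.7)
The plan is to adapt the argument of Theorem \ref{E Grif posi then O_E(1) is also big} to the trace setting, replacing the Griffiths form-inequality bookkeeping by a direct computation of $tr_{\widetilde{\omega}}$. First, at any point $\tilde p_0 = (t_0, [w_0]) \in \mathbb{P}(E)$, the $\omega$-trace negativity of $h$ at $t_0$ supplies a neighborhood $U \ni t_0 \subset X$, a constant $c>0$, and smooth Hermitian approximations $h_\nu \searrow h$ on $E|_U$ satisfying $tr_\omega i\Theta_{E,h_\nu} \leq -c\,h_\nu$ uniformly in $\nu$. Taking the induced smooth canonical metrics $h_L^\nu$ on $L := \mathcal{O}_E(1)|_{\pi^{-1}(U)}$, exactly the computation preceding Theorem \ref{E Grif posi then O_E(1) is also big} gives
\[
i\Theta_{L, h_L^\nu}(\tilde q) = i\frac{\Theta_{E, h_\nu}(\bar w)}{|\bar w|^2_{h_\nu}} + \omega_{FS}([w])
\]
at each $\tilde q = (t,[w]) \in \pi^{-1}(U)$, and these $h_L^\nu$ decrease to $h_{O(1)}$ pointwise.

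Second, I would construct $\widetilde{\omega}$ as follows. Fix any auxiliary smooth Hermitian metric $h_0$ on $E$; its Chern connection yields an orthogonal splitting $T\mathbb{P}(E) = H \oplus T^V$ of the tangent bundle, and the fiberwise Fubini--Study form gives a purely vertical positive $(1,1)$-form $\beta$ on $\mathbb{P}(E)$ (i.e.\ $\beta$ vanishes on pairs of horizontal vectors). For a constant $A > 0$ to be chosen large, set
\[
\widetilde{\omega} := \pi^*\omega + A\beta,
\]
which is a Hermitian metric on $\mathbb{P}(E)$, block-diagonal in the splitting $H \oplus T^V$.

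Third, I would exploit the following clean trace identity, obtained by a direct bidegree check: since $\pi^*\omega$ is horizontal of maximal rank $n$ and $\beta$ is vertical of maximal rank $r-1$, all bidegrees of $\widetilde{\omega}^{N-1}$ (with $N = n+r-1 = \dim \mathbb{P}(E)$) in which a mixed $(1,1)$-component of $\alpha$ could land vanish identically, so
\[
tr_{\widetilde{\omega}} \alpha = tr_\omega\, \alpha^H + tr_{A\beta}\, \alpha^V
\]
for every $(1,1)$-form $\alpha$ on $\mathbb{P}(E)$, where $\alpha^H$ and $\alpha^V$ are the horizontal-horizontal and vertical-vertical restrictions. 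Applied to $\alpha = i\Theta_{L, h_L^\nu}$, the vertical contribution $tr_{A\beta}\,\omega_{FS}$ is of order $(r-1)/A$, bounded uniformly in $\nu$ on a compact neighborhood of $\tilde p_0$ because the fiberwise Fubini--Study metrics associated with $h_\nu$ and $h_0$ are mutually comparable there; the horizontal contribution $tr_\omega[i\Theta_{L, h_L^\nu}]^H$ is essentially $tr_\omega \bigl(\Theta_{E, h_\nu}(\bar w)/|\bar w|^2_{h_\nu}\bigr) \leq -c$ by the hypothesis $tr_\omega i\Theta_{E, h_\nu} \leq -c\,h_\nu$. Taking $A$ large enough so that the vertical term is strictly smaller than $c/2$ yields $tr_{\widetilde{\omega}} i\Theta_{L, h_L^\nu} \leq -c/2$ uniformly in $\nu$ on $\pi^{-1}(U)$, which is exactly the $\widetilde{\omega}$-trace negativity of $h_{O(1)}$ at $\tilde p_0$; since $\tilde p_0$ was arbitrary, the theorem follows.

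The main obstacle is reconciling the two splittings: the clean formula for $i\Theta_{L, h_L^\nu}$ is derived in $h_\nu$-normal coordinates, whereas the trace identity above is applied in the $h_0$-adapted splitting. In $h_0$-adapted coordinates the horizontal-horizontal part of $i\Theta_{L, h_L^\nu}$ at $t_0$ differs from $\Phi_\nu = i\Theta_{E,h_\nu}(\bar w)/|\bar w|^2_{h_\nu}$ by correction terms that, as one sees from the coordinate formula $A^{HH}_{jj} = \partial_j\bar\partial_j\log \Psi_\nu|_0$, reduce to squares of off-diagonal components of $\partial h_\nu|_{t_0}$; one must verify that these corrections can be absorbed into the uniform constant $c$ given by the hypothesis. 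The argument will therefore hinge on a careful coordinate comparison that uses the positivity of the cross terms appearing in $(tr_\omega i\Theta_{E^*,h^*_\nu})_{rr}$ to cancel the analogous positive corrections on the $\mathbb{P}(E)$ side.
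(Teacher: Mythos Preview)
Your overall strategy coincides with the paper's: use the local smooth approximations $h_\nu\searrow h$ supplied by $\omega$-trace negativity, pass to the induced metrics $h_L^\nu$ on $L=\mathcal{O}_E(1)$, build $\widetilde\omega$ as $\pi^*\omega$ plus a large multiple of a fiber-Fubini--Study form, and show $tr_{\widetilde\omega}\,i\Theta_{L,h_L^\nu}\le -c'$ uniformly in $\nu$, then let compactness of $X$ fix one scaling constant for every point.

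The difference is in how the trace is computed. The paper never introduces an auxiliary $h_0$ or an $h_0$-horizontal/vertical splitting. Instead, at each $p=(t,[w])$ it works in the $h_\nu$-adapted coordinates $(z,\zeta)$ in which the curvature formula
\[
i\Theta_{L,h_L^\nu}(p)=i\,\frac{\Theta_{E,h_\nu}(\bar w)}{|\bar w|^2_{h_\nu}}+\omega_{FS}([w])
\]
already has \emph{no} $dz\wedge d\bar\zeta$ cross terms. Taking $\widetilde\omega=\pi^*\omega+\gamma\,\widetilde\omega_{FS}$ with $\widetilde\omega_{FS}$ the global fiberwise Fubini--Study form (so $\widetilde\omega(p)=i\sum dz_j\wedge d\bar z_j+i\sum d\zeta_j\wedge d\bar\zeta_j$ at $p$), the wedge $i\Theta_{L,h_L^\nu}\wedge\widetilde\omega^{\,n+r-2}$ gives directly
\[
tr_{\widetilde\omega}\,i\Theta_{L,h_L^\nu}(p)=\frac{tr_\omega\Theta_{E,h_\nu}(\bar w)}{|\bar w|^2_{h_\nu}}+\frac{r-1}{\gamma}\le -c_x+\frac{r-1}{\gamma},
\]
after which compactness of $X$ lets one choose a single $\gamma$ with $c_x-\tfrac{r-1}{\gamma}>0$ everywhere, and Proposition~\ref{exists smoothing then (strictly) omega-SH as currents} transfers the bound to $h_L$.

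By contrast, your choice to fix $h_0$ first and decompose everything in the $h_0$-splitting manufactures the very obstacle you then worry about: the correction terms you describe come from the discrepancy between the $h_0$- and $h_\nu$-normal frames, and since $h_\nu\to h$ is singular these frame changes need not be uniformly controlled in $\nu$. Your closing paragraph does not resolve this---the appeal to ``positivity of the cross terms appearing in $(tr_\omega i\Theta_{E^*,h^*_\nu})_{rr}$'' is not a mechanism that obviously cancels the extra horizontal terms on $\mathbb{P}(E)$. The fix is simply to drop $h_0$ and compute, as the paper does, in coordinates adapted to $h_\nu$ at each point; then there are no cross terms in the curvature to reconcile, and the argument closes with the elementary estimate above.
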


\begin{proof}
    For any $x\in X$, there exist a neighborhood $U$ of $x$, $c_x>0$ and a sequence of smooth Hermitian metrics $(h_\nu)_{\nu\in\mathbb{N}}$ on $U$ decreasing to $h$ pointwise such that $tr_\omega i\Theta_{E,h_\nu}\leq -c_xh_\nu$ on $U$ for any $\nu\in\mathbb{N}$.
    Let $h_L^\nu$ be canonical metrics on $L|_{\pi^{-1}(U)}$ where 
    $L:=\mathcal{O}_E(1)$.
    For any $t\in U$, let $(z_1,\cdots,z_n)$ be a standard coordinate of $\omega$ at $t$ satisfying $\omega=\idd|z|^2+O(|z|)$ and $(e_1,\ldots,e_r)$ be a orthonormal basis on $E$, then we can write 
    \begin{align*}
        i\Theta_{E,h_\nu}=i\sum c^\nu_{jk\lambda\mu}dz_j\wedge d\overline{z}_k\otimes e^*_\lambda\otimes e_\mu, \quad 
        tr_\omega i\Theta_{E,h_\nu}=i\sum c^\nu_{jj\lambda\mu} e^*_\lambda\otimes e_\mu
    \end{align*}
    at $t$.
    Then for any point $p=(t,[w])\in\mathbb{P}(E)$, i.e. any $w\in E_t$, we have 
    \begin{align*}
        i\Theta_{L,h_L^\nu}(p)=i\sum c^\nu_{jk\lambda\mu}\frac{w_\mu\overline{w}_\lambda}{|\overline{w}|^2_{h_\nu}}+\omega_{FS}([w])=i\frac{\Theta_{E,h_\nu}(\overline{w})}{|\overline{w}|^2_{h_\nu}}+\omega_{FS}([w]).
    \end{align*}

    From the above, there is a global metric $\widetilde{\omega}_{FS}$ on $\mathbb{P}(E)$ that is $\overline{\partial}$-closed and is the Fubini-Study metric when restricted to each fiber.
    Let $\gamma>0$ and $\widetilde{\omega}:=\pi^*\omega+\gamma\widetilde{\omega}_{FS}$ then $\widetilde{\omega}$ is a Hermitian metric on $\mathbb{P}(E)$ since $\widetilde{\omega}_{FS}$ is not dependent on $z$.
    There exists a local coordinate $(\zeta_1,\ldots,\zeta_{r-1})$ centered $[w]$ on $\mathbb{P}(E^*_s)$ such that $\gamma\widetilde{\omega}_{FS}=i\sum d\zeta_j\wedge d\overline{\zeta}_j+(|\zeta|^2)$.
    Therefore, we obtain $\widetilde{\omega}(p)=\pi^*\omega(s)+\gamma\widetilde{\omega}_{FS}([w])=i\sum dz_j\wedge d\overline{z}_j+i\sum d\zeta_j\wedge d\overline{\zeta}_j$ and
    \begin{align*}
        i\Theta_{L,h_L^\nu}(p)\wedge\widetilde{\omega}^{n-1}(p)&=\Bigl(i\sum c^\nu_{jk\lambda\mu}\frac{w_\mu\overline{w}_\lambda}{|\overline{w}|^2_{\nu}}dz_j\wedge d\overline{z}_k+\frac{i}{\gamma}\sum d\zeta_j\wedge d\overline{\zeta}_j\Bigr)\wedge\widetilde{\omega}^{n-1}(p)\\
        &=\Bigl(i\sum c^\nu_{jj\lambda\mu}\frac{w_\mu\overline{w}_\lambda}{|\overline{w}|^2_{h_\nu}}dz_j\wedge d\overline{z}_j+\frac{i}{\gamma}\sum d\zeta_j\wedge d\overline{\zeta}_j\Bigr)\wedge\widetilde{\omega}^{n-1}(p)\\
        &=\Bigl(\frac{\sum c^\nu_{jj\lambda\mu}\overline{w}_\lambda w_\mu}{|\overline{w}|^2_{h_\nu}}+\frac{r-1}{\gamma}\Bigr)\widetilde{\omega}^n
        =\Bigl(\frac{tr_\omega\Theta_{E,h_\nu}(\overline{w})}{|\overline{w}|^2_{h_\nu}}+\frac{r-1}{\gamma}\Bigr)\widetilde{\omega}^n\\
        &\leq\Bigl(-c_x+\frac{r-1}{\gamma}\Bigr)\widetilde{\omega}^n.
    \end{align*}
    Hance, we have $i\Theta_{L,h_L^\nu}\wedge\widetilde{\omega}^{n-1}\!\leq\!(-c_x+\frac{r-1}{\gamma})\widetilde{\omega}^n$ on $\pi^{-1}(U)$ for any $\nu\in\mathbb{N}$.
    By compact-ness of $X$, we can take $\gamma$ large enough such that $C_x:=c_x-\frac{r-1}{\gamma}>0$ for any $x\in X$.
    
    Let $h_L$ be the canonical metric $L$ over $\mathbb{P}(E)$ induced by $h$. 
    Then the weight function $\varphi_L$ of $h_L|_{\pi^{-1}(U)}$ satisfies that $\idd\varphi_L\wedge\widetilde{\omega}^{n-1}\!\leq\!(-c_x+\frac{r-1}{\gamma})\widetilde{\omega}^n<0$ on $\pi^{-1}(U)$ by Proposition \ref{exists smoothing then (strictly) omega-SH as currents}.
    Hance, $h_L$ is $\widetilde{\omega}$-trace negative.
\end{proof}


\begin{corollary}
    Let $X$ be a compact complex manifold, $\omega$ be a Hermitian metric on $X$ and $E$ be a holomorphic vector bundle over $X$ equipped with a singular Hermitian metric $h$.
    If $h$ is $\omega$-trace positive then $E^*$ is not pseudo-effective.
\end{corollary}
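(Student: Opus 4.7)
The plan is to reduce to the already-proven line-bundle case by passing to the tautological line bundle of $E^*$ via the projectivization, exactly in parallel with how Theorem \ref{E tr-nega then O_E(1) is also tr-nega} was used to transport $\omega$-trace negativity on vector bundles to $\widetilde{\omega}$-trace negativity on line bundles. Since $h$ on $E$ is $\omega$-trace positive, the dual metric $h^*$ on $E^*$ is $\omega$-trace negative by definition of the positivity/negativity duality. Thus $(E^*, h^*)$ is precisely the input required by Theorem \ref{E tr-nega then O_E(1) is also tr-nega}.

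Applying Theorem \ref{E tr-nega then O_E(1) is also tr-nega} to $E^*$ with $h^*$, we obtain a Hermitian metric $\widetilde{\omega}$ on $\mathbb{P}(E^*)$ and an induced singular Hermitian metric $(h^*)_{\mathcal{O}(1)}$ on $\mathcal{O}_{E^*}(1)$ that is $\widetilde{\omega}$-trace negative. Dualizing on the line bundle $\mathcal{O}_{E^*}(1)$, the naturally dual singular Hermitian metric on $\mathcal{O}_{E^*}(-1) = \mathcal{O}_{E^*}(1)^*$ is $\widetilde{\omega}$-trace positive (this is the definitional content of negativity versus positivity for the line bundle case from Definition \ref{def of tr-posi for line bdl case}). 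In particular, this metric is $\widetilde{\omega}$-trace quasi-positive.

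Now Theorem \ref{characterization of tr-posi, not psef} applies to $L := \mathcal{O}_{E^*}(-1)$ equipped with this $\widetilde{\omega}$-trace quasi-positive singular Hermitian metric, and it yields that $L^* = \mathcal{O}_{E^*}(1)$ is not pseudo-effective on the compact (as $X$ is compact and $\pi$ is proper) complex manifold $\mathbb{P}(E^*)$. Finally, by the standard equivalence between pseudo-effectivity of a holomorphic vector bundle and pseudo-effectivity of its tautological line bundle on the projectivization, namely $E^*$ is pseudo-effective if and only if $\mathcal{O}_{E^*}(1) \to \mathbb{P}(E^*)$ is pseudo-effective, we conclude that $E^*$ itself is not pseudo-effective.

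The main potential obstacle is verifying that Theorem \ref{E tr-nega then O_E(1) is also tr-nega} really is applicable in the strict (non-quasi, non-semi) negative regime on $E^*$ to produce the strict $\widetilde{\omega}$-trace negativity needed to invoke Theorem \ref{characterization of tr-posi, not psef}; as stated, that theorem was proved precisely in the trace-negative (not merely semi-negative) setting, so the hypotheses match verbatim. The only other subtlety is the dualization at the line-bundle level, which is immediate since $\omega$-trace positivity and negativity for line bundles are defined as exact duals via the weight function, so no further work is needed. Everything else is a direct concatenation of Theorem \ref{E tr-nega then O_E(1) is also tr-nega} and Theorem \ref{characterization of tr-posi, not psef}.
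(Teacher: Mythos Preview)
Your proof is correct and follows essentially the same approach as the paper: both arguments pass to the tautological line bundle $\mathcal{O}_{E^*}(1)$ on $\mathbb{P}(E^*)$ via Theorem \ref{E tr-nega then O_E(1) is also tr-nega} and then invoke Theorem \ref{characterization of tr-posi, not psef}, together with the standard equivalence between pseudo-effectivity of $E^*$ and of $\mathcal{O}_{E^*}(1)$. The paper phrases it as a contradiction while you argue directly, but the logical content is identical.
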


\begin{proof}
    Suppose $E^*$ is pseudo-effective, then the tautological line bundle $\mathcal{O}_{E^*}(1)$ is also pseudo-effective.
    However, $\mathcal{O}_{E^*}(1)$ is not pseudo-effective by Theorem \ref{E tr-nega then O_E(1) is also tr-nega} and \ref{characterization of tr-posi, not psef}. 
\end{proof}

\section{Definition and properties of $\mathrm{deg}\,_\omega$-maximum}

In this section, we introduce the following notion in order to $0$-th cohomology vanishing and show that this notion follows from $\omega$-trace positivity.

\begin{definition}\label{def of deg-max}
    Let $X$ be a compact \kah manifold of dimension $n$ equipped with a \kah metric $\omega$.
    Let $\mathcal{F}$ be a torsion-free coherent sheaf over $X$. 
    We say that $\mathcal{F}$ is $\mathrm{deg}\,_\omega$-\textit{maximal} if for any coherent subsheaf $\mathcal{S}$ with positive rank, we have 
    \begin{align*}
        \mathrm{deg}\,_\omega(\mathcal{S})\leq\mathrm{deg}\,_\omega(\mathcal{F}).
    \end{align*}
    
    If moreover the strict inequality $\mathrm{deg}\,_\omega(\mathcal{S})<\mathrm{deg}\,_\omega(\mathcal{F})$
    holds for all coherent subsheaf $\mathcal{S}$ with $0<\mathrm{rank}\,\mathcal{S}<\mathrm{rank}\,\mathcal{F}$, we say that $\mathcal{F}$ is $\mathrm{deg}\,_\omega$-\textit{strictly maximal}.
\end{definition}

Similar to the stability case, we immediately have the following.

\begin{lemma}\label{deg max by quotient}
    Let $X$ be a compact \kah manifold with a \kah metric $\omega$ and $\mathcal{F}$ be a torsion-free coherent sheaf. 
    Then $\mathcal{F}$ is $\mathrm{deg}\,_\omega$ $($resp. strictly$)$ maximal if and only if 
    $\mathrm{deg}\,_\omega(\mathcal{Q})\geq0$ $($resp. $>0)$ for any quotient sheaf $\mathcal{Q}$ with $0<\mathrm{rank}\,\mathcal{Q}$ $($resp. $<\mathrm{rank}\,\mathcal{F})$. 
\end{lemma}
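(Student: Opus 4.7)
The lemma is essentially bookkeeping based on additivity of $\mathrm{rank}$ and $\mathrm{deg}\,_\omega$ along short exact sequences. For every quotient $\mathcal{Q}$ of $\mathcal{F}$ with kernel $\mathcal{S}$ one has an exact sequence $0 \to \mathcal{S} \to \mathcal{F} \to \mathcal{Q} \to 0$, and additivity yields $\mathrm{rank}\,\mathcal{F} = \mathrm{rank}\,\mathcal{S} + \mathrm{rank}\,\mathcal{Q}$ together with $\mathrm{deg}\,_\omega(\mathcal{F}) = \mathrm{deg}\,_\omega(\mathcal{S}) + \mathrm{deg}\,_\omega(\mathcal{Q})$. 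Hence the inequality $\mathrm{deg}\,_\omega(\mathcal{S}) \leq \mathrm{deg}\,_\omega(\mathcal{F})$ is literally the same as $\mathrm{deg}\,_\omega(\mathcal{Q}) \geq 0$, and subsheaves and quotients of $\mathcal{F}$ correspond bijectively via this sequence. The whole proof is essentially this translation, with only the compatibility of the rank ranges and a torsion-quotient edge case requiring a moment of thought.

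For the direction $(\Rightarrow)$, I will take any quotient $\mathcal{Q}$ with $0 < \mathrm{rank}\,\mathcal{Q}$. When $\mathrm{rank}\,\mathcal{Q} < \mathrm{rank}\,\mathcal{F}$ the kernel satisfies $0 < \mathrm{rank}\,\mathcal{S} < \mathrm{rank}\,\mathcal{F}$, so $\mathrm{deg}\,_\omega$-maximality of $\mathcal{F}$ gives $\mathrm{deg}\,_\omega(\mathcal{S}) \leq \mathrm{deg}\,_\omega(\mathcal{F})$ and thus $\mathrm{deg}\,_\omega(\mathcal{Q}) \geq 0$ by the additivity formula. The only other possibility is $\mathrm{rank}\,\mathcal{Q} = \mathrm{rank}\,\mathcal{F}$, which forces $\mathrm{rank}\,\mathcal{S} = 0$; since $\mathcal{F}$ is torsion-free this pins $\mathcal{S} = 0$ and $\mathcal{Q} = \mathcal{F}$, so the statement reduces to the tautology $\mathrm{deg}\,_\omega(\mathcal{F}) = \mathrm{deg}\,_\omega(\mathcal{F})$ in this degenerate case.

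For $(\Leftarrow)$, given a subsheaf $\mathcal{S}$ with $0 < \mathrm{rank}\,\mathcal{S}$, set $\mathcal{Q} := \mathcal{F}/\mathcal{S}$. If $\mathrm{rank}\,\mathcal{S} < \mathrm{rank}\,\mathcal{F}$ then $\mathrm{rank}\,\mathcal{Q} > 0$ and the hypothesis gives $\mathrm{deg}\,_\omega(\mathcal{Q}) \geq 0$, whence $\mathrm{deg}\,_\omega(\mathcal{S}) \leq \mathrm{deg}\,_\omega(\mathcal{F})$. The remaining case $\mathrm{rank}\,\mathcal{S} = \mathrm{rank}\,\mathcal{F}$ produces a torsion quotient $\mathcal{Q}$; here I would invoke the standard fact that on a compact \kah manifold the $\omega$-degree of a torsion coherent sheaf is non-negative (its determinant line bundle is associated to an effective codimension-one cycle supported on the torsion locus), which again forces $\mathrm{deg}\,_\omega(\mathcal{S}) \leq \mathrm{deg}\,_\omega(\mathcal{F})$. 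The strictly maximal version runs along exactly the same lines: the range $0 < \mathrm{rank}\,\mathcal{Q} < \mathrm{rank}\,\mathcal{F}$ matches $0 < \mathrm{rank}\,\mathcal{S} < \mathrm{rank}\,\mathcal{F}$ under the bijection and strict inequalities propagate through additivity, so no torsion-quotient case even arises. The main (and only) subtle point is this torsion-quotient degenerate case in the non-strict $(\Leftarrow)$, and it is resolved by appealing to the non-negativity of $\mathrm{deg}\,_\omega$ on torsion sheaves rather than by any delicate argument.
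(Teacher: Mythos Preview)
Your bookkeeping is correct everywhere except in one spot: the case $\mathrm{rank}\,\mathcal{Q}=\mathrm{rank}\,\mathcal{F}$ in the $(\Rightarrow)$ direction of the non-strict statement. There you correctly deduce $\mathcal{S}=0$ and hence $\mathcal{Q}\cong\mathcal{F}$, but the conclusion you then need is $\mathrm{deg}\,_\omega(\mathcal{F})\geq 0$, not the identity $\mathrm{deg}\,_\omega(\mathcal{F})=\mathrm{deg}\,_\omega(\mathcal{F})$. That is not a tautology, and in fact it does not follow from $\mathrm{deg}\,_\omega$-maximality alone: a line bundle $L$ of negative $\omega$-degree is trivially $\mathrm{deg}\,_\omega$-maximal (every positive-rank subsheaf $\mathcal{S}\subset L$ has torsion quotient, so $\mathrm{deg}\,_\omega(\mathcal{S})\leq\mathrm{deg}\,_\omega(L)$ by the very fact you quote), yet taking $\mathcal{Q}=L$ gives $\mathrm{deg}\,_\omega(\mathcal{Q})<0$.

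The paper gives no argument here (it simply records the lemma as ``similar to the stability case''), and in its later uses the statement is only ever invoked for quotients of rank strictly between $0$ and $\mathrm{rank}\,\mathcal{F}$, or alongside a separately established inequality $\mathrm{deg}\,_\omega(\mathcal{F})\geq 0$. So the edge case is a genuine slip in your write-up and arguably an imprecision in the lemma's phrasing; the clean fix is to restrict to $0<\mathrm{rank}\,\mathcal{Q}<\mathrm{rank}\,\mathcal{F}$ in the non-strict version as well, after which your additivity argument goes through. The remainder of your proof, including the torsion-quotient handling in $(\Leftarrow)$ and the entire strict case, is correct.
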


\begin{proposition}\label{characterization of deg max}
    Let $X$ be a compact \kah manifold equipped with a \kah metric $\omega$ and $\mathcal{F}$ be a torsion-free coherent sheaf. 
    Then the following conditions are equivalent.
    \begin{itemize}
        \item [$(a)$] $\mathcal{F}$ is $\mathrm{deg}\,_\omega$-maximal,
        \item [$(b)$] $\mathrm{deg}\,_\omega(\mathcal{S})\leq\mathrm{deg}\,_\omega(\mathcal{F})$, for any subsheaf $\mathcal{S}$ with torsion-free quotient,
        \item [$(c)$] $\mathrm{deg}\,_\omega(\mathcal{Q})\geq0$ for any torsion-free quotient sheaf $\mathcal{Q}$.
    \end{itemize}
    The same equivalence relationship holds for $\mathrm{deg}_\omega$-strictly maximal, corresponding to the strict inequality, except that $0<\mathrm{rank}\,\mathcal{S},\,\mathrm{rank}\,\mathcal{Q}<\mathrm{rank}\,\mathcal{F}$ is required.
\end{proposition}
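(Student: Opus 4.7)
The plan is to prove the equivalences $(a) \Rightarrow (b) \Leftrightarrow (c) \Rightarrow (a)$, relying on two standard tools: additivity of $\mathrm{deg}\,_\omega$ in short exact sequences of coherent sheaves on $(X,\omega)$, and the notion of the saturation of a subsheaf. The implication $(a) \Rightarrow (b)$ is immediate since $(b)$ only demands the degree inequality on the restricted class of subsheaves with torsion-free quotient.

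For $(b) \Leftrightarrow (c)$, given any subsheaf $\mathcal{S}\subset\mathcal{F}$ with torsion-free quotient $\mathcal{Q}=\mathcal{F}/\mathcal{S}$, the short exact sequence
\begin{align*}
    0 \longrightarrow \mathcal{S} \longrightarrow \mathcal{F} \longrightarrow \mathcal{Q} \longrightarrow 0
\end{align*}
yields $\mathrm{deg}\,_\omega(\mathcal{F}) = \mathrm{deg}\,_\omega(\mathcal{S}) + \mathrm{deg}\,_\omega(\mathcal{Q})$ by additivity of the first Chern class of the determinant line bundle paired against $\omega^{n-1}$. Hence $\mathrm{deg}\,_\omega(\mathcal{S}) \leq \mathrm{deg}\,_\omega(\mathcal{F})$ holds if and only if $\mathrm{deg}\,_\omega(\mathcal{Q}) \geq 0$, and the assignments $\mathcal{S}\mapsto\mathcal{Q}$ and $\mathcal{Q}\mapsto\ker(\mathcal{F}\to\mathcal{Q})$ are mutually inverse on the classes appearing in $(b)$ and $(c)$.

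For $(c) \Rightarrow (a)$, I take an arbitrary coherent subsheaf $\mathcal{S}\subset\mathcal{F}$ with positive rank. If $\mathrm{rank}\,\mathcal{S} = \mathrm{rank}\,\mathcal{F}$, then $\mathcal{F}/\mathcal{S}$ is a torsion sheaf, whose degree is non-negative (the contribution from each codimension-one component is manifestly non-negative), so additivity immediately yields $\mathrm{deg}\,_\omega(\mathcal{S}) \leq \mathrm{deg}\,_\omega(\mathcal{F})$. Otherwise I pass to the saturation $\overline{\mathcal{S}}$, defined as the preimage in $\mathcal{F}$ of the torsion part of $\mathcal{F}/\mathcal{S}$; then $\mathrm{rank}\,\overline{\mathcal{S}} = \mathrm{rank}\,\mathcal{S}$, the quotient $\overline{\mathcal{S}}/\mathcal{S}$ is torsion with non-negative degree, and $\mathcal{F}/\overline{\mathcal{S}}$ is torsion-free. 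Additivity gives $\mathrm{deg}\,_\omega(\mathcal{S}) \leq \mathrm{deg}\,_\omega(\overline{\mathcal{S}})$, and then $(c)$ applied to the torsion-free quotient $\mathcal{F}/\overline{\mathcal{S}}$ yields $\mathrm{deg}\,_\omega(\overline{\mathcal{S}}) \leq \mathrm{deg}\,_\omega(\mathcal{F})$, closing the chain.

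The strict version follows the same scheme under the rank restriction $0<\mathrm{rank}\,\mathcal{S},\mathrm{rank}\,\mathcal{Q}<\mathrm{rank}\,\mathcal{F}$: saturation preserves the rank, so the torsion-free quotient $\mathcal{F}/\overline{\mathcal{S}}$ also has rank strictly between $0$ and $\mathrm{rank}\,\mathcal{F}$, and the strict inequality of $(c)$ then propagates through. The main conceptual obstacle is the careful handling of coherent sheaves rather than vector bundles when invoking the determinant line bundle and additivity of the first Chern class; this is classical but requires working on the open subset where the relevant sheaves are locally free, whose complement has codimension at least two, so that degrees computed via reflexive hulls are well-defined and additive along short exact sequences.
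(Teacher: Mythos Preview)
Your proof is correct and follows essentially the same route as the paper: both arguments reduce to the saturation trick (replacing an arbitrary subsheaf by its saturation, whose quotient in $\mathcal{F}$ is torsion-free) together with the non-negativity of $\mathrm{deg}\,_\omega$ on torsion sheaves. The only cosmetic difference is that the paper closes the cycle via $(b)\Rightarrow(a)$ rather than your $(c)\Rightarrow(a)$, and it justifies $\mathrm{deg}\,_\omega(\mathcal{T}')\geq 0$ by noting that $\det\mathcal{T}'$ carries a nontrivial global section whose divisor $D$ gives $\mathrm{deg}\,_\omega(\mathcal{T}')=\int_D\omega^{n-1}\geq 0$.
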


\begin{proof}
    $(a) \Longrightarrow (b)$ and $(c)$ are trivial. $(b) \iff (c)$ follows from Lemma \ref{deg max by quotient}. We show that $(b) \Longrightarrow (a)$. 
    For any coherent subsheaf $\mathcal{S}'$, there exists an exact sequence 
    \begin{align*}
        0\longrightarrow\mathcal{S}'\longrightarrow\mathcal{F}\longrightarrow\mathcal{Q}'\longrightarrow0.
    \end{align*}
    Let $\mathcal{T}'$ be the torsion subsheaf of $\mathcal{Q}'$. Set $\mathcal{Q}=\mathcal{Q}'/\mathcal{T}'$ which is torsion-free and define $\mathcal{S}$ 
    by the exact sequence $0\longrightarrow\mathcal{S}\longrightarrow\mathcal{F}\longrightarrow\mathcal{Q}\longrightarrow0.$
    
    Then $\mathcal{S}'$ is a subsheaf of $\mathcal{S}$ and the quotient sheaf $\mathcal{S}/\mathcal{S}'$ is isomorphic to the torsion sheaf $\mathcal{T}'$.
    By the assumption, we have $\mathrm{deg}\,_\omega(\mathcal{S})\leq\mathrm{deg}\,_\omega(\mathcal{F})$.
    Here, $\mathrm{det}\,\mathcal{T}'$ admits a non-trivial holomorphic global section $\sigma$ from torsion-ness of $\mathcal{T}'$.
    Therefore, we have 
    \begin{align*}
        \mathrm{deg}\,_\omega(\mathcal{S})-\mathrm{deg}\,_\omega(\mathcal{S}')=\mathrm{deg}\,_\omega(\mathcal{T}')=\int_D\omega^{n-1}\geq0,
    \end{align*}
    where $D$ is divisor of $\sigma$, and that $\mathrm{deg}\,_\omega(\mathcal{S}')\leq\mathrm{deg}\,_\omega(\mathcal{F})$.
\end{proof}

\begin{proposition}
    Let $X$ be a compact \kah manifold with a \kah metric $\omega$ and $\mathcal{F}$ be a torsion-free coherent sheaf over $X$. 
    We have the following.
    \begin{itemize}
        \item if $\mathcal{F}$ is $\omega$-semistable with $\mathrm{deg}\,_\omega(\mathcal{F})\geq0$ then $\mathcal{F}$ is $\mathrm{deg}\,_\omega$-maximal,
        \item if $\mathcal{F}$ is $\mathrm{deg}\,_\omega$-maximal with $\mathrm{deg}\,_\omega(\mathcal{F})\leq0$ then $\mathcal{F}$ is $\omega$-semistable.
    \end{itemize}
    Similar properties also hold for stability and strictly-maximal.
\end{proposition}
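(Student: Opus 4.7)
The plan is to reduce both implications to an elementary comparison between the slopes $\mu_\omega(\mathcal{S}) = \deg_\omega(\mathcal{S})/\mathrm{rank}\,\mathcal{S}$ and $\mu_\omega(\mathcal{F}) = \deg_\omega(\mathcal{F})/\mathrm{rank}\,\mathcal{F}$, using that the sign of $\deg_\omega(\mathcal{F})$ dictates the direction in which multiplying by the rank ratio $\mathrm{rank}\,\mathcal{S}/\mathrm{rank}\,\mathcal{F}\in(0,1]$ moves the inequality. The only genuinely sheaf-theoretic input needed is the observation that if $\mathcal{S}\subset\mathcal{F}$ have the same rank then the quotient $\mathcal{F}/\mathcal{S}$ is torsion, and the degree of a torsion sheaf on a compact Kähler manifold is non-negative (it equals $\int_D\omega^{n-1}$ for the divisor $D$ associated to a section of its determinant, exactly as used in Proposition \ref{characterization of deg max}).

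For the first implication, assume $\mathcal{F}$ is $\omega$-semistable with $\deg_\omega(\mathcal{F})\geq 0$, and let $\mathcal{S}\subset\mathcal{F}$ be a coherent subsheaf of positive rank. If $\mathrm{rank}\,\mathcal{S}=\mathrm{rank}\,\mathcal{F}$, the torsion argument recalled above gives $\deg_\omega(\mathcal{S})\leq\deg_\omega(\mathcal{F})$ directly. If $0<\mathrm{rank}\,\mathcal{S}<\mathrm{rank}\,\mathcal{F}$, semistability yields
\begin{align*}
\deg_\omega(\mathcal{S})\leq\frac{\mathrm{rank}\,\mathcal{S}}{\mathrm{rank}\,\mathcal{F}}\deg_\omega(\mathcal{F})\leq\deg_\omega(\mathcal{F}),
\end{align*}
where the last inequality uses $\mathrm{rank}\,\mathcal{S}/\mathrm{rank}\,\mathcal{F}\leq 1$ together with $\deg_\omega(\mathcal{F})\geq0$. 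Hence $\mathcal{F}$ is $\deg_\omega$-maximal.

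For the second implication, assume $\mathcal{F}$ is $\deg_\omega$-maximal with $\deg_\omega(\mathcal{F})\leq 0$, and let $\mathcal{S}\subset\mathcal{F}$ with $0<\mathrm{rank}\,\mathcal{S}<\mathrm{rank}\,\mathcal{F}$. Maximality gives $\deg_\omega(\mathcal{S})\leq\deg_\omega(\mathcal{F})$. Dividing by $\mathrm{rank}\,\mathcal{S}$ and then comparing with division by the larger $\mathrm{rank}\,\mathcal{F}$—which \emph{reverses} the inequality precisely because $\deg_\omega(\mathcal{F})\leq 0$—one obtains
\begin{align*}
\mu_\omega(\mathcal{S})=\frac{\deg_\omega(\mathcal{S})}{\mathrm{rank}\,\mathcal{S}}\leq\frac{\deg_\omega(\mathcal{F})}{\mathrm{rank}\,\mathcal{S}}\leq\frac{\deg_\omega(\mathcal{F})}{\mathrm{rank}\,\mathcal{F}}=\mu_\omega(\mathcal{F}),
\end{align*}
so $\mathcal{F}$ is $\omega$-semistable.

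The strict versions are handled by the same two displays with every ``$\leq$'' sharpened to ``$<$'' whenever the corresponding hypothesis (stability or strict maximality) applies, restricting throughout to subsheaves with $0<\mathrm{rank}\,\mathcal{S}<\mathrm{rank}\,\mathcal{F}$. The only point requiring a brief check is the case $\deg_\omega(\mathcal{F})=0$ in the stable $\Rightarrow$ strictly maximal direction, where strict inequality comes from $\mu_\omega(\mathcal{S})<\mu_\omega(\mathcal{F})=0$ forcing $\deg_\omega(\mathcal{S})<0=\deg_\omega(\mathcal{F})$; there is no real obstacle, only bookkeeping on the sign of $\deg_\omega(\mathcal{F})$ and the rank ratio.
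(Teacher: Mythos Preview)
The paper states this proposition without proof, treating it as an immediate consequence of the definitions. Your argument is correct and is exactly the natural elementary verification one would supply: compare $\deg_\omega(\mathcal{S})$ with $\frac{\mathrm{rank}\,\mathcal{S}}{\mathrm{rank}\,\mathcal{F}}\deg_\omega(\mathcal{F})$ via (semi)stability, then use the sign of $\deg_\omega(\mathcal{F})$ together with $\mathrm{rank}\,\mathcal{S}/\mathrm{rank}\,\mathcal{F}\in(0,1]$ to pass to or from the $\deg_\omega$-maximal inequality, handling the equal-rank case by the torsion-quotient degree argument already used in Proposition~\ref{characterization of deg max}. The strict variants and the $\deg_\omega(\mathcal{F})=0$ edge case are also dealt with correctly.
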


For any torsion-free sheaf $\mathcal{F}$ on a compact \kah manifold $X$, we say that $\mathcal{F}$ is \textit{dual} $\mathrm{deg}\,_\omega$-\textit{maximal} (resp. \textit{strictly maximal}) for a \kah metric $\omega$ if the dual sheaf $\mathcal{F}^*$ is $\mathrm{deg}\,_\omega$-maximal (resp. strictly maximal).

\begin{proposition}\label{characterization of dual deg max}
    Let $X$ be a compact \kah manifold equipped with a \kah metric $\omega$ and $\mathcal{F}$ be a torsion-free coherent sheaf. 
    Then the following conditions are equivalent.
    \begin{itemize}
        \item [$(a)$] $\mathcal{F}$ is dual $\mathrm{deg}\,_\omega$-maximal,
        \item [$(b)$] $\mathrm{deg}\,_\omega(\mathcal{S})\leq0$, for any subsheaf $\mathcal{S}$ with torsion-free quotient,
        \item [$(c)$] $\mathrm{deg}\,_\omega(\mathcal{Q})\geq\mathrm{deg}\,_\omega(\mathcal{F})$ for any torsion-free quotient sheaf $\mathcal{Q}$.
    \end{itemize}
    The same equivalence relationship holds for $\mathrm{deg}_\omega$-strictly maximal, corresponding to the strict inequality, except that $0<\mathrm{rank}\,\mathcal{S},\,\mathrm{rank}\,\mathcal{Q}<\mathrm{rank}\,\mathcal{F}$ is required.
\end{proposition}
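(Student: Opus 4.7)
The plan is to derive Proposition \ref{characterization of dual deg max} from Proposition \ref{characterization of deg max} applied to the dual sheaf $\mathcal{F}^*$, then translate the resulting conditions back to $\mathcal{F}$ via duality. The key ingredients are: additivity of $\mathrm{deg}\,_\omega$ in every short exact sequence of coherent sheaves; the identity $\mathrm{deg}\,_\omega(\mathcal{E}^*)=-\mathrm{deg}\,_\omega(\mathcal{E})$ for a torsion-free sheaf $\mathcal{E}$; the vanishing $\mathrm{deg}\,_\omega(\mathcal{T})=0$ whenever $\mathcal{T}$ is a coherent sheaf supported in codimension at least two; and the canonical injection $\mathcal{F}\hookrightarrow\mathcal{F}^{**}$ into the reflexive hull, whose cokernel is likewise supported in codimension at least two.

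The equivalence $(b)\Leftrightarrow(c)$ is a direct consequence of additivity: for every exact sequence $0\to\mathcal{S}\to\mathcal{F}\to\mathcal{Q}\to0$ with $\mathcal{Q}$ torsion-free one has $\mathrm{deg}\,_\omega(\mathcal{S})=\mathrm{deg}\,_\omega(\mathcal{F})-\mathrm{deg}\,_\omega(\mathcal{Q})$, and subsheaves with torsion-free quotient correspond bijectively to torsion-free quotients. For $(a)\Rightarrow(c)$, I would take any torsion-free quotient $\mathcal{Q}$ of $\mathcal{F}$ and dualize the defining sequence to obtain $0\to\mathcal{Q}^*\to\mathcal{F}^*\to\mathcal{S}^*\to\mathcal{E}xt^1(\mathcal{Q},\mathcal{O}_X)\to\cdots$; the image $\mathcal{R}$ of $\mathcal{F}^*\to\mathcal{S}^*$ is a subsheaf of the reflexive sheaf $\mathcal{S}^*$, hence torsion-free, and is therefore a torsion-free quotient of $\mathcal{F}^*$. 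Applying Proposition \ref{characterization of deg max}(c) to $\mathcal{F}^*$ under hypothesis (a) gives $\mathrm{deg}\,_\omega(\mathcal{R})\geq0$; additivity on $0\to\mathcal{Q}^*\to\mathcal{F}^*\to\mathcal{R}\to0$ combined with the duality identity then yields $\mathrm{deg}\,_\omega(\mathcal{Q})\geq\mathrm{deg}\,_\omega(\mathcal{F})$.

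The main obstacle is the converse $(b)\Rightarrow(a)$, because dualizing a torsion-free quotient $\mathcal{Q}'$ of $\mathcal{F}^*$ produces an inclusion $(\mathcal{Q}')^*\hookrightarrow\mathcal{F}^{**}$ that lives on the reflexive hull rather than on $\mathcal{F}$ itself. To overcome this, I would form the preimage $\mathcal{F}_{(\mathcal{Q}')}:=\mathcal{F}\cap(\mathcal{Q}')^*$ inside $\mathcal{F}^{**}$. The quotient $\mathcal{F}/\mathcal{F}_{(\mathcal{Q}')}$ embeds into the torsion-free $\mathcal{F}^{**}/(\mathcal{Q}')^*$, so $\mathcal{F}_{(\mathcal{Q}')}$ is a subsheaf of $\mathcal{F}$ with torsion-free quotient, and (b) forces $\mathrm{deg}\,_\omega(\mathcal{F}_{(\mathcal{Q}')})\leq0$. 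Moreover $(\mathcal{Q}')^*/\mathcal{F}_{(\mathcal{Q}')}$ embeds into $\mathcal{F}^{**}/\mathcal{F}$, hence is supported in codimension $\geq 2$ and has zero degree, so $\mathrm{deg}\,_\omega((\mathcal{Q}')^*)=\mathrm{deg}\,_\omega(\mathcal{F}_{(\mathcal{Q}')})\leq0$. Duality then gives $\mathrm{deg}\,_\omega(\mathcal{Q}')\geq0$, verifying Proposition \ref{characterization of deg max}(c) for $\mathcal{F}^*$, which is precisely (a). The strictly maximal variant follows verbatim with strict inequalities, once one checks that the rank conditions $0<\mathrm{rank}\,\mathcal{S}$, $\mathrm{rank}\,\mathcal{Q}<\mathrm{rank}\,\mathcal{F}$ are preserved under the above correspondences via the identities $\mathrm{rank}\,\mathcal{R}=\mathrm{rank}\,\mathcal{S}$ and $\mathrm{rank}\,\mathcal{F}_{(\mathcal{Q}')}=\mathrm{rank}\,(\mathcal{Q}')^*$.
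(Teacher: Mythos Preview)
The paper does not supply a proof of this proposition; it is stated immediately after the definition of dual $\mathrm{deg}\,_\omega$-maximality and is evidently meant to follow from Proposition \ref{characterization of deg max} applied to $\mathcal{F}^*$. Your proposal carries out exactly this intended reduction and is correct. The only content beyond the formal dualization is handling the discrepancy between $\mathcal{F}$ and $\mathcal{F}^{**}$ in the direction $(b)\Rightarrow(a)$, and your preimage construction $\mathcal{F}_{(\mathcal{Q}')}=\mathcal{F}\cap(\mathcal{Q}')^*$ together with the observation that both $(\mathcal{Q}')^*/\mathcal{F}_{(\mathcal{Q}')}\hookrightarrow\mathcal{F}^{**}/\mathcal{F}$ and $\mathcal{F}/\mathcal{F}_{(\mathcal{Q}')}\hookrightarrow\mathcal{F}^{**}/(\mathcal{Q}')^*$ deals with this cleanly. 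The rank bookkeeping for the strict case is also correct, since the relevant cokernels are supported in codimension at least two and hence have rank zero.
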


We obtain the following $0$-th cohomology vanishing.

\begin{proposition}\label{dual deg max then H^0=0}
    Let $X$ be a compact \kah manifold with a \kah metric $\omega$.
    If a torsion-free sheaf $\mathcal{F}$ on $X$ 
    is dual $\mathrm{deg}\,_\omega$-strictly maximal, then $\mathcal{F}$ admits no nonzero holomorphic section, i.e. $H^0(X,\mathcal{F})=0$.
\end{proposition}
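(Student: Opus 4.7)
The plan is to argue by contradiction, using the reformulation (b) of dual $\mathrm{deg}\,_\omega$-strict maximality provided by Proposition \ref{characterization of dual deg max}. Suppose there exists a nonzero global section $s\in H^0(X,\mathcal{F})$. Because $\mathcal{F}$ is torsion-free, multiplication by $s$ defines an injective sheaf morphism $\mathcal{O}_X\hookrightarrow\mathcal{F}$. Let $\mathcal{S}\subset\mathcal{F}$ denote the saturation of this copy of $\mathcal{O}_X$ inside $\mathcal{F}$, i.e.\ the preimage in $\mathcal{F}$ of the torsion subsheaf of $\mathcal{F}/\mathcal{O}_X$. By construction $\mathcal{S}$ is a coherent subsheaf of $\mathcal{F}$ with $\mathrm{rank}\,\mathcal{S}=1$, the quotient $\mathcal{F}/\mathcal{S}$ is torsion-free, and the intermediate quotient $\mathcal{S}/\mathcal{O}_X$ is a torsion coherent sheaf.

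Next I would compute $\mathrm{deg}\,_\omega(\mathcal{S})$ from the short exact sequence
\begin{align*}
0\longrightarrow \mathcal{O}_X\longrightarrow\mathcal{S}\longrightarrow\mathcal{S}/\mathcal{O}_X\longrightarrow0.
\end{align*}
By additivity of $\mathrm{deg}\,_\omega$ and $\mathrm{deg}\,_\omega(\mathcal{O}_X)=0$, one gets $\mathrm{deg}\,_\omega(\mathcal{S})=\mathrm{deg}\,_\omega(\mathcal{S}/\mathcal{O}_X)$. Exactly as in the last step of the proof of Proposition \ref{characterization of deg max}, the torsion sheaf $\mathcal{S}/\mathcal{O}_X$ has determinant $\mathcal{O}_X(E)$ for an effective divisor $E$ (the Fitting divisor of $\mathcal{S}/\mathcal{O}_X$, i.e.\ the divisor cut out by $s$ viewed as a section of the reflexive hull $\det\mathcal{S}=\mathcal{S}^{**}$), so
\begin{align*}
\mathrm{deg}\,_\omega(\mathcal{S}) \;=\; \int_E\omega^{n-1}\;\geq\;0.
\end{align*}

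On the other hand, since $\mathrm{rank}\,\mathcal{S}=1$, whenever $\mathrm{rank}\,\mathcal{F}\geq2$ we have $0<\mathrm{rank}\,\mathcal{S}<\mathrm{rank}\,\mathcal{F}$, and condition (b) in the strict version of Proposition \ref{characterization of dual deg max} applied to the dual $\mathrm{deg}\,_\omega$-strictly maximal sheaf $\mathcal{F}$ forces $\mathrm{deg}\,_\omega(\mathcal{S})<0$. This contradicts the inequality $\mathrm{deg}\,_\omega(\mathcal{S})\geq 0$ obtained above, ruling out the existence of any nonzero $s$.

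The main obstacle I expect is twofold. First, one must verify cleanly that the saturation $\mathcal{S}$ has torsion-free quotient inside $\mathcal{F}$ (so that the characterization (b) is applicable) and that the degree of the torsion piece $\mathcal{S}/\mathcal{O}_X$ is non-negative by identifying its determinant with an effective divisor; the latter uses the Kähler hypothesis and the standard convention $\mathrm{deg}\,_\omega(\mathcal{T})=\int_X c_1(\det\mathcal{T})\wedge\omega^{n-1}$. Second, the case $\mathrm{rank}\,\mathcal{F}=1$ is delicate because the strict condition in Proposition \ref{characterization of dual deg max} is vacuous on subsheaves with $0<\mathrm{rank}<1$; here one has to treat $\mathcal{F}$ directly via its reflexive hull $\mathcal{F}^{**}\cong\mathcal{O}_X(D)$ (with $D$ effective if a section exists) and argue that this is incompatible with the dual-maximality of $\mathcal{F}^*$.
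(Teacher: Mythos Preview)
Your approach is essentially the same as the paper's: assume a nonzero section, produce a rank-one subsheaf $\mathcal{S}\subset\mathcal{F}$ with $\mathrm{deg}\,_\omega(\mathcal{S})\geq 0$, and contradict condition~(b) of Proposition~\ref{characterization of dual deg max}. The paper is in fact less careful than you: it takes $\mathcal{S}=f(\mathcal{O}_X)$ directly (so $\mathcal{S}\cong\mathcal{O}_X$, since $\mathcal{F}$ is torsion-free forces $f$ injective) and invokes $\omega$-semistability of $\mathcal{O}_X$ to get $\mathrm{deg}\,_\omega(\mathcal{S})\geq 0$, without passing to a saturation or checking that the quotient $\mathcal{F}/\mathcal{S}$ is torsion-free. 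Your saturation step is the cleaner way to make condition~(b) genuinely applicable, and your degree computation via the effective torsion divisor is essentially the same device used in the proof of Proposition~\ref{characterization of deg max}.

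Your worry about $\mathrm{rank}\,\mathcal{F}=1$ is legitimate and is not addressed in the paper's proof either: as written, the strict inequality in Proposition~\ref{characterization of dual deg max}(b) is vacuous when there is no subsheaf of intermediate rank, so neither argument covers that case without an additional remark. In the paper's applications the sheaves involved have higher rank (or the rank-one case is handled separately via line-bundle arguments), so this edge case does not affect the downstream results, but you are right to flag it.
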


\begin{proof}
    We assume that $0\ne f\in H^0(X,\mathcal{F})$, i.e. $0\ne f:\mathcal{O}_X\to \mathcal{F}$. Let $\mathcal{S}:=f(\mathcal{O}_X)$, then $\mathcal{S}$ is a torsion-free quotient sheaf of $\mathcal{O}_X$ and is a torsion-free subsheaf of $\mathcal{F}$. 

    Since $\mathcal{O}_X$ is $\omega$-semistable, we get $0=\mathrm{deg}\,_\omega(\mathcal{O}_X)\leq\mathrm{deg}\,_\omega(\mathcal{S})$. 
    By the assumption and Proposition \ref{characterization of dual deg max}, we have that $\mathrm{deg}\,_\omega(\mathcal{S})<0$. This is a contradiction.
\end{proof}

This proposition is a generalization of the already known Corollary \ref{V-thm if semistable in Kob87}.
In fact, if $\mathcal{F}$ is $\omega$-semistable sheaf on $X$ with $\mathrm{deg}\,_\omega(\mathcal{F})<0$, then $\mathcal{F}$ is dual $\mathrm{deg}\,_\omega$-strictly maximal.

\begin{corollary}$($\textnormal{cf.\,\cite[Corollary\,5.7.12]{Kob87}}$)$\label{V-thm if semistable in Kob87}
    Let $X$ be a compact \kah manifold with a \kah metric $\omega$.
    If $\mathcal{F}$ is $\omega$-semistable sheaf on $X$ with $\mathrm{deg}\,_\omega(\mathcal{F})<0$, then $\mathcal{F}$ admits no nonzero holomorphic section, i.e. $H^0(X,\mathcal{F})=0$.
\end{corollary}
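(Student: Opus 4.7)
The plan is to deduce this corollary directly from the newly-established Proposition \ref{dual deg max then H^0=0} by checking that the hypotheses on $\mathcal{F}$ force it to be dual $\mathrm{deg}_\omega$-strictly maximal. So the whole argument is a short chain: $\omega$-semistable with negative degree $\Rightarrow$ dual $\mathrm{deg}_\omega$-strictly maximal $\Rightarrow$ $H^0(X,\mathcal{F})=0$.

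First, I would recall the definition of $\omega$-semistability: for every coherent subsheaf $\mathcal{S}\subset\mathcal{F}$ with $0<\mathrm{rank}\,\mathcal{S}<\mathrm{rank}\,\mathcal{F}$,
\begin{equation*}
    \mu_\omega(\mathcal{S})=\frac{\mathrm{deg}_\omega(\mathcal{S})}{\mathrm{rank}\,\mathcal{S}}\leq\mu_\omega(\mathcal{F})=\frac{\mathrm{deg}_\omega(\mathcal{F})}{\mathrm{rank}\,\mathcal{F}}.
\end{equation*}
Since by hypothesis $\mathrm{deg}_\omega(\mathcal{F})<0$, and $\mathrm{rank}\,\mathcal{F}>0$, we get $\mu_\omega(\mathcal{F})<0$, hence
\begin{equation*}
    \mathrm{deg}_\omega(\mathcal{S})\leq\mu_\omega(\mathcal{F})\cdot\mathrm{rank}\,\mathcal{S}<0
\end{equation*}
for every such $\mathcal{S}$. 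In particular this holds for every subsheaf with torsion-free quotient of intermediate rank.

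Next, I would invoke the strictly-maximal version of Proposition \ref{characterization of dual deg max}: the condition just verified (i.e.\ $\mathrm{deg}_\omega(\mathcal{S})<0$ for every subsheaf $\mathcal{S}\subset\mathcal{F}$ with torsion-free quotient and $0<\mathrm{rank}\,\mathcal{S}<\mathrm{rank}\,\mathcal{F}$) is exactly the characterization (b) of dual $\mathrm{deg}_\omega$-strictly maximality. Therefore $\mathcal{F}$ is dual $\mathrm{deg}_\omega$-strictly maximal.

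Finally, applying Proposition \ref{dual deg max then H^0=0} to $\mathcal{F}$ yields $H^0(X,\mathcal{F})=0$, which is the desired vanishing. There is no serious obstacle: the only point that needs a sentence of care is confirming that the rank constraint $0<\mathrm{rank}\,\mathcal{S}<\mathrm{rank}\,\mathcal{F}$ is harmless, since the semistability inequality together with $\mu_\omega(\mathcal{F})<0$ handles precisely the subsheaves of that intermediate rank, and the full-rank case is excluded by the definition of dual $\mathrm{deg}_\omega$-strictly maximal.
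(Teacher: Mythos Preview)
Your proposal is correct and is exactly the argument the paper intends: the sentence immediately preceding the corollary in the paper states ``In fact, if $\mathcal{F}$ is $\omega$-semistable sheaf on $X$ with $\mathrm{deg}\,_\omega(\mathcal{F})<0$, then $\mathcal{F}$ is dual $\mathrm{deg}\,_\omega$-strictly maximal,'' and then Proposition \ref{dual deg max then H^0=0} gives the vanishing. Your verification of condition (b) in Proposition \ref{characterization of dual deg max} via $\mu_\omega(\mathcal{S})\leq\mu_\omega(\mathcal{F})<0$ is precisely the missing line the paper leaves implicit.
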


Finally, we clarify the relationship between $\omega$-trace positivity and $\mathrm{deg}\,_\omega$-maximal by using the following lemmas.

\begin{lemma}$($\textnormal{cf.\,\cite[Proposition\,3]{Jac14}}$)$\label{Jac14 Lemma 1}
    Let $E$ be a holomorphic vector bundle on a compact complex manifold $X$ and $\mathcal{S}$ be a torsion-free subsheaf of $E$ with torsion-free quotient $\mathcal{Q}$.
    Then after a finite number of blown ups $\pi:\widetilde{X}\to X$, there exists a holomorphic subbundle $\widetilde{\mathcal{S}}$ of $\pi^*E$ containing $\pi^*\mathcal{S}$ with a holomorphic quotient bundle $\widetilde{\mathcal{Q}}$
    such that $\pi_*\widetilde{\mathcal{S}}=\mathcal{S}$ in codimension $1$.
\end{lemma}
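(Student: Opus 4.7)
The plan is to resolve the failure of $\mathcal{S}$ to be a genuine subbundle by eliminating the indeterminacy of a classifying map to a relative Grassmann bundle. Let $r = \mathrm{rank}\,\mathcal{S}$. Since $\mathcal{Q}$ is torsion-free on the smooth manifold $X$, its non-locally-free locus $Z \subset X$ has $\mathrm{codim}_X Z \geq 2$, and on $X \setminus Z$ both $\mathcal{S}$ and $\mathcal{Q}$ are locally free, with $\mathcal{S}|_{X \setminus Z}$ an honest subbundle of $E|_{X\setminus Z}$ of rank $r$. The subbundle structure on $X \setminus Z$ produces a holomorphic classifying map $f \colon X \setminus Z \to \mathrm{Gr}_r(E)$ into the Grassmann bundle of $r$-planes in $E$, uniquely determined by $f^{*}\mathcal{U} = \mathcal{S}|_{X \setminus Z}$ where $\mathcal{U}$ denotes the tautological rank-$r$ subbundle on $\mathrm{Gr}_r(E)$.

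Next, because $\mathrm{Gr}_r(E) \to X$ is proper and $Z$ has codimension at least $2$, $f$ extends to a meromorphic map $X \dashrightarrow \mathrm{Gr}_r(E)$ whose indeterminacy locus lies in $Z$. Applying Hironaka's elimination of indeterminacy in the bimeromorphic category, I obtain a finite sequence of blow-ups with smooth centers supported over (the successive strict transforms of) $Z$, producing $\pi \colon \widetilde{X} \to X$ together with a holomorphic lift $\widetilde{f} \colon \widetilde{X} \to \mathrm{Gr}_r(E)$ agreeing with $f \circ \pi$ on $\widetilde{X} \setminus \pi^{-1}(Z)$. Setting $\widetilde{\mathcal{S}} := \widetilde{f}^{*}\mathcal{U}$ then gives a holomorphic subbundle of $\pi^{*}E$ whose quotient $\widetilde{\mathcal{Q}} := \pi^{*}E / \widetilde{\mathcal{S}}$ is again locally free by construction.

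It remains to verify $\pi^{*}\mathcal{S} \subseteq \widetilde{\mathcal{S}}$ and the codimension-one equality of pushforwards. For the containment, the composite $\pi^{*}\mathcal{S} \to \pi^{*}E \to \widetilde{\mathcal{Q}}$ vanishes on the dense open subset $\pi^{-1}(X \setminus Z)$, and torsion-freeness of $\widetilde{\mathcal{Q}}$ forces it to vanish globally, giving the desired factorization. For the pushforward, $\pi$ is a biholomorphism over $X \setminus Z$ with $\widetilde{f} = f$ there, so $\pi_{*}\widetilde{\mathcal{S}}|_{X \setminus Z} = \mathcal{S}|_{X \setminus Z}$; since $\mathrm{codim}_X Z \geq 2$, this is exactly the asserted equality $\pi_{*}\widetilde{\mathcal{S}} = \mathcal{S}$ in codimension $1$. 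The main obstacle is the Hironaka step in the analytic category, specifically the requirement that every blow-up center lie above $Z$ so that $\pi$ is an isomorphism outside codimension $\geq 2$; this control on centers is precisely what yields the codimension-one (rather than everywhere) agreement, and it also explains why the inclusion $\pi^{*}\mathcal{S} \subseteq \widetilde{\mathcal{S}}$ need not be an equality — pulling $\mathcal{S}$ back across exceptional divisors can create torsion in $\pi^{*}\mathcal{S}$ which $\widetilde{\mathcal{S}}$ has absorbed by design.
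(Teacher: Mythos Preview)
The paper does not supply its own proof of this lemma; it simply cites \cite[Proposition\,3]{Jac14}. Your argument via the classifying map to the relative Grassmannian $\mathrm{Gr}_r(E)$ followed by resolution of indeterminacy is the standard proof of this fact and is essentially what Jacob does, so there is nothing to compare against here. The proof is correct; one small remark is that since blow-ups are not flat, $\pi^*\mathcal{S}$ may acquire torsion, so the inclusion ``$\pi^*\mathcal{S} \subset \widetilde{\mathcal{S}}$'' should be read as containment of the image of $\pi^*\mathcal{S}$ in $\pi^*E$ --- which is exactly what your vanishing-of-the-composite argument establishes.
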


\begin{lemma}$($\textnormal{cf.\,\cite[Lemma\,2]{Jac14}}$)$\label{Jac14 Lemma 2}
    Under the same situation as above, for a \kah metric $\omega$ on $X$ we have that 
    \begin{align*}
        \mathrm{deg}\,_\omega(\mathcal{Q})=\mathrm{deg}\,_{\pi^*\omega}(\widetilde{\mathcal{Q}}):=\int_{\widetilde{X}}c_1(\widetilde{\mathcal{Q}})\wedge\pi^*\omega^{n-1}.
    \end{align*}
\end{lemma}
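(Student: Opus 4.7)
The plan is to reduce the desired equality to a comparison of determinant line bundles, and then exploit a dimensional degeneration of $\pi^{*}\omega^{n-1}$ along $\pi$-exceptional loci. First, I would invoke both short exact sequences. The original sequence $0\to\mathcal{S}\to E\to\mathcal{Q}\to 0$ gives $\mathrm{det}\, E\cong\mathrm{det}\,\mathcal{S}\otimes\mathrm{det}\,\mathcal{Q}$ as reflexive rank-one sheaves, hence as line bundles on the smooth base $X$. The sequence $0\to\widetilde{\mathcal{S}}\to\pi^{*}E\to\widetilde{\mathcal{Q}}\to 0$ from Lemma \ref{Jac14 Lemma 1} gives $\pi^{*}\mathrm{det}\, E\cong\mathrm{det}\,\widetilde{\mathcal{S}}\otimes\mathrm{det}\,\widetilde{\mathcal{Q}}$. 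Combining them,
\begin{align*}
\mathrm{det}\,\widetilde{\mathcal{Q}}\otimes\pi^{*}(\mathrm{det}\,\mathcal{Q})^{-1}\cong\pi^{*}\mathrm{det}\,\mathcal{S}\otimes(\mathrm{det}\,\widetilde{\mathcal{S}})^{-1}.
\end{align*}
Since $\pi$ is a biholomorphism off the exceptional locus $\mathrm{Exc}(\pi)$ and $\pi_{*}\widetilde{\mathcal{S}}=\mathcal{S}$ holds in codimension one, the right-hand line bundle is trivial on $\widetilde{X}\setminus\mathrm{Exc}(\pi)$, hence isomorphic to $\mathcal{O}_{\widetilde{X}}(D)$ for an integer divisor $D=\sum_{i}a_{i}E_{i}$ supported on the $\pi$-exceptional divisors $E_{i}$.

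Second, this decomposition lets me split the integral as
\begin{align*}
\int_{\widetilde{X}}c_{1}(\widetilde{\mathcal{Q}})\wedge\pi^{*}\omega^{n-1}=\int_{\widetilde{X}}\pi^{*}c_{1}(\mathcal{Q})\wedge\pi^{*}\omega^{n-1}+\sum_{i}a_{i}\int_{\widetilde{X}}c_{1}(\mathcal{O}(E_{i}))\wedge\pi^{*}\omega^{n-1}.
\end{align*}
The first summand equals $\int_{X}c_{1}(\mathcal{Q})\wedge\omega^{n-1}=\mathrm{deg}\,_{\omega}(\mathcal{Q})$ by the projection formula, using that $\pi$ is birational of degree one (so $\pi_{*}\pi^{*}=\mathrm{id}$ on cohomology). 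It therefore remains to verify that every exceptional term vanishes.

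The heart of the argument is a pure bidegree observation. For each $i$, by Poincar\'e duality for the divisor $E_{i}$,
\begin{align*}
\int_{\widetilde{X}}c_{1}(\mathcal{O}(E_{i}))\wedge\pi^{*}\omega^{n-1}=\int_{E_{i}}\bigl(\pi|_{E_{i}}\bigr)^{*}\bigl(\omega^{n-1}\bigr).
\end{align*}
Since $\pi(E_{i})$ has complex codimension at least two in $X$, its complex dimension is at most $n-2$; the $(n-1,n-1)$-form $\omega^{n-1}$ restricted to $\pi(E_{i})$ therefore vanishes for pure bidegree reasons, and its further pullback to $E_{i}$ is zero. Consequently every exceptional contribution vanishes and the stated equality follows.

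The main technical obstacle is the bookkeeping in the first step: identifying the discrepancy between $\mathrm{det}\,\widetilde{\mathcal{Q}}$ and $\pi^{*}\mathrm{det}\,\mathcal{Q}$ as an exceptional $\mathbb{Z}$-divisor. This requires that the identification $\pi_{*}\widetilde{\mathcal{S}}=\mathcal{S}$ holds in codimension one exactly (not merely generically), so that the difference line bundle extends trivially across every prime divisor of $\widetilde{X}$ which is not $\pi$-exceptional. Once this reduction is achieved, the remaining content — projection formula plus the dimensional vanishing along $\pi(E_{i})$ — is essentially formal.
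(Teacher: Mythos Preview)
The paper does not supply its own proof of this lemma; it is quoted directly from \cite[Lemma\,2]{Jac14} and used as a black box in the proof of Theorem \ref{tr-posi then deg max}. Your argument is correct and is essentially the standard proof one finds in Jacob's paper: reduce to showing that $c_1(\widetilde{\mathcal{Q}})-\pi^*c_1(\mathcal{Q})$ is supported on $\pi$-exceptional divisors, then kill each exceptional contribution by the observation that $(\pi|_{E_i})^*\omega^{n-1}$ vanishes identically because $\dim_{\mathbb{C}}\pi(E_i)\leq n-2$.

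The only place worth tightening is your first step. You want $\det\widetilde{\mathcal{S}}\otimes(\pi^*\det\mathcal{S})^{-1}$ to be trivial off $\mathrm{Exc}(\pi)$, and for this you need that $\widetilde{\mathcal{S}}$ and $\pi^*\mathcal{S}$ agree as subsheaves of $\pi^*E$ on that open set. The hypothesis $\pi_*\widetilde{\mathcal{S}}=\mathcal{S}$ in codimension one, together with the fact that $\pi$ is an isomorphism off $\mathrm{Exc}(\pi)$, gives agreement outside $\mathrm{Exc}(\pi)\cup\pi^{-1}(W)$ for some $W\subset X$ of codimension $\geq 2$; since $\pi$ is an isomorphism over $X\setminus\pi(\mathrm{Exc}(\pi))$, the set $\pi^{-1}(W)\setminus\mathrm{Exc}(\pi)$ still has codimension $\geq 2$ in $\widetilde{X}$, so the discrepancy line bundle is indeed trivial outside a codimension-two set union the exceptional locus, hence of the form $\mathcal{O}_{\widetilde{X}}(\sum a_iE_i)$ as you claim. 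Once this is said carefully, the rest is formal.
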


\begin{theorem}\label{tr-posi then deg max}
    Let $X$ be a compact \kah manifold with a \kah metric $\omega$ and $E$ be a holomorphic vector bundle on $X$ equipped with a singular Hermitian metric $h$.
    If $h$ is $\omega$-trace semi-positive $($resp. quasi-positive$)$ with approximation, then $E$ is $\mathrm{deg}\,_\omega$-maximal $($resp. strictly maximal\,$)$ and we have that $\mathrm{deg}\,_\omega(E)\geq0$ $($resp. $>0)$.
\end{theorem}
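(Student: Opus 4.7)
The plan is to reduce the maximality to a positivity statement about the first Chern class of the determinant of a locally free quotient, obtained via a suitable blow-up, and then to mimic the proof of Theorem \ref{characterization of tr-posi, not psef} for that determinant line bundle.

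First, by Proposition \ref{characterization of deg max} (and its strict version), it suffices to prove $\mathrm{deg}_\omega(\mathcal{Q}) \geq 0$ for every torsion-free quotient sheaf $\mathcal{Q}$ of $E$, with strict inequality in the quasi-positive case. Given such $\mathcal{Q}$, apply Lemma \ref{Jac14 Lemma 1} to obtain a proper modification $\pi : \widetilde X \to X$ and a short exact sequence of holomorphic vector bundles
\[
0 \longrightarrow \widetilde{\mathcal{S}} \longrightarrow \pi^*E \longrightarrow \widetilde{\mathcal{Q}} \longrightarrow 0
\]
on $\widetilde X$; by Lemma \ref{Jac14 Lemma 2}, $\mathrm{deg}_\omega(\mathcal{Q}) = \int_{\widetilde X} c_1(\widetilde{\mathcal{Q}}) \wedge \pi^*\omega^{n-1}$. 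The essential use of the K\"ahler hypothesis is that $\pi^*\omega^{n-1}$ is $d$-closed on $\widetilde X$, so the integral is independent of the representative of $c_1(\widetilde{\mathcal{Q}})$ and Stokes-type arguments apply despite $\pi^*\omega$ being only semi-positive on $\widetilde X$.

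Cover $X$ by open sets $\{U_{x_j}\}$ on each of which $h$ admits a sequence $(h_\nu^{x_j})$ of smooth Hermitian metrics decreasing to $h$ with $tr_\omega i\Theta_{E,h_\nu^{x_j}} \geq 0$ on $U_{x_j}$ (and moreover $\geq c_{x_0}\, h_\nu^{x_0}$ on $U_{x_0}$ for the index $j=0$ corresponding to the quasi-positivity point $x_0$). Pulling back and taking induced quotient metrics yields smooth Hermitian metrics $h_{\widetilde{\mathcal{Q}},\nu}^{x_j}$ on $\widetilde{\mathcal{Q}}|_{\pi^{-1}(U_{x_j})}$. The curvature formula for quotients underlying Proposition \ref{subbdl is also tr-negative} shows that the trace lower bound descends to the quotient, so on the open dense locus $\widetilde X_0 := \widetilde X \setminus \mathrm{Exc}(\pi)$ where $\pi^*\omega$ is a genuine Hermitian metric, the scalar $(n,n)$-form
\[
i\Theta_{\det\widetilde{\mathcal{Q}},\,\det h_{\widetilde{\mathcal{Q}},\nu}^{x_j}} \wedge \pi^*\omega^{n-1}
\;=\; \mathrm{tr}\!\bigl(tr_{\pi^*\omega}\, i\Theta_{\widetilde{\mathcal{Q}},\,h_{\widetilde{\mathcal{Q}},\nu}^{x_j}}\bigr)\, dV_{\pi^*\omega}
\]
is pointwise non-negative, and extends by continuity to a non-negative smooth $(n,n)$-form on all of $\pi^{-1}(U_{x_j})$. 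Combining these local pieces with a partition of unity $\{\rho_j\}$ subordinate to $\{\pi^{-1}(U_{x_j})\}$, and using $d(\pi^*\omega^{n-1})=0$ together with Fatou's lemma exactly as in the proof of Theorem \ref{characterization of tr-posi, not psef}, one obtains $\mathrm{deg}_\omega(\mathcal{Q}) = \int_{\widetilde X} c_1(\widetilde{\mathcal{Q}}) \wedge \pi^*\omega^{n-1} \geq 0$.

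For the quasi-positive case, the strict contribution comes from $\rho_0$: on $U_{x_0} \setminus Z$, where $Z$ is the codimension $\geq 2$ non-locally-free locus of $\mathcal{Q}$, the map $\pi$ is a local isomorphism, $\pi^*\omega$ is K\"ahler, and the uniform bound $tr_{\pi^*\omega} i\Theta_{\widetilde{\mathcal{Q}},h_{\widetilde{\mathcal{Q}},\nu}^{x_0}} \geq c_{x_0}\, h_{\widetilde{\mathcal{Q}},\nu}^{x_0}$ tracing down gives
\[
i\Theta_{\det\widetilde{\mathcal{Q}},\,\det h_{\widetilde{\mathcal{Q}},\nu}^{x_0}} \wedge \pi^*\omega^{n-1} \;\geq\; c_{x_0}\cdot \mathrm{rank}(\widetilde{\mathcal{Q}}) \, dV_{\pi^*\omega}
\]
on this open set, yielding a strictly positive contribution in the limit. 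The main obstacle is precisely ensuring this strict contribution survives the blow-up: had the quasi-positivity point $x_0$ lain entirely inside $Z$, the preimage $\pi^{-1}(x_0)$ would be contained in the exceptional divisor where $\pi^*\omega$ degenerates and a naive volume estimate would vanish. Because $Z$ has codimension at least two and the quasi-positivity of $h$ holds on an open neighborhood of $x_0$ rather than at a single point, $U_{x_0} \setminus Z$ is automatically a non-empty open subset carrying positive $\pi^*\omega$-volume, rescuing the strict inequality and completing the proof.
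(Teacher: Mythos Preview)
Your proposal is correct and follows essentially the same strategy as the paper: reduce via Proposition~\ref{characterization of deg max} to torsion-free quotients, resolve to a vector-bundle quotient $\widetilde{\mathcal{Q}}$ on a blow-up via Lemmas~\ref{Jac14 Lemma 1}--\ref{Jac14 Lemma 2}, push the local smooth approximations through the quotient construction to obtain induced metrics on $\det\widetilde{\mathcal{Q}}$ that are $\pi^*\omega$-trace semi-positive off the exceptional locus, and conclude by the partition-of-unity/Fatou computation modeled on the proof of Theorem~\ref{characterization of tr-posi, not psef}. The only cosmetic difference is that the paper builds the quotient metric via the subbundle inclusion $\widetilde{\mathcal{Q}}^*\hookrightarrow(\pi^*E)^*$, whereas you induce it directly as a quotient metric; these are the same construction. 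One small slip: by the paper's conventions the smooth approximants $h_\nu$ on $E$ \emph{increase} to $h$ (it is the duals $h_\nu^*$ that decrease), so adjust the monotonicity direction accordingly in your Fatou step.
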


\begin{proof}
    From Proposition \ref{characterization of deg max}, it is sufficient to show that $\mathrm{deg}\,_\omega(\mathcal{Q})\geq0$ for any torsion-free quotient sheaf $\mathcal{Q}$.
    We define a subsheaf $\mathcal{S}$ of $E$ by the exact sequence 
    \begin{align*}
        0\longrightarrow\mathcal{S}\longrightarrow E\longrightarrow\mathcal{Q}\longrightarrow0,
    \end{align*}
    then $\mathcal{S}$ is torsion-free. By Lemma \ref{Jac14 Lemma 1} and \ref{Jac14 Lemma 2}, there is blown ups $\pi:\widetilde{X}\to X$ and a holomorphic subbundle $\widetilde{\mathcal{S}}$ of $\pi^*E$ with a holomorphic quotient bundle $\widetilde{\mathcal{Q}}$ such that 
    \begin{align*}
        \mathrm{deg}\,_\omega(\mathcal{Q})=\mathrm{deg}\,_{\pi^*\omega}(\widetilde{\mathcal{Q}})=\int_{\widetilde{X}}c_1(\widetilde{\mathcal{Q}})\wedge\pi^*\omega^{n-1}.
    \end{align*}

    Here, $\pi$ is blown ups along an analytic subset $Z$ of codimension $\geq2$. 
    Let $D$ be the exceptional divisor of $\pi$. Since $\pi$ is biholomorphic on $\widetilde{X}\setminus D$ and Proposition \ref{pull back omega-SH for hol immersion map}, the singular Hermitian metric $\pi^*h$ on $\pi^*E$ is also $\pi^*\omega$-trace semi-positive over $\widetilde{X}\setminus D$.

    Let $x_0\in Z$ then there exist an open neighborhood $U$ of $x_0$ and a sequence of smooth $\omega$-trace semi-positive Hermitian metrics $(h_\nu)_{\nu\in\mathbb{N}}$ increasing to $h$ a.e. on $E|_U$ which is trivial.
    We consider the short exact sequence of holomorphic vector bundles 
    \begin{align*}
        0\longrightarrow \widetilde{\mathcal{Q}}^* \overset{\iota}{\longrightarrow} (\pi^*E)^*\longrightarrow \widetilde{\mathcal{S}}^*\longrightarrow 0.
    \end{align*}
    Here, for any smooth Hermitian metric $\pi^*h_\nu$ on $\pi^*E|_U$, the dual metric $(\pi^*h_\nu)^*$ is $\pi^*\omega$-trace semi-negative on $\pi^{-1}(U)\setminus D$.
    Let $h^{\widetilde{Q}^*}$ be the induced singular Hermitian metric on $\widetilde{\mathcal{Q}}^*$ by $\iota$ and $(\pi^*h)^*$, which is defined by $|s|_{h^{\widetilde{Q}^*}}:=|\iota s|_{(\pi^*h)^*}$ for any $s\in \mathcal{O}_{\widetilde{X}}(\widetilde{\mathcal{Q}}^*)$.
    In the same manner, $(\pi^*h_\nu)^*$ induces smooth Hermitian metrics $h_\nu^{\widetilde{Q}^*}$ on $\widetilde{\mathcal{Q}}^*|_{\pi^{-1}(U)}$ decreasing to $h^{\widetilde{Q}^*}$ a.e., which are also $\pi^*\omega$-trace semi-negative on $\pi^{-1}(U)\setminus D$.
    Therefore, dual metrics $h_\nu^{\widetilde{Q}}:=(h_\nu^{\widetilde{Q}^*})^*$ on $\widetilde{\mathcal{Q}}|_{\pi^{-1}(U)}$ are $\pi^*\omega$-trace semi-positive on $\pi^{-1}(U)\setminus D$ and increasing to $h^{\widetilde{Q}}:=(h^{\widetilde{Q}^*})^*$. 
    Then smooth Hermitian metrics $\mathrm{det}\,h^{\widetilde{Q}}_\nu$ on $\mathrm{det}\,\widetilde{\mathcal{Q}}|_{\pi^{-1}(U)}$ increasing to $\mathrm{det}\,h^{\widetilde{Q}}$ a.e. are also $\pi^*\omega$-trace semi-positive on $\pi^{-1}(U)\setminus D$ by the proof in Theorem \ref{tr-posi then RC-posi}.
    In particular, $\mathrm{det}\,h^{\widetilde{Q}}$ is also $\pi^*\omega$-trace semi-positive on $\pi^{-1}(U)\setminus D$ by Proposition \ref{exists smoothing then (strictly) omega-SH as currents}.

    Define $\varphi^{\widetilde{Q}}:=-\log\mathrm{det}\,h^{\widetilde{Q}}$ and $\varphi_\nu^{\widetilde{Q}}:=-\log\mathrm{det}\,h^{\widetilde{Q}}_\nu$. For any fiexd test function $\rho_U\in\mathscr{D}(\pi^{-1}(U))_{\geq0}$, we have that 
    \begin{align*}
        \int_{\pi^{-1}(U)}i\Theta_{\mathrm{det}\,\widetilde{\mathcal{Q}},\,\mathrm{det}\,h^{\widetilde{Q}}}\wedge\rho_U\pi^*\omega^{n-1}&=\int_{\pi^{-1}(U)}\idd\varphi^{\widetilde{Q}}\wedge\rho_U\pi^*\omega^{n-1}
        =\int_{\pi^{-1}(U)}\varphi^{\widetilde{Q}}\cdot\idd(\rho_U\pi^*\omega^{n-1})\\
        &=\int_{\pi^{-1}(U)}\lim_{\nu\to+\infty}\varphi^{\widetilde{Q}}_\nu\cdot\idd(\rho_U\pi^*\omega^{n-1})\\
        &\geq\limsup_{\nu\to+\infty}\int_{\pi^{-1}(U)}\varphi^{\widetilde{Q}}_\nu\cdot\idd(\rho_U\pi^*\omega^{n-1})\\
        &\geq0.
    \end{align*}
    In fact, since $\varphi^{\widetilde{Q}}_\nu$ is smooth on $\pi^{-1}(U)$ and is $\pi^*\omega$-subharmonic on $\pi^{-1}(U)\setminus D$, we get 
    \begin{align*}
        \int_{\pi^{-1}(U)}\varphi^{\widetilde{Q}}_\nu\cdot\idd(\rho_U\pi^*\omega^{n-1})&=\int_{\pi^{-1}(U)\setminus D}\varphi^{\widetilde{Q}}_\nu\cdot\idd(\rho_U\pi^*\omega^{n-1})=\int_{\pi^{-1}(U)\setminus D}\idd\varphi^{\widetilde{Q}}_\nu\wedge\rho_U\pi^*\omega^{n-1}\\
        &=(n-1)!\int_{\pi^{-1}(U)\setminus D}\rho_U\frac{n\idd\varphi^{\widetilde{Q}}_\nu\wedge\pi^*\omega^{n-1}}{\pi^*\omega^n}dV_{\pi^*\omega}\\
        &=(n-1)!\int_{\pi^{-1}(U)\setminus D}\rho_U\Delta^C_{\pi^*\omega}\varphi^{\widetilde{Q}}_\nu \,dV_{\pi^*\omega}\geq0.
    \end{align*}

    In the case of $x_0\in X\setminus Z$, there exists an approximation of $h$ for an open subset $V\subset X\setminus Z$ such that $E|_V$ is trivial. 
    And as above, for the naturally induced singular Hermitian metric $h^{\widetilde{Q}}$ on $\widetilde{\mathcal{Q}}|_{\pi^{-1}(V)}$, the singular Hermitian metrics $h^{\widetilde{Q}}$ and $\mathrm{det}\,h^{\widetilde{Q}}$ are $\pi^*\omega$-trace semi-positive on $\pi^{-1}(V)$ and we have that
    \begin{align*}
        \int_{\pi^{-1}(V)}i\Theta_{\mathrm{det}\,\widetilde{\mathcal{Q}},\,\mathrm{det}\,h^{\widetilde{Q}}}\wedge\rho_V\pi^*\omega^{n-1}\geq0
    \end{align*}
    for any test function $\rho_V\in\mathscr{D}(V)_{\geq0}$.
    
    By compact-ness of $X$, there exist a finite points $x_1,\cdots,x_s$ in $Z$ and $x_{s+1},\cdots,x_N$ in $X\setminus Z$ and open neighborhoods $U_k$ of $x_k$ such that $h$ has approximation on each $U_k$, $E|_{U_k}$ is trivial and the sets $\{U_k\}_{k\in I_N}$ is the open over of $X$. 
    Let $\{\rho_k\}_{k\in I_N}$ be a partitions of unity subordinate to the open over $\{\pi^{-1}(U_k)\}_{k\in I_N}$ of $\widetilde{X}$, i.e. $\rho_k\in\mathscr{D}(\pi^{-1}(U_k))_{\geq0}$ satisfies $\mathrm{supp}\,\rho_k\subset\subset\pi^{-1}(U_k)$. 
    Here, it is well-known that there exist a smooth Hermitian metric $\mathrm{det}\,h_0^{\widetilde{Q}}$ on $\mathrm{det}\,\widetilde{\mathcal{Q}}$ and a real valued function $\psi\in\mathscr{L}^1_{loc}(\widetilde{X},\mathbb{R})$ such that 
    \begin{align*}
        i\Theta_{\mathrm{det}\,\widetilde{\mathcal{Q}},\,\mathrm{det}\,h^{\widetilde{Q}}}=i\Theta_{\mathrm{det}\,\widetilde{\mathcal{Q}},\,\mathrm{det}\,h_0^{\widetilde{Q}}}+\idd\psi.
    \end{align*}

    Hance, we have that 
    \begin{align*}
        \mathrm{deg}\,_{\pi^*\omega}(\widetilde{\mathcal{Q}})=\int_{\widetilde{X}}c_1(\widetilde{\mathcal{Q}})\wedge\pi^*\omega^{n-1}&=\int_{\widetilde{X}}i\Theta_{\mathrm{det}\,\widetilde{\mathcal{Q}},\,\mathrm{det}\,h_0^{\widetilde{Q}}}\wedge\pi^*\omega^{n-1}
        =\int_{\widetilde{X}}i\Theta_{\mathrm{det}\,\widetilde{\mathcal{Q}},\,\mathrm{det}\,h^{\widetilde{Q}}}\wedge\pi^*\omega^{n-1}\\
        &=\int_{\widetilde{X}}i\Theta_{\mathrm{det}\,\widetilde{\mathcal{Q}},\,\mathrm{det}\,h^{\widetilde{Q}}}\wedge\sum_{j\in I_N}\rho_k\pi^*\omega^{n-1}\\
        &=\sum_{k\in I_N}\int_{\pi^{-1}(U_k)}i\Theta_{\mathrm{det}\,\widetilde{\mathcal{Q}},\,\mathrm{det}\,h^{\widetilde{Q}}}\wedge\rho_k\pi^*\omega^{n-1}\\
        &\geq0,
    \end{align*}
    where $\idd\pi^*\omega^{n-1}=0$. It is shown similarly with respect to $\omega$-trace quasi-positivity.
\end{proof}

\section{Vanishing theorems for singular quasi-positivity and applications}
\subsection{Vanishing theorems for singular positivity weaker than Griffiths}

In this subsection, we present the $0$-th cohomology vanishing for various singular positivity. 
We already know the following vanishing theorem for singular Griffiths positivity.

\begin{theorem}$($\textnormal{cf.\,\cite{Ina20}}$)$\label{V-thm of Inayama}
    Let $X$ be a projective manifold and $E$ be a holomorphic vector bundle equipped with a singular Hermitian metric $h$. 
    If $h$ is Griffiths positive and the Lelong number $\nu(-\log\mathrm{det}\,h,x)<2$ for all points $x\in X$ then we have that 
    \begin{align*}
        H^n(X,K_X\otimes E)=0.
    \end{align*}
\end{theorem}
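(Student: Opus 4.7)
The plan is to combine Serre duality with a maximum-principle argument applied to $\log|s|^2_{h^*}$ for a putative nonzero global section $s$ of $E^*$. Serre duality gives $H^n(X,K_X\otimes E)^{*}\cong H^0(X,E^*)$, so it suffices to show that every $s\in H^0(X,E^*)$ vanishes identically. Because $h$ is singular Griffiths positive, the dual metric $h^*$ is singular Griffiths negative in the strict sense of Definition \ref{def of strictly Grif posi as sing}, and Proposition \ref{characterization of Grif nega by smoothing}(d) then implies that $\varphi:=\log|s|^2_{h^*}$ is strictly plurisubharmonic on the open set $U\subset X$ where $h^*$ takes finite values.

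If $\varphi$ were a bona fide plurisubharmonic function on all of $X$, the conclusion would come immediately: on the connected compact manifold $X$, any plurisubharmonic function is constant (it attains its supremum by upper semi-continuity, then the local maximum principle together with connectedness propagates the value everywhere), but a constant function is not strictly plurisubharmonic on any open set, so one is forced to $\varphi\equiv-\infty$, and hence $s\equiv 0$.

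The substantive work is to ensure that $\varphi$ does not take the value $+\infty$ along the pole locus of $h^*$. Pointwise $|s|^2_{h^*}$ is a polynomial expression in the entries of $h$ divided by $\det h$, so the poles lie inside $\{\det h=0\}$; the plurisubharmonic weight $-\log\det h$ has all Lelong numbers strictly below the Skoda--Demailly integrability threshold $2$ by hypothesis. My strategy is to use Demailly's regularization to construct smooth strictly Griffiths positive approximations $h_\nu\searrow h$ with loss of positivity bounded by a multiple of the Lelong numbers of $-\log\det h$, apply the maximum-principle argument of the previous paragraph to each $h_\nu$, and then pass to the limit. The threshold $\nu<2$ is precisely what keeps $|s|^2_{h^*_\nu}$ uniformly $L^1_{\mathrm{loc}}$, so that $\varphi$ remains a genuine plurisubharmonic function in the limit rather than acquiring polar singularities along $\{\det h=0\}$.

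The main obstacle is this regularization-and-limit step: Demailly's classical regularization is stated for scalar plurisubharmonic weights rather than for matrix-valued Hermitian metrics on a higher-rank vector bundle, so one must check that strict Griffiths negativity survives the approximation with a quantitative loss controlled by the Lelong numbers, and that strict plurisubharmonicity (not merely plurisubharmonicity) of $\log|s|^2_{h^*_\nu}$ is inherited by the limit in the sense of currents. Without such quantitative control, pluripolar singularities at zeros of $\det h$ could undermine the maximum-principle step and force a weaker vanishing conclusion; the sharpness of the cut-off $\nu<2$ in the hypothesis strongly suggests that this is exactly where the Skoda--Demailly integrability enters the argument.
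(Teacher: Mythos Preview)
The paper does not supply its own proof of this statement; Theorem~\ref{V-thm of Inayama} is quoted from \cite{Ina20} as background and left unproved. In fact the paper's own Corollary~\ref{V-thm for Grif quasi-posi} is strictly stronger: on any compact K\"ahler manifold, singular Griffiths \emph{quasi}-positivity of $h$ already forces $H^0(X,(E^*)^{\otimes m})=0$ for every $m\geq 1$, with no Lelong-number hypothesis at all. Combined with Serre duality this subsumes the $H^n$-vanishing you are asked to prove, and the introduction explicitly advertises this as a generalisation that removes the Lelong-number condition.

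Your reduction via Serre duality to $H^0(X,E^*)=0$ and your second paragraph (a strictly plurisubharmonic function on a connected compact manifold must be identically $-\infty$) are exactly right, and are close in spirit to the paper's own maximum-principle argument in Proposition~\ref{RC-posi then H^0=0}. But your third and fourth paragraphs chase a phantom. The obstacle you anticipate---that $\varphi=\log|s|^2_{h^*}$ might blow up to $+\infty$ on $\{\det h=0\}$---is excluded by the definitions themselves. Griffiths negativity of $h^*$ gives, via Proposition~\ref{characterization of Grif nega by smoothing}(d), that $\log|s|_{h^*}$ is \emph{strictly} plurisubharmonic; in particular it is an honest upper semi-continuous function with values in $[-\infty,+\infty)$ on all of $X$. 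There is no pole set to worry about, no need for Demailly-type regularisation of a matrix-valued metric, and no role for the Lelong-number threshold. Your second paragraph already finishes the proof.

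The hypothesis $\nu(-\log\det h,x)<2$ appears in \cite{Ina20} because Inayama's argument runs through $L^2$-estimates and needs $(\det h)^{-1}$ locally integrable, which Skoda's lemma delivers under precisely that bound. It is an artefact of that method, not of the $q=n$ vanishing itself; the Serre-dual route you sketched dispenses with it entirely.
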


For any singular Hermitian metric on $E$, we have the following relationship.
\vspace*{-2mm}
\begin{align*}
    \xymatrix{\mathrm{Nakano~positive}\ar@{=>}[d] & \colorbox{white}{$\begin{matrix}
        \omega\mathrm{\mbox{-}trace~ semi\mbox{-}positive}\\ 
        \mathrm{(with ~approximation)} \\
        \mathrm{for~ any~ Hermitian~ metric~} \omega \\
        \end{matrix}
    $}\ar@{<=>}[d] \\
        \mathrm{Griffiths~positive} \ar@{=>}[d]\ar@{=>}[dr]\ar@{=>}[r]& \mathrm{Griffiths~semi\mbox{-}positive} \\
        \mathrm{Griffiths~ quasi\mbox{-}positive} \ar@{=>}[d]  &
        \colorbox{white}{$\begin{matrix}
            \omega\mathrm{\mbox{-}trace~ positive} \\
            \mathrm{for~ any~ Hermitian~ metric~} \omega \\
            \end{matrix}
        $} \ar@{=>}[d] \\ 
        \colorbox{white}{$\begin{matrix}
            \omega\mathrm{\mbox{-}trace~ quasi\mbox{-}positive~with~approximation} \\
            \mathrm{for~ any~ Hermitian~ metric~} \omega \\
            \end{matrix}
        $} \ar@{=>}[d] & \colorbox{white}{$\begin{matrix}
            \omega\mathrm{\mbox{-}trace~ positive} \\
            \mathrm{for~ a~ Hermitian~ metric~} \omega_X \\
            \end{matrix}
        $}\ar@{=>}[d] \\
        \colorbox{white}{$\begin{matrix}
            \omega\mathrm{\mbox{-}trace~ quasi\mbox{-}positive~with~approximation} \\
            \mathrm{for~ a~ K\ddot{a}hler~ metric~} \omega_X \\
            \end{matrix}
        $} & \mathrm{RC~positive}
    }
\end{align*}

We weaken singular positivity of Theorem \ref{V-thm of Inayama} as much as possible
and generalize it from projective manifolds to compact \kah manifolds.
From Proposition \ref{tr-quasi-posi with appro then h^m is also tr-quasi-posi with appro} and \ref{dual deg max then H^0=0} and Theorem \ref{tr-posi then deg max}, we have the following.

\begin{theorem}\label{V-thm for tr-quasi-posi with appro}
    Let $X$ be a compact \kah manifold and $E$ be a holomorphic vector bundle over $X$ equipped with a singular Hermitian metric $h$. 
    If there exists a \kah metric $\omega$ such that $h$ is $\omega$-trace quasi-positive with approximation, 
    then 
    we have that 
    \begin{align*}
        H^0(X,(E^*)^{\otimes m})&=0,\\
        H^0(X,\Lambda^pE^*)&=0,
    \end{align*}
    for any $m\in\mathbb{N}$ and any $1\leq p\leq \mathrm{rank}\,E$.
\end{theorem}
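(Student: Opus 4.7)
The plan is to assemble the statement from three building blocks already developed in the paper. First, by Proposition~\ref{tr-quasi-posi with appro then h^m is also tr-quasi-posi with appro}, the hypothesis that $h$ is $\omega$-trace quasi-positive with approximation propagates to the induced singular Hermitian metrics $h^{\otimes m}$ on $E^{\otimes m}$ and $\Lambda^p h$ on $\Lambda^p E$, which are again $\omega$-trace quasi-positive with approximation for the same K\"ahler metric $\omega$. This is the mechanism that reduces both desired vanishings to a single positivity input applied on different bundles.

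Next, I would apply Theorem~\ref{tr-posi then deg max} to each of $(E^{\otimes m}, h^{\otimes m})$ and $(\Lambda^p E, \Lambda^p h)$. Since $\omega$ is K\"ahler, this yields that $E^{\otimes m}$ and $\Lambda^p E$ are $\deg_\omega$-strictly maximal. Rephrasing in terms of duals, and using $((E^*)^{\otimes m})^* \simeq E^{\otimes m}$ and $(\Lambda^p E^*)^* \simeq \Lambda^p E$ (as holomorphic vector bundles, hence as torsion-free coherent sheaves), this means exactly that $(E^*)^{\otimes m}$ and $\Lambda^p E^*$ are dual $\deg_\omega$-strictly maximal in the sense of the definition preceding Proposition~\ref{characterization of dual deg max}.

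Finally, Proposition~\ref{dual deg max then H^0=0} immediately gives
\begin{align*}
    H^0(X, (E^*)^{\otimes m}) = 0, \qquad H^0(X, \Lambda^p E^*) = 0,
\end{align*}
for all $m \in \mathbb{N}$ and all $1 \leq p \leq \mathrm{rank}\,E$, which is the claimed vanishing.

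Since every step is an invocation of a previously established result, there is no serious obstacle remaining; the substantive content has been packaged into Proposition~\ref{tr-quasi-posi with appro then h^m is also tr-quasi-posi with appro}, Theorem~\ref{tr-posi then deg max}, and Proposition~\ref{dual deg max then H^0=0}. The only point that deserves a brief verification is the identification $(E^{\otimes m})^* \simeq (E^*)^{\otimes m}$ and $(\Lambda^p E)^* \simeq \Lambda^p E^*$ together with the compatibility of the induced singular metrics, which is standard but should be stated explicitly so that the application of Proposition~\ref{dual deg max then H^0=0} is literally correct.
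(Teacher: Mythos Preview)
Your proposal is correct and follows exactly the same route as the paper: the paper explicitly states that the theorem is obtained by combining Proposition~\ref{tr-quasi-posi with appro then h^m is also tr-quasi-posi with appro}, Theorem~\ref{tr-posi then deg max}, and Proposition~\ref{dual deg max then H^0=0}, without giving further details. Your write-up in fact spells out the logical thread (including the dualization step) more carefully than the paper does.
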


\begin{corollary}\label{V-thm for Grif quasi-posi}
    Let $X$ be a compact \kah manifold and $E$ be a holomorphic vector bundle equipped with a singular Hermitian metric $h$. 
    If $h$ is Griffiths quasi-positive then 
    \begin{align*}
        H^0(X,(E^*)^{\otimes m})&=0,\\
        H^0(X,\Lambda^pE^*)&=0,
    \end{align*}
    for any $m\in\mathbb{N}$ and any $1\leq p\leq \mathrm{rank}\,E$.
\end{corollary}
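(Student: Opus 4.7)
The plan is to reduce this corollary to Theorem \ref{V-thm for tr-quasi-posi with appro} by showing that any Griffiths quasi-positive singular Hermitian metric $h$ is $\omega$-trace quasi-positive with approximation for every \kah metric $\omega$. Once this implication is in hand, the two vanishing statements follow immediately (note that the \kah assumption in Theorem \ref{V-thm for tr-quasi-posi with appro} is available because $X$ is assumed compact \kah).

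For the semi-positive half, since $h$ is Griffiths semi-positive on $X$, Theorem \ref{characterization of sing Grif semi-posi and tr-omega semi-posi} already gives that $h$ is $\omega$-trace semi-positive with approximation for any Hermitian metric $\omega$. Concretely, on any trivializing Stein chart Proposition \ref{characterization of sing Grif semi-posi}(d) produces smooth Griffiths semi-negative metrics $h^*_\nu = h^*\ast\rho_\nu$ on $E^*$ decreasing to $h^*$, and by Proposition \ref{psh then omega-SH} each $h^*_\nu$ is $\omega$-trace semi-negative.

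For the strict point, let $x_0 \in X$ be a point where $h$ is Griffiths positive, i.e.\ $h^*$ is Griffiths negative at $x_0$ in the sense of Definition \ref{def of strictly Grif posi as sing}. Proposition \ref{characterization of Grif nega by smoothing} yields a neighborhood $U$ of $x_0$, a constant $\delta>0$, and a sequence $(h^*_\nu)$ of smooth Griffiths negative Hermitian metrics on $U$ decreasing to $h^*$ a.e.\ and satisfying the uniform estimate $\idd|u|^2_{h^*_\nu} \geq \delta|u|^2_{h^*_\nu}\idd|z|^2$ for all holomorphic $u$. The argument in part $(c) \Rightarrow (d)$ of that proposition then gives $\idd\log|u|^2_{h^*_\nu} \geq \delta\idd|z|^2$ on $U$. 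Since positive $(1,1)$-forms are mutually comparable on relatively compact subsets, after shrinking $U$ around $x_0$ we find $c>0$ with $\idd|z|^2 \geq c\omega$ on $U$; taking the $\omega$-trace (wedging with $\omega^{n-1}$) gives $\Delta^C_\omega\log|u|^2_{h^*_\nu} \geq \delta c$ uniformly in $\nu$ on $U$ for every local holomorphic section $u$. This is exactly the definition of $h^*$ being $\omega$-trace negative at $x_0$, equivalently, $h$ being $\omega$-trace positive at $x_0$.

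Combining the two halves, $h$ is $\omega$-trace quasi-positive with approximation for any \kah metric $\omega$ on $X$, and Theorem \ref{V-thm for tr-quasi-posi with appro} delivers $H^0(X,(E^*)^{\otimes m})=0$ and $H^0(X,\Lambda^pE^*)=0$. The only delicate step is the transfer at $x_0$: one must check that the \emph{uniform} positivity condition built into Proposition \ref{characterization of Grif nega by smoothing}, which is what distinguishes genuine Griffiths negativity from pointwise strict plurisubharmonicity of $\log|u|^2_{h^*}$, survives the comparison between $\idd|z|^2$ and $\omega$ and produces a single constant $c_{x_0}>0$ independent of $\nu$. This is the reason the proof uses the uniform form of the smoothing result rather than a bare pointwise statement.
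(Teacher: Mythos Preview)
Your proposal is correct and follows essentially the same approach as the paper. The paper states this corollary without proof, relying on the remark made immediately after the definition of $\omega$-trace quasi-positivity with approximation that Griffiths quasi-positivity implies $\omega$-trace quasi-positivity with approximation for any Hermitian metric $\omega$; you have simply supplied the details of that implication (via Proposition \ref{characterization of sing Grif semi-posi} for the semi-positive part and Proposition \ref{characterization of Grif nega by smoothing} together with the local comparison $\idd|z|^2 \geq c\,\omega$ for the strict point) before invoking Theorem \ref{V-thm for tr-quasi-posi with appro}.
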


Furthermore, we have the following theorem by Theorem \ref{tr-posi then RC-posi} and Proposition \ref{RC-posi then H^0=0}.

\begin{theorem}
    Let $X$ be a compact manifold and $E$ be a holomorphic vector bundle equipped with a singular Hermitian metric $h$. 
    If there exists a Hermitian metric $\omega$ such that $h$ is $\omega$-trace positive, 
    then 
    we have that 
    \begin{align*}
        H^0(X,(E^*)^{\otimes m})&=0,\\
        H^0(X,\Lambda^pE^*)&=0,
    \end{align*}
    for any $m\in\mathbb{N}$ and any $1\leq p\leq \mathrm{rank}\,E$.
\end{theorem}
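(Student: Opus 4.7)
The plan is to invoke exactly the two prior results cited in the hint accompanying the statement, namely Theorem \ref{tr-posi then RC-posi} and Proposition \ref{RC-posi then H^0=0}, which together give the vanishing without any further analysis of singular metrics.

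First I would apply Theorem \ref{tr-posi then RC-posi} to the $\omega$-trace positive singular Hermitian metric $h$ on $E$. This immediately yields that the induced singular Hermitian metrics $h^{\otimes m}$ on $E^{\otimes m}$ and $\Lambda^p h$ on $\Lambda^p E$ are themselves $\omega$-trace positive for every $m \in \mathbb{N}$ and every $1 \leq p \leq \mathrm{rank}\, E$, and in particular that each of these induced metrics is RC-positive in the sense of the definition for singular metrics.

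Next, using the canonical identifications $(E^*)^{\otimes m} \cong (E^{\otimes m})^*$ and $\Lambda^p E^* \cong (\Lambda^p E)^*$ as holomorphic vector bundles, I would apply Proposition \ref{RC-posi then H^0=0} to the pairs $(E^{\otimes m}, h^{\otimes m})$ and $(\Lambda^p E, \Lambda^p h)$ respectively. This gives
\begin{align*}
    H^0(X, (E^*)^{\otimes m}) &= H^0(X, (E^{\otimes m})^*) = 0, \\
    H^0(X, \Lambda^p E^*) &= H^0(X, (\Lambda^p E)^*) = 0,
\end{align*}
which is exactly the claimed vanishing.

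Since both ingredients are already established earlier in the paper, there is essentially no obstacle at this final stage; all the real work has been packaged inside Theorem \ref{tr-posi then RC-posi} (where the passage from the smooth approximations $h_\nu$ to the singular setting is controlled by Proposition \ref{exists smoothing then (strictly) omega-SH as currents}) and inside Proposition \ref{RC-posi then H^0=0} (where the maximum principle for strictly subharmonic restrictions $\log|s|^2_{h^*}|_{L_v}$ produces the contradiction on a compact $X$). The only point requiring a moment of care is verifying that the natural singular metric induced on $(E^{\otimes m})^*$ from $h^{\otimes m}$ agrees with the one used in Proposition \ref{RC-posi then H^0=0}, but this is immediate from the definitions.
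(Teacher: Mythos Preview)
Your proof is correct and follows exactly the same route as the paper: the statement is obtained as an immediate consequence of Theorem \ref{tr-posi then RC-posi} and Proposition \ref{RC-posi then H^0=0}, together with the canonical identifications $(E^{\otimes m})^*\cong (E^*)^{\otimes m}$ and $(\Lambda^p E)^*\cong \Lambda^p E^*$.
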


We consider the cohomology vanishing of direct image sheaves as an application.

\begin{corollary}
    Let $f:X\to Y$ be a smooth fibration of compact \kah manifolds and $L$ be a holomorphic line bundle on $X$ equipped with a singular Hermitian metric $h$.
    We assume that $h$ is singular semi-positive and there exist $t_0\in Y$ and a neighborhood $U$ of $t_0$ satisfying the following 
    \begin{itemize}
        \item [$(a)$] $h$ is singular positive on $f^{-1}(U)$,
        \item [$(b)$] $\mathscr{I}(h|_{X_t})=\mathcal{O}_{X_t}$ for any $t\in U$ where $X_t:=f^{-1}(t)$,
        \item [$(c)$] there exists an analytic subset $A$ which intersects $X_{t_0}$ transversely and satisfies that $f^{-1}(U)\setminus A\Subset S$ for a Stein subset $S\subset X$. 
    \end{itemize}
    Then the torsion-free coherent sheaf $\mathscr{F}:=f_*(K_{X/Y}\otimes L\otimes\mathscr{I}(h))$ has the canonical singular Hermitian metric $H$ defined by fiber integrals and this metric $H$ is Griffiths quasi-positive,
    and we have that $H^0(Y,\mathscr{F}^*)=0$. 
    
    In particular, if $Y$ is a scheme then we have that 
    \begin{align*}
        H^{\,\mathrm{dim}\,Y}\!(Y,K_Y\otimes\mathscr{F})=H^{\,\mathrm{dim}\,Y}\!(Y,f_*(K_X\otimes L\otimes\mathscr{I}(h)))=0.
    \end{align*}
\end{corollary}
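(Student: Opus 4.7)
The strategy is to equip $\mathscr{F}$ with the canonical $L^2$-metric $H$ built from fiber integrals of $h$, verify that $H$ is Griffiths semi-positive on $Y$ and strictly Griffiths positive at $t_0$, and then apply the vanishing result Corollary \ref{V-thm for Grif quasi-posi}, followed by Serre duality for the last assertion.

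First, I would define $H$ via fiber integrals: for a local section $s \in H^0(V, \mathscr{F})$ over $V \subset Y$ and $t \in V$,
\begin{align*}
|s(t)|_H^2 \;=\; \int_{X_t} c_{n_f}\, s|_{X_t} \wedge \overline{s|_{X_t}}\; e^{-\varphi_h},
\end{align*}
where $n_f = \dim X_t$, $c_{n_f}$ is a normalising constant, and $\varphi_h$ is a local weight of $h$. Condition $(b)$ ensures $\mathscr{I}(h|_{X_t}) = \mathcal{O}_{X_t}$ for $t \in U$, so $\mathscr{F}|_U$ has the expected rank and $H$ is finite and nondegenerate near $t_0$. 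By the standard Berndtsson--P\u{a}un--Takayama theory of direct images (cf.\ \cite{BP08, PT18}), the singular semi-positivity of $h$ together with smoothness of $f$ forces $\log |s|_{H^*}$ to be plurisubharmonic for every local holomorphic section $s$ of $\mathscr{F}^*$; in view of Definition \ref{def Griffiths semi-posi sing} this makes $H$ Griffiths semi-positive on all of $Y$.

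The main step is to promote semi-positivity of $H$ to strict Griffiths positivity at $t_0$, using conditions $(a)$--$(c)$. Condition $(a)$ gives a strictly plurisubharmonic local weight of $h$ on $f^{-1}(U)$, while condition $(c)$ furnishes a Stein set $S$ with $f^{-1}(U) \setminus A \Subset S$ and $A$ meeting $X_{t_0}$ transversely. This is exactly the geometric setup for Ohsawa--Takegoshi type $L^2$-extension: sections of $\mathscr{F}^*$ over $X_{t_0}$ extend to neighborhoods inside $S$ with a controlled $L^2$-norm, and strict plurisubharmonicity of the weight supplies a quantitative $L^2$-gain through a Berndtsson-type strict log-concavity argument. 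Translating this to the base, one obtains for every $0 \ne u \in \mathscr{F}^*_{t_0}$ an inequality
\begin{align*}
\idd\, |u|^2_{H^*} \;\geq\; \delta\, |u|^2_{H^*}\, \idd|z|^2
\end{align*}
in the sense of currents on a neighborhood of $t_0$ for some $\delta > 0$ and local coordinates $(z_1, \ldots, z_{\dim Y})$. By Definition \ref{def of strictly Grif posi as sing}, $H$ is Griffiths positive at $t_0$, hence Griffiths quasi-positive on $Y$ in the sense of Definition \ref{def of Grif quai-posi as sing}.

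Finally, since Proposition \ref{dual deg max then H^0=0} is stated for torsion-free sheaves and Theorem \ref{tr-posi then deg max} extends to torsion-free sheaves via Lemmas \ref{Jac14 Lemma 1}--\ref{Jac14 Lemma 2}, Griffiths quasi-positivity of $H$ on $\mathscr{F}$ (which implies $\omega$-trace quasi-positivity with approximation for any Hermitian metric $\omega$ by Theorem \ref{characterization of sing Grif semi-posi and tr-omega semi-posi}) yields $H^0(Y, \mathscr{F}^*) = 0$. When $Y$ is projective, Serre duality for torsion-free coherent sheaves gives
\begin{align*}
H^{\dim Y}(Y, K_Y \otimes \mathscr{F}) \;\cong\; \mathrm{Hom}_{\mathcal{O}_Y}(\mathscr{F}, \mathcal{O}_Y)^* \;=\; H^0(Y, \mathscr{F}^*)^*,
\end{align*}
from which the stated vanishing follows. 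The principal obstacle is the strict positivity step: semi-positivity of $H$ is routine, whereas tracking a quantitative $L^2$-gain in the Ohsawa--Takegoshi extension and translating it into a strict curvature-type estimate on $\mathscr{F}^*$, while $H$ is only singular and $\mathscr{F}$ only torsion-free, is where the technical work concentrates.
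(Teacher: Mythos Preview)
Your overall architecture matches the paper's: show $H$ is Griffiths semi-positive on $Y$ via the P\u{a}un--Takayama theory, upgrade to strict positivity at $t_0$ using assumptions $(a)$--$(c)$, then feed this into Theorem \ref{tr-posi then deg max} and Proposition \ref{dual deg max then H^0=0} (which indeed work at the torsion-free level) and finish with Serre duality. The semi-positivity and the endgame are fine.

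The gap is in the strict-positivity step, which you yourself flag as the crux but do not actually carry out. Your description---``sections of $\mathscr{F}^*$ over $X_{t_0}$ extend to neighborhoods inside $S$''---is off: sections of $\mathscr{F}^*$ live on $Y$, not on fibers, and Ohsawa--Takegoshi extension across a fiber does not by itself produce the inequality $\idd|u|^2_{H^*}\geq\delta|u|^2_{H^*}\idd|z|^2$ for the \emph{singular} metric $H^*$. What is missing is the mechanism that turns strict positivity of $h$ into a \emph{uniform} curvature lower bound for a sequence of \emph{smooth} approximating metrics $H_j$ on $\mathscr{F}$, so that Proposition \ref{characterization of Grif nega by smoothing} can be invoked.

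Concretely, the paper proceeds as follows. Condition $(c)$ is used not for extension of sections but for \emph{regularization of the weight}: since $f^{-1}(U)\setminus A\Subset S$ is Stein and $h=e^{-\psi}$ with $\idd\psi\geq\delta f^*\omega_0$, one finds (via \cite{Bou17}) smooth $\psi_j\searrow\psi$ on $f^{-1}(U)\setminus A$ with $\idd\psi_j\geq\delta f^*\omega_0$ uniformly in $j$; after extending $\psi_j$ by $0$ across $A$ (transversality to $X_{t_0}$ keeps the fiberwise metrics nondegenerate) one obtains smooth Hermitian metrics $H_j$ on $\mathscr{F}|_U$ with $H_j\nearrow H$ a.e.\ by Fatou. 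The uniform lower bound on $\idd\psi_j$ then gives $i\Theta_{\mathscr{F},H_j}\geq\delta\omega_0$ in the sense of Nakano by the Deng--Ning--Wang--Zhou criterion \cite{DNWZ23,WZ24}, hence $\idd|u|^2_{H_j^*}\geq\delta|u|^2_{H_j^*}\idd|z|^2$ for all $j$. This is precisely condition $(c)$ of Proposition \ref{characterization of Grif nega by smoothing}, which then yields Griffiths negativity of $H^*$ at $t_0$. Without this regularize-then-apply-DNWZ step you have no way to produce the smooth decreasing approximation of $H^*$ with uniform $\delta$ that Definition \ref{def of strictly Grif posi as sing} (via Proposition \ref{characterization of Grif nega by smoothing}) requires.
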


\begin{proof}
    We already know that $H$ is Griffiths semi-positive (see \cite{PT18,HPS18}).
    Let $X(\mathscr{F})\subseteq X$ denote the maximal open subset where $\mathscr{F}$ is locally free, 
    then $Z_\mathscr{F}:=X\setminus X(\mathscr{F})$ is a closed analytic subset of codimension $\geq2$. 
    Here, the restriction of $\mathscr{F}$ to the open subset $X(\mathscr{F})$ is a holomorphic vector bundle. 
    By the assumption $(b)$ and \cite[Lemma\,22.1]{HPS18}, we get $f^{-1}(U)\subseteq X(\mathscr{F})$.

    By the assumption $(a)$ and $(c)$, there exists a strictly plurisubharmonic function $\psi$ on $f^{-1}(U)\setminus A$ and we can write $h=e^{-\psi}$ a.e.
    Let $\omega_0$ be the standard \kah metric on $U$, then $i\Theta_{L,h}=\idd\psi\geq\delta\cdot f^*\omega_0$ on $f^{-1}(U)$ in the sense of currents for some $\delta>0$. 
    From \cite[Theorem\,3.8]{Bou17}, we can find a sequence of smooth functions $(\psi_j)_{j\in\mathbb{N}}$ decreasing to $\psi$ pointwise a.e. such that $\idd\psi_j\geq\delta\cdot f^*\omega_0$ on $f^{-1}(U)\setminus A$ for any $j\in\mathbb{N}$. 
    Define the canonical Hermitian metric $H_j$ on $\mathscr{F}|_{f^{-1}(U)}$ induced by $\psi_j$. Here, $\psi_j$ is extended as $0$ on $A$.
    Then, by replacing $U$ small enough if necessary, $A$ intersects each fiber $X_t$ transversely for any $t\in U$ and $H_j$ is smooth. 
     
    We have that $(H_j)_{j\in\mathbb{N}}$ increases and pointwise convergence to $H$ a.e. Let $c_f=i^{(\mathrm{dim}\,X-\mathrm{dim}\,Y)^2}$.
    In fact, $H$ is bounded a.e. and $H(t)<+\infty$ for any $t\in U$ and any $u,v\in\mathscr{F}_t=H^0(X_t,K_{X_t}\otimes L|_{X_t})$, we already define and obtain
    \begin{align*}
        (u,v)_H(t)=\int_{X_t}c_f u\wedge\overline{v}e^{-\psi}<+\infty, \quad (u,v)_{H_j}(t)=\int_{X_t}c_f u\wedge\overline{v}e^{-\psi_j}. 
    \end{align*}

    Hance, by Fatou's lemma and $(\psi_j)_{j\in\mathbb{N}}$ decreasing to $\psi$ pointwise a.e., we have that 
    \begin{align*}
        \limsup_{j\to+\infty}\int_{X_t}c_f u\wedge\overline{v}e^{-\psi_j}&\leq\int_{X_t}\limsup_{j\to+\infty}c_f u\wedge\overline{v}e^{-\psi_j}=\int_{X_t}c_f u\wedge\overline{v}e^{-\psi}\\
        &=(u,v)_H(t)=\int_{X_t}\liminf_{j\to+\infty}c_f u\wedge\overline{v}e^{-\psi_j}\\
        &\leq\liminf_{j\to+\infty}\int_{X_t}c_f u\wedge\overline{v}e^{-\psi_j},
    \end{align*}
    i.e. $\lim_{j\to+\infty}(u,v)_{H_j}(t)=(u,v)_H(t)$. 

    Then we have that $i\Theta_{\mathscr{F},H_j}\geq \delta\cdot\omega_0$ on $U$ in the sense of Nakano by Deng-Ning-Wang-Zhou's approach (see \cite{DNWZ23}, \cite[Theorem\,5.10 and 5.11]{WZ24}).
    In particular, $\idd|u|^2_{H_j^*}\geq\delta|u|^2_{H_j^*}\idd|z|^2$ for any local holomorphic section $u\in H^0(U,\mathscr{F}^*)$ and for a local coordinate $(z_1,\cdots,z_n)$ of $U$ satisfying $\omega_0\geq\idd|z|^2$.
    By Proposition \ref{characterization of Grif nega by smoothing}, $H^*$ is Griffiths negative at $t_0$ and $H$ is Griffiths quasi-positive on $Y$.

    Here, $\mathscr{F}$ is holomorphic vector bundle on $X(\mathscr{F})=X\setminus Z_{\mathscr{F}}$. 
    By Griffiths semi-positivity of $H$ on $Y$, for any point $p\in Z_{\mathscr{F}}$ there exist an open neighborhood $V$ of $p$ and a sequence of Griffiths semi-positive Hermitian metrics $(H_\nu)_{\nu\in\mathbb{N}}$ on $V$ increasing to $H$ pointwise a.e. such that $H_\nu|_{V\setminus Z_{\mathscr{F}}}$ is smooth.
    Thus, even if $\mathscr{F}$ is not a holomorphic vector bundle on $Y$, similarly to Theorem \ref{tr-posi then deg max} we have that $\mathscr{F}$ is $\mathrm{deg}\,_\omega$-strictly maximal and $\mathrm{deg}\,_\omega(\mathscr{F})>0$ for any \kah metric $\omega$ on $Y$. 
    From Proposition \ref{dual deg max then H^0=0}, we obtain $H^0(Y,\mathscr{F}^*)=0$.
    Finally, if $Y$ is a scheme then we have that  
    \begin{align*}
        H^0(Y,\mathscr{F}^*)
        =\mathrm{Ext}^0_Y(\mathscr{F},\mathcal{O}_Y)
        \cong\mathrm{Ext}^0_Y(K_Y\otimes\mathscr{F},K_Y)\cong H^{\,\mathrm{dim}\,Y}\!(Y,K_Y\otimes\mathscr{F})
    \end{align*}
    by the Serre duality for Cohen-Macaulay schemes. 
\end{proof}

In particular, if $X$ and $Y$ are projective and further assume nef-ness of $L$, then $L$ is big from singular quasi-positivity (see \cite[Corollary\,6.19]{Dem10}), and more strongly the following is known.

\begin{theorem}$($\textnormal{cf.\,\cite[Theorem\,1.3 and Corollary\,1.4]{WZ24}}$)$
    Let $f:X\to Y$ be a smooth fibration of smooth projective varieties and $L$ be a holomorphic line bundle on $X$. 
    If $L$ is nef and big, then the direct image sheaf $\mathcal{F}:=f_*(K_{X/Y}\otimes L)$ is also nef and Viehweg-big, and has a induced singular Hermitian metric $H$ satisfying $\mathscr{E}(H)=\mathcal{O}_Y(\mathcal{F})$, and there exists a proper analytic subset $Z$ such that $H$ is smooth and Nakano positive on $X\setminus Z$. 
    
    Here, $\mathscr{E}(H)$ is the $L^2$-subsheaf defined by $\mathscr{E}(H)_y:= \{s_y \in \mathcal{O}(\mathcal{F})_y \mid |s|^2_{H}\,\, \text{is locally}$ $\text{integrable around} \,\, y \}$.
    Moreover, we have the following cohomology vanishing
    \begin{align*}
        H^q(Y,f_*(K_X\otimes L))=0
    \end{align*}
    for any integers $q>0$. 
\end{theorem}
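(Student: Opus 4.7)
The plan is to follow Wang–Zhou, building the metric $H$ on $\mathcal{F}$ by fiber integration against a carefully chosen singular Hermitian metric on $L$, then combining $L^2$-extension, Berndtsson-type positivity, and Kollár vanishing.

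First I would extract from the nefness and bigness of $L$ a singular Hermitian metric $h = e^{-\varphi}$ on $L$ with $i\Theta_{L,h} \geq \epsilon\,\omega_X$ in the sense of currents for some K\"ahler form $\omega_X$ on $X$, using the Kawamata--Viehweg decomposition $L \sim_\mathbb{Q} A + E$ with $A$ ample and $E$ effective. After a birational modification I may assume $\varphi$ has analytic singularities along a simple normal crossings divisor $\Sigma \subset X$ and is smooth outside $\Sigma$. Set $Z := f(\Sigma) \cup \mathrm{Sing}(f) \cup Z_{\mathcal{F}} \subsetneq Y$, where $Z_{\mathcal{F}}$ is the locus where $\mathcal{F}$ fails to be locally free; over $Y \setminus Z$ the sheaf $\mathcal{F}$ is a holomorphic vector bundle and $\varphi|_{X_y}$ is smooth.

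Next I would define the canonical fiber integration metric $H$ on $\mathcal{F}$ by
\begin{align*}
    (s, s')_H(y) = \int_{X_y} c_f \, s \wedge \overline{s'} \, e^{-\varphi},
\end{align*}
for $s, s' \in \mathcal{F}_y \subset H^0(X_y, K_{X_y} \otimes L|_{X_y})$. The identification $\mathscr{E}(H) = \mathcal{O}_Y(\mathcal{F})$ follows from the Ohsawa--Takegoshi $L^2$-extension theorem: using $\mathscr{I}(h|_{X_y}) = \mathcal{O}_{X_y}$ for general $y$, any locally square-integrable germ of $\mathcal{F}$ extends fiberwise to a holomorphic section of $K_{X/Y} \otimes L$ on a neighborhood, via the base change theorem. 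For the smoothness and Nakano positivity of $H$ over $Y \setminus Z$, I would invoke Berndtsson's curvature theorem for direct images: on this open locus the weight $\varphi$ restricted to fibers is smooth and strictly plurisubharmonic, so Berndtsson's theorem produces strict Nakano positivity of $H|_{Y \setminus Z}$, from which Griffiths semi-positivity and Viehweg-bigness of $\mathcal{F}$ on all of $Y$ follow via the Deng--Ning--Wang--Zhou approximation used earlier in the paper.

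Finally, for the cohomology vanishing, the relative canonical bundle formula gives $f_*(K_X \otimes L) = K_Y \otimes \mathcal{F}$. Combined with Koll\'ar's theorem $R^if_*(K_{X/Y} \otimes L) = 0$ for $i > 0$ (valid since $L$ is nef and $L|_{X_y}$ is big), the Leray spectral sequence reduces the claim to the Kawamata--Viehweg vanishing $H^q(X, K_X \otimes L) = 0$ for $q > 0$, which holds by nefness and bigness of $L$. The main obstacle will be to show that $Z$ can be taken to be a \emph{proper} analytic subset and to control $\mathscr{E}(H)$ there: near singular fibers both the fiber structure and the Hodge variation of $H$ degenerate, so proving $\mathscr{E}(H) = \mathcal{O}_Y(\mathcal{F})$ globally requires a Popa--Schnell-type extension argument together with the transversality properties of $\Sigma$ relative to $f$.
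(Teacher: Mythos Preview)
The paper does not prove this theorem: it is stated with the attribution ``cf.\ \cite[Theorem\,1.3 and Corollary\,1.4]{WZ24}'' and no proof environment follows. It is quoted purely as an external result to contrast with the preceding corollary, so there is no in-paper argument against which to compare your proposal.

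That said, your outline is a reasonable sketch of how such a result is typically obtained. A few remarks. Since $f$ is assumed smooth, $\mathrm{Sing}(f)$ is empty and should be dropped from your definition of $Z$; the genuine issue is ensuring $f(\Sigma)\ne Y$, which is where bigness of $L$ (as opposed to mere pseudo-effectivity) is used to choose a metric with analytic singularities whose polar set does not dominate $Y$. Your ``birational modification'' step is delicate: resolving the singularities of $\varphi$ upstairs may destroy the smoothness of the fibration, so in practice one works directly with the original metric and the Bergman-kernel/$L^2$ machinery rather than passing to a log resolution. For the vanishing $H^q(Y,K_Y\otimes\mathcal{F})=0$, your Leray--Koll\'ar--Kawamata--Viehweg route is correct and efficient; the alternative, more in the spirit of \cite{WZ24}, is to use directly the singular Nakano positivity of $H$ together with $\mathscr{E}(H)=\mathcal{O}_Y(\mathcal{F})$ and a Nadel--Nakano type vanishing theorem.
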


\subsection{global generation of adjoint linear series for Griffiths quasi-positivity}

In \cite{Fuj88}, Fujita proposed the following conjecture which is a open question in classical algebraic geometry.
In this subsection, we provide a global generation theorem of the Fujita conjecture type, involving $L^2$-subsheaves.

\begin{conjecture}
    Let $X$ be a smooth projective variety and $L$ be an ample line bundle. 
    \begin{itemize}
        \item $K_X\otimes L^{\otimes(\mathrm{dim}\,X+1)}$ is globally generated;
        \item $K_X\otimes L^{\otimes(\mathrm{dim}\,X+2)}$ is very ample.
    \end{itemize}
\end{conjecture}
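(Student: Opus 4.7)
The plan is to attack both statements by the multiplier ideal / Nadel vanishing technique, fitting naturally into the Griffiths and $\omega$-trace quasi-positivity framework of this paper. For global generation at a point $x_0\in X$ it suffices to exhibit a singular Hermitian metric $h=h_0 e^{-\varphi}$ on $L$ with $i\Theta_{L,h}\ge\varepsilon\omega$ for some fixed K\"ahler form $\omega$ and $\varepsilon>0$, and whose Nadel multiplier ideal satisfies
\begin{align*}
    \mathscr{I}\bigl((n+1)\varphi\bigr)_{x_0}=\mathfrak{m}_{X,x_0},\qquad \mathscr{I}\bigl((n+1)\varphi\bigr)_y=\mathcal{O}_{X,y}\ \text{for every}\ y\ne x_0.
\end{align*}
Granting such a $\varphi$, the Nadel-type vanishing that underlies Theorem \ref{V-thm for tr-quasi-posi with appro} and Theorem \ref{Fujita Conj for Grif quai-posi} yields
\begin{align*}
    H^1\bigl(X,K_X\otimes L^{\otimes(n+1)}\otimes\mathscr{I}((n+1)\varphi)\bigr)=0,
\end{align*}
and the short exact sequence $0\to\mathscr{I}\to\mathcal{O}_X\to\mathcal{O}_X/\mathscr{I}\to 0$ twisted by $K_X\otimes L^{\otimes(n+1)}$ forces the evaluation map $H^0(X,K_X\otimes L^{\otimes(n+1)})\to(K_X\otimes L^{\otimes(n+1)})|_{x_0}$ to be surjective. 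Very ampleness for $K_X\otimes L^{\otimes(n+2)}$ follows from the same template, now producing a metric on $L$ whose $(n+2)$-fold multiplier ideal equals $\mathfrak{m}_{x_0}\cap\mathfrak{m}_{y_0}$ (point separation for a second point $y_0\ne x_0$) and $\mathfrak{m}_{x_0}^2$ (tangent separation at $x_0$), the extra tensor factor of $L$ absorbing the additional mass required.

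To produce $\varphi$ I would run the Angehrn--Siu cutting-down induction. Start from a singular weight on $L$ obtained as $\tfrac{1}{k}\log\sum|\sigma_j|^2$ for a basis of $H^0(X,L^{\otimes k})$ vanishing to high order at $x_0$; by Demailly's regularization the non-integrability locus $V_1\ni x_0$ is a proper analytic subscheme of $X$. At each step use ampleness of $L$ to build a section vanishing on an irreducible component of the current locus passing through $x_0$, convex-combine the new weight with the current $\varphi$ so that the non-integrability locus drops in dimension by at least one, and track the additional $L$-mass consumed via the standard Skoda / log-canonical threshold estimates. After at most $n$ iterations the locus is isolated at $x_0$, and a final normalization shrinks the multiplier ideal to exactly $\mathfrak{m}_{X,x_0}$. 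The strict positivity of the resulting current provides precisely the Griffiths / $\omega$-trace quasi-positivity hypothesis needed to invoke the vanishing theorem, so no additional work is needed on the curvature side.

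The main obstacle, and the reason Fujita's conjecture is still open in general, is that the naive slope bookkeeping at each cutting step only produces the total exponent $\binom{n+1}{2}+1$ rather than $n+1$ (the Angehrn--Siu bound). The sharp linear exponent is presently known only in low dimensions, via Reider's method for surfaces and more delicate algebro-geometric arguments for threefolds and fourfolds, and these proofs all involve inputs that genuinely go beyond the analytic cutting-down. Reaching $n+1$ (and $n+2$ for very ampleness) in arbitrary $n$ seems to require either a substantially sharper jumping-number inequality for $\mathscr{I}(\bullet)$ under convex combinations of plurisubharmonic weights, or a new positivity mechanism altogether. A potentially promising direction suggested by this paper is to replace the strict positivity requirement in the vanishing step by $\omega$-trace quasi-positivity with approximation (Theorem \ref{V-thm for tr-quasi-posi with appro}), allowing cheaper auxiliary weights at each cut and possibly tightening the budget; however, turning this relaxation into a full proof of the conjectured sharp bounds appears to require essentially new input beyond what is developed here.
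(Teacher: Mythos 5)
The statement you were asked about is Fujita's conjecture, which the paper records as a \emph{conjecture} precisely because it has no proof: the paper itself only notes that the global generation part is known up to dimension $5$ (citing Kawamata and Ye--Zhu) and then proves weaker, modified statements (Theorem \ref{Fujita Conj for Grif quai-posi}) involving $L^2$-subsheaves and quasi-positivity hypotheses. So there is no ``paper's own proof'' to compare against, and your proposal does not close the gap either --- as you yourself acknowledge in the final paragraph. The concrete failure point is exactly where you locate it: the Angehrn--Siu cutting-down induction, with the standard Ohsawa--Takegoshi/Skoda bookkeeping of the $L$-mass consumed at each dimension-reduction step, only produces a singular metric on $L^{\otimes m}$ with isolated multiplier ideal at $x_0$ for $m$ on the order of $\binom{n+1}{2}+1$, not $m=n+1$. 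Nothing in this paper's machinery repairs that: Theorem \ref{V-thm for tr-quasi-posi with appro} and the $\omega$-trace quasi-positivity framework weaken the \emph{positivity} hypothesis needed for vanishing, but the bottleneck in Fujita is not the vanishing step (Nadel vanishing already suffices once the metric is built); it is the \emph{construction} of a weight with the prescribed multiplier ideal using only $n+1$ copies of $L$. Relaxing strict positivity to quasi-positivity does not reduce the number of copies of $L$ needed to isolate the singularity at $x_0$.

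In short: your proposal is an accurate sketch of the known strategy and an honest assessment of why it stops short, but it is not a proof of the statement, and the statement is not provable by the methods of this paper (or, at present, by any known method in dimension $\geq 6$ for global generation, and in full generality for very ampleness). The correct reading of the situation is that the paper deliberately presents this as an open conjecture and then proves Fujita-type results under additional hypotheses; any reviewer should treat a claimed proof of the conjecture itself with the presumption that the quadratic-versus-linear exponent gap has not actually been bridged.
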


The global generation conjecture has been proved (cf. \cite{YZ20,Kaw97}) 
up to dimension $5$.
Recently, Fujita's conjecture type theorems was obtained in \cite{SY19} for the case of pseudo-effective involving the multiplier ideal sheaf. 

\begin{theorem}\label{Fujita Conj thm for psef}$($\textnormal{cf.\,\cite[Theorem\,1.3]{SY19}}$)$ 
    Let $X$ be a compact \kah manifold, $L$ be an ample and globally generated line bundle and $(B,h)$ be a pseudo-effective line bundle.
    If the numerical dimension of $(B,h)$ is not zero, i.e. $\mathrm{nd}(B,h)\ne0$. then
    \begin{align*}
        K_X\otimes L^{\otimes n}\otimes B\otimes \mathscr{I}(h)
    \end{align*}
    is globally generated.
\end{theorem}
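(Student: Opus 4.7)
The plan is to establish global generation pointwise: for any $x \in X$, I would show that the evaluation map
\begin{align*}
    H^0(X, K_X \otimes L^{\otimes n} \otimes B \otimes \mathscr{I}(h)) \longrightarrow \bigl(K_X \otimes L^{\otimes n} \otimes B \otimes \mathscr{I}(h)\bigr)_x \big/ \mathfrak{m}_x
\end{align*}
is surjective. By the standard long exact sequence argument, this reduces to showing the vanishing of a first cohomology group of the form $H^1(X, K_X \otimes F \otimes \mathscr{I}(\Phi))$, where $F := L^{\otimes n} \otimes B$ and $\Phi$ is a carefully chosen quasi-psh weight whose associated multiplier ideal isolates $x$ inside $\mathscr{I}(h)$. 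Once such a $\Phi$ with strictly positive curvature current can be produced, Nadel's vanishing theorem closes the argument.

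First, I would use global generation of $L$ to build a singular metric $e^{-n\varphi_L}$ on $L^{\otimes n}$ whose Lelong number at $x$ is $n = \dim X$. Concretely, choosing $n$ sections $s_1, \dots, s_n \in H^0(X, L)$ whose common zero locus is an isolated point at $x$ (possible since $L$ is very ample after possibly passing to a small perturbation of base loci, and globally generated sections separate jets sufficiently after taking general linear combinations), the weight $\varphi_L = \tfrac{1}{2}\log(|s_1|^2 + \cdots + |s_n|^2)$ is plurisubharmonic with an isolated logarithmic pole at $x$, so that $\mathscr{I}(n\varphi_L) \subset \mathfrak{m}_x$ near $x$ by Skoda's integrability criterion. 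Ampleness of $L$ ensures that $i\Theta_{L, e^{-\varphi_L}} \geq 0$ as a current.

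Next, I would exploit the hypothesis $\mathrm{nd}(B,h) \neq 0$ to extract a genuine strict-positivity contribution from $B$. Nonvanishing of the numerical dimension means that, for a fixed smooth Kähler form $\omega_0$ and any $\varepsilon > 0$, one can find a singular Hermitian metric $h_\varepsilon$ on $B$ whose curvature current dominates $\varepsilon\,\omega_0$ on a nontrivial set while having multiplier ideal comparable to $\mathscr{I}(h)$ at $x$ (one typically uses regularization results to find $h_\varepsilon$ with analytic singularities, then replaces $h$ by an equisingular approximation). Combined with $L^{\otimes n}$, which provides strict positivity of curvature from $n$ copies of an ample class, the product weight $\Phi := n\varphi_L + \varphi_{h_\varepsilon}$ yields a singular metric on $F$ whose curvature current is strictly positive in the sense of currents.

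The main obstacle is the joint control of the multiplier ideal: one needs $\mathscr{I}(\Phi)$ to coincide with $\mathfrak{m}_x \cdot \mathscr{I}(h)$ near $x$ (so that Nadel vanishing produces exactly the sheaf $K_X \otimes F \otimes \mathscr{I}(h) \otimes \mathfrak{m}_x$ one wants to kill), without the pole at $x$ from $n\varphi_L$ being eaten by $\varphi_{h_\varepsilon}$ or producing extra singularities elsewhere that destroy global sectionality. This is where the numerical-dimension hypothesis is essential: it forces strict positivity on a set of positive measure, so that a small multiple can be added to $n\varphi_L$ without raising the Lelong number at $x$ past $n$, keeping Skoda's bound $\mathscr{I}(n\varphi_L + \text{small}) \subset \mathfrak{m}_x$ valid. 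Once the multiplier ideal is correctly identified, Nadel vanishing gives $H^1(X, K_X \otimes F \otimes \mathscr{I}(\Phi)) = 0$, and chasing the diagram through the inclusion $\mathscr{I}(\Phi) \subset \mathfrak{m}_x \cdot \mathscr{I}(h)$ yields the surjectivity of the evaluation map at $x$, proving global generation.
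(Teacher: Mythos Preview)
The paper does not prove this statement; it is cited as \cite[Theorem~1.3]{SY19} and serves only as background for the paper's own Theorem~\ref{Fujita Conj for Grif quai-posi}. So there is no ``paper's own proof'' to compare against. That said, the paper's proof of its analogous result (Theorem~\ref{Fujita Conj for Grif quai-posi}) goes through Castelnuovo--Mumford regularity rather than a direct pointwise Nadel argument: one shows the sheaf is $0$-regular with respect to $L$ by proving $H^q(X,K_X\otimes L^{\otimes(n-q)}\otimes B\otimes\mathscr{I}(h))=0$ for all $q>0$, using Nadel vanishing for $0<q<n$ (where $L^{\otimes(n-q)}$ supplies strict positivity) and a separate argument for $q=n$ (where the hypothesis $\mathrm{nd}(B,h)\neq0$ is used). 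This is also the approach in \cite{SY19}.

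Your direct approach is in principle viable but, as written, has genuine gaps. First, your use of $\mathrm{nd}(B,h)\neq0$ is not correct: you claim it yields a metric $h_\varepsilon$ whose curvature ``dominates $\varepsilon\omega_0$ on a nontrivial set'', and later that strict positivity ``on a set of positive measure'' suffices. But Nadel vanishing requires the total curvature current to satisfy $i\Theta\geq\varepsilon\omega$ \emph{globally} as currents, not merely on a subset; positivity on a set of positive measure gives nothing. What $\mathrm{nd}(B,h)\neq0$ actually provides (via \cite{SY19}, building on Cao's work) is the top-degree vanishing $H^n(X,K_X\otimes B\otimes\mathscr{I}(h))=0$, which slots directly into the regularity argument; extracting a globally strictly positive curvature current from it, while keeping $\mathscr{I}(h_\varepsilon)=\mathscr{I}(h)$, is not available in general. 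Second, your multiplier-ideal control is left unresolved: you need $\mathscr{I}(n\varphi_L+\varphi_{h_\varepsilon})=\mathfrak{m}_x\cdot\mathscr{I}(h)$ in a neighbourhood of $x$, but the singularities of $h$ and of $n\varphi_L$ can interact (e.g.\ if $h$ already has a pole at $x$), and you give no mechanism to separate them. The regularity route sidesteps both issues entirely.
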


After that, the following global generation theorem involving $L^2$-subsheaves was obtained in \cite{Wat23} for singular Griffiths semi-positivity.

\begin{theorem}$($\textnormal{cf.\,\cite[Theorem\,1.9]{Wat23}}$)$\label{Fujita Conj for Grif}
    Let $X$ be a compact \kah manifold and $E$ be a holomorphic vector bundle equipped with a singular Hermitian metric $h$. 
    Let $L$ be an ample and globally generated line bundle and $N$ be a nef but not numerically trivial line bundle.
    If $h$ is Griffiths semi-positive and there exists a smooth effective ample divisor $A$ such that the Lelong number $\nu(-\log\mathrm{det}\,h|_A,x)<1$ for all points in $A$ and 
    that the equality of numerical dimensions $\mathrm{nd}(N|_A)=\mathrm{nd}(N)$, then the coherent sheaf
    \begin{align*}
        K_X\otimes L^{\otimes n}\otimes N\otimes \mathscr{E}(h\otimes\mathrm{det}\,h)
    \end{align*}
    is globally generated.
\end{theorem}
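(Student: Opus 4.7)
The plan is to adapt the strategy of Shin--Yamaguchi \cite{SY19} (Theorem \ref{Fujita Conj thm for psef}) to the vector bundle setting, exploiting the key fact that Griffiths semi-positivity of $h$ on $E$ yields a Nakano-type semi-positivity of the tensor product metric $h\otimes\det h$ on $E\otimes\det E$, which is the singular analogue of the classical Demailly--Skoda theorem. Using this, I would reduce the statement to pointwise generation: for each $x_0\in X$, produce a global section of
\[
\mathcal{F}:=K_X\otimes L^{\otimes n}\otimes N\otimes \mathscr{E}(h\otimes\det h)
\]
whose germ at $x_0$ is not contained in $\mathfrak{m}_{x_0}\mathcal{F}_{x_0}$. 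The argument splits according to whether $x_0\notin A$ or $x_0\in A$.

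For $x_0\notin A$, I would use global generation of $L$ to build, at $x_0$, a singular Hermitian metric on $L^{\otimes(n-1)}$ with an isolated logarithmic pole whose multiplier ideal at $x_0$ equals $\mathfrak{m}_{x_0}$. Coupled with one remaining ample factor of $L$ (providing strict positivity), the nef line bundle $N$, and the Nakano-type singular metric on $E\otimes\det E$ adjoined to $K_X$, this equips $K_X\otimes L^{\otimes n}\otimes N\otimes E\otimes\det E$ with a singular Hermitian metric satisfying the curvature assumptions required for an Ohsawa--Takegoshi style $L^2$-extension from $\{x_0\}$ (or from a zero-dimensional subscheme through $x_0$). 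The resulting section automatically lies in $\mathscr{E}(h\otimes\det h)$ because of the $L^2$ bound furnished by the extension.

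For $x_0\in A$, I would run an induction on $\dim X$, taking $A$ as a hyperplane-like section. The hypothesis $\mathrm{nd}(N|_A)=\mathrm{nd}(N)$ preserves the non-triviality of the numerical dimension of the nef factor upon restriction, and the Lelong number bound $\nu(-\log\det h|_A,x)<1$ combined with Skoda's integrability lemma yields $\mathscr{I}(\det h|_A)=\mathcal{O}_A$; together with the compatibility of $L^2$-subsheaves under restriction, this lets the inductive hypothesis apply to $(A,E|_A,L|_A,N|_A,h|_A)$ and produce an $L^2$ section on $A$ that separates $x_0$. An $L^2$-extension from $A$ to $X$, using again the Nakano-type positivity of $h\otimes\det h$ and the curvature of $L^{\otimes n}\otimes N$, then yields a global section of $\mathcal{F}$ separating $x_0$.

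The principal obstacle is technical and twofold. First, one needs a sufficiently strong Ohsawa--Takegoshi extension theorem for $(E\otimes\det E)$-valued forms with the singular metric $h\otimes\det h$ and optimal $L^2$ bound, so that the extension genuinely lands in $\mathscr{E}(h\otimes\det h)$ rather than in a larger sheaf; this is where the Griffiths-to-Nakano passage must be done carefully, and approximation via the smoothings from Proposition \ref{characterization of sing Grif semi-posi}(d) (possibly combined with the Deng--Ning--Wang--Zhou improvement) is needed to make sense of the curvature inequality for the singular metric. Second, the compatibility between $\mathscr{E}(h\otimes\det h)|_A$ and the $L^2$-subsheaf intrinsically defined on $A$ must be controlled, and the Lelong number bound $\nu(-\log\det h|_A,\cdot)<1$ is the sharp hypothesis that makes Skoda's lemma applicable. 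Arranging the geometric choices (the ample divisor $A$, the singular metric on $L^{\otimes(n-1)}$, and the base of the induction) to be mutually compatible is routine once these two analytic points are settled.
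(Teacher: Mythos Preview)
This theorem is not proved in the present paper: it is quoted verbatim from \cite[Theorem\,1.9]{Wat23} as background for the new Theorem~\ref{Fujita Conj for Grif quai-posi}, and no argument for it appears here. Consequently there is no ``paper's own proof'' to compare your proposal against.

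That said, a few remarks may be useful. Your outline is a plausible direct approach via Ohsawa--Takegoshi extension and induction on a smooth ample divisor, in the spirit of \cite{SY19}. However, judging by the proof of the adjacent Theorem~\ref{Fujita Conj for Grif quai-posi} (which \emph{is} carried out in this paper), the author's preferred method is different: one shows that $K_X\otimes L^{\otimes n}\otimes N\otimes\mathscr{E}(h\otimes\det h)$ is $0$-regular in the sense of Castelnuovo--Mumford with respect to $L$, and then invokes the standard regularity criterion for global generation. The vanishing for $0<q<n$ comes from the Nakano-type vanishing \cite[Theorem\,1.6]{Wat23} applied with the singular metric $h\otimes\det h$ and the positive twist $L^{\otimes(n-q)}\otimes N$; the delicate case is $q=n$, where there is no ample twist left and one needs an auxiliary vanishing argument (in Theorem~\ref{Fujita Conj for Grif quai-posi} this is supplied by Corollary~\ref{V-thm for Grif quasi-posi}, while in \cite{Wat23} the role of the nef non-trivial $N$ together with the hypotheses on $A$ presumably handles it). Your extension/induction scheme bypasses Mumford regularity entirely, which is conceptually cleaner in one respect but requires a sharp $L^2$-extension theorem for $(E\otimes\det E)$-valued forms with the singular metric $h\otimes\det h$ landing in $\mathscr{E}(h\otimes\det h)$---a nontrivial input that you correctly flag as the main technical point.
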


Here, for any nef line bundle $N$, if $\mathrm{nd}(N)\ne n$, i.e. $N$ is not big, then we can always select a nonsingular ample divisor $A$ satisfying $\mathrm{nd}(N|_A)=\mathrm{nd}(N)$.
However, this theorem is complicated by requiring conditions of the Lelong number and of the numerically dimension.
As an application of Corollary \ref{V-thm for Grif quasi-posi}, we obtain the following theorem for the simpler assumption of singular Griffiths quasi-positivity.

\begin{theorem}\label{Fujita Conj for Grif quai-posi}
    Let $X$ be a compact \kah manifold and $L$ be an ample and globally generated line bundle. 
    Let $E$ be a holomorphic vector bundle equipped with a singular Hermitian metric $h$. 
    If $h$ is Griffiths quasi-positive then the coherent sheaf 
    \begin{align*}
        K_X\otimes L^{\otimes n}\otimes \mathscr{E}(h\otimes\mathrm{det}\,h)
    \end{align*}
    is globally generated.
\end{theorem}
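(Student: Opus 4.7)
The strategy is to deduce Theorem \ref{Fujita Conj for Grif quai-posi} from the previously established Theorem \ref{Fujita Conj for Grif}, using the Griffiths quasi-positivity of $h$ to supply, essentially for free, the auxiliary nef line bundle $N$ and the smooth ample divisor $A$ required there, together with the Lelong-number and numerical-dimension hypotheses. An alternative route, via a Nadel-type vanishing for the singular Nakano-positivity of $h\otimes\det h$ (in the sense of Deng--Ning--Wang--Zhou) combined with Castelnuovo--Mumford regularity, is kept as a fallback.

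The first step is to translate the quasi-positivity of $h$ into positivity of $\det h$. By Proposition \ref{characterization of Grif nega by smoothing}, in a neighborhood $U$ of the point $x_0$ where $h$ is Griffiths positive there exists a sequence of smooth Griffiths negative metrics $(h_\nu^*)$ on $E^*|_U$ satisfying the uniform bound $\idd|u|^2_{h_\nu^*}\geq\delta|u|^2_{h_\nu^*}\idd|z|^2$. Taking determinants shows that $-\log\det h$ coincides almost everywhere on $U$ with a strictly plurisubharmonic function, so $\nu(-\log\det h,x)=0$ for every $x\in U$. Moreover, running the partition-of-unity computation from the proof of Theorem \ref{characterization of tr-posi, not psef} on $\det h$ yields $\int_X c_1(\det E)\wedge\omega^{n-1}>0$, so $\det E$ is pseudo-effective with $\mathrm{nd}(\det E,\det h)\geq 1$.

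The second step is to construct the pair $(N,A)$ and apply Theorem \ref{Fujita Conj for Grif}. I would take $N:=\det E$ with the metric $\det h$, passing to a small nef perturbation $N\otimes L^{\otimes\varepsilon}$ (with $\varepsilon$ ultimately absorbed into the $L^{\otimes n}$ factor) if nef-ness of $N$ must be enforced; its numerical dimension is positive by the first step. Choose $A$ to be a very general smooth ample divisor meeting $U$; then $\nu(-\log\det h|_A,x)<1$ automatically on $A\cap U$, and, after replacing $h$ by a Demailly regularization $h\cdot e^{-\varepsilon\psi}$ that preserves Griffiths semi-positivity on $X$ and Griffiths positivity at $x_0$, this is propagated to all of $A$. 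A Bertini genericity choice of $A$ ensures $\mathrm{nd}(N|_A)=\mathrm{nd}(N)$. Theorem \ref{Fujita Conj for Grif} then yields global generation of $K_X\otimes L^{\otimes n}\otimes N\otimes\mathscr{E}(h\otimes\det h)$, and tensoring back by $N^{-1}$ using a nonvanishing section of some suitable twist supplied by Corollary \ref{V-thm for Grif quasi-posi} descends this to the claimed global generation of $K_X\otimes L^{\otimes n}\otimes\mathscr{E}(h\otimes\det h)$.

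The main obstacle will be the Demailly-style regularization in the second step: one must simultaneously shrink the Lelong numbers of $-\log\det h|_A$ below $1$ on the whole divisor $A$, preserve the Griffiths semi-positivity globally, retain the quasi-positivity bound at $x_0$, and avoid enlarging the $L^2$-subsheaf $\mathscr{E}(h\otimes\det h)$ (so that global generation of the regularized sheaf implies global generation of the original). Balancing these constraints requires a careful use of Demailly's approximation of positive currents with logarithmic poles, applied to $-\log\det h$ while keeping $h$ itself only perturbed by a smooth factor. If this turns out to be too delicate, the fallback is to work directly on $E\otimes\det E$: the singular Nakano-positivity from \cite{DNWZ23}, upgraded to strict positivity near $x_0$ by the quasi-positivity hypothesis, yields a Nadel-type vanishing $H^q(X,K_X\otimes L^{\otimes n}\otimes\mathscr{E}(h\otimes\det h)\otimes\mathscr{I}_x)=0$ for $q\geq 1$ and any point $x\in X$, from which global generation follows by the standard Castelnuovo--Mumford argument exploiting that $L$ is ample and globally generated.
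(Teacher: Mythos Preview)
Your primary approach has a fatal gap at the final step. Even granting the delicate regularization and that Theorem \ref{Fujita Conj for Grif} applies to give global generation of $K_X\otimes L^{\otimes n}\otimes N\otimes\mathscr{E}(h\otimes\det h)$ with $N=\det E$, there is no mechanism to ``tensor back by $N^{-1}$'': global generation of $\mathcal{F}\otimes N$ implies global generation of $\mathcal{F}$ only if $N^{-1}$ has a nowhere-vanishing section, but here $\det E$ has strictly positive degree, so $H^0(X,(\det E)^{-1})=0$. Corollary \ref{V-thm for Grif quasi-posi} supplies \emph{vanishing} of sections of duals, not the nonvanishing you would need. The perturbation ``$N\otimes L^{\otimes\varepsilon}$ with $\varepsilon$ absorbed into $L^{\otimes n}$'' is also not meaningful for integer tensor powers.

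The paper's argument is a direct Castelnuovo--Mumford regularity, which your fallback gestures toward but mis-specifies. One must show $H^q(X,K_X\otimes L^{\otimes(n-q)}\otimes\mathscr{E}(h\otimes\det h))=0$ for every $q>0$. For $0<q<n$ the twist $L^{\otimes(n-q)}$ is still ample, and the known singular Nakano vanishing \cite[Theorem 1.6]{Wat23} applies using only Griffiths \emph{semi}-positivity of $h$. The quasi-positivity enters solely at $q=n$, where no ample twist remains: the inclusion $\mathscr{E}(h\otimes\det h)\hookrightarrow E\otimes\det E$ reduces the question to $H^n(X,K_X\otimes E\otimes\det E)=0$, which is exactly Serre duality combined with Corollary \ref{V-thm for Grif quasi-posi} (the dual $(E\otimes\det E)^*=E^*\otimes\det E^*$ is a summand of $(E^*)^{\otimes(r+1)}$). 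Your fallback's Nadel-type vanishing twisted by $\mathscr{I}_x$ is a different route to global generation---via $H^1$ of the point exact sequence, not via $0$-regularity---and would require strict Nakano positivity dominating $\mathscr{I}_x$ at \emph{every} point $x\in X$, which quasi-positivity at a single point does not provide.
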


\begin{proof}
    By the Mumford regularity, we already know that if $\mathcal{F}$ is a $0$-regular coherent sheaf on $X$ with respect to $L$, i.e. $H^0(X,\mathcal{F}\otimes L^{\otimes(n-q)})=0$ for any $q>0$, then $\mathcal{F}$ is generated by its global sections (see \cite[Theorem\,1.8.4]{Laz04}).
    Then, we only need to prove $K_X\otimes L^{\otimes n}\otimes \mathscr{E}(h\otimes\mathrm{det}\,h)$ is $0$-regular with respect to $L$.
    For $0<q<n$, we have the following cohomologies vanishing 
    \begin{align*}
        H^q(X,K_X\otimes L^{\otimes(n-q)}\otimes \mathscr{E}(h\otimes\mathrm{det}\,h))=0
    \end{align*}
    by the positivity of $L^{\otimes(n-q)}$ and the already known Nakano vanishing theorem \cite[Theorem\,1.6]{Wat23}.
    Obviously, the $q$-th cohomology always vanishes in the case of $q>n$.

    Finally, we show the case of $q=n$. By the natural inclusion $\mathscr{E}(h\otimes\mathrm{det}\,h)\hookrightarrow E\otimes\mathrm{det}\,E$ and Corollary \ref{V-thm for Grif quasi-posi}, 
    we obtain
    \begin{align*}
        H^n(X,K_X\otimes \mathscr{E}(h\otimes\mathrm{det}\,h))\hookrightarrow H^n(X,K_X\otimes E\otimes\mathrm{det}\,E)=0.
    \end{align*}
    Hance, we have that $H^n(X,K_X\otimes \mathscr{E}(h\otimes\mathrm{det}\,h))=0$ and this proof is completed.
\end{proof}

\section{$\omega$-trace positivity, rational conected-ness and generically $\omega_X^{n-1}$ positivity}
\subsection{$\omega$-trace positivity and rational conected-ness}

A projective manifold $X$ is called $\it{rationally}$ $\it{connected}$ if any two points of $X$ can be connected by some rational curve.
In this subsection, we show that rational conected-ness follows from $\omega$-trace quasi-positivity with approximation.
Yau's conjecture was solved in \cite{HW20,Mat22}, and in \cite{Yan18} by introducing the notion of RC-positivity.

\begin{conjecture}$($\textnormal{Yau's conjecture,\,\cite[Problem\,47]{Yau82}}$)$\label{Yau's conjecture}
    Let $X$ be a compact \kah manifold. If $X$ has a \kah metric with positive holomorphic sectional curvature, 
    then $X$ is projective and rationally connected manifold.
\end{conjecture}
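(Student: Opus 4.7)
The plan is to follow the strategy of \cite{Yan18} adapted to the framework developed in this paper, namely to derive rational connectedness from the RC-positivity of the tangent bundle. Let $\Omega$ be a \kah metric on $X$ with positive holomorphic sectional curvature $H > 0$. Viewing $\Omega$ as a smooth Hermitian metric on $T_X$, for any $0 \neq v \in T_{X,x}$ the choice $\xi = v$ yields
\begin{align*}
    \langle i\Theta_{T_X, \Omega}(v, \bar v)v, v\rangle_\Omega = H(v) \cdot |v|_\Omega^4 > 0,
\end{align*}
so $(T_X, \Omega)$ is RC-positive in the sense of the paper.

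To extract vanishing on symmetric and exterior powers of $\Omega_X^1$, I would next argue that the RC-positivity at the level of $T_X$ propagates to $T_X^{\otimes m}$ and $\Lambda^p T_X$. The point is that $H > 0$ for $T_X$ yields, by direct computation, a positive direction for each diagonal tensor $v^{\otimes m}$ or $v_1 \wedge \cdots \wedge v_p$ in the corresponding bundle; spreading positivity to every nonzero section of the tensor bundle requires the orthonormal frame trick used in the proof of Theorem \ref{tr-posi then RC-posi}, together with a compactness argument on the unit sphere. Once this is in place, Proposition \ref{RC-posi then H^0=0} applied to $T_X^{\otimes m}$ and $\Lambda^p T_X$ gives
\begin{align*}
    H^0(X, (\Omega^1_X)^{\otimes m}) = 0, \qquad H^0(X, \Omega^p_X) = 0
\end{align*}
for all $m \geq 1$ and $1 \leq p \leq n$. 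In particular $h^{p,0}(X) = 0$ for $p \geq 1$, and combined with the \kah hypothesis, Kodaira's embedding theorem implies that $X$ is projective. Then the Campana-Demailly-Peternell characterization \cite{CDP14}, applied to a projective manifold with $H^0(X, (\Omega_X^1)^{\otimes m}) = 0$ for all $m$, gives rational connectedness.

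An alternative route is to reduce Yau's conjecture directly to Theorem \ref{rationally conected if tr-quasi posi} by producing a Hermitian metric $\omega$ on $X$ for which $\Omega$ is $\omega$-trace positive; since $\Omega$ is smooth, the "with approximation" hypothesis is vacuous. The naive attempt $\omega = \Omega$ fails because positive holomorphic sectional curvature does not in general imply positivity of the second Chern-Ricci $tr_\Omega i\Theta_{T_X, \Omega}$. A more promising choice would use the induced metric on the tautological line bundle $\mathcal{O}_{T_X}(1)$, where $H > 0$ translates cleanly to positivity of the Chern curvature along the base directions, and then run the construction of Theorem \ref{E tr-nega then O_E(1) is also tr-nega} in reverse to recover an appropriate $\omega$ downstairs.

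The main obstacle in either route is the same: passing from the pointwise, single-direction curvature positivity guaranteed by $H > 0$ to a global, multi-direction positivity on tensor constructions of $T_X$. RC-positivity is notoriously unstable under tensor products, so either one must perform a careful direct computation exploiting the diagonal structure $v^{\otimes m}$, or one must first upgrade $H > 0$ to $\omega$-trace positivity and then invoke the inheritance property in Proposition \ref{tr-quasi-posi with appro then h^m is also tr-quasi-posi with appro}. If the upgrade step can be carried out, Yau's conjecture becomes a clean corollary of Theorem \ref{rationally conected if tr-quasi posi}; if not, one falls back on the diagonal-tensor computation of \cite{Yan18} to establish RC-positivity of the full tensor algebra of $T_X$ by hand.
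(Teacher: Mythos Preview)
The paper does not contain a proof of Yau's conjecture. The statement is recorded as a \emph{conjecture} (Conjecture~\ref{Yau's conjecture}) for historical context only, and the surrounding text simply cites the solutions in \cite{Yan18}, \cite{HW20}, \cite{Mat22} (and the quasi-positive extension in \cite{ZZ23}). The paper's own contribution in this section is Theorem~\ref{rationally conected if tr-quasi posi}, whose hypothesis is $\omega$-trace quasi-positivity of a singular Hermitian metric on $T_X$, not positive holomorphic sectional curvature. So there is no ``paper's own proof'' to compare your proposal against.

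As for the content of your proposal: your first route is an accurate sketch of Yang's argument in \cite{Yan18}, and you correctly flag the only substantive step, namely that RC-positivity does not pass formally to tensor powers and one must carry out the explicit computation there. Your second route---upgrading positive holomorphic sectional curvature to $\omega$-trace positivity for some Hermitian $\omega$ and then invoking Theorem~\ref{rationally conected if tr-quasi posi}---is not something the paper claims or proves; the paper only observes that each of these two conditions separately implies RC-positivity, not that one implies the other. The ``reverse'' construction you gesture at via Theorem~\ref{E tr-nega then O_E(1) is also tr-nega} does not obviously produce such an $\omega$, so that route remains speculative.
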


After that, Yau's conjecture was conjectured by weakening the assumption to quasi-positivity (see \cite[Conjecture\,1.9]{Yan20}) and was recently solved in \cite{ZZ23}.

\begin{theorem}$($\textnormal{cf.\,\cite[Theorem\,1.6]{ZZ23}}$)$
    Let $(X,\omega_g)$ be a compact \kah manifold with nonnegative holomorphic sectional curvature. 
    Then the following statements hold:
    \begin{itemize}
        \item For any holomorphic $(p,0)$-form $\eta$, we have $|\eta|^2_{\omega_g}\equiv C$ for some constant $C\geq0$.
        \item If $(X,\omega)$ has no nonzero truly flat tangent vector at some point (which is satisfied when the holomorphic sectional curvature is quasi-positive), 
        then we have that $H^0(X,\Omega^p_X)=0$ for every $1\leq p\leq\mathrm{dim}\,X$. In particular, $X$ is projective and rationally connected. 
    \end{itemize}
\end{theorem}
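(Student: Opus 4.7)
The plan is to use the Bochner--Kodaira technique on holomorphic $(p,0)$-forms, combined with an averaging trick that converts bisectional curvature sums into sums of holomorphic sectional curvatures. Fix $\eta\in H^0(X,\Omega^p_X)$. Expanding $\Delta^C_{\omega_g}|\eta|^2_{\omega_g}$ via the Kodaira identity on a \kah manifold yields $|\nabla^{1,0}\eta|^2_{\omega_g}$ plus a curvature term linear in the bisectional curvature tensor contracted against $\eta\otimes\overline{\eta}$. Since bisectional curvature is not pointwise controlled by the holomorphic sectional curvature, I would follow Berger and Yang and integrate the Bochner identity against the Haar measure on the unitary frame bundle at each point; a classical algebraic identity relates the resulting averaged curvature contraction to a nonnegative combination of holomorphic sectional curvatures, so the hypothesis $H(\omega_g)\geq 0$ gives
\begin{align*}
    \Delta^C_{\omega_g}|\eta|^2_{\omega_g}\geq 0.
\end{align*}
By compactness of $X$ and the Hopf maximum principle, $|\eta|^2_{\omega_g}$ must be constant, establishing the first bullet.

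For the second bullet, equality in the Bochner inequality forces both $\nabla^{1,0}\eta\equiv 0$ on $X$ and the pointwise vanishing of the averaged curvature contractions evaluated on $\eta\otimes\overline{\eta}$. Parallelism of $\eta$ under the Chern connection propagates its value from one point to every other, so it suffices to kill $\eta$ at the distinguished point $x_0$ where no nonzero truly flat tangent vector exists. The simultaneous vanishing of all the averaged bisectional contractions at $\eta(x_0)\otimes\overline{\eta(x_0)}$ would otherwise manufacture a nonzero truly flat direction from $\eta(x_0)$, contradicting the hypothesis on $x_0$; hence $\eta(x_0)=0$ and thus $\eta\equiv 0$ on $X$, yielding $H^0(X,\Omega^p_X)=0$ for every $1\leq p\leq\mathrm{dim}\,X$.

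For the geometric conclusion, the vanishing $H^0(X,\Omega^2_X)=0$ combined with the compact \kah structure gives $h^{2,0}=0$, so the K\"ahler cone meets $H^2(X,\mathbb{Q})$ and $X$ is projective by Kodaira's embedding theorem. Rational connectedness requires more than the vanishing of $H^0(X,\Omega^p_X)$ alone; I would combine the Bochner machinery above with the fact that nonnegative holomorphic sectional curvature is inherited in an averaged sense by tensor and symmetric powers of $\Omega^1_X$, deducing $H^0(X,(\Omega^1_X)^{\otimes m})=0$ for every $m\geq 1$, and then invoke the characterization of Campana--Demailly--Peternell \cite{CDP14}: a projective manifold admitting no pseudo-effective subsheaf of any $(\Omega^1_X)^{\otimes m}$ is rationally connected.

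The main obstacle will be executing the Bochner inequality in sufficient generality: the averaging argument is cleanest for $p=1$ but becomes combinatorially heavier for general $p$, and extending it to tensor powers of $\Omega^1_X$ --- which is what is actually needed to pass from the vanishing of holomorphic forms to rational connectedness --- is the step where one must be most careful not to lose the nonnegativity after averaging. The second delicate point is calibrating the precise meaning of ``truly flat tangent vector'' so that its absence at $x_0$ kills all the relevant curvature contractions simultaneously and forces $\eta(x_0)=0$ without introducing extraneous hypotheses.
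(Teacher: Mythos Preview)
The paper does not give a proof of this statement: it is quoted from \cite{ZZ23} as contextual background (note the ``cf.'' in the label), and the paper immediately moves on to its own contribution, Theorem~\ref{rationally conected if tr-quasi posi}. There is therefore nothing in the present paper to compare your proposal against.

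On the substance of your sketch: the Bochner--averaging strategy for the first bullet and the equality-case analysis for the second are indeed the route taken in \cite{ZZ23}. However, your plan for rational connectedness has a real gap. Deducing $H^0(X,(\Omega^1_X)^{\otimes m})=0$ for all $m\geq1$ and then invoking \cite{CDP14} does not work as stated: the vanishing of global sections does \emph{not} imply that every invertible subsheaf of $\Omega^p_X$ (or of $(\Omega^1_X)^{\otimes m}$) fails to be pseudo-effective, which is what the Campana--Demailly--Peternell criterion actually requires. A pseudo-effective line bundle need not have any sections. Passing from ``$H^0=0$ on all tensor powers'' directly to ``rationally connected'' is precisely the open Mumford conjecture (see the discussion around Corollary~\ref{Grif semi-posi and Mumford conj} in this very paper). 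The arguments in \cite{Yan18} and \cite{ZZ23} close this gap not by iterating Bochner on tensor powers, but by producing a curvature-positivity statement (RC-positivity, or an $\omega$-trace--type positivity) on induced metrics of line subbundles of $\Omega^p_X$, which then feeds into Lemma~\ref{characterization of not psef} / \cite{CDP14}. Your outline does not supply that step.
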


Here, a \kah metric $\Omega$ on $T_X$ is RC-positive if $\Omega$ has positive holomorphic sectional curvature or is $\omega$-trace positive for a Hermitian metric $\omega$ on $X$.
Then, we obtain the following theorem as a generalization to singular Hermitian metrics and quasi-positivity with respect to $\omega$-trace positivity.
In fact, this theorem is already known if $h$ is smooth and $\omega$-trace positive (see \cite[Corollary\,1.5]{Yan18}).

\begin{theorem}\label{rationally conected if tr-quasi posi}
    Let $X$ be a compact \kah manifold. 
    If there exist a Hermitian metric $\omega$ $($not necessarily K\"ahler$)$ on $X$ and a singular Hermitian metric $h$ on $T_X$ such that $h$ is $\omega$-trace quasi-positive with approximation, then $X$ is projective and rationally connected.
\end{theorem}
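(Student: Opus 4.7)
The strategy is to produce the vanishings $H^0(X,\Omega^p_X)=0$ for all $1\le p\le n$ and $H^0(X,(\Omega^1_X)^{\otimes m})=0$ for all $m\ge 1$, and then combine them with the characterization of rational connectedness from \cite{CDP14}. By Proposition \ref{tr-quasi-posi with appro then h^m is also tr-quasi-posi with appro}, the induced metrics $\Lambda^p h$ and $h^{\otimes m}$ on $\Lambda^p T_X$ and $T_X^{\otimes m}$ are again $\omega$-trace quasi-positive with approximation. Since $\omega$ is only Hermitian, Theorem \ref{V-thm for tr-quasi-posi with appro} is not directly applicable; I would instead adapt the Gauduchon integration used in the proof of Theorem \ref{characterization of tr-posi, not psef}. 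Let $\omega_G=e^f\omega$ be a Gauduchon metric in the conformal class of $\omega$, so that $\partial\overline{\partial}\omega_G^{n-1}=0$. For a putative nonzero section $s$ of $F^*$, with $F\in\{\Lambda^p T_X,T_X^{\otimes m}\}$, the function $\log|s|^2_{h_F^*}$ is $\omega$-subharmonic on $X$ and strictly $\omega$-subharmonic on a neighborhood $V_{x_0}$ of the quasi-point $x_0$. Using the smooth local approximants provided by the hypothesis together with a partition of unity $\{\rho_j\}$ subordinate to a finite cover $\{V_{x_j}\}$, the partition-of-unity computation of Theorem \ref{characterization of tr-posi, not psef} carries over and yields
\begin{align*}
0=\int_X\idd\log|s|^2_{h_F^*}\wedge\omega_G^{n-1}\ge c\int_{V_{x_0}}\rho_0\,e^{-f}\,dV_{\omega_G}>0,
\end{align*}
a contradiction; hence $H^0(X,F^*)=0$.

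Next, the vanishing $H^0(X,\Omega^p_X)=0$ for $p\ge 1$ gives $h^{2,0}(X)=0$, so $H^2(X,\mathbb{R})=H^{1,1}(X,\mathbb{R})$. A small rational perturbation of any K\"ahler class stays K\"ahler, and Kodaira's embedding theorem then yields that $X$ is projective. With $X$ projective in hand, the vanishings $H^0(X,(\Omega^1_X)^{\otimes m})=0$ for all $m\ge 1$ (together with their analogs on finite \'etale covers $Y\to X$, obtained by pulling back $h$ and $\omega$ via Proposition \ref{pull back omega-SH for hol immersion map}) allow one to invoke the characterization of rational connectedness from \cite{CDP14}, concluding that $X$ is rationally connected.

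The main technical obstacle is the first step: replicating Theorem \ref{V-thm for tr-quasi-posi with appro} without the K\"ahler hypothesis on $\omega$, since the functional $\deg_\omega$ used there is no longer well behaved. The Gauduchon conformal factor $e^f$ is the key device: $\omega_G^{n-1}$ is $\partial\overline{\partial}$-closed, which rigorously justifies the global identity $\int_X\idd\varphi\wedge\omega_G^{n-1}=0$ for $\varphi\in\mathscr{L}^1(X,\mathbb{R})$, while strict $\omega$-subharmonicity at $x_0$ transfers to strict $\omega_G$-subharmonicity up to the positive factor $e^{-f}$, preserving the sign in the key integral. One also has to verify that $\log|s|^2_{h_F^*}$ is locally integrable on $X$ for $0\ne s\in H^0(X,F^*)$, which follows from $\det h_F^*<+\infty$ a.e. and the smooth local approximants decreasing to $h_F^*$.
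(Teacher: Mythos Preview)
Your Gauduchon-integration argument for $H^0(X,\Lambda^pT^*_X)=0$ and $H^0(X,(T^*_X)^{\otimes m})=0$ is a legitimate way around the K\"ahler hypothesis on $\omega$ in Theorem~\ref{V-thm for tr-quasi-posi with appro}, and with the technical points you flag (local integrability of $\log|s|^2_{h_F^*}$; moving $x_0$ inside its neighborhood to a point where $s\ne 0$) it goes through. The projectivity step via $h^{2,0}=0$ and Kodaira is also fine.

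The genuine gap is the last step. The criterion in \cite[Theorem~1.1]{CDP14} is \emph{not} that $H^0(X,(\Omega^1_X)^{\otimes m})=0$ for all $m\ge 1$ (on $X$ or on finite \'etale covers) implies rational connectedness; that implication is Mumford's conjecture and is open in general---the paper itself records it only under an additional positivity hypothesis in Corollary~\ref{Grif semi-posi and Mumford conj}. What \cite{CDP14} actually proves is that a projective $X$ is rationally connected if and only if no invertible subsheaf $\mathcal{F}\subset\Omega^p_X$ is pseudo-effective, and pseudo-effectivity is strictly weaker than admitting a nonzero section, so your $H^0$-vanishings do not verify it. The paper closes the argument differently: Proposition~\ref{tr-quasi-posi with appro then h^m is also tr-quasi-posi with appro} and Proposition~\ref{subbdl is also tr-negative} endow every invertible $\mathcal{F}\subset\Omega^p_X$ with an induced singular metric that is $\omega$-trace quasi-negative, and then Theorem~\ref{characterization of tr-posi, not psef} (which already uses the Gauduchon device and only needs $\omega$ Hermitian) shows $\mathcal{F}$ is not pseudo-effective. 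In short, your Gauduchon idea is exactly right but must be applied to the line subsheaf $\mathcal{F}$---ruling out pseudo-effectivity---rather than to a putative global section of $(\Omega^1_X)^{\otimes m}$, which only rules out effectivity.
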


\begin{proof}
    We have that $H^0(X,\Omega^p_X)=0$ for any $1\leq p\leq n$ by Theorem \ref{V-thm for tr-quasi-posi with appro}.
    From $H^0(X,\Omega^2_X)=0$ and Kodaira's theorem (see \cite[Proposition\,3.3.2 and Corollary\,5.3.3]{Huy05}), the \kah manifold $X$ is projective.
    By Proposition \ref{tr-quasi-posi with appro then h^m is also tr-quasi-posi with appro}, the singular Hermitian metric $\bigwedge^ph$ on $\bigwedge^pT_X$ is also $\omega$-trace quasi-positive with approximation for any $1\leq p\leq n$.
    For any invertible subsheaf $\mathcal{F}\subset \Omega^p_X=\mathcal{O}_X(\bigwedge^pT^*_X)$ has the induced singular metric $h_{\mathcal{F}}$ which is also $\omega$-trace quasi-negative (with approximation) from Proposition \ref{subbdl is also tr-negative}.
    Hance, $\mathcal{F}$ is not pseudo-effective by Theorem \ref{characterization of tr-posi, not psef}. 
    Thus, the proof is completed from \cite[Theorem\,1.1]{CDP14}.
\end{proof}

\subsection{generically $\omega_X^{n-1}$ positivity}

In this subsection, we clarify the relationship between the following notion of generically semi-positivity and $\omega$-trace positivity.

\begin{definition}$($\textnormal{cf.\,\cite[Section\,6]{Miy87}}$)$\label{def of generically omega posi}
    Let $X$ be a compact \kah manifold and $E$ be a holomorphic vector bundle on $X$. 
    Let $\omega_1,\cdots,\omega_{n-1}$ be \kah classes. Let 
    \begin{align*}
        0=\mathscr{E}_0\subset\mathscr{E}_1\subset\cdots\subset\mathscr{E}_s=E
    \end{align*}
    be the Harder-Narasimhan semi-stable filtration with respect to $(\omega_1,\cdots,\omega_{n-1})$.
    We say that $E$ is \textit{generically} $(\omega_1,\cdots,\omega_{n-1})$ \textit{semi}-\textit{positive} (resp. \textit{strictly} \textit{positive}), if 
    \begin{align*}
        \int_Xc_1(\mathscr{E}_j/\mathscr{E}_{j-1})\wedge\omega_1\wedge\cdots\wedge\omega_{n-1}\geq0 \quad (\mathrm{resp}.\,\,>0) \quad \mathrm{for~all}~j.
    \end{align*}
    If $\omega_1=\cdots=\omega_{n-1}$, we write the polarization as $\omega^{n-1}_1$ for simplicity.
\end{definition}

The following conjecture and theorem are known for generically positivity.

\begin{conjecture}$($\textnormal{cf.\,\cite[Conjecture\,1.3]{Pet12}}$)$
    Let $X$ be a projective manifold with nef anti-canonical bundle $-K_X$. Then $T_X$ is generically $(H_1,\cdots,H_{n-1})$ semi-positive for any $(n-1)$-tuple of ample divisors $H_1,\cdots,H_{n-1}$.
\end{conjecture}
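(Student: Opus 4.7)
The plan is to reduce Peternell's conjecture to Theorem \ref{tr-posi then deg max}. Generic $(H_1,\ldots,H_{n-1})$ semi-positivity of $T_X$ is equivalent to every torsion-free quotient $Q$ of $T_X$ satisfying $\int_X c_1(Q)\wedge H_1\wedge\cdots\wedge H_{n-1}\geq 0$, which, after extracting the multilinear coefficient in $t=(t_1,\ldots,t_{n-1})$, is the $\mathrm{deg}\,_{\omega_t}$-maximality of Proposition \ref{characterization of deg max} applied to the \kah perturbation $\omega_t:=\sum t_iH_i$ with $t_i>0$. By Theorem \ref{tr-posi then deg max}, it therefore suffices to produce a singular Hermitian metric $h$ on $T_X$ that is $\omega_t$-trace semi-positive with approximation for all such $t$.

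First, I would invoke the analytic consequence of $-K_X$ being nef: for each $\varepsilon>0$ and each \kah class $[\omega]$, Yau's theorem applied after a standard Ricci perturbation yields a smooth \kah metric $\omega_\varepsilon\in[\omega]$ with $\mathrm{Ric}(\omega_\varepsilon)\geq-\varepsilon\omega_\varepsilon$. Viewing $\omega_\varepsilon$ itself as a smooth Hermitian metric $h_\varepsilon$ on $T_X$, the \kah identity $tr_{\omega_\varepsilon}i\Theta_{T_X,h_\varepsilon}=\mathrm{Ric}(\omega_\varepsilon)$ (identifying the Ricci tensor with a Hermitian endomorphism of $T_X$ via $h_\varepsilon$) yields the endomorphism inequality
\begin{align*}
tr_{\omega_\varepsilon}i\Theta_{T_X,h_\varepsilon}\geq-\varepsilon h_\varepsilon.
\end{align*}
I would then absorb the $-\varepsilon$ defect by a conformal rescaling $\widetilde{h}_\varepsilon:=e^{-\varphi_\varepsilon}h_\varepsilon$ with a potential $\varphi_\varepsilon$ satisfying $\Delta^C_{\omega_\varepsilon}\varphi_\varepsilon\geq\varepsilon$ and $C^0$-bounded uniformly in $\varepsilon$, producing smooth $\omega_\varepsilon$-trace semi-positive metrics on $T_X$. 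Passing to a decreasing limit along $\varepsilon\to 0$ would yield the desired singular $h$ with the paper's approximation property.

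Finally, Theorem \ref{tr-posi then deg max} then gives $\mathrm{deg}\,_{\omega_t}$-maximality of $T_X$ for each admissible $t$; expanding $\omega_t^{n-1}$ multilinearly in $t$ and isolating the coefficient of $t_1\cdots t_{n-1}$ recovers generic $H_1\cdots H_{n-1}$ semi-positivity. The main obstacle is the conformal absorption step: the rescaling modifies $tr_{\omega_\varepsilon}i\Theta_{T_X,\widetilde{h}_\varepsilon}$ by the additional scalar term $-\Delta^C_{\omega_\varepsilon}\varphi_\varepsilon\cdot\widetilde{h}_\varepsilon$, so one requires a family of potentials with Chern Laplacian of order $\varepsilon$ and uniform $C^0$-control as $\omega_\varepsilon$ itself degenerates in a Ricci-controlled way. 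Producing such a family and ensuring that the limit $h=\lim_\varepsilon\widetilde{h}_\varepsilon$ satisfies the decreasing-approximation condition of the paper appears to require genuine new analytic input beyond the present methods, consistent with Peternell's conjecture being open in this generality (though partial structural results such as Cao-H\"oring's decomposition theorem are available).
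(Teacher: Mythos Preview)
The statement you are trying to prove is presented in the paper as a \emph{conjecture} (Peternell's conjecture, cited from \cite{Pet12}); the paper gives no proof and immediately follows it with Cao's partial result (Theorem~\ref{Cao's thm}) and the paper's own theorems, which require a positivity hypothesis on a singular metric rather than merely nefness of $-K_X$. So there is no ``paper's proof'' to compare against, and your proposal should be read as an attempted proof of an open problem.

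Your sketch has two genuine gaps. First, the conformal absorption step is impossible as stated: on a compact K\"ahler manifold one has $\int_X\Delta^C_{\omega_\varepsilon}\varphi_\varepsilon\,dV_{\omega_\varepsilon}=\int_X i\partial\overline{\partial}\varphi_\varepsilon\wedge\omega_\varepsilon^{n-1}/(n-1)!=0$ by Stokes and $d\omega_\varepsilon=0$, so no smooth $\varphi_\varepsilon$ can satisfy $\Delta^C_{\omega_\varepsilon}\varphi_\varepsilon\geq\varepsilon>0$ everywhere. Thus the $-\varepsilon$ defect in $tr_{\omega_\varepsilon}i\Theta_{T_X,h_\varepsilon}\geq-\varepsilon h_\varepsilon$ cannot be removed by a global conformal twist, and the passage to an $\omega_\varepsilon$-trace semi-positive metric fails. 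Second, even if you had $\mathrm{deg}\,_{\omega_t}$-maximality for all $t$ in the positive orthant, the ``multilinear extraction'' does not work: nonnegativity of the polynomial $t\mapsto\int_Xc_1(Q)\wedge\omega_t^{n-1}$ on $\{t_i>0\}$ does not force the individual coefficient of $t_1\cdots t_{n-1}$ (which is a constant multiple of $\int_Xc_1(Q)\wedge H_1\wedge\cdots\wedge H_{n-1}$) to be nonnegative --- already $(t_1-t_2)^2\geq0$ has a negative mixed coefficient. You correctly sense the obstruction at the end of your write-up, but both issues are structural rather than merely technical; this is consistent with the conjecture remaining open.
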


\begin{theorem}$($\textnormal{cf.\,\cite[Theorem\,0.2]{Cao14}}$)$\label{Cao's thm}
    Let $X$ be a compact \kah manifold with nef anti-canonical bundle $-K_X$ $($resp. nef canonical bundle $K_X)$. 
    Then $T_X$ $($resp. $\Omega_X^1)$ is generically $\omega^{n-1}_X$ semi-positive for any \kah class $\omega_X$.
\end{theorem}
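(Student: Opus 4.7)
The two assertions are dual; I focus on the $-K_X$ nef case, the $K_X$ nef case for $\Omega_X^1$ being parallel. The plan is proof by contradiction: suppose $T_X$ is not generically $\omega_X^{n-1}$ semi-positive. Then the minimum-slope quotient $\mathscr{Q}$ in the Harder-Narasimhan filtration of $T_X$ with respect to $\omega_X^{n-1}$ is $\omega_X^{n-1}$-semistable with $\mathrm{deg}\,_{\omega_X}\mathscr{Q}<0$. Writing the defining exact sequence $0\to\mathscr{F}\to T_X\to\mathscr{Q}\to 0$, the subsheaf $\mathscr{F}$ has positive degree:
\begin{align*}
    \mathrm{deg}\,_{\omega_X}\mathscr{F}=\mathrm{deg}\,_{\omega_X}(-K_X)-\mathrm{deg}\,_{\omega_X}\mathscr{Q}>0,
\end{align*}
since $-K_X$ being nef forces $\mathrm{deg}\,_{\omega_X}(-K_X)\geq0$.

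To extract a contradiction, I would first resolve singularities of $\mathscr{F}$ via a modification $\pi:\widetilde{X}\to X$ as in Lemma \ref{Jac14 Lemma 1}, obtaining a holomorphic subbundle $\widetilde{\mathscr{F}}\subset\pi^*T_X$ whose pulled-back degree $\mathrm{deg}\,_{\pi^*\omega_X}\widetilde{\mathscr{F}}$ equals $\mathrm{deg}\,_{\omega_X}\mathscr{F}>0$ via Lemma \ref{Jac14 Lemma 2}. Next, approximate the nef class: for each $\varepsilon>0$ the class $c_1(-K_X)+\varepsilon[\omega_X]$ is K\"ahler, so by Aubin-Yau one chooses a K\"ahler metric $\omega_\varepsilon$ in this class with Ricci curvature bounded below by $-\varepsilon\omega_X$. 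The induced Hermitian metric on $\pi^*T_X$ restricts to a metric on $\widetilde{\mathscr{F}}$ whose trace curvature is controlled by this Ricci bound together with the non-positive second fundamental form of the inclusion $\widetilde{\mathscr{F}}\hookrightarrow\pi^*T_X$. Integrating against $\pi^*\omega_\varepsilon^{n-1}$ should give $\mathrm{deg}\,_{\pi^*\omega_\varepsilon}\widetilde{\mathscr{F}}\leq O(\varepsilon)$; passing to the limit $\varepsilon\to 0$ produces $\mathrm{deg}\,_{\pi^*\omega_X}\widetilde{\mathscr{F}}\leq 0$, contradicting the positivity established above.

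The main obstacle is the K\"ahler versus projective setting: Miyaoka's original argument for projective manifolds uses Mori-theoretic reduction and characteristic-$p$ Frobenius destabilization, which have no direct K\"ahler analog. Cao's substitute requires singular K\"ahler-Einstein metrics built via complex Monge-Amp\`ere solutions (or the K\"ahler-Ricci flow), together with a delicate continuity property of the HN filtration as the polarization $c_1(-K_X)+\varepsilon[\omega_X]$ degenerates to $c_1(-K_X)$. A further technical subtlety lies in the modification step: one must ensure that $\widetilde{\mathscr{F}}$ remains the destabilizer of $\pi^*T_X$ with respect to $\pi^*\omega_X^{n-1}$ in the HN sense, which requires torsion-freeness of quotients in codimension one (the content of Lemma \ref{Jac14 Lemma 1}).
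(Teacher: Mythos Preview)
The paper does not prove this statement. Theorem~\ref{Cao's thm} is quoted from \cite{Cao14} as background and motivation; the paper's own contributions in this subsection are the subsequent theorems (Theorem~\ref{Grif posi then generically omega posi} and the unnumbered theorem following it), which establish generic $\omega^{n-1}$ (semi-)positivity under the \emph{different} hypothesis that $E$ carries a singular Hermitian metric with $\omega$-trace (semi-/quasi-)positivity with approximation. The paper explicitly remarks that Cao's theorem is ``the case where $E=T_X$ or $T^*_X$ and $\det E$ is nef,'' a hypothesis not covered by the paper's own results. So there is no proof in the paper against which to compare your proposal.

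That said, your sketch is in the spirit of Cao's actual argument (approximate K\"ahler--Einstein metrics via Aubin--Yau, resolution of the filtration through Lemmas~\ref{Jac14 Lemma 1}--\ref{Jac14 Lemma 2}, degeneration $\varepsilon\to 0$), but the curvature estimate runs in the wrong direction. For a subbundle $\widetilde{\mathscr{F}}\subset\pi^*T_X$ the second fundamental form gives $c_1(\widetilde{\mathscr{F}},h_\varepsilon)\leq\mathrm{tr}_{\widetilde{\mathscr{F}}}\bigl(i\Theta_{\pi^*T_X}\big|_{\widetilde{\mathscr{F}}}\bigr)$, and a \emph{lower} Ricci bound $\mathrm{Ric}(\omega_\varepsilon)\geq -\varepsilon\omega_X$ does not bound this partial endomorphism trace from \emph{above}; you therefore cannot conclude $\deg_{\pi^*\omega_\varepsilon}\widetilde{\mathscr{F}}\leq O(\varepsilon)$. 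The correct move is to estimate the quotient $\mathscr{Q}$ directly: curvature increases on quotients, so a lower bound on $\mathrm{tr}_{\omega_\varepsilon}i\Theta_{T_X,\omega_\varepsilon}$ (equivalently on Ricci, since the metric is K\"ahler) yields a lower bound on $\deg_{\omega_\varepsilon}\mathscr{Q}$, and one passes to the limit to obtain $\deg_{\omega_X}\mathscr{Q}\geq 0$. This quotient-side estimate is also precisely how the paper organises its own proof of Theorem~\ref{tr-posi then deg max} and of the theorem immediately following Theorem~\ref{Grif posi then generically omega posi}.
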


More generally, we show that $E$ has generically $\omega^{n-1}$ positivity if a singular Hermitian metric of $E$ has $\omega$-trace positivity with approximation, and obtain analogous results to Theorem \ref{Cao's thm}.
Here, it is a sufficient condition for $\mathrm{det}\,E$ to be pseudo-effective that $\mathrm{det}\,E$ is nef or that $E$ is Griffiths semi-positive for singular Hermitian metrics.
Theorem \ref{Cao's thm} is the case where $E=T_X$ or $T^*_X$ and $\mathrm{det}\,E$ is nef.

\begin{theorem}\label{Grif posi then generically omega posi}
    Let $X$ be a compact \kah manifold and $E$ be a holomorphic vector bundle equipped with a singular Hermitian metric $h$.
    If $h$ is Griffiths semi-positive $($resp. quasi-positive$)$, then $E$ is generically $\omega^{n-1}_X$ semi-positive $($resp. strictly positive$)$ for any \kah class $\omega_X$.
\end{theorem}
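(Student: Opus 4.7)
The plan is to combine the implication ``Griffiths semi-positive (resp.\ quasi-positive) $\Longrightarrow$ $\omega$-trace semi-positive (resp.\ quasi-positive) with approximation'' recorded at the end of Section $5$ with Theorem \ref{tr-posi then deg max}, and then read off the successive-quotient inequalities directly from the structure of the Harder--Narasimhan filtration. First I would fix a representative \kah metric $\omega_X$ of the given \kah class and invoke that chain of implications to conclude that $h$ is $\omega_X$-trace semi-positive (resp.\ quasi-positive) with approximation. Applying Theorem \ref{tr-posi then deg max} with this metric then yields that $E$ is $\mathrm{deg}\,_{\omega_X}$-maximal (resp.\ strictly maximal) and that $\mathrm{deg}\,_{\omega_X}(E)\geq0$ (resp.\ $>0$).

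Next, consider the Harder--Narasimhan semi-stable filtration
\begin{align*}
0=\mathscr{E}_0\subset\mathscr{E}_1\subset\cdots\subset\mathscr{E}_s=E
\end{align*}
with respect to $\omega_X$, whose successive quotients $\mathscr{E}_j/\mathscr{E}_{j-1}$ are $\omega_X$-semistable with strictly decreasing slopes $\mu_1>\mu_2>\cdots>\mu_s$, where $\mu_j:=\mathrm{deg}\,_{\omega_X}(\mathscr{E}_j/\mathscr{E}_{j-1})/\mathrm{rank}\,(\mathscr{E}_j/\mathscr{E}_{j-1})$. The key observation is that the \emph{final} quotient $\mathscr{E}_s/\mathscr{E}_{s-1}=E/\mathscr{E}_{s-1}$ is itself a torsion-free quotient sheaf of $E$. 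Proposition \ref{characterization of deg max} applied to $E$ therefore gives $\mathrm{deg}\,_{\omega_X}(E/\mathscr{E}_{s-1})\geq0$ in the semi-positive case, so $\mu_s\geq0$; the strict descent of slopes then propagates this to $\mu_j\geq\mu_s\geq0$, i.e.\ $\mathrm{deg}\,_{\omega_X}(\mathscr{E}_j/\mathscr{E}_{j-1})\geq0$, for every $j$.

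For the strict version, under the Griffiths quasi-positive hypothesis, I would separate the cases $s=1$ and $s\geq2$. When $s=1$ the only successive quotient is $E$ itself, and $\mathrm{deg}\,_{\omega_X}(E)>0$ is supplied directly by Theorem \ref{tr-posi then deg max}. When $s\geq2$, the quotient $E/\mathscr{E}_{s-1}$ has rank strictly smaller than $\mathrm{rank}\,E$, and the strictly maximal analogue of Proposition \ref{characterization of deg max} yields $\mathrm{deg}\,_{\omega_X}(E/\mathscr{E}_{s-1})>0$, whence $\mu_s>0$ and again $\mu_j>0$ for all $j$ by strict descent.

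The main subtlety, and the reason for splitting the strict case by $s$, is that the definition of $\mathrm{deg}\,_\omega$-strictly maximal only controls quotients of rank strictly less than $\mathrm{rank}\,E$; without the supplementary conclusion $\mathrm{deg}\,_\omega(E)>0$ recorded in Theorem \ref{tr-posi then deg max}, the $s=1$ case of the strict statement would not follow. Everything else is structural: once the passage through $\omega$-trace positivity and $\mathrm{deg}\,_{\omega_X}$-maximality is in place, monotonicity of slopes along the Harder--Narasimhan filtration does the rest.
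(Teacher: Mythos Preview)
Your argument is correct and rests on the same core reduction as the paper: pass from Griffiths (semi/quasi)-positivity to $\omega$-trace (semi/quasi)-positivity with approximation, and then use the degree-maximality machinery of Theorem \ref{tr-posi then deg max} to control the Harder--Narasimhan quotients.

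The only difference is one of packaging. The paper routes the conclusion through the companion statement for $\omega$-trace positivity, whose proof re-invokes Jacob's blowup resolution (Lemmas \ref{Jac14 Lemma 1}, \ref{Jac14 Lemma 2}) to replace the Harder--Narasimhan filtration by a filtration of subbundles on a modification and then appeals to the \emph{proof} of Theorem \ref{tr-posi then deg max} to bound each $\mu_{\pi^*\omega}(\pi^*E/\widetilde{E}_j)$. You instead apply the \emph{conclusion} of Theorem \ref{tr-posi then deg max} directly, read off $\mathrm{deg}_{\omega_X}(E/\mathscr{E}_{s-1})\geq0$ (resp.\ $>0$) from Proposition \ref{characterization of deg max} for the single torsion-free quotient of minimal slope, and let the strict monotonicity $\mu_1>\cdots>\mu_s$ propagate. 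This is a genuine economy: no second pass through the blowup is needed, and the case split $s=1$ versus $s\geq2$ cleanly handles the rank restriction in the strictly-maximal definition. Conversely, the paper's formulation has the virtue of isolating the $\omega$-trace statement as a standalone theorem applicable beyond the Griffiths setting.
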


Theorem \ref{Grif posi then generically omega posi} follows immediately from the following theorem and the characterization of singular Griffiths semi-positivity, i.e. Theorem \ref{characterization of sing Grif semi-posi and tr-omega semi-posi}.

\begin{theorem}
    Let $X$ be a compact \kah manifold, $\omega$ be a \kah metric on $X$ and $E$ be a holomorphic vector bundle equipped with a singular Hermitian metric $h$. 
    If $h$ is $\omega$-trace semi-positive $($resp. quasi-positive$)$ with approximation then $E$ is generically $\omega^{n-1}$ semi-positive $($resp. strictly positive$)$.
\end{theorem}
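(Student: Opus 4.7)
The plan is to combine the strict decrease of slopes in the Harder--Narasimhan filtration with the maximality result established in Theorem \ref{tr-posi then deg max}. Let $0=\mathscr{E}_0\subset\mathscr{E}_1\subset\cdots\subset\mathscr{E}_s=E$ be the HN semistable filtration of $E$ with respect to $\omega^{n-1}$, and set $\mathscr{Q}_j:=\mathscr{E}_j/\mathscr{E}_{j-1}$. By construction each $\mathscr{E}_j$ is saturated in $E$, so every successive quotient $\mathscr{Q}_j$ is torsion-free and $\omega^{n-1}$-semistable, and the slopes $\mu_\omega(\mathscr{F}):=\mathrm{deg}\,_\omega(\mathscr{F})/\mathrm{rank}\,\mathscr{F}$ satisfy the strict chain
\begin{align*}
    \mu_\omega(\mathscr{Q}_1)>\mu_\omega(\mathscr{Q}_2)>\cdots>\mu_\omega(\mathscr{Q}_s).
\end{align*}

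The key observation is that the lowest piece $\mathscr{Q}_s=E/\mathscr{E}_{s-1}$ is itself a torsion-free quotient sheaf of $E$. Hence, if one can show $\mathrm{deg}\,_\omega(\mathscr{Q}_s)\geq0$ (resp.\ $>0$), then the strict descent of slopes forces $\mu_\omega(\mathscr{Q}_j)>\mu_\omega(\mathscr{Q}_s)\geq0$ (resp.\ $>0$) for every $j<s$, so that $\mathrm{deg}\,_\omega(\mathscr{Q}_j)\geq0$ (resp.\ $>0$) for all $j$, which is precisely the definition of generic $\omega^{n-1}$-semi-positivity (resp.\ strict positivity).

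For the semi-positive case I would invoke Theorem \ref{tr-posi then deg max} to conclude that $E$ is $\mathrm{deg}\,_\omega$-maximal; the characterization in Proposition \ref{characterization of deg max}$(c)$ then immediately gives $\mathrm{deg}\,_\omega(\mathscr{Q}_s)\geq0$, closing the argument. For the quasi-positive case, Theorem \ref{tr-posi then deg max} additionally provides $\mathrm{deg}\,_\omega(E)>0$ together with $\mathrm{deg}\,_\omega$-strict maximality. If $s=1$, then $\mathscr{Q}_1=E$ and we already have $\mathrm{deg}\,_\omega(\mathscr{Q}_1)>0$; if $s\geq2$, then $\mathscr{Q}_s$ is a proper torsion-free quotient of $E$ (of rank strictly less than $\mathrm{rank}\,E$), and the strictly-maximal form of Proposition \ref{characterization of deg max}$(c)$ yields $\mathrm{deg}\,_\omega(\mathscr{Q}_s)>0$.

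The only delicate point, though quite mild, is to ensure that the Harder--Narasimhan filtration is available in this generality (a torsion-free coherent sheaf on a compact K\"ahler manifold polarized by $\omega^{n-1}$) and that the successive quotients $\mathscr{Q}_j$ are genuinely torsion-free and $\omega^{n-1}$-semistable with strictly decreasing slopes; this is classical after the works of Bando--Siu and Bruasse, and once it is in place the argument above is essentially formal, requiring no further analytic input beyond the $\omega$-trace maximality developed in $\S7$.
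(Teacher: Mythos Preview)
Your argument is correct and follows the same route as the paper: both reduce to bounding the degree of the last Harder--Narasimhan quotient $\mathscr{Q}_s=E/\mathscr{E}_{s-1}$ via the $\mathrm{deg}\,_\omega$-maximality established in Theorem \ref{tr-posi then deg max}, and then use the strict decrease of HN slopes to propagate the inequality to all $\mathscr{Q}_j$. Your version is in fact slightly more streamlined, since you invoke Theorem \ref{tr-posi then deg max} and Proposition \ref{characterization of deg max}$(c)$ directly, whereas the paper re-runs the desingularization step (Lemmas \ref{Jac14 Lemma 1}--\ref{Jac14 Lemma 2}) that is already absorbed into the proof of Theorem \ref{tr-posi then deg max}.
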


\begin{proof}
    Let $ 0=\mathscr{E}_0\subset\mathscr{E}_1\subset\cdots\subset\mathscr{E}_s=E$
    be the Harder-Narasimhan filtration of torsion-free subsheaves such that $\mathscr{E}_j/\mathscr{E}_{j-1}$ is the maximal $\omega$-semistable torsion-free subsheaf of $E/\mathscr{E}_{j-1}$ for $1\leq j\leq s$.
    By Lemma \ref{Jac14 Lemma 1}, there exists a desingularization $\pi:\widetilde{X}\to X$ such that $\pi^*E$ admits a filtration:
    \begin{align*}
        0\subset \widetilde{E}_1\subset\widetilde{E}_2\subset\cdots\subset\widetilde{E}_s=\pi^*(E),
    \end{align*}
    where $\widetilde{E}_j,\widetilde{E}_j/\widetilde{E}_{j-1}$ are holomorphic vector bundles and $\pi_*\widetilde{E}_j=\mathscr{E}_j$ outside an analytic subset of codimension at least $2$.
    From Lemma \ref{Jac14 Lemma 2}, we have that 
    \begin{align*}
        \mu_\omega(\mathscr{E}_j/\mathscr{E}_{j-1})=\mu_{\pi^*\omega}(\widetilde{E}_j/\widetilde{E}_{j-1})
    \end{align*}
    and $\widetilde{E}_j/\widetilde{E}_{j-1}$ is a $\pi^*\omega$-semistable of $\pi^*E/\widetilde{E}_{j-1}$ of maximal slope.

    Hance, it is sufficient to prove $\mu_{\pi^*\omega}(\pi^*E/\widetilde{E}_j)\geq0$ (resp. $>0$) for any $1\leq j\leq s-1$.
    This follows from the proof of Theorem \ref{tr-posi then deg max}.
\end{proof}

Finally, we give an application to the Mumford conjecture that a projective manifold $X$ satisfying the cohomology vanishing $H^0(X,(T^*_X)^{\otimes m})=0$ for any $m\geq1$ is rationally connected.

\begin{corollary}\label{Grif semi-posi and Mumford conj}
    Let $X$ be a compact \kah manifold. We assume that the tangent holomoprhic bundle $T_X$ has a singular Hermitian metric with Griffiths semi-positivity. 
    Then the following conditions are equivalent
    \begin{itemize}
        \item [$(a)$] $H^0(X,(T_X^*)^{\otimes m})=0$ for all $m\geq1$,
        \item [$(b)$] $X$ is rationally connected,
        \item [$(c)$] $T_X$ is generically $\omega^{n-1}_X$ strictly positive for some \kah class $\omega_X$,
        \item [$(d)$] $T_X$ is generically $\omega^{n-1}_X$ strictly positive for any \kah class $\omega_X$.
    \end{itemize}
\end{corollary}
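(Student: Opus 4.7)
The plan is to establish the cycle $(d)\Rightarrow(c)\Rightarrow(a)\Rightarrow(b)\Rightarrow(d)$, supplemented by the classical $(b)\Rightarrow(a)$. Two key inputs are Theorem~\ref{Grif posi then generically omega posi} (Griffiths semi-positivity of $T_X$ yields generic $\omega_X^{n-1}$ semi-positivity for every \kah class $\omega_X$) and the Campana-Demailly-Peternell criterion \cite{CDP14}: a compact \kah manifold $X$ is rationally connected iff no $\Omega_X^p$ ($p\geq 1$) contains a nonzero pseudo-effective invertible subsheaf. The implication $(d)\Rightarrow(c)$ is tautological, and $(b)\Rightarrow(a)$ is classical: pulling back a pluri-tensor section to a very free rational curve annihilates it, since $T_X$ is ample along such a curve.

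For $(c)\Rightarrow(a)$, pick a \kah class $\omega_X$ witnessing the generic strict positivity, so every Harder-Narasimhan quotient of $T_X$ has strictly positive $\omega_X$-slope and $\mu_{\min,\omega_X}(T_X)>0$. Invoking the tensor-product formula for the maximal slope on compact \kah manifolds (via the Kobayashi-Hitchin correspondence of Uhlenbeck-Yau and Bando-Siu), $\mu_{\max,\omega_X}((T_X^*)^{\otimes m})=-m\,\mu_{\min,\omega_X}(T_X)<0$. A nonzero element of $H^0(X,(T_X^*)^{\otimes m})$ would give an embedding $\mathcal{O}_X\hookrightarrow(T_X^*)^{\otimes m}$, a subsheaf of slope $0$, contradicting this bound.

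For $(b)\Rightarrow(d)$, fix any \kah class $\omega_X$. By Theorem~\ref{Grif posi then generically omega posi}, the HN quotients $Q_j=\mathscr{E}_j/\mathscr{E}_{j-1}$ satisfy $\mu_{\omega_X}(Q_j)\geq0$. Assume some $\mu_{\omega_X}(Q_j)=0$. As $Q_j$ is a torsion-free quotient of $T_X$, the singular Griffiths semi-positive metric on $T_X$ induces on $\det Q_j$ (outside a codimension $\geq 2$ locus, extending to its reflexive line bundle) a semi-positive singular metric, so $\det Q_j$ is pseudo-effective. Combined with $\deg_{\omega_X}(\det Q_j)=0$, any positive current representative $T\in c_1(\det Q_j)$ satisfies $\int_X T\wedge\omega_X^{n-1}=0$, forcing $T=0$ and $c_1(\det Q_j)=0$ in $H^{1,1}(X,\mathbb{R})$. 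Hence $\det Q_j$ is unitary flat on compact \kah $X$, and so $\det Q_j^*$ is also pseudo-effective. Dualizing $T_X\twoheadrightarrow Q_j$ and passing to top wedge yields $\det Q_j^*\hookrightarrow\Lambda^{\mathrm{rank}\,Q_j}T_X^*=\Omega_X^{\mathrm{rank}\,Q_j}$, a nonzero pseudo-effective invertible subsheaf, contradicting CDP14.

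For $(a)\Rightarrow(b)$, the most delicate step, suppose $X$ is not rationally connected, so by CDP14 there is a pseudo-effective invertible subsheaf $\mathcal{L}\hookrightarrow\Omega_X^p$ for some $p\geq1$. Under Griffiths semi-positivity of $T_X$, $T_X^*$ carries a singular Griffiths semi-negative metric, a property inherited by $(T_X^*)^{\otimes p}$; via the alternator embedding $\Omega_X^p\hookrightarrow(T_X^*)^{\otimes p}$ the subsheaf $\mathcal{L}$ inherits a singular semi-negative metric, so $\mathcal{L}^*$ is pseudo-effective. Together with $\mathcal{L}$ pseudo-effective this yields $c_1(\mathcal{L})=0$, and $\mathcal{L}$ is unitary flat. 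Simultaneously, (a) with $m=1$ gives $H^0(X,\Omega_X^1)\subset H^0(X,T_X^*)=0$, so by Hodge symmetry $h^{0,1}(X)=h^{1,0}(X)=0$, and the character group $\mathrm{Hom}(\pi_1(X),U(1))$ classifying flat line bundles on $X$ is then finite. Consequently $\mathcal{L}$ is torsion, $\mathcal{L}^{\otimes N}\cong\mathcal{O}_X$ for some $N\geq1$, and the chain $\mathcal{O}_X\cong\mathcal{L}^{\otimes N}\hookrightarrow(\Omega_X^p)^{\otimes N}\hookrightarrow(T_X^*)^{\otimes pN}$ exhibits a nonzero section contradicting (a). The principal obstacle is this \emph{flat $\Rightarrow$ torsion} rigidification, which succeeds precisely because (a) forces $h^{0,1}=0$.
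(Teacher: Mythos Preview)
Your cycle $(d)\Rightarrow(c)\Rightarrow(a)\Rightarrow(b)\Rightarrow(d)$ is correct, and the arguments are sound modulo two small imprecisions. In $(b)\Rightarrow(d)$ you write ``$Q_j$ is a torsion-free quotient of $T_X$'', but a general Harder--Narasimhan graded piece is only a subquotient; the point is that the HN slopes are strictly decreasing, so if all $\mu_{\omega_X}(Q_j)\geq 0$ and one vanishes it must be $Q_s=T_X/\mathscr{E}_{s-1}$, which \emph{is} a quotient of $T_X$. In $(a)\Rightarrow(b)$ you invoke \cite{CDP14} directly, but that criterion is stated for projective manifolds; you should first observe that $(a)$ with $m=2$ gives $H^0(X,\Omega_X^2)\subset H^0(X,(T_X^*)^{\otimes 2})=0$, whence $X$ is projective by Kodaira, and then apply \cite{CDP14}.

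Your route genuinely differs from the paper's. The paper does not write out a proof but defers to \cite[Proposition\,0.4]{Cao14} for the overall scheme and, crucially, cites the structure theorem \cite[Theorem\,1.1]{HIM22} for the implication $(a)\Rightarrow(b)$. Your argument for $(a)\Rightarrow(b)$ avoids that structure theorem entirely: from Griffiths semi-positivity of $T_X$ you deduce that any invertible $\mathcal{L}\hookrightarrow\Omega_X^p$ has $\mathcal{L}^*$ pseudo-effective, hence $c_1(\mathcal{L})_{\mathbb{R}}=0$; then $(a)$ forces $h^{0,1}=0$, so $\mathrm{Pic}^0(X)=0$ and $\mathcal{L}$ is torsion, yielding a forbidden section of some $(T_X^*)^{\otimes N}$. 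This is more elementary and self-contained than invoking \cite{HIM22}, at the cost of using the torsion trick which hinges on the specific vanishing $(a)$; the structure-theorem approach, by contrast, gives geometric information (an MRC-type decomposition) beyond what is needed here.
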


This corollary is shown similarly to \cite[Proposition\,0.4]{Cao14}, which changes the assumption of Corollary \ref{Grif semi-posi and Mumford conj} to nef-ness of the anti-canonical bundle $-K_X$. 
In particular, $(a)\Longrightarrow (b)$ follows from the structure theorem (see \cite[Theorem\,1.1]{HIM22}) for singular Griffiths semi-positivity of $T_X$.

\vspace*{5mm}
{\bf Acknowledgement. } 
The author would like to thank my supervisor Professor Shigeharu Takayama for guidance and helpful advice. 
The author would also like to thank Professor Takahiro Inayama and Professor Shin-ichi Matsumura for useful suggestions and discussions.


\end{document}